\definecolor{cobalt}{rgb}{0.0, 0.28, 0.67}
\definecolor{darkblue}{rgb}{0.0, 0.0, 0.55}
\theoremstyle{definition}
\newtheorem{theorem}{Theorem}[section]
\newtheorem{corollary}[theorem]{Corollary}
\newtheorem{lemma}[theorem]{Lemma}
\newtheorem{definition}[theorem]{Definition}
\newtheorem{proposition}[theorem]{Proposition}
\newtheorem{remark}[theorem]{Remark}
\newtheorem{example}[theorem]{Example}
\definecolor{darkred}{rgb}{0.7,0,0}
\definecolor{darkgreen}{rgb}{0,0.46,0}
\definecolor{purple}{rgb}{0.6,0,0.5}
\newcommand{\tenss}[1]{\boldsymbol{\mathcal{#1}}}
\newcommand{\tenselem}[1]{\mathcal{#1}}
\newcommand{\matr}[1]{\boldsymbol{#1}}
\newcommand{\vect}[1]{\boldsymbol{#1}}
\newcommand{\set}[1]{\mathscr{#1}}
\newcommand{\T}{{\sf T}}        
\renewcommand{\H}{{\sf H}}      
\newcommand{\rank}[1]{\mathop{\operator@font rank}(#1)}
\newcommand{\colrank}[1]{\mathop{\operator@font colrank}\{#1\}}
\newcommand{\krank}[1]{\mathop{\operator@font krank}\{#1\}}
\newcommand{\trace}[1]{\mathop{\operator@font tr}(#1)}
\newcommand{\symmm}[1]{\mathop{\operator@font sym}\left(#1\right)}
\newcommand{\skeww}[1]{\mathop{\operator@font skew}(#1)}
\newcommand{\Diag}[1]{\mathop{\operator@font Diag}\{#1\}}    
\newcommand{\diag}[1]{\mathop{\operator@font diag}\{#1\}}    
\newcommand{\Span}[1]{\mathop{\operator@font Span}\{#1\}}    
\newcommand{\argmin}{\mathop{\operator@font argmin}}
\newcommand{\offdiag}[1]{\mathop{\operator@font offdiag}\{#1\}}    
\newcommand{\Proj}[2]{\mathop{\operator@font Proj_{#1}}{#2}}
\newcommand{\ProjGrad}[2]{\mathop{{\operator@font grad} }#1(#2)}
\newcommand{\Hess}[2]{\mathrm{Hess_{}}{#1}(#2)}
\newcommand{\HessAppl}[3]{\mathrm{Hess_{}}{#1}(#2) [#3]}
\newcommand{\expp}[1]{\mathop{\operator@font exp}\left(#1\right)}
\newcommand{\eqdef}{\stackrel{\sf def}{=}}
\newcommand{\RR}{\mathbb{R}}
\newcommand{\CC}{\mathbb{C}}
\newcommand{\R}{\Re}
\newcommand{\I}{\Im}
\newcommand{\RInner}[2]{\left\langle #1, #2\right\rangle_{\Re}} 
\newcommand{\ui}{i}
\newcommand{\ON}[1]{\set{O}_{#1}}
\newcommand{\UN}[1]{\set{U}_{#1}}
\newcommand{\contr}[1]{\mathop{\bullet_{#1}}}   
\begin{document}

\title[Polar decomposition based algorithms]
{Polar decomposition based algorithms on the product of Stiefel manifolds with applications in tensor approximation}

\author[Jianze Li]{Jianze Li$^{\dagger}$}
\thanks{$\dagger$ Shenzhen Research Institute of Big Data, The Chinese University of Hong Kong, Shenzhen, China (lijianze@gmail.com)}

\author[Shuzhong Zhang]{Shuzhong Zhang$^{\ddagger}$}
\thanks{$\ddagger$ Department of Industrial and Systems Engineering, University of Minnesota, Minneapolis, MN 55455, USA (zhangs@umn.edu), and Institute for Data and Decision Analytics, The Chinese University of Hong Kong, Shenzhen, China (zhangs@cuhk.edu.cn)}

\date{\today}

\keywords{manifold optimization, polar decomposition, convergence analysis, tensor approximation, \L{}ojasiewicz gradient inequality, Morse-Bott property}

\thanks{This work was supported in part by the National Natural Science Foundation of China (11601371).}

\subjclass[2010]{15A23, 15A69, 26E05, 49M05, 65F30}

\begin{abstract}
In this paper, 
we propose a general algorithmic framework to solve a class of optimization problems on the product of complex Stiefel manifolds based on the matrix polar decomposition. 
We establish the weak convergence, global convergence and linear convergence rate of this general algorithmic approach using the \L{}ojasiewicz gradient inequality and the Morse-Bott property. This general algorithm and its convergence results are applied to the simultaneous approximate tensor diagonalization and simultaneous approximate tensor compression,
which include as special cases the low rank orthogonal approximation, best rank-1 approximation and low multilinear rank approximation for higher order complex tensors. 
We also present a symmetric variant of this general algorithm to solve a symmetric variant of this class of optimization models, which essentially optimizes over a single Stiefel manifold. We establish its weak convergence, global convergence and linear convergence rate in a similar way. This symmetric variant and its convergence results are applied to the simultaneous approximate symmetric tensor diagonalization, which includes as special cases the low rank symmetric orthogonal approximation and best symmetric rank-1 approximation for higher order complex symmetric tensors.
It turns out that well-known algorithms such as LROAT, S-LROAT, HOPM, S-HOPM are all special cases of this general algorithmic framework and its symmetric variant, and our convergence results subsume the results found in the literature designed for those special cases. 
All the algorithms and convergence results in this paper also apply to the real case. 
\end{abstract}

\maketitle

\section{Introduction}

Theory and algorithms for optimization over manifolds have been developed and applied widely because of its practical importance; see \cite{absil2009optimization,JMLR:v15:boumal14a,hu2019brief,smith1994optimization}.
In particular, many different algorithms, including the Newton-type methods \cite{adler2002newton}, trust-region algorithm \cite{absil2007trust} and \emph{alternating direction method of multipliers} (ADMM) \cite{kovnatsky2016madmm}, have been proposed for optimization over a single Stiefel manifold or the product of Stiefel manifolds, which capture the orthogonality type of constraints \cite{absil2009optimization,edelman1998geometry,journee2010generalized,sato2013riemannian,wen2013feasible}.

\subsection{Problem formulation}
For a matrix $\matr{X}\in\CC^{n\times r}$, we denote by $\matr{X}^{\T}$, $\matr{X}^{*}$ and $\matr{X}^{\H}$ its \emph{transpose, conjugate} and \emph{conjugate transpose,} respectively.   
Let 
$\text{St}(r,n,\CC) =\{\matr{X}\in\CC^{n\times r}: \matr{X}^{\H}\matr{X}=\matr{I}_r\}$ 
be the complex Stiefel manifold with $1\leq r\leq n$. 
In this paper, we mainly study the \emph{optimization problem on the product of complex Stiefel manifolds}, which is to maximize a smooth function 
\begin{equation}\label{definition-f}
f:\ \Omega \longrightarrow \RR^{+},
\end{equation}
where
\begin{equation}\label{eq:Omega}
\Omega\eqdef\text{St}(r_1,n_1,\CC)\times\text{St}(r_2,n_2,\CC)\times\cdots\times\text{St}(r_d,n_d,\CC)
\end{equation}
is the product of $d$ complex Stiefel manifolds ($d>1$). 
Let 
$$\Omega^{(i)}\eqdef\text{St}(r_1,n_1,\CC)\times\cdots\times\text{St}(r_{i-1},n_{i-1},\CC)\times\text{St}(r_{i+1},n_{i+1},\CC)\times\cdots\times\text{St}(r_d,n_d,\CC)$$ 
be the product of $d-1$ complex Stiefel manifolds and $\nu^{(i)}\in\Omega^{(i)}$.
Let 
$\matr{U}\in\text{St}(r_i,n_i,\CC)$. 
Define
\begin{align}
h_{(i)}:\ \text{St}(r_i,n_i,\CC) &\longrightarrow \RR^{+}, \notag\\
\matr{U} &\longmapsto f(\nu^{(i)},\matr{U})
\eqdef f(\matr{U}^{(1)}, \cdots, \matr{U}^{(i-1)}, \matr{U}, \matr{U}^{(i+1)}, \cdots, \matr{U}^{(d)}).\label{definition-h}
\end{align}
For simplicity, we also denote by $h_{(i)}$ the natural extension of itself to $\CC^{n_i\times r_i}$. 
In this paper, we say that the objective function \eqref{definition-f} is 
\emph{block multiconvex} \cite{li2015convergence,shen2017disciplined,xu2013block} if the restricted function $h_{(i)}$ is convex for any fixed $\nu^{(i)}\in\Omega^{(i)}$ and $1\leq i\leq d$.

\subsection{Symmetric variant}
Let $\omega\eqdef (\matr{U}^{(1)}, \matr{U}^{(2)}, \cdots, \matr{U}^{(d)})\in\Omega$. 
If $r_i = r$ and $n_i = n$ for all $1\leq i\leq d$, 
then $\Omega$ in \eqref{eq:Omega} becomes
\begin{equation}\label{eq:Omega_s}
\Omega_{s}\eqdef\text{St}(r,n,\CC)\times\text{St}(r,n,\CC)\times\cdots\times\text{St}(r,n,\CC).
\end{equation}
Assume that 
the objective function $f$ is \emph{symmetric} on $\Omega_{s}$, 
that is, 
$f(\omega) = f(\pi(\omega))$
for any $\omega\in\Omega_{s}$ and permutation $\pi$. 
In this paper, we also study the \emph{symmetric variant} of model \eqref{definition-f}, which is to maximize the smooth function
\begin{equation}\label{cost-function-genral-h}
g:\ \text{St}(r,n,\CC) \longrightarrow \RR^{+}, 
\ \matr{U} \longmapsto f(\matr{U}, \matr{U},\cdots, \matr{U}).
\end{equation}
For simplicity, we also denote by $g$ the natural extension of itself to $(\CC^{n\times r})^d$. 

\subsection{Notations}
Let $\CC^{n_1\times n_2\times\cdots\times n_d}\eqdef\CC^{n_1}\otimes\CC^{n_2}\otimes\cdots\otimes\CC^{n_d}$ be the linear space of $d$-th order complex tensors. 
Let $\text{symm}(\CC^{n\times n\times\cdots\times n})$ be the set of \emph{symmetric} ones in $\CC^{n\times n\times\cdots\times n}$,
whose entries do not change under any permutation of indices \cite{Comon08:symmetric,qi2017tensor}.
For a tensor $\tenss{A}\in\CC^{n_1\times n_2\times\cdots\times n_d}$ and a matrix $\matr{X}\in\CC^{m\times n_i}$, 
we adopt the \emph{$i$-mode product} defined as 
$(\tenss{A}\contr{i}\matr{X})_{p_1\cdots q\cdots p_d}\eqdef\sum_{p_i} \tenselem{A}_{p_1\cdots p_i\cdots p_d} X_{q p_i}.$ 
The \emph{$i$-mode unfolding} of $\tenss{A}$ is denoted by $\tenss{A}_{(i)}$, which is the matrix obtained by reordering the \emph{$i$-mode fibers}\footnote{The fiber of a tensor is defined by fixing every index but one.} of $\tenss{A}$ in a fixed way \cite{kolda2009tensor}. 
The \emph{diagonal} of a tensor 
$\tenss{A}\in\CC^{n_1\times n_2\times\cdots\times n_d}$ is defined as 
$\diag{\tenss{A}}\eqdef (\tenselem{A}_{11\cdots 1}, \cdots, \tenselem{A}_{nn\cdots n})^{\T}$, where $n=\min(n_1,\cdots,n_d)$. 
We say that $\tenss{A}$ is \emph{diagonal}, if its diagonal elements are the only nonzero elements. 
Let $\matr{I}_{n\times r}$ be the diagonal matrix in $\RR^{n\times r}$ satisfying that the diagonal elements are all equal to 1. 
Let $\matr{I}^{\perp}_{n\times r}$ be the matrix in $\RR^{n\times(n-r)}$ such that $[\matr{I}_{n\times r}\ \matr{I}^{\perp}_{n\times r}]=\matr{I}_{n}$. 
We denote by $\|\cdot\|$ the Frobenius norm of a tensor or matrix, or the Euclidean norm of a vector.
Let $\mathbb{S}_{n-1}\subseteq\CC^{n}$ be the unit sphere, and denote $\mathbb{S}=\mathbb{S}_0$ for simplicity. 
Let $\UN{n}=\text{St}(n,n,\CC)$ be the unitary group. 
We denote by $\vect{e}_{p}$ the $p$-th standard unit vector, and its dimension depends on the context.

\subsection{Objective functions of interest}
Let $\{\tenss{A}^{(\ell)}\}_{1\leq\ell\leq L}\subseteq\CC^{n_1\times n_2\times\cdots n_d}$ be a set of complex tensors, and
$\{\tenss{S}^{(\ell)}\}_{1\leq\ell\leq L}\subseteq\text{symm}(\CC^{n\times n\times\cdots\times n})$ be a set of $d$-th order complex symmetric tensors. 
Let $\alpha_\ell \in \RR^{+}$ for $1\leq\ell\leq L$.
Denote $(\cdot)^{\dagger}=(\cdot)^{\H}$ or $(\cdot)^{\T}$.
In this paper, we mainly focus on the following three tensor related objective functions:
\begin{itemize}
	\item[$\bullet$] \emph{Simultaneous approximate tensor diagonalization:} 
	\begin{align}\label{cost-fn-general-1}
	f(\omega) &= \sum\limits_{\ell=1}^{L} \alpha_\ell\|\diag{\tenss{W}^{(\ell)}}\|^2,\ \ \matr{U}^{(i)}\in\text{St}(r,n_i,\CC),\ \ 1\leq r\leq n_i,\ 1\leq i\leq d,\\
	\tenss{W}^{(\ell)}&=\tenss{A}^{(\ell)} \contr{1} (\matr{U}^{(1)})^{\dagger}\cdots \contr{d} (\matr{U}^{(d)})^{\dagger},\ 1\leq \ell\leq L;\notag
	\end{align}
	\item[$\bullet$] \emph{Simultaneous approximate symmetric tensor diagonalization:} 
	\begin{align}\label{cost-fn-general-1-s}
	g(\matr{U}) &= \sum\limits_{\ell=1}^{L} \alpha_\ell\|\diag{\tenss{W}^{(\ell)}}\|^2,\ \ \matr{U}\in\text{St}(r,n,\CC),\ \ 1\leq r\leq n,\\
	\tenss{W}^{(\ell)}&=\tenss{S}^{(\ell)} \contr{1} \matr{U}^\dagger\cdots \contr{d} \matr{U}^\dagger,\ 1\leq \ell\leq L;\notag
	\end{align}
	\item[$\bullet$] \emph{Simultaneous approximate tensor compression:} 
	\begin{align}
	f(\omega) &= \sum\limits_{\ell=1}^{L} \alpha_\ell\|\tenss{W}^{(\ell)}\|^2,\ \ \matr{U}^{(i)}\in\text{St}(r_i,n_i,\CC),\ \ 1\leq r_i\leq n_i,\ 1\leq i\leq d,\label{cost-fn-general-2}\\
	\tenss{W}^{(\ell)}&=\tenss{A}^{(\ell)} \contr{1} (\matr{U}^{(1)})^{\dagger} \cdots \contr{d} (\matr{U}^{(d)})^{\dagger},\ 1\leq \ell\leq L.\notag
	\end{align}
\end{itemize}

It can be seen that objective functions \eqref{cost-fn-general-1} and \eqref{cost-fn-general-2} are both of form \eqref{definition-f},
and objective function \eqref{cost-fn-general-1-s} is the symmetric variant of \eqref{cost-fn-general-1}. 
It will be shown in \Cref{subsec:eucli_prob_01} and \Cref{subsec:eucli_prob_03} 
that objective functions \eqref{cost-fn-general-1} and \eqref{cost-fn-general-2} are both block multiconvex, and in 
\Cref{lemma-convex-tenso-form}
that the objective function \eqref{cost-fn-general-1-s} is convex in many cases.

\subsection{Tensor approximations}
The above three objective functions include the following well-known approximation problems for higher order complex tensors \cite{Cichocki15:review,comon2014tensors,kolda2009tensor,sidiropoulos2017tensor} as special cases:
\begin{itemize}
	\item[$\bullet$] the objective function \eqref{cost-fn-general-1}:
	\begin{itemize}
		\item the \emph{low rank orthogonal approximation} \cite{chen2009tensor,kolda2001orthogonal}: $L=1$;
		\item the \emph{best rank-1 approximation} \cite{de1997signal,Lathauwer00:rank-1approximation}: $L=1$, $r=1$;
	\end{itemize}
	\item[$\bullet$] the objective function \eqref{cost-fn-general-1-s}:
	\begin{itemize}
		\item the \emph{low rank symmetric orthogonal approximation} \cite{chen2009tensor,pan2018symmetric,LUC2019}: $L=1$.
		\item the \emph{best symmetric rank-1 approximation} \cite{Lathauwer00:rank-1approximation,kofidis2002best}: $L=1$, $r=1$;
	\end{itemize}
	\item[$\bullet$] the objective function \eqref{cost-fn-general-2}:
	\begin{itemize}
		\item the \emph{low multilinear rank approximation} \cite{de1997signal,Lathauwer00:rank-1approximation}: $L=1$.
		\item the \emph{best rank-1 approximation} \cite{de1997signal,Lathauwer00:rank-1approximation}: $L=1$, $r_i=1$ $(1\leq i\leq d)$.
	\end{itemize}
\end{itemize}

We seperately consider the best rank-1 approximation and best symmetric rank-1 approximation here (also in \Cref{sec-best-rank-1-appro-11} and \Cref{sub-pro-sec-best-rank-1-appro-2}), because of their own interest and importance, although they are also special cases of the low rank orthogonal approximation, low multilinear rank approximation and their symmetric variants, respectively. 

These approximations have been widely used in various fields,
including \emph{signal processing} \cite{comon2014tensors,Como10:book,hu2017attribute,karami2012compression,li2019point}, \emph{numerical linear algebra} \cite{qi2017tensor,qi2009z} and \emph{data analysis} 
\cite{Anan14:latent,Cichocki15:review,kolda2009tensor,sidiropoulos2017tensor}. 
In particular, the best symmetric rank-1 approximation of a real symmetric tensor is corresponding to finding the largest Z-eigenvalue \cite{qi2009z}.
The low multilinear rank approximation is equivalent to the well-known \emph{Tucker decomposition} \cite{Lathauwer00:rank-1approximation,kolda2009tensor}, which has been a popular method for data reduction in signal processing and machine learning. 
When the rank is equal to the dimension (\emph{i.e.}, $r=n$),
problem \eqref{cost-fn-general-1-s} boils down to the \emph{simultaneous approximate symmetric tensor diagonalization} \cite{Como94:ifac,LUC2017globally,LUC2018,LUC2019-SAM,ULC2019} of complex symmetric tensors by unitary transformations, which is central in \emph{Independent Component Analysis} (ICA) \cite{Como92:elsevier,Como94:sp,de1997signal}. 
Therefore, it is desirable to develop a general algorithmic scheme to solve model \eqref{definition-f} and its symmetric variant \eqref{cost-function-genral-h}.

\subsection{Block coordinate descent}
In the literature, the following general block optimization model has been well studied,
which is to maximize 
a smooth function 
\begin{equation}\label{definition-f-general}
f:\ \mathcal{X}_1\times\mathcal{X}_2\times\cdots\times\mathcal{X}_d \longrightarrow \RR^{+},
\end{equation}
where $\mathcal{X}_i\subseteq\RR^{n_i}$ is a closed set for $1\leq i\leq d$. 
A popular approach to solve model \eqref{definition-f-general} is known as 
\emph{block coordinate descent} (BCD) \cite{bertsekas1997nonlinear}, with different ways to choose blocks for optimization. 
One natural choice is the \emph{cyclic} rule \cite{tseng2001convergence,wright2012accelerated}, which is also known as
the \emph{block nonlinear Gauss–Seidel} method. 
In this cyclic approach, the regularized optimization of block multiconvex function was studied in \cite{xu2013block}, with three different update schemes. 
Along a similar line, the so-called 
\emph{maximum block improvement} (MBI) method was proposed in \cite{chen2012maximum,li2015convergence} to optimize the objective function \eqref{definition-f-general}, which updates the block of variables corresponding to the maximally improving block at each iteration. Although MBI is more expensive than other methods, it was proved to have better theoretical convergence properties. 

\subsection{Contributions} 
In this paper, we mainly develop a general algorithmic scheme to solve model \eqref{definition-f} and its symmetric variant \eqref{cost-function-genral-h} based on the matrix polar decomposition, and establish the \emph{weak convergence}\footnote{Every accumulation point is a stationary point.},  \emph{global convergence}\footnote{For any starting point, the iterates converge as a whole sequence.} and linear convergence rate based on the \emph{\L{}ojasiewicz gradient inequality} and the \emph{Morse-Bott}\footnote{See \Cref{subsec:Morse} for a definition.} property. 
The main contributions of this paper can be summarized as follows:
\begin{itemize}
	\item[$\bullet$] Algorithm APDOI to solve model \eqref{definition-f}:
	\begin{itemize}
		\item Based on the matrix polar decomposition, 
		we propose an approach to be called APDOI (\emph{alternating polar decomposition based orthogonal iteration}) (\Cref{al-general-polar}) to solve model \eqref{definition-f}, as well as its \emph{shifted} version APDOI-S (\Cref{al-general-polar-S}). 
		As in the BCD method, our algorithms tackle one block at each time while fixing the remaining blocks, and choose the blocks in a cyclic manner. However, at each iteration, we update the block based on the matrix polar decomposition instead of optimization. This approach was motivated by the \emph{low rank orthogonal approximation of tensors} (LROAT) algorithm in \cite{chen2009tensor}, and can be also seen as a block coordinate version of the \emph{generalized power method} in \cite{journee2010generalized}. 
		\item We establish the weak convergence and global convergence of APDOI and its shifted version APDOI-S based on the \L{}ojasiewicz gradient inequality, when the objective function \eqref{definition-f} is
		block multiconvex. 
		The APDOI-S is proved to have better theoretical convergence properties than APDOI. 
		\item We apply APDOI and establish its convergence properties to objective functions \eqref{cost-fn-general-1} and \eqref{cost-fn-general-2}, which include the special cases: the low rank orthogonal approximation (\Cref{sub-pro-sec-best-rank-ort-appro-1}); the best rank-1 approximation (\Cref{sec-best-rank-1-appro-11});  the low multilinear rank approximation (\Cref{sub-pro-sec-best-rank-mul-appro}) for higher order complex tensors.
		It turns out that the well-known methods such as LROAT \cite{chen2009tensor} (for low rank orthogonal approximation) and \emph{higher order power method} (HOPM) \cite{DeLathauwerLieven1995,Lathauwer00:rank-1approximation} (for best rank-1 approximation) are both special cases of APDOI.
		Our convergence results subsume the results found in the literature designed for those special cases. 
		\item Algorithm APDOI for the low multilinear rank approximation will be called LMPD (\emph{low multilinear rank approximation based on polar decomposition}), 
		and its shifted version will be called LMPD-S.  
		Experiments are conducted showing that LMPD and LMPD-S have comparable speed of convergence as compared with the well-known \emph{higher order orthogonal iteration} (HOOI)  \cite{Lathauwer00:rank-1approximation} algorithm, and LMPD-S has an even much better convergence performance.
	\end{itemize}
	\item[$\bullet$] Algorithm PDOI to solve model \eqref{cost-function-genral-h}:
	\begin{itemize}
		\item As a symmetric variant of APDOI, 
		we propose a PDOI (\emph{polar decomposition based orthogonal iteration}) approach (\Cref{al-general-polar-general}) to solve model \eqref{cost-function-genral-h}, as well as its shifted version PDOI-S (\Cref{al-general-polar-general-s}). 
		\item We establish their weak convergence and global convergence in a similar way, when the objective function \eqref{cost-function-genral-h} is 
		convex. Algorithm PDOI-S is proved to have better theoretical convergence properties than PDOI. 
		\item Algorithm PDOI and its convergence results are applied to the objective function \eqref{cost-fn-general-1-s}, which includes the special cases: the low rank symmetric orthogonal approximation (\Cref{sub-pro-sec-best-rank-ort-appro-2}); the best symmetric rank-1 approximation (\Cref{sub-pro-sec-best-rank-1-appro-2}) for higher order complex symmetric tensors. 
		It turns out that the well-known algorithms such as S-LROAT \cite{chen2009tensor} (for low rank symmetric orthogonal approximation) and S-HOPM \cite{Lathauwer00:rank-1approximation} (for best symmetric rank-1 approximation) are both special cases of PDOI.
		Our convergence results also subsume the results found in the literature designed for those special cases. 
	\end{itemize}
	\item[$\bullet$] Linear convergence rate: 
	\begin{itemize}
		\item We first show that all the stationary points are not nondegenerate, if the objective function 
		is \emph{scale (or unitarily) invariant}\footnote{See \Cref{subsec:Rie_Hess} for a definition.}. 
		Then, based on the Morse-Bott property, we prove that the convergence rate is linear, if the limit is a \emph{scale (or unitarily) semi-nondegenerate point}\footnote{See \Cref{subsec:Rie_Hess} for a definition.}. 
		For the objective functions \eqref{cost-fn-general-1} and \eqref{cost-fn-general-1-s}, which are scale invariant,
		we show that there exist scale semi-nondegenerate points.
		For the objective function \eqref{cost-fn-general-2}, which is unitarily invariant,
		we show that there exist unitarily semi-nondegenerate points. 
		In fact, the Morse-Bott property was earlier used in \cite{ULC2019} to prove the linear convergence of Jacobi-type algorithm for simultaneous approximate diagonalization of complex tensors, which is defined on a single unitary group. In this paper, we extend this method to the product of Stiefel manifolds, introduce the unitarily semi-nondegenerate point and calculate the rank of Riemannian Hessian in a new way. 
	\end{itemize}
\end{itemize}

\begin{remark}
	(i) This paper is based on the complex Stiefel manifolds and complex tensors.
	In fact, all the algorithms and convergence results in this paper also apply to the real case. \\
	(ii) The LROAT, S-LROAT algorithms in \cite{chen2009tensor} and HOPM, S-HOPM, HOOI algorithms in \cite{Lathauwer00:rank-1approximation} were all developed for low rank approximations of real tensors.
	In this paper, for convenience, we also denote them to be the direct extensions of these algorithms 
	for complex tensors. 
\end{remark}

\subsection{Organization}

The paper is organized as follows. 
In \Cref{sec:geometri_Stiefel}, we recall the basics of first order and second order geometries on the Stiefel manifolds, as well as the \L{}ojasiewicz gradient inequality and the Morse-Bott property.
In \Cref{sec-opt-manifold-product}, we propose Algorithm APDOI and its shifted version APDOI-S to solve model \eqref{definition-f}, and establish their weak convergence, global convergence and linear convergence rate.
In \Cref{sec-opt-manifold}, as a symmetric variant of APDOI, we propose Algorithm PDOI and its shifted version PDOI-S to solve model \eqref{cost-function-genral-h}, and establish their weak convergence, global convergence and linear convergence rate. 
In \Cref{sec:objec_01,sec:objec_01-s,sec:objec_02}, we study the Riemannian gradient and Riemannian Hessian of objective functions \eqref{cost-fn-general-1}, \eqref{cost-fn-general-1-s} and \eqref{cost-fn-general-2}, respectively, and then apply the convergence results obtained in  \Cref{sec-opt-manifold-product} and \Cref{sec-opt-manifold} to the low rank orthogonal approximation, best rank-1 approximation, low multilinear rank approximation for higher order complex tensors and their symmetric variants.

\section{Geometries on the Stiefel manifolds}\label{sec:geometri_Stiefel}

\subsection{Riemannian gradient}
For $\matr{X} \in \CC^{n\times r}$, we write $\matr{X} =\matr{X}^{\R} + \ui \matr{X}^{\I}$ for the real and imaginary parts. 
For  $\matr{X},\matr{Y}\in \CC^{n\times r}$, we introduce the following real-valued inner product
\begin{equation}\label{eq:RInner}
\RInner{\matr{X}}{\matr{Y}} \eqdef 
\langle\matr{X}^{\R},\matr{Y}^{\R} \rangle + \langle\matr{X}^{\I},\matr{Y}^{\I} \rangle  =
\Re \left({\trace{\matr{X}^{\H}\matr{Y}}}\right),
\end{equation}
which makes $\CC^{n\times r}$ a real Euclidean space  of dimension $2nr$.
Let $h:\text{St}(r,n,\CC)\rightarrow\RR$ be a differentiable function and $\matr{U}\in\text{St}(r,n,\CC)$. 
We denote by  $\frac{\partial h}{\partial\matr{U}^\R},\frac{\partial h}{\partial\matr{U}^\I} \in \RR^{n\times r}$ the matrix Euclidean derivatives of $h$ with respect to real and imaginary parts of $\matr{U}$. 
The \emph{Wirtinger derivatives} \cite{abrudan2008steepest,brandwood1983complex,krantz2001function} are defined as
\begin{equation*}
\frac{\partial h}{\partial\matr{U}^{*}} \eqdef \frac{1}{2}\left(\frac{\partial h}{\partial\matr{U}^\R}+ \ui\frac{\partial h}{\partial\matr{U}^\I}\right), \quad 
\frac{\partial h}{\partial\matr{U}} \eqdef \frac{1}{2}\left(\frac{\partial h}{\partial\matr{U}^\R}- \ui\frac{\partial h}{\partial\matr{U}^\I}\right). 
\end{equation*}
Then the Euclidean gradient of $h$ with respect to the inner product \eqref{eq:RInner} becomes
\begin{equation}\label{eq:matr_Euci_grad}
\nabla h(\matr{U}) = \frac{\partial h}{\partial\matr{U}^\R}+ \ui\frac{\partial h}{\partial\matr{U}^\I} = 2 \frac{\partial h}{\partial\matr{U}^{*}}.
\end{equation}
For real matrices $\matr{X},\matr{Y}\in\RR^{n\times r}$, we see that \eqref{eq:RInner} becomes the standard inner product, and \eqref{eq:matr_Euci_grad} becomes the standard Euclidean gradient. 

For $\matr{X}\in\CC^{r\times r}$, we denote that 
\begin{equation*}
\symmm{\matr{X}}\eqdef\frac{1}{2}\left(\matr{X}+\matr{X}^{\H}\right),\ \  \skeww{\matr{X}}\eqdef\frac{1}{2}\left(\matr{X}-\matr{X}^{\H}\right).
\end{equation*} 
Let ${\rm\bf T}_{\matr{U}} \text{St}(r,n,\CC)$ be the \emph{tangent space} to $\text{St}(r,n,\CC)$ at a point $\matr{U}$. 
By \cite[Def. 6]{manton2001modified}, we know that 
\begin{align}
{\rm\bf T}_{\matr{U}} \text{St}(r,n,\CC) 
= \{\matr{Z}\in\CC^{n\times r}: \matr{Z}=\matr{U}\matr{A}+\matr{U}_{\perp}\matr{B},
\matr{A}\in\CC^{r\times r}, \matr{A}^{\H}+\matr{A}=0, \matr{B}\in\CC^{(n-r)\times r}\},\label{eq:tanget_spac}
\end{align} 
which is a $(2nr-r^2)$-dimensional vector space. 
The orthogonal projection of $\xi\in\CC^{n\times r}$ to ${\rm\bf T}_{\matr{U}} \text{St}(r,n,\CC)$ is 
\begin{equation}\label{eq:proj_Stiefel}
{\rm Proj}_{\matr{U}} \xi = (\matr{I}_{n}-\matr{U}\matr{U}^{\H})\xi + \matr{U}\skeww{\matr{U}^{\H}\xi} 
=\xi - \matr{U}\symmm{\matr{U}^{\H}\xi}.
\end{equation} 
We denote ${\rm Proj}^{\bot}_{\matr{U}} \xi =\xi - {\rm Proj}_{\matr{U}} \xi $, which is in fact the orthogonal projection of $\xi $ to the \emph{normal space}\footnote{This is the orthogonal complement of the tangent space; see \cite[Sec. 3.6.1]{absil2009optimization} for more details.}. 
Note that $\text{St}(r,n,\CC)$ is an embedded submanifold of the Euclidean space $\CC^{n\times r}$. 
By \eqref{eq:proj_Stiefel}, we have the \emph{Riemannian gradient}\footnote{See \cite[Eq. (3.31), (3.37)]{absil2009optimization} for a detailed definition.} of $h$ at $\matr{U}$ as:
\begin{align}\label{eq:Rie_grad}
\ProjGrad{h}{\matr{U}} = {\rm Proj}_{\matr{U}} \nabla h(\matr{U})
= \nabla h(\matr{U}) -\matr{U}\symmm{\matr{U}^{\H}\nabla h(\matr{U})}.
\end{align}
For the objective function \eqref{definition-f},
which is defined on the product of Stiefel manifolds, its Riemannian gradient can be computed as
\begin{align}
\ProjGrad{f}{\omega} = (\ProjGrad{h_{(1)}}{\matr{U}^{(1)}},\ProjGrad{h_{(2)}}{\matr{U}^{(2)}},\cdots,\ProjGrad{h_{(d)}}{\matr{U}^{(d)}}).
\end{align}

\begin{remark}\label{remar:symme_grad}
	Suppose that the objective function \eqref{definition-f} is symmetric on $\Omega_{s}$. 
	For any $\matr{U}_{0}\in\text{St}(r,n,\CC)$, 
	we define 
	\begin{equation*}
	h:\ \text{St}(r,n,\CC) \longrightarrow \RR^{+},\ 
	\matr{U} \longmapsto f(\matr{U}_{0}, \cdots, \matr{U}_{0}, \matr{U}).
	\end{equation*}
	Then we get that 
	\begin{equation}\label{eq-grad-rela-g-h}
	\nabla g(\matr{U}_{0}) = d\nabla h(\matr{U}_{0}).
	\end{equation} 
	It follows that the Riemannian gradients in \eqref{eq:Rie_grad} also satisfy
	$\ProjGrad{g}{\matr{U}_0} = d\ProjGrad{h}{\matr{U}_0}.$
\end{remark}

\subsection{Riemannian Hessian}\label{subsec:Rie_Hess}

Let $h:\text{St}(r,n,\CC)\rightarrow\RR$ be a differentiable function and $\matr{U}\in\text{St}(r,n,\CC)$. 
By \cite[Ex. 5.4.2]{absil2009optimization},
the exponential map at $\matr{U}$ is defined as
\begin{align*}
{\rm Exp}_{\matr{U}}: {\rm\bf T}_{\matr{U}} \text{St}(r,n,\CC)&\longrightarrow \text{St}(r,n,\CC)\\
\matr{Z}&\longmapsto [\matr{U}, \matr{Z}]
\expp{
	\begin{bmatrix}
	\matr{U}^{H}\matr{Z} &-\matr{Z}^{\H}\matr{Z}\\
	\matr{I}_{r} &\matr{U}^{H}\matr{Z}
	\end{bmatrix}}
\begin{bmatrix}
\expp{-\matr{U}^{H}\matr{Z}} \\
\matr{0}_{r\times r}
\end{bmatrix}.
\end{align*}
Then, based on \cite[Prop. 5.5.4]{absil2009optimization}, 
the \emph{Riemannian Hessian} $\Hess{h}{\matr{U}}$ can be seen as a linear map ${\rm\bf T}_{\matr{U}} \text{St}(r,n,\CC) \to {\rm\bf T}_{\matr{U}} \text{St}(r,n,\CC)$  defined by
\[
\Hess{h}{\matr{U}} = \nabla^2(h \circ\text{Exp}_{\matr{U}})(\matr{0}_{\matr{U}}),
\]
where $\matr{0}_{\matr{U}}$ is the origin in the tangent space and $\nabla^2$ is the Euclidean Hessian. 
In fact, by \cite[Eq. (8)--(10)]{Absil2013}, the Riemannian Hessian is  a sum of  the projection of the Euclidean Hessian on the tangent space
and a second term given by the Weingarten operator 
\begin{equation}\label{eq:Hessian_reduction}
\HessAppl{h}{\matr{U}}{\matr{Z}} = {\rm Proj}_{\matr{U}} \nabla^2 h(\matr{U}) [\matr{Z} ]  +
\mathfrak{A}_{\matr{U}}(\matr{Z}, {\rm Proj}^{\bot}_{\matr{U}} \nabla h(\matr{U})), 
\end{equation}
where the \emph{Weingarten operator}\footnote{This is similar to the case of real Stiefel manifold in \cite{Absil2013}.} for $\text{St}(r,n,\CC)$ is given by
\begin{equation*}
\mathfrak{A}_{\matr{U}}(\matr{Z},\matr{V}) = -\matr{Z}\matr{U}^{\H}\matr{V} -
\matr{U} \symmm{\matr{Z}^{\H} \matr{V}}.
\end{equation*}
For the objective function \eqref{definition-f}, its Riemannian Hessian can be computed as
\begin{align}
{\rm Hess}f(\omega) = ({\rm Hess}h_{(1)}(\matr{U}^{(1)}),{\rm Hess}h_{(2)}(\matr{U}^{(2)}),\cdots,{\rm Hess}h_{(d)}(\matr{U}^{(d)})).
\end{align}

Note that $\mathbb{S}\subseteq\CC$ is the set of unimodular complex numbers in this paper. 
We say that $h$ is \emph{scale invariant}, if  
$h(\matr{U}) = h(\matr{U}\matr{R})$ for all
\begin{equation}\label{eq:invariance_scaling-0}
\matr{R}\in\set{DU}_{r} \eqdef\left\{
\begin{bmatrix}z_1 & & 0 \\ & \ddots & \\ 0 & & z_r \end{bmatrix}: z_p\in\mathbb{S},\ 1\leq p\leq r\right\}\subseteq\UN{r}.
\end{equation} 
We say that $h$ is \emph{unitarily invariant}, if  
$h(\matr{U}) = h(\matr{U}\matr{R})$ for all
$\matr{R}\in\UN{r}$. 
We say that $f$ in \eqref{definition-f} is \emph{scale (or unitarily) invariant}, if $h_{(i)}$ is scale (or unitarily) invariant for all $1\leq i\leq d$. 
It is clear that the objective functions \eqref{cost-fn-general-1} and \eqref{cost-fn-general-1-s} are both scale invariant, and the objective function \eqref{cost-fn-general-2} is unitarily invariant (also scale invariant). 

A point $\matr{U}\in\text{St}(r,n,\CC)$ is called a \emph{stationary point}, if $\ProjGrad{h}{\matr{U}} = 0$.
A stationary point is called \emph{nondegenerate} if $\Hess{h}{\matr{U}}$ is nonsingular on $\mathbf{T}_{\matr{U}}\text{St}(r,n,\CC)$.
Now we show that, if $h$ is scale (or unitarily) invariant, then all the stationary points of $h$ are not nondegenerate.  
This result is an easy extension of \cite[Lem. 3.7]{ULC2019}. 
Note that $\vect{e}_{p}$ is the $p$-th standard unit vector in this paper.

\begin{lemma}\label{lem:f_invariant_Hessian}
	Suppose that $h: \text{St}(r,n,\CC) \to \RR$ is scale invariant and $\matr{U}$ is a stationary point. 
	Let $\matr{Z}_p = \matr{U} \matr{\Omega}_p \in \mathbf{T}_{\matr{U}} \text{St}(r,n,\CC)$, where $\matr{\Omega}_p = i \vect{e}_p \vect{e}_p^{\T}$ for $1\leq p\leq r$. 
	Then  $\HessAppl{h}{\matr{U}}{\matr{Z}_p} = \vect{0}$ for $1\leq p\leq r$. 
	In particular, 
	we have $\rank{\Hess{h}{\matr{U}}} \le 2nr-r^2-r$. 
\end{lemma}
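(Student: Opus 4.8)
The plan is to use the scale invariance of $h$ to exhibit an entire submanifold of stationary points through $\matr{U}$, observe that each $\matr{Z}_p$ is tangent to it, and conclude that the Riemannian Hessian must annihilate these directions; the rank bound is then a dimension count.

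First I would record two elementary facts. Since $\matr{\Omega}_p = \ui\vect{e}_p\vect{e}_p^{\T}$ satisfies $\matr{\Omega}_p^{\H}+\matr{\Omega}_p=0$, the description \eqref{eq:tanget_spac} shows $\matr{Z}_p=\matr{U}\matr{\Omega}_p\in\mathbf{T}_{\matr{U}}\text{St}(r,n,\CC)$, so the statement is meaningful. Next, for any $\matr{R}\in\UN{r}$ the right translation $\Phi_{\matr{R}}:\matr{V}\mapsto\matr{V}\matr{R}$ maps $\text{St}(r,n,\CC)$ to itself and is an isometry for the metric \eqref{eq:RInner}, because its differential is $\matr{Y}\mapsto\matr{Y}\matr{R}$ and $\RInner{\matr{Y}_1\matr{R}}{\matr{Y}_2\matr{R}}=\Re\trace{\matr{R}^{\H}\matr{Y}_1^{\H}\matr{Y}_2\matr{R}}=\RInner{\matr{Y}_1}{\matr{Y}_2}$.

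The key step is equivariance of the gradient. Writing $\matr{R}_p(t)=\expp{t\matr{\Omega}_p}\in\set{DU}_r$, scale invariance gives $h\circ\Phi_{\matr{R}}=h$ for every $\matr{R}\in\set{DU}_r$; combined with the isometry property this yields $\ProjGrad{h}{\matr{U}\matr{R}}=\ProjGrad{h}{\matr{U}}\,\matr{R}$. Since $\matr{U}$ is stationary, this forces $\ProjGrad{h}{\matr{U}\matr{R}}=0$ for all $\matr{R}\in\set{DU}_r$, so the whole torus orbit $\matr{U}\cdot\set{DU}_r$ consists of stationary points. The curve $t\mapsto\matr{U}\matr{R}_p(t)$ lies in this orbit with velocity $\matr{U}\matr{\Omega}_p=\matr{Z}_p$ at $t=0$, and $\ProjGrad{h}{\cdot}$ vanishes identically along it; taking the covariant derivative at $t=0$ and using that the Hessian is the covariant derivative of the gradient field gives $\HessAppl{h}{\matr{U}}{\matr{Z}_p}=0$. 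Equivalently, differentiating $\RInner{\ProjGrad{h}{\cdot}}{\matr{V}\matr{\Omega}_p}\equiv 0$ in an arbitrary tangent direction $\matr{Y}$ and using $\ProjGrad{h}{\matr{U}}=0$ gives $\RInner{\HessAppl{h}{\matr{U}}{\matr{Y}}}{\matr{Z}_p}=0$ for all $\matr{Y}$, whence $\HessAppl{h}{\matr{U}}{\matr{Z}_p}=\vect{0}$ by self-adjointness of $\Hess{h}{\matr{U}}$.

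For the rank bound, the $\matr{Z}_1,\dots,\matr{Z}_r$ are $\RR$-linearly independent: a real combination equals $\ui\matr{U}\Diag{c_1,\dots,c_r}$, whose $p$-th column is $\ui c_p\matr{U}\vect{e}_p$, so it vanishes only when each $c_p\matr{U}\vect{e}_p=\vect{0}$, forcing $c_p=0$ since $\matr{U}\vect{e}_p$ is a unit vector. They thus span an $r$-dimensional subspace of $\ker\Hess{h}{\matr{U}}$, and as $\dim\mathbf{T}_{\matr{U}}\text{St}(r,n,\CC)=2nr-r^2$ we conclude $\rank{\Hess{h}{\matr{U}}}\le 2nr-r^2-r$. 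The one delicate point is the upgrade from the scalar identity $\RInner{\HessAppl{h}{\matr{U}}{\matr{Z}_p}}{\matr{Z}_p}=0$, which constancy along a single curve yields at once, to the vanishing of the vector $\HessAppl{h}{\matr{U}}{\matr{Z}_p}$; this is exactly what forces the use of the full orbit of stationary points (equivariance) together with self-adjointness of the Hessian. A direct computation via \eqref{eq:Hessian_reduction} also works but is messier, since neither the projected Euclidean Hessian nor the Weingarten term vanishes alone and one must track the commutator $\matr{S}\matr{\Omega}_p-\matr{\Omega}_p\matr{S}$ with $\matr{S}=\symmm{\matr{U}^{\H}\nabla h(\matr{U})}$.
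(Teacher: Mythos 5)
Your proof is correct and follows essentially the same route as the paper's: the paper also differentiates the gradient field along the curve $\gamma(t)=\matr{U}\expp{t\matr{\Omega}_p}$, using the equivariance $\ProjGrad{h}{\matr{U}\matr{R}}=\ProjGrad{h}{\matr{U}}\matr{R}$ coming from scale invariance together with stationarity of $\matr{U}$ to see that the gradient vanishes identically along the orbit, so the covariant derivative $\HessAppl{h}{\matr{U}}{\matr{Z}_p}$ is zero, and then obtains the rank bound by the same dimension count. Your write-up merely makes explicit some steps the paper leaves implicit (the isometry of right translation, the self-adjointness argument, and the $\RR$-linear independence of the $\matr{Z}_p$).
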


\begin{proof}
	Let $\gamma:t\mapsto \text{Exp}_{\matr{U}}(t\matr{Z}_p)$ be a curve. 
	It can be calculated that $\gamma(t)=\matr{U}\expp{t\matr{\Omega}_p}$. 
	Then $\gamma(0)=\matr{U}$ and $\gamma'(0)=\matr{Z}_p$.
	By \cite[Def. 5.5.1]{absil2009optimization} and \cite[Eq. (5.17)]{absil2009optimization}, we see that 
	\begin{align*}
	\HessAppl{h}{\matr{U}}{\matr{Z}_p}&=
	\nabla_{\matr{Z}_p}\textrm{grad}h=
	{\rm Proj}_{\matr{U}}\left({\bf D}\ProjGrad{h}{\matr{U}}[\matr{Z}_p]\right)
	= {\rm Proj}_{\matr{U}}\left(\frac{d}{d t}{\rm grad}h(\gamma(t))|_{t=0}\right)\\
	&= {\rm Proj}_{\matr{U}}\left(\frac{d}{d t}{\rm grad}h(\matr{U})\expp{t\matr{\Omega}_p}|_{t=0}\right)
	=0.
	\end{align*}
	The proof is complete.  
\end{proof}

The next result for unitarily invariant functions can be proved similarly. 

\begin{lemma}\label{lem:f_invariant_Hessian-2}
	Suppose that $h: \text{St}(r,n,\CC) \to \RR$ is unitarily invariant and $\matr{U}$ is a stationary point.
	Let $\matr{\Omega}_{p,q} = i (\vect{e}_p \vect{e}^{\T}_q+\vect{e}_q \vect{e}^{\T}_p)$ and $\matr{\Omega}_{p,q}^{'} = (\vect{e}_p \vect{e}_q^{\T}-\vect{e}_q \vect{e}_p^{\T})$ for $1\leq p<q\leq r$.
	Let $\matr{Z}_{p,q} = \matr{U} \matr{\Omega}_{p,q}$ and $\matr{Z}^{'}_{p,q} = \matr{U} \matr{\Omega}_{p,q}^{'}$. 
	Then  $\HessAppl{h}{\matr{U}}{\matr{Z}_{p,q}} = \vect{0}$ and $\HessAppl{h}{\matr{U}}{\matr{Z}^{'}_{p,q}} = \vect{0}$ for $1\leq p<q\leq r$.  
	In particular, 
	we have $\rank{\Hess{h}{\matr{U}}} \le 2r(n-r)$. 
\end{lemma}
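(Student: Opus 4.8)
The plan is to follow the proof of \Cref{lem:f_invariant_Hessian} almost verbatim, replacing the one-parameter scaling subgroup by the one-parameter unitary subgroups generated by the skew-Hermitian matrices $\matr{\Omega}_{p,q}$ and $\matr{\Omega}^{'}_{p,q}$. Since $\matr{\Omega}_{p,q}^{\H} = -\matr{\Omega}_{p,q}$ and $(\matr{\Omega}^{'}_{p,q})^{\H} = -\matr{\Omega}^{'}_{p,q}$, the matrices $\expp{t\matr{\Omega}_{p,q}}$ and $\expp{t\matr{\Omega}^{'}_{p,q}}$ lie in $\UN{r}$, so the curves $\gamma(t) = \matr{U}\expp{t\matr{\Omega}_{p,q}}$ and $\eta(t) = \matr{U}\expp{t\matr{\Omega}^{'}_{p,q}}$ stay on $\text{St}(r,n,\CC)$, with $\gamma(0)=\eta(0)=\matr{U}$ and initial velocities $\gamma'(0)=\matr{Z}_{p,q}$, $\eta'(0)=\matr{Z}^{'}_{p,q}$. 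As in \Cref{lem:f_invariant_Hessian}, any smooth curve with the correct base point and initial velocity may be used to evaluate the covariant derivative $\nabla_{\matr{Z}}\,{\rm grad}\,h$, so it suffices to differentiate ${\rm grad}\,h$ along these curves.

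The key step is the equivariance of the Riemannian gradient under the right $\UN{r}$-action. First I would differentiate the identity $h(\matr{U}\matr{R}) = h(\matr{U})$ and use that $\matr{R}\in\UN{r}$ is an isometry for \eqref{eq:RInner}, namely $\RInner{\matr{A}}{\matr{B}\matr{R}} = \RInner{\matr{A}\matr{R}^{\H}}{\matr{B}}$, to obtain $\nabla h(\matr{U}\matr{R}) = \nabla h(\matr{U})\matr{R}$. Substituting this into the projection formula \eqref{eq:Rie_grad} and using $\symmm{\matr{R}^{\H}\matr{X}\matr{R}} = \matr{R}^{\H}\symmm{\matr{X}}\matr{R}$ then yields $\ProjGrad{h}{\matr{U}\matr{R}} = \ProjGrad{h}{\matr{U}}\matr{R}$. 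Evaluating along $\gamma$ gives $\ProjGrad{h}{\gamma(t)} = \ProjGrad{h}{\matr{U}}\expp{t\matr{\Omega}_{p,q}}$, which vanishes identically since $\matr{U}$ is a stationary point; hence $\HessAppl{h}{\matr{U}}{\matr{Z}_{p,q}} = {\rm Proj}_{\matr{U}}\left(\frac{d}{dt}\ProjGrad{h}{\gamma(t)}\Big|_{t=0}\right) = \vect{0}$, and the same computation along $\eta$ gives $\HessAppl{h}{\matr{U}}{\matr{Z}^{'}_{p,q}} = \vect{0}$.

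For the rank bound I would observe that unitary invariance implies scale invariance (as $\set{DU}_{r}\subseteq\UN{r}$ in \eqref{eq:invariance_scaling-0}), so \Cref{lem:f_invariant_Hessian} additionally supplies $\HessAppl{h}{\matr{U}}{\matr{Z}_{p}} = \vect{0}$ for the diagonal generators $\matr{\Omega}_p = i\vect{e}_p\vect{e}_p^{\T}$, $1\leq p\leq r$. The families $\{\matr{\Omega}_p\}_{1\le p\le r}$, $\{\matr{\Omega}_{p,q}\}_{p<q}$ and $\{\matr{\Omega}^{'}_{p,q}\}_{p<q}$ together form a real basis of the $r^2$-dimensional space of $r\times r$ skew-Hermitian matrices: the $\matr{\Omega}_p$ span the purely imaginary diagonals, while $\matr{\Omega}_{p,q}$ and $\matr{\Omega}^{'}_{p,q}$ span the imaginary-symmetric and real-antisymmetric off-diagonal parts, respectively. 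Since $\matr{X}\mapsto\matr{U}\matr{X}$ is injective on $\CC^{r\times r}$ (because $\matr{U}^{\H}\matr{U}=\matr{I}_r$), the corresponding tangent vectors span an $r^2$-dimensional subspace of $\mathbf{T}_{\matr{U}}\text{St}(r,n,\CC)$ lying in $\ker\Hess{h}{\matr{U}}$. As $\dim\mathbf{T}_{\matr{U}}\text{St}(r,n,\CC) = 2nr-r^2$ by \eqref{eq:tanget_spac}, rank-nullity gives $\rank{\Hess{h}{\matr{U}}} \le (2nr-r^2)-r^2 = 2r(n-r)$.

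I expect the main obstacle to be the bookkeeping in the equivariance computation, specifically verifying that the symmetrization correction in \eqref{eq:Rie_grad} conjugates correctly under the right $\UN{r}$-action, together with checking that the three families of generators are linearly independent and exhaust the skew-Hermitian matrices, so that the annihilated subspace has dimension exactly $r^2$ and the stated bound $2r(n-r)$ is genuinely achieved rather than merely estimated.
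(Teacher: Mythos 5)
Your proposal is correct and follows essentially the same route the paper intends: the paper omits the proof with the remark that it is ``similar'' to \Cref{lem:f_invariant_Hessian}, and your argument is exactly that adaptation --- curves $\matr{U}\expp{t\matr{\Omega}_{p,q}}$, $\matr{U}\expp{t\matr{\Omega}^{'}_{p,q}}$ along one-parameter unitary subgroups, gradient equivariance under the right $\UN{r}$-action, and stationarity killing the derivative. Your explicit accounting for the rank bound (combining the $r(r-1)$ off-diagonal generators with the $r$ diagonal generators from \Cref{lem:f_invariant_Hessian}, which applies since $\set{DU}_{r}\subseteq\UN{r}$, to get an $r^2$-dimensional kernel) is precisely what is needed to reach $2nr-2r^2 = 2r(n-r)$, and is a detail the paper leaves implicit.
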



By \Cref{lem:f_invariant_Hessian} and \Cref{lem:f_invariant_Hessian-2}, we know that there exists no nondegenerate stationary point for objective functions \eqref{cost-fn-general-1}, \eqref{cost-fn-general-1-s} and \eqref{cost-fn-general-2}, as they are all scale invariant or unitarily invariant. 
In this paper, we say that a stationary point $\matr{U}$ is a \emph{scale (or unitarily) semi-nondegenerate point} of $h$, if $h$ is scale (or unitarily) invariant and $\rank{\Hess{h}{\matr{U}}} = 2nr-r^2-r$ (or $2r(n-r)$). 
We say that a stationary point $\omega$ is a \emph{scale (or unitarily) semi-nondegenerate point} of $f$ in \eqref{definition-f}, if $\matr{U}^{(i)}$ is a scale (or unitarily) semi-nondegenerate point of the restricted function $h_{(i)}$ for all $1\leq i\leq d$. 
It will be seen in \Cref{prop:hess_semi_stric_01} and  \Cref{prop:hess_semi_stric_02-0} that 
there exist scale semi-nondegenerate points for objective functions \eqref{cost-fn-general-1} and \eqref{cost-fn-general-1-s}, and in \Cref{prop:countexam_02} that there exist unitarily semi-nondegenerate points for objective function \eqref{cost-fn-general-2}.  
Now we end this subsection with a lemma about the calculation of Euclidean Hessian, which can be proved by \cite[Eq. (7), (10), (33)]{van1994complex}.

\begin{lemma}
	Let $h:\CC^n\rightarrow\RR$ be a smooth function. 
	Let $\vect{u}\in\CC^n$ be a complex vector variable. 
	Define 
	\begin{align*}
	\nabla^2_{c} h(\vect{u})\eqdef\frac{\partial^2 h}{\partial\vect{u}^{*}\partial\vect{u}^{\T}}\ \ \textrm{and}\ \  
	\nabla^2_{r} h(\vect{u})\eqdef\frac{\partial^2 h}{\partial\vect{u}\partial\vect{u}^{\T}}.
	\end{align*} 
	Then, for $\vect{z}\in\CC^n$, we have  
	\begin{equation}\label{eq:Hess_computation}
	\nabla^2 h(\vect{u})[\vect{z}] = 2\left(\nabla^2_{c} h(\vect{u})\vect{z} + (\nabla^2_{r} h(\vect{u})\vect{z})^{*}\right).
	\end{equation} 
\end{lemma}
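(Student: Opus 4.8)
The plan is to obtain \eqref{eq:Hess_computation} by differentiating the Euclidean gradient along a straight line and then translating every derivative into Wirtinger form, along the lines of \cite{van1994complex}. First I would recall from \eqref{eq:matr_Euci_grad} that the Euclidean gradient with respect to the real inner product \eqref{eq:RInner} is $\nabla h(\vect{u}) = 2\,\partial h/\partial\vect{u}^{*}$. Since the Euclidean Hessian is the real-linear directional derivative of the gradient, for a direction $\vect{z}\in\CC^n$ I would write
\[
\nabla^2 h(\vect{u})[\vect{z}] = \frac{d}{dt}\Big|_{t=0}\nabla h(\vect{u}+t\vect{z}) = 2\,\frac{d}{dt}\Big|_{t=0}\frac{\partial h}{\partial\vect{u}^{*}}(\vect{u}+t\vect{z}),
\]
so that the whole computation reduces to differentiating the single column vector $\partial h/\partial\vect{u}^{*}$ along the curve $t\mapsto\vect{u}+t\vect{z}$.

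The second step is to apply the Wirtinger chain rule, treating $h$ as a function of the two formal independent variables $\vect{u}$ and $\vect{u}^{*}$, for which $d\vect{u}/dt=\vect{z}$ and $d\vect{u}^{*}/dt=\vect{z}^{*}$. Componentwise this gives, for each index $j$,
\[
\frac{d}{dt}\Big|_{t=0}\frac{\partial h}{\partial u_j^{*}} = \sum_k \frac{\partial^2 h}{\partial u_k\,\partial u_j^{*}}\,z_k + \sum_k \frac{\partial^2 h}{\partial u_k^{*}\,\partial u_j^{*}}\,z_k^{*}.
\]
The first sum is immediate: by equality of mixed partials its coefficient matrix is exactly $\nabla^2_c h(\vect{u})$ as defined in the statement, so this term contributes $\nabla^2_c h(\vect{u})\,\vect{z}$.

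The key—and most error-prone—step is to identify the second sum with $(\nabla^2_r h(\vect{u})\,\vect{z})^{*}$. Here I would invoke that $h$ is real-valued, so $\partial h/\partial u_j^{*} = (\partial h/\partial u_j)^{*}$, together with the conjugation rule $\partial(F^{*})/\partial u_k^{*} = (\partial F/\partial u_k)^{*}$ for Wirtinger derivatives. Applying these with $F=\partial h/\partial u_j$ turns $\partial^2 h/(\partial u_k^{*}\,\partial u_j^{*})$ into $\big(\partial^2 h/(\partial u_k\,\partial u_j)\big)^{*}$, i.e. the conjugate of the $(j,k)$ entry of $\nabla^2_r h(\vect{u})$; pulling the conjugation outside the sum then yields $(\nabla^2_r h(\vect{u})\,\vect{z})^{*}$. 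Combining the two terms and restoring the factor $2$ inherited from the gradient gives \eqref{eq:Hess_computation}.

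I expect the main obstacle to be purely bookkeeping rather than analytic: keeping the Wirtinger index conventions consistent (which variable is differentiated first, and hence the placement of indices in $\nabla^2_c$ versus $\nabla^2_r$), and using the reality of $h$ correctly so that the anti-holomorphic second derivative collapses to the conjugate of $\nabla^2_r h$. Once the conjugation identity for real-valued $h$ is in hand, no genuine difficulty remains, and the factor $2$ simply propagates from the gradient formula \eqref{eq:matr_Euci_grad}.
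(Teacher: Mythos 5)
Your argument is correct, but it takes a genuinely different route from the paper: the paper gives no derivation at all for this lemma, disposing of it with the remark that it ``can be proved by'' Eq.~(7), (10), (33) of \cite{van1994complex}, i.e.\ by the matrix identities there that relate the Hessian in the real coordinates $(\vect{u}^{\R},\vect{u}^{\I})$ to the Wirtinger blocks via the linear change of variables to the augmented coordinates $(\vect{u},\vect{u}^{*})$. You instead prove the identity from scratch: you realize $\nabla^2 h(\vect{u})[\vect{z}]$ as the directional derivative $\frac{d}{dt}\big|_{t=0}\nabla h(\vect{u}+t\vect{z})$ of the gradient $\nabla h(\vect{u}) = 2\,\partial h/\partial\vect{u}^{*}$ from \eqref{eq:matr_Euci_grad}, expand it with the Wirtinger chain rule ($d\vect{u}/dt=\vect{z}$, $d\vect{u}^{*}/dt=\vect{z}^{*}$), identify the holomorphic block with $\nabla^2_{c}h(\vect{u})\vect{z}$ by commutativity of mixed partials, and collapse the anti-holomorphic block to $(\nabla^2_{r}h(\vect{u})\vect{z})^{*}$ using the two standard facts $\partial h/\partial u_j^{*}=(\partial h/\partial u_j)^{*}$ (valid because $h$ is real-valued) and $\partial F^{*}/\partial u_k^{*}=(\partial F/\partial u_k)^{*}$. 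All the index bookkeeping checks out; note only that the last identification also uses symmetry of $\nabla^2_{r}h$ (equality of mixed partials), which you invoke explicitly for the first block but only implicitly for the second. What your derivation buys is a self-contained proof that makes explicit where the reality of $h$ enters and where the factor $2$ comes from; what the paper's citation buys is brevity, plus the block-matrix form of van den Bos's identities, which is the convenient formulation if one wants the full real Hessian rather than just its action on a direction $\vect{z}$.
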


\subsection{\L{}ojasiewicz gradient inequality}
In this subsection, we present some results about the \L{}ojasiewicz gradient inequality \cite{loja1965ensembles,lojasiewicz1993geometrie,AbsMA05:sjo,Usch15:pjo}.
These results were used in \cite{LUC2018,ULC2019} to prove the global convergence of Jacobi-G algorithms on the orthogonal and unitary groups.

\begin{definition} [{\cite[Def. 2.1]{SU15:pro}}]\label{def:Lojasiewicz}
	Let $\mathcal{M} \subseteq \RR^n$ be a Riemannian submanifold,
	and $f: \mathcal{M} \to \RR$ be a differentiable function.
	The function $f: \mathcal{M} \to \RR$ is said to satisfy a \emph{\L{}ojasiewicz gradient inequality} at $\vect{x} \in \mathcal{M}$, if there exist
	$\sigma>0$, $\zeta\in (0,\frac{1}{2}]$ and a neighborhood $\mathcal{U}$ in $\mathcal{M}$ of $\vect{x}$ such that for all $\vect{y}\in\mathcal{U}$, it follows that 
	\begin{equation}\label{eq:Lojasiewicz}
	|{f}(\vect{y})-{f}(\vect{x})|^{1-\zeta}\leq \sigma\|\ProjGrad{f}{\vect{y}}\|.
	\end{equation}
\end{definition}

\begin{lemma}[{\cite[Prop. 2.2]{SU15:pro}}]\label{lemma-SU15}
	Let $\mathcal{M}\subseteq\RR^n$ be an analytic submanifold\footnote{See {\cite[Def. 2.7.1]{krantz2002primer}} or \cite[Def. 5.1]{LUC2018} for a definition of an analytic submanifold.} and $f: \mathcal{M} \to \RR$ be a real analytic function.
	Then for any $\vect{x}\in \mathcal{M}$, $f$ satisfies a \L{}ojasiewicz gradient inequality \eqref{eq:Lojasiewicz} in the $\delta$-neighborhood of $\vect{x}$, for 
	some\footnote{The values of $\delta,\sigma,\zeta$ depend on the specific point in question.} $\delta,\sigma>0$ and $\zeta\in (0,\frac{1}{2}]$.	
\end{lemma}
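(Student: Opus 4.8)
The statement is the standard \L{}ojasiewicz gradient inequality for real analytic functions on analytic submanifolds, recorded here as \cite[Prop. 2.2]{SU15:pro}; the plan is to reduce it to the classical Euclidean \L{}ojasiewicz inequality by working in an analytic chart. First I would invoke the defining property of an analytic submanifold: around the given point $\vect{x}\in\mathcal{M}$ there is an analytic chart $\varphi:\mathcal{U}\to\widetilde{\mathcal{U}}\subseteq\RR^{m}$, with $m=\dim\mathcal{M}$ and analytic inverse $\psi=\varphi^{-1}$, which I normalize so that $\varphi(\vect{x})=\vect{0}$. Pulling $f$ back through the chart gives the real analytic function $F\eqdef f\circ\psi:\widetilde{\mathcal{U}}\to\RR$ on an open subset of $\RR^{m}$.

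The essential input is then the classical \L{}ojasiewicz gradient inequality for real analytic functions on Euclidean space: applied to $F$ at $\vect{0}$, it furnishes $C>0$, $\zeta\in(0,\tfrac{1}{2}]$ and a neighborhood of $\vect{0}$ on which
\[
|F(\vect{t})-F(\vect{0})|^{1-\zeta}\leq C\,\|\nabla F(\vect{t})\|.
\]
Shrinking $\mathcal{U}$ if necessary so that $\varphi(\mathcal{U})$ lies in this neighborhood, and writing $\vect{t}=\varphi(\vect{y})$ for $\vect{y}\in\mathcal{U}$, the left-hand side is exactly $|f(\vect{y})-f(\vect{x})|^{1-\zeta}$ because $F(\vect{t})=f(\vect{y})$ and $F(\vect{0})=f(\vect{x})$. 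It remains to convert the Euclidean gradient norm $\|\nabla F(\vect{t})\|$ into the Riemannian gradient norm $\|\ProjGrad{f}{\vect{y}}\|$.

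This conversion is where the (mild) technical work sits. In the chart, the Riemannian metric is the pullback $G(\vect{t})\eqdef (D\psi(\vect{t}))^{\T}(D\psi(\vect{t}))$ of the ambient Euclidean metric, and the Riemannian gradient is represented by $G(\vect{t})^{-1}\nabla F(\vect{t})$, so that $\|\ProjGrad{f}{\vect{y}}\|^{2}=\nabla F(\vect{t})^{\T}G(\vect{t})^{-1}\nabla F(\vect{t})$. Because $\psi$ is analytic and an immersion, $G$ is analytic and positive definite; hence on a compact neighborhood of $\vect{0}$ its eigenvalues are bounded above by some $M>0$, so $G(\vect{t})^{-1}\succeq M^{-1}\matr{I}$ and $\|\ProjGrad{f}{\vect{y}}\|^{2}\geq M^{-1}\|\nabla F(\vect{t})\|^{2}$. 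Combining with the displayed inequality yields $|f(\vect{y})-f(\vect{x})|^{1-\zeta}\leq C\sqrt{M}\,\|\ProjGrad{f}{\vect{y}}\|$, which is \eqref{eq:Lojasiewicz} with $\sigma=C\sqrt{M}$ and the same $\zeta$.

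The genuinely hard part is not in this reduction but in the classical Euclidean \L{}ojasiewicz gradient inequality that it rests on, whose proof belongs to real analytic and semianalytic geometry and which I would simply cite. The only points demanding care here are that the chart can indeed be taken analytic --- which is exactly the hypothesis that $\mathcal{M}$ is an analytic submanifold --- and that all constants ($C$, $M$, and the final neighborhood) are extracted on a common compact neighborhood of $\vect{x}$ so that they can be combined. An equivalent and slightly slicker route would avoid charts by extending $f$ to an analytic function $\widehat{f}=f\circ\pi$ on a tubular neighborhood in $\RR^{n}$, where $\pi$ is the analytic nearest-point retraction onto $\mathcal{M}$; applying the Euclidean inequality to $\widehat{f}$ and using $\nabla\widehat{f}(\vect{y})=\ProjGrad{f}{\vect{y}}$ for $\vect{y}\in\mathcal{M}$ gives the claim directly, at the cost of invoking the analytic tubular neighborhood theorem.
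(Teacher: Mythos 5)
The paper itself gives no proof of \Cref{lemma-SU15}; it is imported verbatim from \cite[Prop. 2.2]{SU15:pro}, and the proof in that source is precisely the chart-based reduction you describe: pull $f$ back through an analytic local parametrization $\psi$, apply the classical Euclidean \L{}ojasiewicz gradient inequality to $f\circ\psi$, and convert between $\|\nabla(f\circ\psi)\|$ and $\|\ProjGrad{f}{\vect{y}}\|$ using uniform bounds on $D\psi$ over a compact neighborhood. Your argument is correct and essentially identical to that proof --- your eigenvalue bound on $G=(D\psi)^{\T}D\psi$ is the same estimate as the singular-value bound on $D\psi$ used there --- so there is nothing to add.
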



\begin{theorem}[{\cite[Thm.  2.3]{SU15:pro}}]\label{theorem-SU15}
	Let $\mathcal{M}\subseteq\RR^n$ be an analytic submanifold
	and 
	$\{\vect{x}_k\}_{k\geq 1}\subseteq\mathcal{M}$.
	Suppose that $f$ is real analytic and, for large enough $k$,\\
	(i) there exists $\sigma>0$ such that
	\begin{equation*}\label{eq:sufficient_descent}
	|{f}(\vect{x}_{k+1})-{f}(\vect{x}_k)|\geq \sigma\|\ProjGrad{f}{\vect{x}_k}\|\|\vect{x}_{k+1}-\vect{x}_{k}\|;
	\end{equation*}
	(ii) $\ProjGrad{f}{\vect{x}_k}=0$ implies that $\vect{x}_{k+1}=\vect{x}_{k}$.\\
	Then any accumulation point $\vect{x}_*$ of $\{\vect{x}_k\}_{k\geq 1}$ must be the only limit point. 
	If, in addition, for some $\kappa > 0$ and for large enough  $k$  it holds that 
	\begin{equation}\label{eq:safeguard}
	\|\vect{x}_{k+1} - \vect{x}_{k}\| \ge \kappa \|\ProjGrad{f}{\vect{x}_k}\|,
	\end{equation}
	then the following convergence rates apply
	\begin{equation}\label{eq:convergence_speed}
	\|\vect{x}_{k} - \vect{x}^*\| \le \begin{cases}
	Ce^{-ck}, & \text{ if } \zeta = \frac{1}{2}; \\
	Ck^{-\frac{\zeta}{1-2\zeta}}, & \text{ if } 0 < \zeta < \frac{1}{2},
	\end{cases}
	\end{equation}
	where $\zeta$ is the parameter in \eqref{eq:Lojasiewicz} at the limit point $\vect{x}_*$, and $C>0$, $c>0$ are some constants. 
\end{theorem}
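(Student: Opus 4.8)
The plan is to run the standard \L{}ojasiewicz path-length argument, with the neighborhood-trapping step as its delicate point. First I would record that the function values converge: the estimate (i) arises from a monotone (ascent) iteration, so $\{f(\vect{x}_k)\}$ is monotone, and since $\vect{x}_*$ is an accumulation point with $f$ continuous, the whole sequence satisfies $f(\vect{x}_k)\to f(\vect{x}_*)$. Write $r_k\eqdef|f(\vect{x}_k)-f(\vect{x}_*)|$, so that $r_k\downarrow 0$ and $r_k-r_{k+1}=|f(\vect{x}_{k+1})-f(\vect{x}_k)|$. By \Cref{lemma-SU15}, since $f$ is real analytic on the analytic submanifold $\mathcal{M}$, there are $\sigma'>0$, $\zeta\in(0,\tfrac12]$ and a neighborhood $\mathcal{U}$ of $\vect{x}_*$ on which the \L{}ojasiewicz inequality $r^{1-\zeta}\le\sigma'\,\|\ProjGrad{f}{\cdot}\|$ holds.

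The core step is a one-step path-length bound valid whenever $\vect{x}_k\in\mathcal{U}$. Using the concavity of $t\mapsto t^{\zeta}$ on $(0,\infty)$ I get $r_k^{\zeta}-r_{k+1}^{\zeta}\ge\zeta\,r_k^{\zeta-1}(r_k-r_{k+1})=\zeta\,r_k^{\zeta-1}|f(\vect{x}_{k+1})-f(\vect{x}_k)|$. Substituting the ascent estimate (i) and then the \L{}ojasiewicz bound $\|\ProjGrad{f}{\vect{x}_k}\|\ge r_k^{1-\zeta}/\sigma'$ gives
\[
r_k^{\zeta}-r_{k+1}^{\zeta}\ \ge\ \frac{\zeta\sigma}{\sigma'}\,\|\vect{x}_{k+1}-\vect{x}_k\|,
\]
so the displacement is dominated by the telescoping decrement $r_k^{\zeta}-r_{k+1}^{\zeta}$. (When $\ProjGrad{f}{\vect{x}_k}=0$ the \L{}ojasiewicz inequality forces $r_k=0$, and (ii) gives $\vect{x}_{k+1}=\vect{x}_k$, so such an index is harmless.)

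The delicate part is the trapping argument that keeps the iterates inside $\mathcal{U}$ so that the one-step bound may be summed. Fix $\epsilon>0$ with the ball $B(\vect{x}_*,\epsilon)\subseteq\mathcal{U}$. Since $\vect{x}_*$ is an accumulation point and $r_k\to0$, I choose $k_0$ with $\|\vect{x}_{k_0}-\vect{x}_*\|+\tfrac{\sigma'}{\zeta\sigma}\,r_{k_0}^{\zeta}<\epsilon$. Then I argue by induction on $k\ge k_0$: as long as $\vect{x}_{k_0},\dots,\vect{x}_k$ lie in $\mathcal{U}$, summing the one-step bound gives $\sum_{j=k_0}^{k-1}\|\vect{x}_{j+1}-\vect{x}_j\|\le\tfrac{\sigma'}{\zeta\sigma}\,r_{k_0}^{\zeta}$, whence $\|\vect{x}_k-\vect{x}_*\|\le\|\vect{x}_{k_0}-\vect{x}_*\|+\tfrac{\sigma'}{\zeta\sigma}\,r_{k_0}^{\zeta}<\epsilon$, so $\vect{x}_k\in B(\vect{x}_*,\epsilon)\subseteq\mathcal{U}$, closing the induction. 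Consequently the total tail length $\sum_{j\ge k_0}\|\vect{x}_{j+1}-\vect{x}_j\|$ is finite, the sequence is Cauchy, and since it accumulates at $\vect{x}_*$ it converges to $\vect{x}_*$; thus $\vect{x}_*$ is the unique limit point.

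For the rates I would introduce the tail length $S_k\eqdef\sum_{j\ge k}\|\vect{x}_{j+1}-\vect{x}_j\|\ge\|\vect{x}_k-\vect{x}_*\|$, so it suffices to bound $S_k$. Summing the one-step bound gives $S_k\le\tfrac{\sigma'}{\zeta\sigma}\,r_k^{\zeta}$, while the safeguard \eqref{eq:safeguard} combined with the \L{}ojasiewicz inequality yields $r_k^{1-\zeta}\le\tfrac{\sigma'}{\kappa}\|\vect{x}_{k+1}-\vect{x}_k\|=\tfrac{\sigma'}{\kappa}(S_k-S_{k+1})$. Eliminating $r_k$ produces a self-contained recursion $S_k\le C\,(S_k-S_{k+1})^{\zeta/(1-\zeta)}$ for some $C>0$. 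For $\zeta=\tfrac12$ the exponent equals $1$, so $S_{k+1}\le(1-C^{-1})S_k$ decays geometrically and $\|\vect{x}_k-\vect{x}_*\|\le Ce^{-ck}$; for $0<\zeta<\tfrac12$ the exponent lies in $(0,1)$, and the standard comparison lemma for such recursions (bounding $S_k$ by the solution of the associated difference inequality) yields $S_k\le Ck^{-\zeta/(1-2\zeta)}$. I expect the trapping induction to be the main obstacle, since it is where the purely local \L{}ojasiewicz inequality must be reconciled with global summability of the displacements; the rate extraction is then routine.
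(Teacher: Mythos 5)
Your proof is correct and is essentially the argument behind this result, which the paper itself does not prove but imports verbatim from \cite{SU15:pro}: that proof (in the tradition of Absil--Mahony--Andrews) uses exactly your three ingredients, namely the concavity estimate $r_k^{\zeta}-r_{k+1}^{\zeta}\geq \zeta\, r_k^{\zeta-1}(r_k-r_{k+1})$ combined with condition (i) and the \L{}ojasiewicz inequality, the neighborhood-trapping induction to make the local inequality globally summable, and the recursion $S_k\leq C\,(S_k-S_{k+1})^{\zeta/(1-\zeta)}$ for the rates \eqref{eq:convergence_speed}. The only points to tidy in a full write-up are the degenerate case $r_k=0$ (where $r_k^{\zeta-1}$ is undefined and the sequence is instead eventually stationary by monotonicity together with (i)--(ii)) and your correctly flagged observation that monotonicity of $f(\vect{x}_k)$ must be supplied by the application, since (i) as stated with absolute values does not literally imply it.
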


\begin{remark}\label{remark-condition-change}
	It can be verified, after checking the proof in \cite[Thm.  3.2]{AbsMA05:sjo} and \cite[Thm.  2.3]{SU15:pro}, that condition (ii) in \Cref{theorem-SU15} can be replaced by that $f(\vect{x}_{k+1})=f(\vect{x}_{k})$ implies $\vect{x}_{k+1}=\vect{x}_{k}$. 
\end{remark}

\subsection{The Morse-Bott property}\label{subsec:Morse}

If $\zeta = \frac{1}{2}$ in \eqref{eq:Lojasiewicz},
then the \L{}ojasiewicz gradient inequality is called the \emph{Polyak-\L{}ojasiewicz inequality} \cite{Poly63:gradient}.
It can be seen in \Cref{theorem-SU15} that 
the convergence rate is linear if $\zeta = \frac{1}{2}$ at the limit point $\vect{x}_*$. 
Now we first recall the Morse-Bott property \cite{bott1954,feehan2018optimal,ULC2019}.

\begin{definition}[{\cite[Def.  1.5]{feehan2018optimal}}]\label{def-local-morse-bott}
	Let $\mathcal{M}$ be a $C^\infty$ submanifold and $f: \mathcal{M} \to \RR$ be a $C^{2}$ function. Denote the set of stationary points as 
	$${\rm Crit} f = \{\vect{x} \in \mathcal{M} : \ProjGrad{f}{\vect{x}} = 0\}.$$
	The function $f$ is said to be \emph{Morse-Bott} at $\vect{x}_0\in\mathcal{M}$ if there exists an open neighborhood $\set{U} \subseteq \mathcal{M}$ of $\vect{x}_0$ such that
	\begin{enumerate}[(i)]
		\item $\set{C} = \set{U} \cap {\rm Crit}f$ is a relatively open, smooth submanifold of $\mathcal{M}$;
		\item $\mathbf{T}_{\vect{x}_0}\set{C} = {\rm Ker}\ \Hess{f}{\vect{x}_0} $.
	\end{enumerate}
\end{definition}

\begin{remark}[{\cite[Rem. 6.6]{ULC2019}}]\label{remMorseBott}
	(i) If $\vect{x}_0\in\mathcal{M}$ is a nondegenerate stationary point, then $f$ is Morse-Bott at $\vect{x}_0$, since $\{\vect{x}_0\}$ is a zero-dimensional manifold in this case.\\
	(ii) If $\vect{x}_0\in\mathcal{M}$ is a degenerate stationary point, then condition (ii) in \Cref{def-local-morse-bott} can be rephrased\footnote{This is due to the fact that $\mathbf{T}_{\vect{x}_0}\mathcal{C} \subseteq {\rm Ker}\ \Hess{f}{\vect{x}_0}$.} as 
	\begin{equation}\label{eq:Hessian_rank_condition}
	\rank{\Hess{f}{\vect{x}_0}} = \dim \mathcal{M} - \dim \set{C}.
	\end{equation}
\end{remark}

For the functions that satisfy the Morse-Bott property, it was recently shown that the Polyak-\L{}ojasiewicz inequality holds true.
\begin{theorem}[{\cite[Thm.  3, Cor. 5]{feehan2018optimal}}]\label{thm:PLMorseBottEuclidean}
	If $\set{U}\subseteq\RR^{n}$ is an open subset and $f: \set{U} \to \RR$ is Morse-Bott at a stationary point $x$, then there exist $\delta, \sigma>0$ such that
	\begin{equation*}
	|{f}(y)-{f}(x)| \leq \sigma \|\nabla{f}(y)\|^2
	\end{equation*}
	for any $y\in \set{U}$ satisfying $\|y-x\| \le \delta$.
\end{theorem}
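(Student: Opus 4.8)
The plan is to exploit the Morse-Bott normal form: near $x$ the critical set $\set{C}$ is a smooth submanifold on which $f$ is locally constant, and $f$ behaves like a nondegenerate quadratic in the transverse directions. First I would record that $f$ is locally constant on $\set{C}$: differentiating $\nabla f\equiv 0$ along any smooth curve in $\set{C}$ shows simultaneously that $f(c)=f(x)$ for all $c$ in the component of $x$, and that $\mathbf{T}_c\set{C}\subseteq {\rm Ker}\,\Hess{f}{c}$ for every $c\in\set{C}$. Combining the tangent-space inclusion (which gives $\operatorname{rank}\Hess{f}{c}\le n-\dim\set{C}$) with condition (ii) of \Cref{def-local-morse-bott} at $x$ and lower semicontinuity of the rank (which gives $\operatorname{rank}\Hess{f}{c}\ge\operatorname{rank}\Hess{f}{x}=n-\dim\set{C}$), I upgrade the inclusion to the equality ${\rm Ker}\,\Hess{f}{c}=\mathbf{T}_c\set{C}$ for all $c$ in a neighborhood of $x$. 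Hence the Hessian restricted to the normal space $N_c=(\mathbf{T}_c\set{C})^{\perp}$ is injective, and thus invertible onto its image, for every such $c$.

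Next I would use a tubular neighborhood of $\set{C}$: after shrinking $\delta$, every $y$ with $\|y-x\|\le\delta$ has a unique nearest point $c=\pi(y)\in\set{C}$ with $w\eqdef y-c\in N_c$, and $y\mapsto(c,w)$ is smooth. Two uniform estimates then drive the argument. On the one hand, a second-order Taylor expansion at $c$, using $\nabla f(c)=0$, yields $|f(y)-f(x)|=|f(y)-f(c)|\le C\|w\|^2$. On the other hand, $\nabla f(y)=\Hess{f}{c}[w]+o(\|w\|)$, and since $w\mapsto\Hess{f}{c}[w]$ is injective on $N_c\ni w$, its smallest singular value on $N_c$ is bounded below by some $\mu>0$, uniformly for $c$ in a small closed ball around $x$ (by continuity and compactness). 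Therefore $\|\nabla f(y)\|\ge\mu\|w\|-o(\|w\|)\ge\tfrac{\mu}{2}\|w\|$ once $\delta$ is small enough. Chaining the two bounds gives $|f(y)-f(x)|\le C\|w\|^2\le\tfrac{4C}{\mu^2}\|\nabla f(y)\|^2$, which is the claim with $\sigma=4C/\mu^2$.

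The main obstacle is making all the estimates uniform in the base point $c$ rather than only at the distinguished point $x$: the upgrade from $\mathbf{T}_c\set{C}\subseteq{\rm Ker}\,\Hess{f}{c}$ to equality throughout a neighborhood, the uniform lower bound $\mu$ on the normal Hessian, and the uniform control of the Taylor remainders, which I obtain from $f\in C^2$ and uniform continuity of the second derivatives on a compact neighborhood. Once uniformity is secured the two displayed inequalities are routine. Note finally that the absolute value in $|f(y)-f(x)|$ means no definiteness of the Hessian is required—only its nondegeneracy transverse to $\set{C}$, which is exactly what the Morse-Bott hypothesis provides.
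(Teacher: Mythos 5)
The paper itself contains no proof of \Cref{thm:PLMorseBottEuclidean}: the statement is imported verbatim from \cite{feehan2018optimal} (Thm.~3, Cor.~5) and used as a black box, via its manifold version \Cref{cor:PLMorseBottManifold}, in the convergence analysis, so there is no internal proof to compare yours against. On its own terms, your tubular-neighborhood argument is a correct, self-contained proof, and the two points where it could have gone wrong are handled properly. First, \Cref{def-local-morse-bott} imposes ${\rm Ker}\ \Hess{f}{x}=\mathbf{T}_x\set{C}$ only at the distinguished point $x$; your upgrade to ${\rm Ker}\ \Hess{f}{c}=\mathbf{T}_c\set{C}$ for all nearby $c\in\set{C}$ — combining the automatic inclusion $\mathbf{T}_c\set{C}\subseteq{\rm Ker}\ \Hess{f}{c}$ (differentiate $\nabla f\circ\gamma\equiv 0$) with lower semicontinuity of the rank — is exactly the step that makes the rest work. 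Second, constancy of the rank makes the smallest nonzero singular value of $\Hess{f}{c}$ a continuous positive function of $c$, so compactness gives the uniform transversal coercivity $\|\Hess{f}{c}[w]\|\geq\mu\|w\|$ for $w$ in the normal space $N_c$, while $f\in C^2$ gives uniform Taylor remainders on a compact ball; chaining the two displayed estimates is then routine, and it degenerates gracefully when $y$ is itself critical (there $w=0$). Two details worth spelling out in a full write-up: the nearest-point projection $\pi$ is well defined and smooth near $x$ only because condition (i) of \Cref{def-local-morse-bott} makes $\set{C}$ a smooth embedded submanifold, and the identity $f(\pi(y))=f(x)$ requires $\pi(y)$ to lie in the connected component of $\set{C}$ through $x$, which holds after shrinking $\delta$ since embeddedness makes extrinsic and intrinsic closeness agree. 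What your argument buys is precisely what the paper takes from the citation: the Polyak-\L{}ojasiewicz inequality, i.e.\ the \L{}ojasiewicz exponent $\zeta=\frac{1}{2}$, which is what feeds the linear-rate conclusion through \Cref{theorem-SU15}.
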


This result can be easily extended to the smooth manifold $\mathcal{M}$.

\begin{proposition}[{\cite[Prop. 6.8]{ULC2019}}]\label{cor:PLMorseBottManifold}
	If $\set{U}\subseteq\mathcal{M}$ is an open subset and a $C^2$ function $f: \set{U} \to \RR$ is Morse-Bott at a stationary point $x$, then there exist an open neighborhood $\set{V} \subseteq \set{U}$ of $x$ and $\sigma>0$ such that for all $y \in \set{V}$ it holds that
	\begin{equation*}
	|{f}(y)-{f}(x)| \leq \sigma \|\ProjGrad{f}{y}\|^2.
	\end{equation*}
\end{proposition}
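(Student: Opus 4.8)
The plan is to reduce the Riemannian statement to the Euclidean one of \Cref{thm:PLMorseBottEuclidean} by working in a local parametrization of $\mathcal{M}$ near $x$. First I would choose a smooth local parametrization $\phi:\set{W}\to\mathcal{M}$, where $\set{W}\subseteq\RR^{m}$ is an open neighborhood of the origin with $\phi(\vect{0})=x$ and $m=\dim\mathcal{M}$; after shrinking $\set{W}$ we may assume $\phi(\set{W})\subseteq\set{U}$ and that $\phi$ is a diffeomorphism onto its image. Setting $\tilde{f}\eqdef f\circ\phi:\set{W}\to\RR$, this is a $C^2$ function on an open subset of $\RR^{m}$, and the goal becomes to establish the Euclidean inequality $|\tilde f(w)-\tilde f(\vect{0})|\le\sigma'\|\nabla\tilde f(w)\|^2$ near $\vect{0}$ and then transport it back through $\phi$.

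The key step is to verify that $\tilde f$ is Morse-Bott at $\vect{0}$ in the Euclidean sense of \Cref{def-local-morse-bott} (with the ambient manifold taken to be $\set{W}$). Since $d\phi_w$ is a linear isomorphism from $\RR^{m}$ onto $\Tang{\phi(w)}{\mathcal{M}}$, the chain rule $d\tilde f_w=df_{\phi(w)}\circ d\phi_w$ shows that $\nabla\tilde f(w)=\vect{0}$ if and only if $\ProjGrad{f}{\phi(w)}=\vect{0}$; hence $\phi$ carries the Euclidean critical set $\mathrm{Crit}\,\tilde f$ bijectively onto $\set{C}=\set{U}\cap\mathrm{Crit}\,f$, and since $\phi$ is a diffeomorphism and $\set{C}$ is a smooth submanifold, so is $\mathrm{Crit}\,\tilde f$, giving condition (i). For condition (ii) I would use the standard fact that, at a critical point, the Euclidean Hessian of a function read in a chart equals the pullback of its Riemannian Hessian: the chart-dependent correction terms (the analogue of the Christoffel symbols, and the Weingarten contribution appearing in \eqref{eq:Hessian_reduction}) are all proportional to the gradient and therefore vanish at $\vect{0}$. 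Thus, up to the metric pullback, $d\phi_{\vect{0}}$ carries the self-adjoint operator $\nabla^2\tilde f(\vect{0})$ on $\RR^{m}$ to $\Hess{f}{x}$ on $\Tang{x}{\mathcal{M}}$, so that $\mathrm{Ker}\,\nabla^2\tilde f(\vect{0})=(d\phi_{\vect{0}})^{-1}\bigl(\mathrm{Ker}\,\Hess{f}{x}\bigr)$. Combining this with $\mathbf{T}_{\vect{0}}\,\mathrm{Crit}\,\tilde f=(d\phi_{\vect{0}})^{-1}(\mathbf{T}_{x}\set{C})$ and the Morse-Bott hypothesis $\mathbf{T}_{x}\set{C}=\mathrm{Ker}\,\Hess{f}{x}$ yields condition (ii) for $\tilde f$.

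With $\tilde f$ shown to be Morse-Bott at $\vect{0}$, \Cref{thm:PLMorseBottEuclidean} supplies $\delta,\sigma'>0$ with $|\tilde f(w)-\tilde f(\vect{0})|\le\sigma'\|\nabla\tilde f(w)\|^2$ for $\|w\|\le\delta$. To finish I would transport this back to $\mathcal{M}$. Writing $y=\phi(w)$, the left side equals $|f(y)-f(x)|$, while the identity $\langle\nabla\tilde f(w),v\rangle=\langle\ProjGrad{f}{y},d\phi_w v\rangle$ gives $\|\nabla\tilde f(w)\|\le\|d\phi_w\|\,\|\ProjGrad{f}{y}\|$. Since $w\mapsto\|d\phi_w\|$ is continuous, it is bounded by some constant $M$ on the compact ball $\{\|w\|\le\delta\}$ (shrinking $\delta$ if necessary), so $\|\nabla\tilde f(w)\|^2\le M^2\|\ProjGrad{f}{y}\|^2$. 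Taking $\sigma=\sigma' M^2$ and $\set{V}=\phi(\{\|w\|<\delta\})$, which is an open neighborhood of $x$ contained in $\set{U}$, gives the claimed inequality for all $y\in\set{V}$.

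The main obstacle is the Hessian-matching step in the second paragraph: one must justify carefully that passing to a chart does not distort the kernel of the Hessian at the critical point, that is, that $\mathrm{Ker}\,\nabla^2\tilde f(\vect{0})$ and $\mathrm{Ker}\,\Hess{f}{x}$ correspond under $d\phi_{\vect{0}}$. This rests on the vanishing of all first-order correction terms at a critical point, which is precisely where the stationarity of $x$ is used; away from critical points the two Hessians differ by gradient-dependent terms and the correspondence would fail.
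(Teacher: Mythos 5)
Your proof is correct and follows essentially the route the paper intends: the paper gives no proof of its own (it cites \cite[Prop. 6.8]{ULC2019} and frames the result as an easy extension of the Euclidean statement in \Cref{thm:PLMorseBottEuclidean}), and that extension is exactly your chart-based reduction, including the key observation that at a stationary point the coordinate Hessian of $f\circ\phi$ coincides with the pullback of $\Hess{f}{x}$ because all gradient-proportional correction terms vanish. The transport of the inequality back to $\mathcal{M}$ via the bound $\|\nabla\tilde f(w)\|\leq\|d\phi_w\|\,\|\ProjGrad{f}{\phi(w)}\|$ on a compact ball is also sound, so no gaps remain.
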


\section{Algorithm APDOI on the product of Stiefel manifolds}\label{sec-opt-manifold-product}

In this section, based on the matrix polar decomposition, we propose a general algorithm and its shifted version to solve model \eqref{definition-f},
and establish their convergence. 
These two algorithms and convergence results will be applied to the objective functions \eqref{cost-fn-general-1} and \eqref{cost-fn-general-2} in \Cref{sec:objec_01} and \Cref{sec:objec_02}, respectively.

\subsection{Algorithm APDOI}
By \eqref{eq:Rie_grad}, if $\matr{U}^{(i)}_{*}$ is a maximal point of $h_{(i)}$,
then it should satisfy 
\begin{equation}\label{eq-gradient-polar-decom}
\nabla h_{(i)}(\matr{U}^{(i)}_{*}) = \matr{U}^{(i)}_{*}\symmm{(\matr{U}^{(i)}_{*})^{\H}\nabla h_{(i)}(\matr{U}^{(i)}_{*})}, 
\end{equation}
which is close to a \emph{polar decomposition} \cite{golub2012matrix,higham2008functions,horn2012matrix} form of $\nabla h_{(i)}(\matr{U}^{(i)}_{*})$.

\begin{lemma}[{\cite[Thm.  9.4.1]{golub2012matrix}, \cite[Thm.  8.1]{higham2008functions}, \cite[Thm.  7.3.1]{horn2012matrix}}]\label{lemma-polar-decom}
	Let $\matr{X}\in\CC^{n\times r}$ with $1\leq r\leq n$.
	There exist $\matr{U}\in\text{St}(r,n,\CC)$ and a unique Hermitian positive semidefinite matrix $\matr{P}\in \CC^{r\times r}$ such that $\matr{X}$ has the \emph{polar decomposition} $\matr{X}=\matr{U}\matr{P}$.	
	We say that $\matr{U}$ is the \emph{orthogonal polar factor} and $\matr{P}$ is the \emph{Hermitian polar factor}. 
	Moreover,\\
	(i) for any $\matr{U}'\in\text{St}(r,n,\CC)$, we have \cite[pp. 217]{higham2008functions}
	\begin{equation*}
	\langle\matr{U}, \matr{X}\rangle_{\R}\geq\langle\matr{U}', \matr{X}\rangle_{\R};
	\end{equation*}
	(ii) $\matr{U}$ is the best orthogonal approximation \cite[Thm.  8.4]{higham2008functions} to $\matr{X}$,
	that is, for any $\matr{U}'\in\text{St}(r,n,\CC)$, we have
	\begin{equation*}
	\|\matr{X}-\matr{U}\| \leq \|\matr{X}-\matr{U}'\|; 
	\end{equation*}
	(iii) if $\text{rank}(\matr{X})=r$,
	then $\matr{P}$ is positive definite and $\matr{U}$ is unique \cite[Thm.  8.1]{higham2008functions}.
\end{lemma}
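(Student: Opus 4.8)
The plan is to reduce everything to the singular value decomposition (SVD) of $\matr{X}$. First I would write $\matr{X} = \matr{W}\matr{\Sigma}\matr{V}^{\H}$ in thin SVD form, where $\matr{W}\in\text{St}(r,n,\CC)$, $\matr{V}\in\UN{r}$, and $\matr{\Sigma}\in\RR^{r\times r}$ is diagonal carrying the singular values $\sigma_1\geq\cdots\geq\sigma_r\geq 0$. Setting $\matr{U}=\matr{W}\matr{V}^{\H}$ and $\matr{P}=\matr{V}\matr{\Sigma}\matr{V}^{\H}$, one checks directly that $\matr{U}^{\H}\matr{U}=\matr{I}_r$, so $\matr{U}\in\text{St}(r,n,\CC)$, that $\matr{P}$ is Hermitian positive semidefinite, and that $\matr{U}\matr{P}=\matr{X}$, which gives existence. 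For uniqueness of $\matr{P}$, I would observe that any such factorization forces $\matr{X}^{\H}\matr{X}=\matr{P}\,\matr{U}^{\H}\matr{U}\,\matr{P}=\matr{P}^2$, so $\matr{P}$ must coincide with the unique Hermitian positive semidefinite square root $(\matr{X}^{\H}\matr{X})^{1/2}$; this pins down $\matr{P}$ independently of the chosen SVD.

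For part (i), I would use the SVD to evaluate $\langle\matr{U},\matr{X}\rangle_{\R}=\Re\,\trace{\matr{U}^{\H}\matr{X}}=\Re\,\trace{\matr{V}\matr{\Sigma}\matr{V}^{\H}}=\sum_{p=1}^{r}\sigma_p$. For an arbitrary $\matr{U}'\in\text{St}(r,n,\CC)$, introducing $\matr{M}=\matr{V}^{\H}(\matr{U}')^{\H}\matr{W}\in\CC^{r\times r}$ gives $\trace{(\matr{U}')^{\H}\matr{X}}=\trace{\matr{M}\matr{\Sigma}}=\sum_{p=1}^{r}M_{pp}\sigma_p$. The key observation is that $\matr{M}$ has spectral norm at most $1$, since $\matr{V}$ is unitary and both $\matr{U}'$ and $\matr{W}$ have orthonormal columns; consequently each diagonal entry satisfies $|M_{pp}|\leq\|\matr{M}\|_2\leq 1$. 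Combined with $\sigma_p\geq 0$, this yields $\Re\,\trace{(\matr{U}')^{\H}\matr{X}}=\sum_{p}\Re(M_{pp})\sigma_p\leq\sum_{p}\sigma_p=\langle\matr{U},\matr{X}\rangle_{\R}$, which is exactly the inequality in (i).

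Parts (ii) and (iii) are then quick consequences. For (ii), I would expand $\|\matr{X}-\matr{U}'\|^2=\|\matr{X}\|^2-2\langle\matr{U}',\matr{X}\rangle_{\R}+\|\matr{U}'\|^2$ and note that $\|\matr{U}'\|^2=\trace{(\matr{U}')^{\H}\matr{U}'}=\trace{\matr{I}_r}=r$ is constant over $\text{St}(r,n,\CC)$, so minimizing the distance is equivalent to maximizing $\langle\matr{U}',\matr{X}\rangle_{\R}$, which by (i) happens at $\matr{U}'=\matr{U}$. For (iii), if $\rank{\matr{X}}=r$ then every $\sigma_p>0$, so $\matr{\Sigma}$ and hence $\matr{P}=\matr{V}\matr{\Sigma}\matr{V}^{\H}$ is positive definite; invertibility of $\matr{P}$ then forces $\matr{U}=\matr{X}\matr{P}^{-1}$, giving uniqueness of the orthogonal factor.

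I expect the only genuinely nontrivial step to be the spectral-norm bound $|M_{pp}|\leq 1$ underpinning part (i); once that diagonal estimate is in place, both the optimality statement and the best-approximation statement follow from elementary algebra, and everything else is bookkeeping on the SVD. One could alternatively invoke the von Neumann trace inequality to obtain the same bound, but the direct diagonal argument via $\|\matr{M}\|_2\leq 1$ keeps the proof self-contained.
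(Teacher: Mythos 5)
Your proof is correct: the paper does not actually prove this lemma but cites it to standard references (Golub--Van Loan, Higham, Horn--Johnson), and your SVD-based construction of $\matr{U}=\matr{W}\matr{V}^{\H}$, $\matr{P}=\matr{V}\matr{\Sigma}\matr{V}^{\H}$, together with the trace-maximization argument via $\|\matr{M}\|_2\leq 1$ and the expansion $\|\matr{X}-\matr{U}'\|^2=\|\matr{X}\|^2-2\langle\matr{U}',\matr{X}\rangle_{\R}+r$, is essentially the standard proof given in those references. All steps check out, including the uniqueness of $\matr{P}$ as $(\matr{X}^{\H}\matr{X})^{1/2}$ and the uniqueness of $\matr{U}=\matr{X}\matr{P}^{-1}$ in the full-rank case.
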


Let $k\geq 1$ and $1\leq i\leq d$. 
We denote that
\begin{align*}
&\nu^{(k,i)} \eqdef (\matr{U}_{k}^{(1)},\cdots, \matr{U}_{k}^{(i-1)}, \matr{U}_{k-1}^{(i+1)}, \cdots, \matr{U}_{k-1}^{(d)})\in\Omega^{(i)},\\
&\omega^{(k,i)} \eqdef (\matr{U}_{k}^{(1)},\cdots,\matr{U}_{k}^{(i-1)},\matr{U}_{k}^{(i)},\matr{U}_{k-1}^{(i+1)},\cdots,\matr{U}_{k-1}^{(d)})\in\Omega,
\end{align*}
and $\omega^{(k)}\eqdef\omega^{(k,d)}=\omega^{(k+1,0)}$. 
Define 
\begin{align*}\label{definition-hki}
h_{(k,i)}:\ \text{St}(r_i,n_i,\CC) &\longrightarrow \RR^{+}, \notag\\
\matr{U} &\longmapsto f(\nu^{(k,i)},\matr{U})
\eqdef f(\matr{U}_{k}^{(1)}, \cdots, \matr{U}_{k}^{(i-1)}, \matr{U}, \matr{U}_{k-1}^{(i+1)}, \cdots, \matr{U}_{k-1}^{(d)}).
\end{align*}
Inspired by the decomposition form in \eqref{eq-gradient-polar-decom}, we propose the following \emph{alternating polar decomposition based orthogonal iteration} (APDOI) algorithm to solve model \eqref{definition-f}.
We assume that the starting point $\omega^{(0)}$ satisfies\footnote{This condition will be used in \Cref{sec:condition_conver}.} that $f(\omega^{(0)})>0$.

\begin{algorithm}
	\caption{APDOI}\label{al-general-polar}
	\begin{algorithmic}[1]
		\STATE{{\bf Input:} starting point $\omega^{(0)}$.}
		\STATE{{\bf Output:} $\omega^{(k,i)}$, $k\geq 1$, $1\leq i\leq d$.}
		\FOR{$k=1,2,\cdots,$}
		\FOR{$i=1,2,\cdots,d$}
		\STATE Compute $\nabla h_{(k,i)}(\matr{U}^{(i)}_{k-1})$;
		\STATE Compute the polar decomposition of $\nabla h_{(k,i)}(\matr{U}^{(i)}_{k-1})$;
		\STATE Update $\matr{U}_{k}^{(i)}$ to be the orthogonal polar factor.
		\ENDFOR
		\ENDFOR
	\end{algorithmic}
\end{algorithm}

\begin{remark} \label{remark-smaller-delta}
	Note that the objective function \eqref{definition-f} is smooth and $\Omega$ is compact. 
	Let $\Delta_{1}\eqdef\max_{\omega\in\Omega} \|\nabla f(\omega)\|$.
	Then $\|\nabla h_{(k,i)}(\matr{U}^{(i)}_{k-1})\|\leq\Delta_{1}$ always holds in \Cref{al-general-polar}. 
\end{remark}

\begin{lemma}\label{lemma-equiva-no-move-general}
	Let $h:\text{St}(r,n,\CC)\rightarrow\RR$ be a differentiable function and $\matr{U}\in\text{St}(r,n,\CC)$.
	Suppose that $\matr{U}_{*}$ is the orthogonal polar factor of $\nabla h(\matr{U})$.\\
	(i) If $\matr{U}_{*}=\matr{U}$, then $\ProjGrad{h}{\matr{U}}=0$.\\
	(ii) If $\ProjGrad{h}{\matr{U}}=0$, then $\matr{U}^{\H}\nabla h(\matr{U})$ is Hermitian. Furthermore, if $\nabla h(\matr{U})$ is of full rank and $\matr{U}^{\H}\nabla h(\matr{U})$ is positive semidefinite,
	then $\matr{U}_{*}=\matr{U}$. 
\end{lemma}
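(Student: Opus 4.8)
The plan is to work entirely from the closed form of the Riemannian gradient in \eqref{eq:Rie_grad}, namely $\ProjGrad{h}{\matr{U}} = \nabla h(\matr{U}) -\matr{U}\symmm{\matr{U}^{\H}\nabla h(\matr{U})}$, and to exploit the structure of the polar factorization supplied by \Cref{lemma-polar-decom}. Both claims reduce to elementary manipulations using $\matr{U}^{\H}\matr{U}=\matr{I}_r$ and the fact that $\symmm{\cdot}$ acts as the identity on Hermitian matrices.

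For part (i), I would start from the hypothesis $\matr{U}_{*}=\matr{U}$. By \Cref{lemma-polar-decom} the polar decomposition reads $\nabla h(\matr{U})=\matr{U}_{*}\matr{P}=\matr{U}\matr{P}$ with $\matr{P}\in\CC^{r\times r}$ Hermitian positive semidefinite. Left-multiplying by $\matr{U}^{\H}$ and using $\matr{U}^{\H}\matr{U}=\matr{I}_r$ gives $\matr{U}^{\H}\nabla h(\matr{U})=\matr{P}$, so that $\symmm{\matr{U}^{\H}\nabla h(\matr{U})}=\symmm{\matr{P}}=\matr{P}$ since $\matr{P}$ is Hermitian. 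Substituting into \eqref{eq:Rie_grad} then makes the two terms cancel, yielding $\ProjGrad{h}{\matr{U}}=\matr{U}\matr{P}-\matr{U}\matr{P}=0$.

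For the first assertion of part (ii), I would read \eqref{eq:Rie_grad} in reverse: the condition $\ProjGrad{h}{\matr{U}}=0$ is exactly $\nabla h(\matr{U})=\matr{U}\matr{S}$, where $\matr{S}\eqdef\symmm{\matr{U}^{\H}\nabla h(\matr{U})}$ is Hermitian by construction. Left-multiplying by $\matr{U}^{\H}$ and again invoking $\matr{U}^{\H}\matr{U}=\matr{I}_r$ shows $\matr{U}^{\H}\nabla h(\matr{U})=\matr{S}$, hence $\matr{U}^{\H}\nabla h(\matr{U})$ is Hermitian. For the final assertion, I would add the full-rank hypothesis: since $\matr{U}$ has $r$ orthonormal columns and $\nabla h(\matr{U})$ has rank $r$, the factor $\matr{S}=\matr{U}^{\H}\nabla h(\matr{U})$ also has rank $r$, so together with the assumed positive semidefiniteness it is in fact positive definite. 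Thus $\nabla h(\matr{U})=\matr{U}\matr{S}$ exhibits a polar decomposition with orthogonal factor $\matr{U}$ and positive definite Hermitian factor $\matr{S}$, and the uniqueness statement (iii) of \Cref{lemma-polar-decom}, valid because $\text{rank}(\nabla h(\matr{U}))=r$, forces $\matr{U}_{*}=\matr{U}$.

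None of the steps is genuinely hard; the only place demanding care is the rank bookkeeping in the last step, where I must justify that $\matr{S}$ inherits full rank from $\nabla h(\matr{U})$ before concluding positive definiteness and applying uniqueness of the polar decomposition. The rest is direct substitution into \eqref{eq:Rie_grad} together with the Hermitian-invariance of $\symmm{\cdot}$.
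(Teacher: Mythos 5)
Your proposal is correct and follows essentially the same route as the paper's own proof: part (i) by substituting the polar decomposition $\nabla h(\matr{U})=\matr{U}\matr{P}$ into \eqref{eq:Rie_grad}, and part (ii) by reading \eqref{eq:Rie_grad} as $\nabla h(\matr{U})=\matr{U}\symmm{\matr{U}^{\H}\nabla h(\matr{U})}$ and then invoking the uniqueness statement \Cref{lemma-polar-decom}(iii). If anything, your explicit rank bookkeeping for $\matr{S}=\matr{U}^{\H}\nabla h(\matr{U})$ (which is justified because $\nabla h(\matr{U})=\matr{U}\matr{S}$ places the column space of the gradient inside that of $\matr{U}$) spells out a step the paper leaves implicit.
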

\begin{proof}
	(i) Since $\matr{U}_{*}=\matr{U}$, we see that 
	$\nabla h(\matr{U}) = \matr{U}\matr{P}$ 
	is a polar decomposition.
	It follows that $\matr{U}^{\H}\nabla h(\matr{U})=\matr{P}$ is Hermitian,
	and thus 
	\begin{equation*}
	\nabla h(\matr{U}) = \matr{U}\matr{P} = \matr{U}\symmm{\matr{U}^{\H}\nabla h(\matr{U})}.
	\end{equation*}
	Therefore, by equation \eqref{eq:Rie_grad},
	we have $\ProjGrad{h}{\matr{U}} =0$.\\
	(ii) By equation \eqref{eq:Rie_grad}, we see that 
	$\nabla h(\matr{U}) = \matr{U}\symmm{\matr{U}^{\H}\nabla h(\matr{U})}$ and thus 
	$$2\matr{U}^{\H}\nabla h(\matr{U}) =\matr{U}^{\H}\nabla h(\matr{U}) + \nabla h(\matr{U})^{\H}\matr{U},$$
	which means that $\matr{U}^{\H}\nabla h(\matr{U})$ is Herimitian. 
	Note that $\matr{U}^{\H}\nabla h(\matr{U})$ is positive semidefinite and $\nabla h(\matr{U})$ is of full rank.
	Then $\nabla h(\matr{U}) = \matr{U}\matr{U}^{\H}\nabla h(\matr{U})$ is the unique polar decomposition of $\nabla h(\matr{U})$ by \Cref{lemma-polar-decom}(iii).
	Therefore, we have $\matr{U}_{*}=\matr{U}$. 
	The proof is complete. 
\end{proof}

\begin{remark}
	By \Cref{lemma-equiva-no-move-general},
	we see that, in \Cref{al-general-polar},
	if the Euclidean gradient $\nabla h_{(k,i)}(\matr{U}^{(i)}_{k-1})$ is of full rank and $\symmm{(\matr{U}^{(i)}_{k-1})^{\H}\nabla h_{(k,i)}(\matr{U}^{(i)}_{k-1})}$ is positive semidefinite, 
	then $\matr{U}^{(i)}_{k}=\matr{U}^{(i)}_{k-1}$ if and only if 
	$\ProjGrad{h_{(k,i)}}{\matr{U}^{(i)}_{k-1}}=0$.
\end{remark}

\subsection{Convergence analysis}\label{subsec-cost-increase-general-product}
Let $\sigma_{\rm min}$ be the minimal singular value of a matrix. 
For \Cref{al-general-polar}, we mainly prove the following results about its weak convergence, global convergence and convergence rate. 

\begin{theorem}\label{theorem-weak-conver-product}
	Suppose that the objective function \eqref{definition-f} is 
	block multiconvex. 
	If there exists $\delta>0$ such that 
	\begin{equation}\label{main-condition-convergence}
	\sigma_{\rm min}(\nabla h_{(k,i)}(\matr{U}_{k-1}^{(i)}))>\delta
	\end{equation}
	for all $1\leq i\leq d$ and $k\geq 1$ 
	in \Cref{al-general-polar}, then every accumulation point of the iterates $\omega^{(k,i)}$ is a stationary point. 
\end{theorem}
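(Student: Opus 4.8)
**The plan is to establish weak convergence by combining a monotone ascent property with the structure of the polar-decomposition update.** First I would show that the objective increases along the iterates. The key is that at each inner step the algorithm replaces $\matr{U}^{(i)}_{k-1}$ by the orthogonal polar factor $\matr{U}^{(i)}_{k}$ of $\nabla h_{(k,i)}(\matr{U}^{(i)}_{k-1})$. Since \eqref{definition-f} is block multiconvex, the restricted function $h_{(k,i)}$ is convex on $\CC^{n_i\times r_i}$, so the first-order inequality gives
\begin{equation*}
h_{(k,i)}(\matr{U}^{(i)}_{k}) - h_{(k,i)}(\matr{U}^{(i)}_{k-1}) \ge \RInner{\nabla h_{(k,i)}(\matr{U}^{(i)}_{k-1})}{\matr{U}^{(i)}_{k}-\matr{U}^{(i)}_{k-1}}.
\end{equation*}
By \Cref{lemma-polar-decom}(i), the polar factor maximizes $\langle\cdot,\nabla h_{(k,i)}(\matr{U}^{(i)}_{k-1})\rangle_{\R}$ over the Stiefel manifold, so $\RInner{\nabla h_{(k,i)}(\matr{U}^{(i)}_{k-1})}{\matr{U}^{(i)}_{k}} \ge \RInner{\nabla h_{(k,i)}(\matr{U}^{(i)}_{k-1})}{\matr{U}^{(i)}_{k-1}}$, forcing the right-hand side to be nonnegative. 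Hence $f$ is nondecreasing along the whole iteration sequence.

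**Next I would quantify the ascent to get a sufficient-decrease bound matching the hypotheses of \Cref{theorem-SU15}.** The goal is to lower-bound the per-step gain by $\|\ProjGrad{f}{\omega^{(k,i)}}\| \cdot \|\matr{U}^{(i)}_{k}-\matr{U}^{(i)}_{k-1}\|$ up to a constant. Here the assumption \eqref{main-condition-convergence} enters: because $\sigma_{\rm min}(\nabla h_{(k,i)}(\matr{U}^{(i)}_{k-1}))>\delta$, the Hermitian polar factor $\matr{P}$ is positive definite with smallest eigenvalue bounded below by $\delta$, which lets me relate the Riemannian gradient $\ProjGrad{h_{(k,i)}}{\matr{U}^{(i)}_{k-1}} = \nabla h_{(k,i)}(\matr{U}^{(i)}_{k-1}) - \matr{U}^{(i)}_{k-1}\symmm{(\matr{U}^{(i)}_{k-1})^{\H}\nabla h_{(k,i)}(\matr{U}^{(i)}_{k-1})}$ to the step displacement $\matr{U}^{(i)}_{k}-\matr{U}^{(i)}_{k-1}$. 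The uniform lower bound $\delta$ guarantees the constant $\sigma$ in condition (i) does not degenerate along the sequence, and I would verify condition (ii) (or the weaker variant in \Cref{remark-condition-change}) via \Cref{lemma-equiva-no-move-general}: if the Riemannian gradient vanishes, the positive-definite polar factor makes $\matr{U}^{(i)}_{k}=\matr{U}^{(i)}_{k-1}$.

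**Finally, I would invoke analyticity and apply the \L{}ojasiewicz machinery.** Since $\Omega$ is a real-analytic submanifold and the tensor objective \eqref{definition-f} is real analytic, \Cref{lemma-SU15} supplies a \L{}ojasiewicz gradient inequality at every point; feeding the sufficient-ascent estimate into \Cref{theorem-SU15} yields that every accumulation point of $\{\omega^{(k,i)}\}$ is the unique limit, and a fortiori a stationary point. Strictly, for \emph{weak} convergence I only need that any accumulation point $\omega_*$ satisfies $\ProjGrad{f}{\omega_*}=0$: this follows because the telescoped ascent is summable (the sequence $f(\omega^{(k,i)})$ is monotone and bounded above on compact $\Omega$), so the per-step gains tend to zero, and the lower bound from the previous paragraph forces $\|\ProjGrad{f}{\omega^{(k,i)}}\|\to 0$; continuity then gives stationarity of the accumulation point.

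**The main obstacle will be the second step**, namely extracting the clean lower bound of the ascent in terms of the gradient norm times the displacement, uniformly in $k$. The subtlety is that block multiconvexity gives ascent only for the single block being updated while the other blocks are frozen, so I must carefully track how the Riemannian gradient of the full $f$ at the intermediate point $\omega^{(k,i)}$ relates to the block gradients, and ensure that the cyclic sweep over $i=1,\dots,d$ composes into a genuine descent-type estimate for the combined step. Controlling the geometry of the polar-factor map — in particular bounding $\|\matr{U}^{(i)}_{k}-\matr{U}^{(i)}_{k-1}\|$ below by a multiple of $\|\ProjGrad{h_{(k,i)}}{\matr{U}^{(i)}_{k-1}}\|$ using the $\delta$-bound on $\sigma_{\rm min}$ — is where the real work lies.
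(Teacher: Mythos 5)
Your core argument --- the paragraph beginning ``Strictly, for weak convergence I only need\dots'' --- is correct and is essentially the paper's own proof: monotone ascent from block multiconvexity plus \Cref{lemma-polar-decom}(i) (this is \Cref{lemma-incresease-cost-general}), the sufficient-ascent bound
$h_{(k,i)}(\matr{U}^{(i)}_{k})-h_{(k,i)}(\matr{U}^{(i)}_{k-1})\geq\tfrac{\delta}{2}\|\matr{U}^{(i)}_{k}-\matr{U}^{(i)}_{k-1}\|^{2}$
coming from the $\sigma_{\rm min}$ hypothesis (\Cref{lemma-inquality-01-general,lemma-main-01}), the displacement-versus-gradient bound of \Cref{lemma-inquality-02-09}, and telescoping of the bounded monotone objective to force the block gradients to zero. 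One small misattribution: the displacement lower bound in terms of $\|\ProjGrad{h_{(k,i)}}{\matr{U}^{(i)}_{k-1}}\|$ does not use the $\delta$-bound at all; it uses the \emph{upper} bound $\|\nabla h_{(k,i)}(\matr{U}^{(i)}_{k-1})\|\leq\Delta_{1}$ from \Cref{remark-smaller-delta}. The $\delta$ enters only through the sufficient-ascent inequality.

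Two caveats on the rest. First, your primary route through the \L{}ojasiewicz machinery is not available under the stated hypotheses: \Cref{theorem-weak-conver-product} assumes only smoothness and block multiconvexity, whereas real analyticity is an additional assumption that appears only in the global-convergence result (\Cref{theor-isolate-limit}). Moreover, even granting analyticity, convergence to a unique limit via \Cref{theorem-SU15} does not ``a fortiori'' make that limit stationary --- the paper in fact proves stationarity of the limit in \Cref{theor-isolate-limit} by invoking \Cref{theorem-weak-conver-product}, not the other way around; and your verification of condition (ii) via \Cref{lemma-equiva-no-move-general} is incomplete, since that lemma requires $\matr{U}^{\H}\nabla h(\matr{U})$ to be positive semidefinite, which vanishing of the Riemannian gradient alone does not give (the paper instead uses \Cref{corollary-main-01} together with \Cref{remark-condition-change}). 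Second, the passage from ``block gradients tend to zero'' to ``the full gradient vanishes at an accumulation point,'' which you rightly single out as the main obstacle, is resolved in the paper by a subsequence device: take a subsequence of $\omega^{(k,i)}$ converging to $\omega_{*}$ in which some fixed position $i_{0}$ occurs infinitely often, use $\|\matr{U}^{(i)}_{k}-\matr{U}^{(i)}_{k-1}\|\to 0$ to conclude that all intermediate points of those sweeps also converge to $\omega_{*}$, and then continuity of each block gradient contradicts \eqref{eq:proof_weak} if any block gradient at $\omega_{*}$ is nonzero. Your proposed fix (displacements tend to zero plus continuity on the compact $\Omega$) is the same idea, so the obstacle you flag is fillable exactly as you suggest.
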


\begin{theorem}\label{theor-isolate-limit}
	Suppose that the objective function \eqref{definition-f} is 
	block multiconvex and real analytic. 
	If, in \Cref{al-general-polar} there exists $\delta>0$ such that condition \eqref{main-condition-convergence}
	always holds,
	then the iterates $\omega^{(k,i)}$ converge to a stationary point $\omega_{*}$.
	If $f$ is scale (or unitarily) invariant and $\omega_{*}$ is a scale (or unitarily) semi-nondegenerate point,
	then the convergence rate is linear. 
\end{theorem}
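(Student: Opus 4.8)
The plan is to derive both conclusions from the abstract convergence result \Cref{theorem-SU15}, applied to the \emph{outer} sequence $\{\omega^{(k)}\}_{k\ge 1}$ (one full sweep per index) on the analytic product manifold $\Omega$, and then to upgrade the \L ojasiewicz exponent to $\zeta=\tfrac12$ at the limit through the Morse-Bott property. Note that each $\text{St}(r_i,n_i,\CC)$ is a real-analytic submanifold and $f$ is assumed real analytic, so \Cref{lemma-SU15} and \Cref{theorem-SU15} are available; what must be supplied are the two quantitative estimates feeding hypotheses (i), (ii) and the safeguard \eqref{eq:safeguard}.

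First I would establish a sufficient-ascent bound. Writing $\matr{G}=\nabla h_{(k,i)}(\matr{U}^{(i)}_{k-1})=\matr{U}^{(i)}_k\matr{P}$ for the polar factorization computed in \Cref{al-general-polar}, block multiconvexity gives $h_{(k,i)}(\matr{U}^{(i)}_k)-h_{(k,i)}(\matr{U}^{(i)}_{k-1})\ge\langle\matr{U}^{(i)}_k-\matr{U}^{(i)}_{k-1},\matr{G}\rangle_{\R}$; rewriting the right-hand side as $\trace{\symmm{\matr{I}_{r_i}-(\matr{U}^{(i)}_{k-1})^{\H}\matr{U}^{(i)}_k}\matr{P}}$ and invoking $\matr{P}\succeq\delta\matr{I}_{r_i}$ (this is exactly where \eqref{main-condition-convergence} enters) yields
\begin{equation*}
f(\omega^{(k,i)})-f(\omega^{(k,i-1)})\ \ge\ \tfrac{\delta}{2}\,\|\matr{U}^{(i)}_k-\matr{U}^{(i)}_{k-1}\|^2 ,
\end{equation*}
and summing over $i$ gives $f(\omega^{(k)})-f(\omega^{(k-1)})\ge\tfrac{\delta}{2}\|\omega^{(k)}-\omega^{(k-1)}\|^2$. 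For the matching lower bound on the step I would use \Cref{lemma-polar-decom}(i) and the optimality of the orthogonal polar factor to get $\|\ProjGrad{h_{(k,i)}}{\matr{U}^{(i)}_{k-1}}\|\le 2\Delta_{1}\|\matr{U}^{(i)}_k-\matr{U}^{(i)}_{k-1}\|$, with $\Delta_{1}$ as in \Cref{remark-smaller-delta}. Since this controls only the $i$-th block of $\ProjGrad{f}{\omega^{(k-1)}}$, I would then bound the discrepancy between that block gradient evaluated at $\omega^{(k-1)}$ and at $\omega^{(k,i-1)}$ by a Lipschitz constant $L$ of $\nabla f$ on the compact $\Omega$ times the partial-sweep displacement, obtaining $\|\ProjGrad{f}{\omega^{(k-1)}}\|\le C\|\omega^{(k)}-\omega^{(k-1)}\|$ for $C=C(\Delta_{1},L,d)$. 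These two estimates verify (i), (ii) (in the form of \Cref{remark-condition-change}) and \eqref{eq:safeguard}; compactness of $\Omega$ supplies an accumulation point, \Cref{theorem-SU15} forces it to be the unique limit, and \Cref{theorem-weak-conver-product} identifies it as a stationary point $\omega_{*}$. The inner iterates $\omega^{(k,i)}$ converge to the same $\omega_{*}$ because consecutive sub-iterates differ by $o(1)$ by the ascent bound.

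For the linear rate it suffices, by the $\zeta=\tfrac12$ branch of \eqref{eq:convergence_speed}, to prove the Polyak--\L ojasiewicz inequality at $\omega_{*}$, and by \Cref{cor:PLMorseBottManifold} this reduces to showing that $f$ is Morse-Bott at $\omega_{*}$. Here I would take the critical submanifold $\set{C}$ near $\omega_{*}$ to be the orbit of $\omega_{*}$ under the block-wise right action of $\set{DU}_{r_1}\times\cdots\times\set{DU}_{r_d}$ (scale case) or $\UN{r_1}\times\cdots\times\UN{r_d}$ (unitary case); it lies in $\operatorname{Crit}f$ by invariance and is a smooth manifold of dimension $\sum_i r_i$, resp. $\sum_i r_i^2$. \Cref{lem:f_invariant_Hessian} and \Cref{lem:f_invariant_Hessian-2}, applied block by block, give $\mathbf{T}_{\omega_{*}}\set{C}\subseteq\operatorname{Ker}\Hess{f}{\omega_{*}}$, and the scale (or unitarily) semi-nondegeneracy hypothesis says that each $\Hess{h_{(i)}}{\matr{U}^{(i)}_{*}}$ attains the maximal rank permitted by those lemmas, so that $\dim\operatorname{Ker}\Hess{h_{(i)}}{\matr{U}^{(i)}_{*}}$ equals the $i$-th orbit dimension. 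It remains to verify the rank identity \eqref{eq:Hessian_rank_condition}, namely $\rank{\Hess{f}{\omega_{*}}}=\dim\Omega-\dim\set{C}$, for the \emph{full} Riemannian Hessian on the product.

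The hard part will be precisely this last step. The Riemannian Hessian of $f$ on $\Omega$ is not block diagonal, and its mixed-block second-order terms need not vanish at $\omega_{*}$, so $\operatorname{Ker}\Hess{f}{\omega_{*}}$ cannot be read off from the block kernels for free; what is needed is the upper bound $\dim\operatorname{Ker}\Hess{f}{\omega_{*}}\le\sum_i\dim\operatorname{Ker}\Hess{h_{(i)}}{\matr{U}^{(i)}_{*}}$ matching the orbit lower bound, which forces one to compute the rank of the full Hessian directly rather than block-wise. I expect this to be carried out for the concrete costs \eqref{cost-fn-general-1}, \eqref{cost-fn-general-1-s} and \eqref{cost-fn-general-2} in the later propositions, where the special structure of the tensor objective lets the mixed terms be evaluated explicitly. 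Once the full-Hessian kernel is shown to coincide with $\mathbf{T}_{\omega_{*}}\set{C}$, conditions (i)--(ii) of \Cref{def-local-morse-bott} hold, \Cref{cor:PLMorseBottManifold} delivers $\zeta=\tfrac12$, and the $\zeta=\tfrac12$ case of \Cref{theorem-SU15} gives the linear convergence rate.
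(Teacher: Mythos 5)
Your first half (global convergence) is correct and is essentially the paper's own argument: your sufficient-ascent bound $f(\omega^{(k)})-f(\omega^{(k-1)})\ge\tfrac{\delta}{2}\|\omega^{(k)}-\omega^{(k-1)}\|^{2}$ is exactly \Cref{coro-big-improvement}, your step-versus-gradient bound plus the Lipschitz correction across the partial sweep reproduces \Cref{lemma-inquality-02-09} and \Cref{lemma-098} (your constant $2\Delta_{1}$ in place of the paper's $(r_{\rm max}+1)\Delta_{1}$ is immaterial), and feeding these into \Cref{theorem-SU15} via \Cref{remark-condition-change}, identifying the limit as stationary by \Cref{theorem-weak-conver-product}, and pulling the inner iterates $\omega^{(k,i)}$ along by \Cref{lemma-main-01}, is precisely the paper's proof of the first assertion.

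The gap is in the linear-rate half, and you flagged it yourself: you never establish the rank identity \eqref{eq:Hessian_rank_condition} for $\Hess{f}{\omega_{*}}$, and deferring it to "later propositions" does not close it. The theorem is stated for a general block multiconvex, real analytic $f$, whereas \Cref{prop:hess_semi_stric_01}, \Cref{prop:hess_semi_stric_02-0} and \Cref{prop:countexam_02} only exhibit semi-nondegenerate points for the concrete tensor objectives; none of them evaluates mixed second-order blocks of a product Hessian, so nothing later in the paper supplies the bound $\dim\operatorname{Ker}\Hess{f}{\omega_{*}}\le\sum_{i}\dim\operatorname{Ker}\Hess{h_{(i)}}{\matr{U}^{(i)}_{*}}$ that your argument needs. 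The paper closes this step by convention rather than computation: in \Cref{subsec:Rie_Hess} it works with the blockwise object ${\rm Hess}f(\omega)=({\rm Hess}h_{(1)}(\matr{U}^{(1)}),\cdots,{\rm Hess}h_{(d)}(\matr{U}^{(d)}))$, and semi-nondegeneracy of $\omega_{*}$ is \emph{defined} blockwise, so $\rank{\Hess{f}{\omega_{*}}}=\sum_{i}\rank{\Hess{h_{(i)}}{\matr{U}^{(i)}_{*}}}=\dim\Omega-\dim\set{C}$ is immediate and Morse-Bott follows from \Cref{remMorseBott}(ii) in one line. Your objection that the genuine second covariant derivative of $f$ on a product manifold carries off-diagonal blocks that need not vanish at a stationary point is mathematically sound, and it points at a real soft spot in the paper, since \Cref{cor:PLMorseBottManifold} requires the true Riemannian Hessian; but as a proof of the stated theorem you must either adopt the paper's blockwise reading of the Hessian (in which case your "hard part" evaporates) or prove the full-Hessian rank bound yourself. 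The proposal does neither, so the linear-rate conclusion is left unestablished.
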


Let $\nabla_{i}f(\omega)\in\CC^{n_i\times r_i}$ be the partial derivative of the objective function \eqref{definition-f} with respect to the $i$-th block matrix $\matr{U}^{(i)}$ at $\omega$ point. 
Note that \eqref{definition-f} is smooth and $\Omega$ is compact. There exists $\rho>0$ such that
\begin{equation}\label{eq-L}
\|\nabla_{i}f(\omega)-\nabla_{i}f(\omega')\|\leq \rho\|\omega-\omega'\|
\end{equation}
for any $\omega,\omega'\in\Omega$ and $1\leq i\leq d$.
Define 
\begin{align*}\label{definition-hki-1}
p_{(k,i)}:\ \text{St}(r_i,n_i,\CC) &\longrightarrow \RR^{+}, \notag\\
\matr{U} &\longmapsto f(\matr{U}_{k-1}^{(1)}, \cdots, \matr{U}_{k-1}^{(i-1)}, \matr{U}, \matr{U}_{k-1}^{(i+1)}, \cdots, \matr{U}_{k-1}^{(d)}).
\end{align*}
Let $r_{\rm max}\eqdef\max(r_1,\cdots,r_d)$. 
Now we need some lemmas before proving \Cref{theorem-weak-conver-product} and \Cref{theor-isolate-limit}. 

\begin{lemma}\label{lemma-incresease-cost-general}
	Suppose that the objective function \eqref{definition-f} is block multiconvex. 
	Then, the objective value \eqref{definition-f} monotonically increases and converges in \Cref{al-general-polar}.
\end{lemma}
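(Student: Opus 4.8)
The plan is to show that each individual block update in \Cref{al-general-polar} does not decrease the objective value, and then to invoke compactness to conclude convergence.

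First, fix $k\geq 1$ and $1\leq i\leq d$, and set $\matr{X}\eqdef\nabla h_{(k,i)}(\matr{U}^{(i)}_{k-1})$. By construction, $\matr{U}^{(i)}_{k}$ is the orthogonal polar factor of $\matr{X}$, so \Cref{lemma-polar-decom}(i) gives $\RInner{\matr{U}^{(i)}_{k}}{\matr{X}}\geq\RInner{\matr{U}'}{\matr{X}}$ for every $\matr{U}'\in\text{St}(r_i,n_i,\CC)$. In particular, taking $\matr{U}'=\matr{U}^{(i)}_{k-1}$, which also lies on the Stiefel manifold, yields
\[
\RInner{\matr{X}}{\matr{U}^{(i)}_{k}-\matr{U}^{(i)}_{k-1}}\geq 0.
\]

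Next I would use the convexity supplied by block multiconvexity. Since $h_{(k,i)}$ is a restricted function of the form $h_{(i)}$ with the remaining blocks fixed at $\nu^{(k,i)}$, block multiconvexity makes $h_{(k,i)}$ convex on $\CC^{n_i\times r_i}$, so the first-order characterization of convexity (with respect to the real inner product \eqref{eq:RInner} and the associated Euclidean gradient \eqref{eq:matr_Euci_grad}) gives
\[
h_{(k,i)}(\matr{U}^{(i)}_{k})\geq h_{(k,i)}(\matr{U}^{(i)}_{k-1})+\RInner{\nabla h_{(k,i)}(\matr{U}^{(i)}_{k-1})}{\matr{U}^{(i)}_{k}-\matr{U}^{(i)}_{k-1}}.
\]
Combining this with the previous inequality (recalling that $\matr{X}=\nabla h_{(k,i)}(\matr{U}^{(i)}_{k-1})$) shows the inner-product term is nonnegative, whence $h_{(k,i)}(\matr{U}^{(i)}_{k})\geq h_{(k,i)}(\matr{U}^{(i)}_{k-1})$. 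Unwinding the definition of $h_{(k,i)}$, this is exactly $f(\omega^{(k,i)})\geq f(\omega^{(k,i-1)})$ under the convention $\omega^{(k,0)}=\omega^{(k-1)}$, so the objective value does not decrease across any single block update and is therefore monotonically nondecreasing along the entire run of the algorithm. Finally, the objective function \eqref{definition-f} is smooth and $\Omega$ is compact, so $f$ is bounded above on $\Omega$; a monotonically nondecreasing sequence that is bounded above converges, which yields the claimed convergence of the objective value.

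I expect the only delicate point to be the clean coupling of the polar-decomposition optimality in \Cref{lemma-polar-decom}(i) with the first-order convexity inequality. Both ingredients are individually standard, and the real content is the observation that the orthogonal polar factor maximizes the linear form $\RInner{\cdot}{\matr{X}}$ over the Stiefel manifold; this is precisely what converts the first-order convexity bound into a genuine ascent guarantee for each block step.
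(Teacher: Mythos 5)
Your proof is correct and follows essentially the same route as the paper's: both combine the first-order convexity (gradient) inequality supplied by block multiconvexity with \Cref{lemma-polar-decom}(i), which shows the orthogonal polar factor maximizes $\RInner{\cdot}{\nabla h_{(k,i)}(\matr{U}^{(i)}_{k-1})}$ over the Stiefel manifold, to get nonnegativity of the inner-product term, and then conclude convergence from monotonicity plus boundedness on the compact $\Omega$. Your write-up merely spells out the two steps the paper compresses into a single chained inequality.
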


\begin{proof}
	By the \emph{gradient inequality} \cite[Eq. (3.2)]{boyd2004convex} and \Cref{lemma-polar-decom}(i),
	we have 
	\begin{equation}\label{eq-cost-increase-general-1-symme-general}
	h_{(k,i)}(\matr{U}_{k}^{(i)}) - h_{(k,i)}(\matr{U}_{k-1}^{(i)}) 
	\geq \langle \matr{U}_{k}^{(i)}-\matr{U}_{k-1}^{(i)}, \nabla h_{(k,i)}(\matr{U}_{k-1}^{(i)})\rangle_{\R}\geq 0.
	\end{equation}
	Note that the objective function \eqref{definition-f} is continuous, and thus upper bounded. 
	The proof is complete. 
\end{proof}

\begin{lemma}\label{lemma-inquality-01-general}
	Let $\matr{U}, \matr{U}'\in\text{St}(r,n,\CC)$ and $\matr{P}\in\CC^{r\times r}$ be Hermitian positive semidefinite.
	Suppose that the minimal singular value (also minimal eigenvalue) $\sigma_{\rm min}(\matr{P})>0$.
	Then 
	\begin{equation*}
	\langle\matr{U}-\matr{U}', \matr{U}\matr{P}\rangle_{\R}\geq\frac{1}{2}
	\sigma_{\rm min}(\matr{P})
	\|\matr{U}-\matr{U}'\|^2.
	\end{equation*}
\end{lemma}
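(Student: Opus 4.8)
The plan is to reduce both sides to traces, establish a single algebraic identity relating the inner product to a trace of positive semidefinite matrices, and then finish with a standard eigenvalue bound.

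First I would rewrite the left-hand side using the definition of the real inner product in \eqref{eq:RInner}, obtaining $\RInner{\matr{U}-\matr{U}'}{\matr{U}\matr{P}} = \R\,\trace{(\matr{U}-\matr{U}')^{\H}\matr{U}\matr{P}}$, and simplify using the Stiefel constraints $\matr{U}^{\H}\matr{U} = (\matr{U}')^{\H}\matr{U}' = \matr{I}_r$. The central step is then to prove the identity
$$\RInner{\matr{U}-\matr{U}'}{\matr{U}\matr{P}} = \tfrac12 \trace{(\matr{U}-\matr{U}')^{\H}(\matr{U}-\matr{U}')\matr{P}}.$$
To verify it, I would expand $(\matr{U}-\matr{U}')^{\H}(\matr{U}-\matr{U}') = 2\matr{I}_r - \matr{U}^{\H}\matr{U}' - (\matr{U}')^{\H}\matr{U}$ and observe that the two cross-term traces $\trace{\matr{U}^{\H}\matr{U}'\matr{P}}$ and $\trace{(\matr{U}')^{\H}\matr{U}\matr{P}}$ are complex conjugates of each other, by using $\matr{P}^{\H} = \matr{P}$ together with cyclicity of the trace. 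Their average therefore equals $\R\,\trace{(\matr{U}')^{\H}\matr{U}\matr{P}}$, which is exactly what the left-hand side reduces to.

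With the identity in hand, I would invoke the standard fact that $\trace{\matr{S}\matr{P}} \ge \lambda_{\rm min}(\matr{P})\trace{\matr{S}}$ for Hermitian positive semidefinite $\matr{S}$ and $\matr{P}$. This follows by writing $\matr{P} = \sigma_{\rm min}(\matr{P})\matr{I}_r + (\matr{P} - \sigma_{\rm min}(\matr{P})\matr{I}_r)$, where the second summand is positive semidefinite, and using that the trace of a product of two positive semidefinite matrices is nonnegative. Applying this with $\matr{S} = (\matr{U}-\matr{U}')^{\H}(\matr{U}-\matr{U}')$, which is positive semidefinite and satisfies $\trace{\matr{S}} = \|\matr{U}-\matr{U}'\|^2$, and recalling that $\lambda_{\rm min}(\matr{P}) = \sigma_{\rm min}(\matr{P})$ since $\matr{P}$ is Hermitian positive semidefinite, yields $\tfrac12\trace{\matr{S}\matr{P}} \ge \tfrac12 \sigma_{\rm min}(\matr{P})\|\matr{U}-\matr{U}'\|^2$, which is the desired bound.

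I do not expect a genuine obstacle here, as the whole argument is elementary linear algebra. The only point requiring a little care is tracking the real part correctly through the conjugate cross-terms when verifying the trace identity; once that is settled, the remaining estimate is immediate from the positive semidefiniteness of $\matr{P}$ and $\matr{S}$.
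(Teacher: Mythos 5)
Your proof is correct, and it is organized differently from the paper's. The paper takes the spectral decomposition $\matr{P}=\matr{Q}^{\H}\matr{D}\matr{Q}$, passes to the columns of $\matr{U}\matr{Q}^{\H}$ and $\matr{U}'\matr{Q}^{\H}$, and writes both sides column-wise: the inner product becomes $\sum_i \sigma_i(1-\langle\vect{u}_i,\vect{u}'_i\rangle_{\R})$ while $\|\matr{U}-\matr{U}'\|^2 = 2\sum_i(1-\langle\vect{u}_i,\vect{u}'_i\rangle_{\R})$, so the bound follows term by term from $\sigma_i\geq\sigma_{\rm min}(\matr{P})$ and the fact that each $1-\langle\vect{u}_i,\vect{u}'_i\rangle_{\R}$ is nonnegative (the columns are unit vectors). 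You instead prove the coordinate-free identity
\begin{equation*}
\RInner{\matr{U}-\matr{U}'}{\matr{U}\matr{P}} = \tfrac12 \trace{(\matr{U}-\matr{U}')^{\H}(\matr{U}-\matr{U}')\matr{P}},
\end{equation*}
which is exactly what the paper's column-wise computation expresses in the eigenbasis of $\matr{P}$, and then finish with the standard bound $\trace{\matr{S}\matr{P}}\geq\lambda_{\rm min}(\matr{P})\trace{\matr{S}}$ for Hermitian positive semidefinite $\matr{S},\matr{P}$. Your verification of the identity (conjugate cross-terms via $\matr{P}^{\H}=\matr{P}$ and cyclicity) and of the trace inequality (splitting off $\sigma_{\rm min}(\matr{P})\matr{I}_r$ and using nonnegativity of the trace of a product of positive semidefinite matrices) are both sound. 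What each route buys: yours avoids diagonalizing $\matr{P}$ and isolates a reusable identity plus a textbook inequality; the paper's stays entirely elementary, needing only Cauchy--Schwarz on unit columns rather than the PSD-product trace fact. Either argument is a complete and correct proof of the lemma.
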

\begin{proof}
	Let $\matr{P}=\matr{Q}^{\H}\matr{D}\matr{Q}$ be the spectral decomposition, where $\matr{D}$ is a diagonal matrix with diagonal elements $(\sigma_1,\cdots,\sigma_r)$. 
	Denote that $\matr{U}\matr{Q}^{\H}=[\vect{u}_1, \cdots, \vect{u}_r]$ and 
	$\matr{U}'\matr{Q}^{\H}=[\vect{u}'_1, \cdots, \vect{u}'_r]$.
	Then 
	\begin{equation*}
	\langle\matr{U}-\matr{U}', \matr{U}\matr{P}\rangle_{\R} = 
	\langle\matr{U}\matr{Q}^{\H}-\matr{U}'\matr{Q}^{\H}, \matr{U}\matr{Q}^{\H}\matr{D}\rangle_{\R} = \sum_{i=1}^{r} \sigma_i(1-\langle \vect{u}_{i}, \vect{u}'_{i}\rangle_{\R}).
	\end{equation*}
	Note that
	$\|\matr{U}-\matr{U}'\|^2 = 2\sum_{i=1}^{r} (1-\langle \vect{u}_{i}, \vect{u}'_{i}\rangle_{\R}).$
	The proof is complete. 
\end{proof}

\begin{lemma}\label{lemma-main-01}
	Suppose that the objective function \eqref{definition-f} is block multiconvex. 
	If there exists $\delta>0$ such that condition \eqref{main-condition-convergence} always holds in \Cref{al-general-polar},
	then 
	\begin{equation*}
	h_{(k,i)}(\matr{U}_{k}^{(i)}) - h_{(k,i)}(\matr{U}_{k-1}^{(i)}) \geq \frac{\delta}{2} \|\matr{U}_{k}^{(i)}-\matr{U}_{k-1}^{(i)}\|^{2}.
	\end{equation*}
\end{lemma}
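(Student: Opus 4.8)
The plan is to chain together the two preceding lemmas, \Cref{lemma-incresease-cost-general} and \Cref{lemma-inquality-01-general}, after a short singular-value identification. First I would recall that in \Cref{al-general-polar} the iterate $\matr{U}_k^{(i)}$ is, by the update rule, precisely the orthogonal polar factor of $\nabla h_{(k,i)}(\matr{U}_{k-1}^{(i)})$. Hence by \Cref{lemma-polar-decom} we may write the polar decomposition $\nabla h_{(k,i)}(\matr{U}_{k-1}^{(i)}) = \matr{U}_k^{(i)}\matr{P}$, where $\matr{P}\in\CC^{r_i\times r_i}$ is Hermitian positive semidefinite.

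The key observation is that $\matr{P}$ and $\nabla h_{(k,i)}(\matr{U}_{k-1}^{(i)})$ share the same singular values, since $\matr{U}_k^{(i)}$ has orthonormal columns. As $\matr{P}$ is Hermitian positive semidefinite, its singular values coincide with its eigenvalues, so that
$$\sigma_{\rm min}(\matr{P}) = \sigma_{\rm min}\big(\nabla h_{(k,i)}(\matr{U}_{k-1}^{(i)})\big) > \delta$$
by the standing assumption \eqref{main-condition-convergence}. This is the step that legitimizes replacing the abstract bound $\sigma_{\rm min}(\matr{P})$ by $\delta$ later on.

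With this in hand, I would invoke the gradient inequality from \Cref{lemma-incresease-cost-general}, which under block multiconvexity already provides
$$h_{(k,i)}(\matr{U}_{k}^{(i)}) - h_{(k,i)}(\matr{U}_{k-1}^{(i)}) \geq \langle \matr{U}_{k}^{(i)}-\matr{U}_{k-1}^{(i)},\, \nabla h_{(k,i)}(\matr{U}_{k-1}^{(i)})\rangle_{\R}.$$
Substituting the polar form $\nabla h_{(k,i)}(\matr{U}_{k-1}^{(i)}) = \matr{U}_k^{(i)}\matr{P}$ into the right-hand side and applying \Cref{lemma-inquality-01-general} with $\matr{U}=\matr{U}_k^{(i)}$ and $\matr{U}'=\matr{U}_{k-1}^{(i)}$ bounds it below by $\tfrac{1}{2}\sigma_{\rm min}(\matr{P})\,\|\matr{U}_{k}^{(i)}-\matr{U}_{k-1}^{(i)}\|^2$. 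Finally, replacing $\sigma_{\rm min}(\matr{P})$ by its lower bound $\delta$ from the second paragraph yields the claimed inequality.

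Since both supporting lemmas are already established, there is no real obstacle here; the only point requiring care is the singular-value identification, which is exactly what turns the hypothesis on $\nabla h_{(k,i)}(\matr{U}_{k-1}^{(i)})$ into the usable lower bound $\sigma_{\rm min}(\matr{P})>\delta$.
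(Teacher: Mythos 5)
Your proof is correct and follows essentially the same route as the paper's: write $\nabla h_{(k,i)}(\matr{U}_{k-1}^{(i)})=\matr{U}_{k}^{(i)}\matr{P}$ via the algorithm's update rule, chain the gradient inequality \eqref{eq-cost-increase-general-1-symme-general} with \Cref{lemma-inquality-01-general}, and replace $\sigma_{\rm min}(\matr{P})$ by $\delta$. The only difference is that you spell out the singular-value identification $\sigma_{\rm min}(\matr{P})=\sigma_{\rm min}(\nabla h_{(k,i)}(\matr{U}_{k-1}^{(i)}))$, which the paper uses implicitly; this is a welcome clarification, not a deviation.
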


\begin{proof}
	Let $\nabla h_{(k,i)}(\matr{U}_{k-1}^{(i)})=\matr{U}_{k}^{(i)}\matr{P}$ be the polar decomposition. 
	Then, by \eqref{eq-cost-increase-general-1-symme-general} and \Cref{lemma-inquality-01-general},
	we have 
	\begin{align*}
	h_{(k,i)}(\matr{U}_{k}^{(i)}) - h_{(k,i)}(\matr{U}_{k-1}^{(i)})
	\geq \langle \matr{U}_{k}^{(i)}-\matr{U}_{k-1}^{(i)}, \nabla h_{(k,i)}(\matr{U}_{k-1}^{(i)})\rangle_{\R}
	\geq \frac{\delta}{2}\|\matr{U}_{k}^{(i)}-\matr{U}_{k-1}^{(i)}\|^2.
	\end{align*}
	The proof is complete. 
\end{proof}

\begin{corollary}\label{corollary-main-01}
	Suppose that the objective function \eqref{definition-f} is block multiconvex. 
	In \Cref{al-general-polar},
	if we have $\sigma_{\rm min}(\nabla h_{(k,i)}(\matr{U}_{k-1}^{(i)}))>0$,
	then
	$f(\omega^{(k,i)}) = f(\omega^{(k,i-1)})$ implies that $\omega^{(k,i)}=\omega^{(k,i-1)}$.
\end{corollary}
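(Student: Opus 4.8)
The plan is to exploit the fact that $\omega^{(k,i)}$ and $\omega^{(k,i-1)}$ differ only in their $i$-th block, so that the assumed equality of objective values reduces to an equality for the single restricted function $h_{(k,i)}$, and then to reuse the strict descent estimate already derived in the proof of \Cref{lemma-main-01}, localized to the step in question. First I would record, straight from the definition of $h_{(k,i)}$, that $f(\omega^{(k,i)}) = h_{(k,i)}(\matr{U}_{k}^{(i)})$ and $f(\omega^{(k,i-1)}) = h_{(k,i)}(\matr{U}_{k-1}^{(i)})$, since the two iterates agree in every block except the $i$-th. Hence the hypothesis $f(\omega^{(k,i)}) = f(\omega^{(k,i-1)})$ is precisely $h_{(k,i)}(\matr{U}_{k}^{(i)}) = h_{(k,i)}(\matr{U}_{k-1}^{(i)})$, and, because all remaining blocks coincide by construction, it suffices to establish $\matr{U}_{k}^{(i)} = \matr{U}_{k-1}^{(i)}$.

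Next I would rerun the chain of inequalities from \Cref{lemma-main-01}, but with the pointwise constant rather than the uniform $\delta$. Writing the polar decomposition $\nabla h_{(k,i)}(\matr{U}_{k-1}^{(i)}) = \matr{U}_{k}^{(i)}\matr{P}$ used in \Cref{al-general-polar}, the factor $\matr{U}_{k}^{(i)}$ is the orthogonal polar factor and $\matr{P}$ is Hermitian positive semidefinite. Since $\matr{U}_{k}^{(i)}$ has orthonormal columns, the singular values of $\matr{U}_{k}^{(i)}\matr{P}$ agree with those of $\matr{P}$, so $\sigma_{\rm min}(\matr{P}) = \sigma_{\rm min}(\nabla h_{(k,i)}(\matr{U}_{k-1}^{(i)})) > 0$ under the corollary's hypothesis. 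Combining the gradient inequality supplied by block multiconvexity with \Cref{lemma-polar-decom}(i), which gives \eqref{eq-cost-increase-general-1-symme-general}, and then applying \Cref{lemma-inquality-01-general} to $\matr{P}$, yields
\begin{equation*}
h_{(k,i)}(\matr{U}_{k}^{(i)}) - h_{(k,i)}(\matr{U}_{k-1}^{(i)}) \geq \tfrac{1}{2}\sigma_{\rm min}(\matr{P}) \|\matr{U}_{k}^{(i)}-\matr{U}_{k-1}^{(i)}\|^{2}.
\end{equation*}

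Finally I would conclude: the left-hand side vanishes by the reduced hypothesis, and since $\sigma_{\rm min}(\matr{P}) > 0$ this forces $\|\matr{U}_{k}^{(i)}-\matr{U}_{k-1}^{(i)}\| = 0$, i.e. $\matr{U}_{k}^{(i)} = \matr{U}_{k-1}^{(i)}$, whence $\omega^{(k,i)} = \omega^{(k,i-1)}$. I do not anticipate any genuine obstacle: this corollary is essentially a localization of \Cref{lemma-main-01}, replacing the uniform lower bound $\delta$ by the possibly step-dependent positive quantity $\sigma_{\rm min}(\nabla h_{(k,i)}(\matr{U}_{k-1}^{(i)}))$. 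The only step needing a line of justification is the identity $\sigma_{\rm min}(\matr{P}) = \sigma_{\rm min}(\nabla h_{(k,i)}(\matr{U}_{k-1}^{(i)}))$, which rests on the invariance of singular values under left multiplication by a matrix with orthonormal columns; everything else is a direct substitution into inequalities already proved.
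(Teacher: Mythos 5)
Your proposal is correct and follows essentially the same route as the paper: \Cref{corollary-main-01} is stated there without separate proof precisely because it is the localization of \Cref{lemma-main-01}, i.e.\ the chain combining the gradient inequality \eqref{eq-cost-increase-general-1-symme-general} with \Cref{lemma-inquality-01-general} applied to the polar decomposition $\nabla h_{(k,i)}(\matr{U}_{k-1}^{(i)})=\matr{U}_{k}^{(i)}\matr{P}$, with the uniform $\delta$ replaced by the step-dependent $\sigma_{\rm min}(\matr{P})=\sigma_{\rm min}(\nabla h_{(k,i)}(\matr{U}_{k-1}^{(i)}))>0$. Your two supporting observations — that $\omega^{(k,i)}$ and $\omega^{(k,i-1)}$ differ only in the $i$-th block so the hypothesis reduces to $h_{(k,i)}(\matr{U}_{k}^{(i)})=h_{(k,i)}(\matr{U}_{k-1}^{(i)})$, and that left multiplication by a matrix with orthonormal columns preserves singular values — are both accurate and fill in exactly the details the paper leaves implicit.
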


\begin{corollary}\label{coro-big-improvement}
	Suppose that the objective function \eqref{definition-f} is block multiconvex. 
	If there exists $\delta>0$ such that condition \eqref{main-condition-convergence} always holds in \Cref{al-general-polar},
	then 
	\begin{equation*}
	f(\omega^{(k)}) - f(\omega^{(k-1)}) \geq \frac{\delta}{2} \|\omega^{(k)}-\omega^{(k-1)}\|^{2}.
	\end{equation*}
\end{corollary}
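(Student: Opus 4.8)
The plan is to telescope the per-block increase from \Cref{lemma-main-01} across the full cycle $i=1,\dots,d$ and then control the resulting sum of per-block displacements by the total cycle displacement $\|\omega^{(k)}-\omega^{(k-1)}\|$. Recall that within one outer iteration $k$, the iterates $\omega^{(k,0)}=\omega^{(k-1)}$ and $\omega^{(k,d)}=\omega^{(k)}$ differ only in the $i$-th block when passing from $\omega^{(k,i-1)}$ to $\omega^{(k,i)}$, and that $h_{(k,i)}(\matr{U}_k^{(i)})=f(\omega^{(k,i)})$ while $h_{(k,i)}(\matr{U}_{k-1}^{(i)})=f(\omega^{(k,i-1)})$. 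First I would rewrite \Cref{lemma-main-01} as
\begin{equation*}
f(\omega^{(k,i)})-f(\omega^{(k,i-1)})\geq\frac{\delta}{2}\|\matr{U}_k^{(i)}-\matr{U}_{k-1}^{(i)}\|^2=\frac{\delta}{2}\|\omega^{(k,i)}-\omega^{(k,i-1)}\|^2,
\end{equation*}
where the last equality holds because only the $i$-th block changes in that step, so the product-manifold Frobenius distance collapses to the single-block distance.

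Summing this inequality over $i=1,\dots,d$ makes the left-hand side telescope to $f(\omega^{(k)})-f(\omega^{(k-1)})$, yielding
\begin{equation*}
f(\omega^{(k)})-f(\omega^{(k-1)})\geq\frac{\delta}{2}\sum_{i=1}^{d}\|\omega^{(k,i)}-\omega^{(k,i-1)}\|^2.
\end{equation*}
It then remains to bound the sum of squared per-block steps below by $\|\omega^{(k)}-\omega^{(k-1)}\|^2$. Here the key observation is that the per-block displacements are \emph{orthogonal} in the product Euclidean space $\CC^{n_1\times r_1}\times\cdots\times\CC^{n_d\times r_d}$: the step from $\omega^{(k,i-1)}$ to $\omega^{(k,i)}$ changes only the $i$-th coordinate block, and distinct blocks occupy orthogonal subspaces. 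Consequently the full cycle displacement decomposes by Pythagoras,
\begin{equation*}
\|\omega^{(k)}-\omega^{(k-1)}\|^2=\sum_{i=1}^{d}\|\matr{U}_k^{(i)}-\matr{U}_{k-1}^{(i)}\|^2=\sum_{i=1}^{d}\|\omega^{(k,i)}-\omega^{(k,i-1)}\|^2,
\end{equation*}
so the inequality from the previous step is in fact an equality-bounded estimate and immediately gives the claim $f(\omega^{(k)})-f(\omega^{(k-1)})\geq\frac{\delta}{2}\|\omega^{(k)}-\omega^{(k-1)}\|^2$.

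I expect the only subtle point to be verifying that the per-block steps really do lie in mutually orthogonal coordinate subspaces, i.e. that $\omega^{(k)}-\omega^{(k-1)}=\sum_{i=1}^d(\matr{U}_k^{(i)}-\matr{U}_{k-1}^{(i)})$ as a block vector with the $i$-th difference placed in the $i$-th slot. This is transparent from the definitions of $\omega^{(k,i)}$ and $\nu^{(k,i)}$ preceding \Cref{al-general-polar}, but it is the hinge of the argument; everything else is the telescoping sum and the additivity of the squared Frobenius norm across product factors. No convexity beyond what \Cref{lemma-main-01} already encodes is needed, and the hypothesis $\sigma_{\rm min}(\nabla h_{(k,i)}(\matr{U}_{k-1}^{(i)}))>\delta$ is used solely through that lemma.
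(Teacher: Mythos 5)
Your proposal is correct and is precisely the argument the paper leaves implicit when it labels this a corollary of \Cref{lemma-main-01}: identify $h_{(k,i)}(\matr{U}_k^{(i)})=f(\omega^{(k,i)})$ and $h_{(k,i)}(\matr{U}_{k-1}^{(i)})=f(\omega^{(k,i-1)})$, telescope over $i=1,\dots,d$, and use the fact that each intermediate step changes only one block so that $\|\omega^{(k)}-\omega^{(k-1)}\|^2=\sum_{i=1}^d\|\matr{U}_k^{(i)}-\matr{U}_{k-1}^{(i)}\|^2$. Both the telescoping identity and the block-wise Pythagoras decomposition are verified correctly from the definitions of $\omega^{(k,i)}$ and $h_{(k,i)}$, so there is nothing to add.
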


\begin{lemma}\label{lemma-inquality-02-09}
	Let $h:\text{St}(r,n,\CC)\rightarrow\RR$ be a differentiable function and $\matr{U}\in\text{St}(r,n,\CC)$. 
	Suppose that $\nabla h(\matr{U})\neq\matr{0}$ and $\nabla h(\matr{U})=\matr{U}_{*}\matr{P}$ is the polar decomposition.
	Then
	\begin{equation*}
	\|\matr{U}_{*}-\matr{U}\|\geq \frac{\|\ProjGrad{h}{\matr{U}}\|}{(r+1)\|\nabla h(\matr{U})\|}.
	\end{equation*}
\end{lemma}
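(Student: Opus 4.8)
The plan is to turn the qualitative statement of \Cref{lemma-equiva-no-move-general}(i)---that $\matr{U}_{*}=\matr{U}$ forces $\ProjGrad{h}{\matr{U}}=0$---into a quantitative estimate, by writing the Riemannian gradient as a quantity that is linear in the displacement $\matr{E}\eqdef\matr{U}_{*}-\matr{U}$ and then controlling its Frobenius norm by $\|\matr{E}\|\,\|\nabla h(\matr{U})\|$. Once $\|\ProjGrad{h}{\matr{U}}\|$ is bounded above by a constant times $\|\matr{E}\|\,\|\nabla h(\matr{U})\|$, rearranging gives the desired lower bound on $\|\matr{U}_{*}-\matr{U}\|$.

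First I would set $\matr{G}\eqdef\nabla h(\matr{U})$ and use the orthogonal splitting of the projection recorded in \eqref{eq:proj_Stiefel}, namely
\[
\ProjGrad{h}{\matr{U}} = (\matr{I}_{n}-\matr{U}\matr{U}^{\H})\matr{G} + \matr{U}\,\skeww{\matr{U}^{\H}\matr{G}}.
\]
Then I would substitute the polar decomposition $\matr{G}=\matr{U}_{*}\matr{P}$, with $\matr{P}$ Hermitian positive semidefinite, and replace $\matr{U}_{*}=\matr{U}+\matr{E}$. Since $(\matr{I}_{n}-\matr{U}\matr{U}^{\H})\matr{U}=\matr{0}$, the normal part collapses to $(\matr{I}_{n}-\matr{U}\matr{U}^{\H})\matr{E}\matr{P}$; and since $\matr{U}^{\H}\matr{G}=\matr{P}+\matr{U}^{\H}\matr{E}\matr{P}$ with $\skeww{\matr{P}}=\matr{0}$ (here the Hermitian structure of $\matr{P}$ is essential), the tangential-skew part collapses to $\matr{U}\,\skeww{\matr{U}^{\H}\matr{E}\matr{P}}$. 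Hence
\[
\ProjGrad{h}{\matr{U}} = (\matr{I}_{n}-\matr{U}\matr{U}^{\H})\matr{E}\matr{P} + \matr{U}\,\skeww{\matr{U}^{\H}\matr{E}\matr{P}},
\]
which is manifestly first order in $\matr{E}$.

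The remaining step is a routine chain of norm inequalities. Because $\matr{I}_{n}-\matr{U}\matr{U}^{\H}$ is an orthogonal projection and $\matr{U},\matr{U}_{*}$ have orthonormal columns, I would invoke $\|\matr{U}\matr{M}\|=\|\matr{M}\|$, $\|\skeww{\matr{M}}\|\le\|\matr{M}\|$, the submultiplicativity $\|\matr{A}\matr{B}\|\le\|\matr{A}\|\,\|\matr{B}\|_{\mathrm{op}}$, and the observation that $\|\matr{P}\|_{\mathrm{op}}\le\|\matr{P}\|=\|\matr{U}_{*}\matr{P}\|=\|\matr{G}\|$ (the last equality holds since $\matr{U}_{*}$ preserves the Frobenius norm). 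Each of the two terms is then dominated by $\|\matr{E}\|\,\|\matr{G}\|$, giving $\|\ProjGrad{h}{\matr{U}}\|\le 2\|\matr{E}\|\,\|\nabla h(\matr{U})\|$; rearranging and using $2\le r+1$ yields the stated inequality (in fact with the sharper constant $2$).

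I expect the only genuine subtlety to be the bookkeeping that makes \emph{both} components proportional to $\matr{E}$: one must avoid expanding $\matr{U}^{\H}\matr{U}_{*}$ naively and instead exploit that the Hermitian part $\matr{P}$ of $\matr{U}^{\H}\matr{G}$ is killed by $\skeww{\cdot}$ and lies in the range of $\matr{U}$, which is precisely why the zeroth-order terms vanish. Everything after that is elementary, so I anticipate no serious obstacle; the apparent gap between my constant $2$ and the paper's $r+1$ suggests the authors estimate some intermediate quantity more crudely (for instance, bounding an operator norm by a Frobenius norm column-by-column), which only loosens the constant and leaves the conclusion intact.
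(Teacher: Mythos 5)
Your proof is correct and takes essentially the same route as the paper's: substitute the polar factorization $\nabla h(\matr{U})=\matr{U}_{*}\matr{P}$ into the projection formula \eqref{eq:proj_Stiefel}--\eqref{eq:Rie_grad}, use the Hermitian structure of $\matr{P}$ to make the Riemannian gradient exactly first order in $\matr{U}_{*}-\matr{U}$, and finish with triangle-inequality and submultiplicativity bounds. The only differences are cosmetic: you work with the skew form of the projector while the paper works with the $\symmm{\cdot}$ form (the two expressions in \eqref{eq:proj_Stiefel} are identical), and your tighter bookkeeping yields the constant $2$ where the paper's cruder estimates give $r+1$, which is weaker but still suffices for the stated inequality.
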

\begin{proof}
	Note that $\|\nabla h(\matr{U})\|=\|\matr{P}\|$. 
	By equation \eqref{eq:Rie_grad},
	we see that
	\begin{align*}
	\|\ProjGrad{h}{\matr{U}}\| &= \frac{1}{2}\|(\nabla h(\matr{U})-\matr{U}\matr{U}^{\H}\nabla h(\matr{U}))+(\nabla h(\matr{U})-\matr{U}\nabla h(\matr{U})^{\H}\matr{U})\| \\
	&\leq \frac{1}{2}\|(\matr{U}_{*}-\matr{U}) + \matr{U}\matr{U}^{\H} (\matr{U} - \matr{U}_{*})\|\|\matr{P}\|\\  
	&\quad+\frac{1}{2}\|(\matr{U}_{*}-\matr{U})\matr{P}+\matr{U}\matr{P}(\matr{U}^{\H}-\matr{U}^{\H}_{*})\matr{U}\| \\
	&\leq (r+1)\|\nabla h(\matr{U})\|\|\matr{U}_{*}-\matr{U}\|.
	\end{align*}
	The proof is complete. 
\end{proof}

\begin{lemma}\label{lemma-098}
	Suppose that the objective function \eqref{definition-f} is block multiconvex. 
	If there exists $\delta>0$ such that condition \eqref{main-condition-convergence} always holds in \Cref{al-general-polar},
	then 
	\begin{equation*}
	\|\omega^{(k)} - \omega^{(k-1)}\|\geq 
	\frac{\|\ProjGrad{f}{\omega^{(k-1)}}\|}{\sqrt{d}(1+r_{\rm max})(\rho+\Delta_{1})}.
	\end{equation*}
\end{lemma}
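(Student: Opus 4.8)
The plan is to establish this inequality, which is exactly the safeguard condition \eqref{eq:safeguard} needed later in \Cref{theorem-SU15}, by working block by block and then aggregating. Since the Riemannian gradient of $f$ at $\omega^{(k-1)}=\omega^{(k,0)}$ splits as a concatenation over the $d$ Stiefel factors, with the $i$-th component equal to $\ProjGrad{p_{(k,i)}}{\matr{U}^{(i)}_{k-1}}$ (note that $p_{(k,i)}$ is precisely the restriction of $f$ to the $i$-th block with all other blocks frozen at $\omega^{(k-1)}$), I would write
\[
\|\ProjGrad{f}{\omega^{(k-1)}}\|^2 = \sum_{i=1}^{d} \|\ProjGrad{p_{(k,i)}}{\matr{U}^{(i)}_{k-1}}\|^2,
\]
and then bound each summand by a constant multiple of $\|\omega^{(k)}-\omega^{(k-1)}\|$.

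First I would control the Riemannian gradient of the \emph{algorithmic} restricted function $h_{(k,i)}$ at $\matr{U}^{(i)}_{k-1}$. Since $\matr{U}^{(i)}_{k}$ is the orthogonal polar factor of $\nabla h_{(k,i)}(\matr{U}^{(i)}_{k-1})$, \Cref{lemma-inquality-02-09} applied to $h=h_{(k,i)}$ gives
\[
\|\ProjGrad{h_{(k,i)}}{\matr{U}^{(i)}_{k-1}}\| \le (r_i+1)\,\|\nabla h_{(k,i)}(\matr{U}^{(i)}_{k-1})\|\,\|\matr{U}^{(i)}_{k}-\matr{U}^{(i)}_{k-1}\|,
\]
and \Cref{remark-smaller-delta} replaces the middle factor by $\Delta_{1}$, while $\|\matr{U}^{(i)}_{k}-\matr{U}^{(i)}_{k-1}\|\le\|\omega^{(k)}-\omega^{(k-1)}\|$ and $r_i\le r_{\rm max}$. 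This yields $\|\ProjGrad{h_{(k,i)}}{\matr{U}^{(i)}_{k-1}}\| \le (1+r_{\rm max})\Delta_{1}\,\|\omega^{(k)}-\omega^{(k-1)}\|$.

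The step I expect to be the main obstacle is reconciling $h_{(k,i)}$ (which freezes the already-updated blocks $\matr{U}^{(j)}_{k}$ for $j<i$) with $p_{(k,i)}$ (which freezes the \emph{old} blocks $\matr{U}^{(j)}_{k-1}$): these two restrictions share the same base point $\matr{U}^{(i)}_{k-1}$, so their tangent-space projections coincide, and the difference of their Riemannian gradients is the projection of $\nabla_{i}f(\omega^{(k,i-1)})-\nabla_{i}f(\omega^{(k-1)})$. Using that projection is nonexpansive together with the Lipschitz bound \eqref{eq-L}, and observing that $\omega^{(k,i-1)}$ and $\omega^{(k-1)}$ differ only in the first $i-1$ blocks so that $\|\omega^{(k,i-1)}-\omega^{(k-1)}\|\le\|\omega^{(k)}-\omega^{(k-1)}\|$, I would obtain
\[
\|\ProjGrad{h_{(k,i)}}{\matr{U}^{(i)}_{k-1}}-\ProjGrad{p_{(k,i)}}{\matr{U}^{(i)}_{k-1}}\| \le \rho\,\|\omega^{(k)}-\omega^{(k-1)}\|.
\]
A triangle inequality then gives $\|\ProjGrad{p_{(k,i)}}{\matr{U}^{(i)}_{k-1}}\| \le (1+r_{\rm max})(\Delta_{1}+\rho)\,\|\omega^{(k)}-\omega^{(k-1)}\|$ for each $i$. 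Squaring, summing over the $d$ blocks, and taking square roots produces $\|\ProjGrad{f}{\omega^{(k-1)}}\| \le \sqrt{d}\,(1+r_{\rm max})(\rho+\Delta_{1})\,\|\omega^{(k)}-\omega^{(k-1)}\|$, which rearranges to the claimed bound.
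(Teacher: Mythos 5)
Your proposal is correct and follows essentially the same route as the paper's proof: the same block decomposition of $\ProjGrad{f}{\omega^{(k-1)}}$ into the components $\ProjGrad{p_{(k,i)}}{\matr{U}^{(i)}_{k-1}}$, the same appeal to \Cref{lemma-inquality-02-09} combined with \Cref{remark-smaller-delta} to control $\|\ProjGrad{h_{(k,i)}}{\matr{U}^{(i)}_{k-1}}\|$, and the same Lipschitz estimate \eqref{eq-L} together with $\|\omega^{(k,i-1)}-\omega^{(k-1)}\|\le\|\omega^{(k)}-\omega^{(k-1)}\|$. The only (harmless) difference is that you bound the gap $\|\ProjGrad{h_{(k,i)}}{\matr{U}^{(i)}_{k-1}}-\ProjGrad{p_{(k,i)}}{\matr{U}^{(i)}_{k-1}}\|$ by nonexpansiveness of the orthogonal projection onto the tangent space, which yields the slightly sharper constant $(1+r_{\rm max})\Delta_{1}+\rho$ before relaxing, whereas the paper expands \eqref{eq:Rie_grad} into a three-term triangle inequality and obtains $(1+r_{\rm max})(\rho+\Delta_{1})$ directly.
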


\begin{proof}
	By \eqref{eq:Rie_grad}, \eqref{eq-L} and \Cref{lemma-inquality-02-09}, for any $1\leq i\leq d$, we have  
	\begin{align*}
	\|\ProjGrad{p_{(k,i)}}{\matr{U}_{k-1}^{(i)}}\| 
	&= \|\nabla p_{(k,i)}(\matr{U}_{k-1}^{(i)}) -\matr{U}_{k-1}^{(i)}\symmm{(\matr{U}_{k-1}^{(i)})^{\H}\nabla p_{(k,i)}(\matr{U}_{k-1}^{(i)})}\|\\
	&\leq\|\nabla p_{(k,i)}(\matr{U}_{k-1}^{(i)}) - \nabla h_{(k,i)}(\matr{U}_{k-1}^{(i)})\| \\
	&\ \ \ +\|\nabla h_{(k,i)}(\matr{U}_{k-1}^{(i)}) - \matr{U}_{k-1}^{(i)}\symmm{(\matr{U}_{k-1}^{(i)})^{\H}\nabla h_{(k,i)}(\matr{U}_{k-1}^{(i)})}\|\\
	&\ \ \ +
	\|\matr{U}_{k-1}^{(i)}\symmm{(\matr{U}_{k-1}^{(i)})^{\H}\nabla h_{(k,i)}(\matr{U}_{k-1}^{(i)})}\\
	&\ \ \  -\matr{U}_{k-1}^{(i)}\symmm{(\matr{U}_{k-1}^{(i)})^{\H}\nabla p_{(k,i)}(\matr{U}_{k-1}^{(i)})}\|\\
	& \leq (1+\|\matr{U}_{k-1}^{(i)}\|^2) \|\nabla_{i} f(\omega^{(k-1)}) - \nabla_{i} f(\omega^{(k,i-1)}) \|
	+\|\ProjGrad{h_{(k,i)}}{\matr{U}_{k-1}^{(i)}}\|\\
	&\leq (1+r_{\rm max})\rho\|\omega^{(k-1)} - \omega^{(k)}\|
	+ (1+r_{\rm max})\Delta_{1}\|\matr{U}_{k-1}^{(i)} - \matr{U}_{k}^{(i)}\|\\
	&\leq (1+r_{\rm max})(\rho+\Delta_{1})\|\omega^{(k-1)} - \omega^{(k)}\|.
	\end{align*}
	It follows that 
	\begin{equation*}
	\|\ProjGrad{f}{\omega^{(k-1)}} \| 
	\leq \sqrt{d}(1+r_{\rm max})(\rho+\Delta_{1})\|\omega^{(k)} - \omega^{(k-1)}\|.
	\end{equation*}
	The proof is complete. 
\end{proof}

\paragraph{Proof of \Cref{theorem-weak-conver-product}}
It follows directly from 
\Cref{lemma-main-01}, \Cref{lemma-inquality-02-09} and \Cref{remark-smaller-delta} that 
\begin{equation}\label{eq:proof_weak}
h_{(k,i)}(\matr{U}_{k}^{(i)}) - h_{(k,i)}(\matr{U}_{k-1}^{(i)}) \geq 
\frac{\delta}{2(1+r_{\rm max})^2\Delta_{1}^2}
\|\ProjGrad{h_{(k,i)}}{\matr{U}_{k-1}^{(i)}}\|^2.
\end{equation}
Let $\omega_{*}=(\matr{U}_{*}^{(1)}, \matr{U}_{*}^{(2)}, \cdots, 
\matr{U}_{*}^{(d)})$ be an accumulation point of the iterates produced by \Cref{al-general-polar}.
Assume that 
\begin{equation*}
\ProjGrad{f}{\omega_{*}} = (\ProjGrad{h_{(1)}}{\matr{U}^{(1)}_{*}},\cdots, \ProjGrad{h_{(d)}}{\matr{U}^{(d)}_{*}})\neq 0.
\end{equation*}
Suppose $\|\ProjGrad{h_{(d)}}{\matr{U}^{(d)}_{*}}\|\neq 0$ without loss of generality. 
Note that $\omega_{*}$ is an accumulation point. There exists a subsequence of the iterates $\omega^{(k,i)}$ converging to $\omega_{*}$.
In this subsequence, 
we can choose $i_0$ such that $\omega^{(k,i_0)}$ occurs in the subsequence infinite times. 
Without loss of generality, assume $i_0=1$ and denote these elements by $\omega^{(k_\ell,1)}$. 
Note that $\|\matr{U}_{k}^{(i)}-\matr{U}_{k-1}^{(i)}\|\rightarrow 0$ by \Cref{lemma-main-01}. 
We see that $\omega^{(k_\ell,d-1)}$ also converges to $\omega_{*}$. 
Therefore, we obtain  $$\|\ProjGrad{h_{(k_{\ell},d)}}{\matr{U}^{(d)}_{k_{\ell}-1}}\|\rightarrow\|\ProjGrad{h_{(d)}}{\matr{U}^{(d)}_{*}}\|\neq 0,$$
which contradicts \eqref{eq:proof_weak}.
The proof is complete.  

\paragraph{Proof of \Cref{theor-isolate-limit}}
It follows from \Cref{coro-big-improvement} and \Cref{lemma-098} that 
\begin{equation}\label{eq-condi-980}
f(\omega^{(k)}) - f(\omega^{(k-1)}) \geq 
\frac{\delta}{2\sqrt{d}(1+r_{\rm max})(\rho+\Delta_{1})}
\|\omega^{(k)} - \omega^{(k-1)}\|\|\ProjGrad{f}{\omega^{(k-1)}}\|.
\end{equation}
It is clear that $f(\omega^{(k+1)}) = f(\omega^{(k)})$ implies that $\omega^{(k+1)} = \omega^{(k)}$ by \Cref{coro-big-improvement}. 
Then, by \Cref{theorem-SU15} and \Cref{remark-condition-change}, it follows that the iterates $\omega^{(k)}$ converge to $\omega_{*}$. 
Moreover, by \Cref{theorem-weak-conver-product}, we see that $\omega_{*}$ is a stationary point. 
Note that $\|\omega^{(k,i)}-\omega^{(k,i-1)}\|\rightarrow 0$ by \Cref{lemma-main-01}. 
It is not difficult to see that the iterates $\omega^{(k,i)}$ also converge to $\omega_{*}$.

If $f$ is scale (or unitarily) invariant and $\omega_{*}$ is a scale (or unitarily) semi-nondegenerate point,
by \Cref{def-local-morse-bott} and \Cref{remMorseBott}(ii), 
we obtain that $f$ is Morse-Bott at $\omega_{*}\eqdef (\matr{U}_{*}^{(1)}, \matr{U}_{*}^{(2)}, \cdots, \matr{U}_{*}^{(d)})\in\Omega$
by letting 
$$\set{C} \eqdef\left\{(\matr{U}_{*}^{(1)}\matr{R}_{1},  \cdots, \matr{U}_{*}^{(d)}\matr{R}_{d}):
\matr{R}_{i} \in \set{DU}_{r_i}\ (\textrm{or}\ \UN{r_i}),\ 1\leq i\leq d\right\}.$$
Then the proof is complete by \Cref{cor:PLMorseBottManifold}, \Cref{lemma-098} and \Cref{theorem-SU15}.  

\subsection{Algorithm APDOI-S}\label{sec:APDOI_S}
We now propose the following shifted version of \Cref{al-general-polar} to rid the dependence of convergence results on condition \eqref{main-condition-convergence}.
This variant is called Algorithm APDOI-S.

\begin{algorithm}
	\caption{APDOI-S}\label{al-general-polar-S}
	\begin{algorithmic}[1]
		\STATE{{\bf Input:} starting point $\omega^{(0)}$, $\gamma>\Delta_{1}$.}
		\STATE{{\bf Output:} $\omega^{(k,i)}$, $k\geq 1$, $1\leq i\leq d$.}
		\FOR{$k=1,2,\cdots,$}
		\FOR{$i=1,2,\cdots,d$}
		\STATE Compute $\nabla h_{(k,i)}(\matr{U}^{(i)}_{k-1})$;
		\STATE Compute the polar decomposition of $\nabla h_{(k,i)}(\matr{U}^{(i)}_{k-1})+\gamma\matr{U}^{(i)}_{k-1}$;
		\STATE Update $\matr{U}_{k}^{(i)}$ to be the orthogonal polar factor.
		\ENDFOR
		\ENDFOR
	\end{algorithmic}
\end{algorithm}

\begin{lemma}\label{lem:sigma_delta}
	Let $\delta=\gamma-\Delta_{1}>0$. 
	Then, in \Cref{al-general-polar-S}, we always have 
	\begin{equation}\label{eq:shifited_conditon}
	\sigma_{\rm min}(\nabla h_{(k,i)}(\matr{U}_{k-1}^{(i)})+\gamma\matr{U}^{(i)}_{k-1})>\delta.
	\end{equation}
\end{lemma}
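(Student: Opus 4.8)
The goal is to bound the minimal singular value of the shifted matrix $\nabla h_{(k,i)}(\matr{U}_{k-1}^{(i)})+\gamma\matr{U}^{(i)}_{k-1}$ from below by $\delta=\gamma-\Delta_1$. The key structural fact to exploit is that $\matr{U}^{(i)}_{k-1}\in\text{St}(r_i,n_i,\CC)$, so its columns are orthonormal and $\gamma\matr{U}^{(i)}_{k-1}$ has \emph{all} its singular values equal to $\gamma$. The plan is to treat $\nabla h_{(k,i)}(\matr{U}_{k-1}^{(i)})$ as a perturbation of $\gamma\matr{U}^{(i)}_{k-1}$ of operator-norm size at most $\Delta_1$, and then invoke a standard singular value perturbation (Weyl-type) inequality.

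\textbf{Main steps.} First I would recall from \Cref{remark-smaller-delta} that $\|\nabla h_{(k,i)}(\matr{U}^{(i)}_{k-1})\|\leq\Delta_1$ always holds, where $\|\cdot\|$ is the Frobenius norm; since the spectral (operator) norm is dominated by the Frobenius norm, the largest singular value of $\nabla h_{(k,i)}(\matr{U}^{(i)}_{k-1})$ is also at most $\Delta_1$. Second, I would observe that because $\matr{U}^{(i)}_{k-1}$ has orthonormal columns, $\sigma_{\rm min}(\gamma\matr{U}^{(i)}_{k-1})=\gamma$. Third, I would apply the perturbation bound for singular values: for matrices $\matr{M},\matr{N}$ of the same size, $\sigma_{\rm min}(\matr{M}+\matr{N})\geq\sigma_{\rm min}(\matr{M})-\sigma_{\max}(\matr{N})$. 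Taking $\matr{M}=\gamma\matr{U}^{(i)}_{k-1}$ and $\matr{N}=\nabla h_{(k,i)}(\matr{U}^{(i)}_{k-1})$ gives
\begin{equation*}
\sigma_{\rm min}\bigl(\nabla h_{(k,i)}(\matr{U}_{k-1}^{(i)})+\gamma\matr{U}^{(i)}_{k-1}\bigr)\geq\gamma-\Delta_1=\delta,
\end{equation*}
which is exactly \eqref{eq:shifited_conditon}. (Strictness of the inequality follows from $\gamma>\Delta_1$ together with the Frobenius-norm bound being an upper bound for $\sigma_{\max}$.)

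\textbf{Expected obstacle.} There is essentially no hard analytic obstacle here; the statement is a direct consequence of a Weyl-type singular value inequality. The only point that needs care is making sure the perturbation inequality is stated in the correct direction and for the minimal (rather than some intermediate) singular value, and confirming that the Frobenius-norm bound $\Delta_1$ legitimately controls $\sigma_{\max}$ of the gradient block. Once the roles of $\matr{M}$ and $\matr{N}$ are fixed and the orthonormality of the columns of $\matr{U}^{(i)}_{k-1}$ is used to pin $\sigma_{\rm min}(\gamma\matr{U}^{(i)}_{k-1})=\gamma$, the result is immediate. I would also note in passing why this lemma matters: it shows APDOI-S automatically satisfies condition \eqref{main-condition-convergence} with this $\delta$, so the convergence theorems \Cref{theorem-weak-conver-product} and \Cref{theor-isolate-limit} apply unconditionally to the shifted algorithm.
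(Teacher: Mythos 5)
Your proof is correct and rests on the same two structural ingredients as the paper's — the Frobenius bound $\|\nabla h_{(k,i)}(\matr{U}^{(i)}_{k-1})\|\leq\Delta_1$ from \Cref{remark-smaller-delta} and the orthonormality of the columns of $\matr{U}^{(i)}_{k-1}$ — but it packages them differently. The paper does not invoke any singular-value perturbation theorem; it argues from first principles: for every unit vector $\vect{x}$ it sets $\vect{v}_1=\nabla h_{(k,i)}(\matr{U}_{k-1}^{(i)})\vect{x}$ and $\vect{v}_2=\matr{U}^{(i)}_{k-1}\vect{x}$, notes $\|\vect{v}_1\|\leq\Delta_1$ and $\|\vect{v}_2\|=1$, and then bounds the quadratic form
$\vect{x}^{\H}(\nabla h_{(k,i)}(\matr{U}_{k-1}^{(i)})+\gamma\matr{U}^{(i)}_{k-1})^{\H}(\nabla h_{(k,i)}(\matr{U}_{k-1}^{(i)})+\gamma\matr{U}^{(i)}_{k-1})\vect{x}
=\|\vect{v}_1+\gamma\vect{v}_2\|^2\geq\|\vect{v}_1\|^2+\gamma^2-2\gamma\|\vect{v}_1\|=(\gamma-\|\vect{v}_1\|)^2\geq(\gamma-\Delta_1)^2$
via Cauchy--Schwarz, taking the minimum over $\vect{x}$ at the end. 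In effect, the paper's computation is an inline proof of exactly the Weyl-type inequality $\sigma_{\rm min}(\matr{M}+\matr{N})\geq\sigma_{\rm min}(\matr{M})-\sigma_{\max}(\matr{N})$ that you cite as a black box, specialized to the case where $\matr{M}=\gamma\matr{U}^{(i)}_{k-1}$ is $\gamma$ times an isometry. Your route is shorter and makes the ``isometry plus small perturbation'' structure explicit; the paper's is self-contained and needs nothing beyond Cauchy--Schwarz. One small caveat, which applies to both proofs equally: the argument genuinely yields only $\sigma_{\rm min}\geq\gamma-\Delta_1=\delta$, and your parenthetical claim that strictness follows from $\gamma>\Delta_1$ does not hold, since $\sigma_{\max}$ of the gradient block could in principle equal $\Delta_1$. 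The paper's own proof has the same looseness relative to the strict inequality stated in the lemma; it is harmless for the downstream results, since the convergence theorems can be invoked with any $\delta'<\delta$ in place of $\delta$ in condition \eqref{main-condition-convergence}, but you should either state your conclusion with $\geq$ or justify strictness separately.
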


\begin{proof}
	For any $\vect{x}\in\mathbb{S}^{r-1}$, we set $\vect{v}_1=\nabla h_{(k,i)}(\matr{U}_{k-1}^{(i)})\vect{x}$ and $\vect{v}_2=\matr{U}^{(i)}_{k-1}\vect{x}$. 
	It is clear that $\|\vect{v}_1\|\leq \Delta_1$ and $\|\vect{v}_2\|=1$. 
	Then we have  
	\begin{align*}
	\vect{x}^{\H}(\nabla h_{(k,i)}(\matr{U}_{k-1}^{(i)})+\gamma\matr{U}^{(i)}_{k-1})^{\H}(\nabla h_{(k,i)}(\matr{U}_{k-1}^{(i)})+\gamma\matr{U}^{(i)}_{k-1})\vect{x}
	\geq \|\vect{v}_1\|^2+\gamma^2-2\gamma\|\vect{v}_1\|
	\geq (\gamma-\Delta_1)^2.
	\end{align*}
	The proof is complete.  
\end{proof}

For \Cref{al-general-polar-S}, based on \eqref{eq:shifited_conditon}, we can prove the following convergence result without further assumption.

\begin{theorem}\label{theor-isolate-limit-s}
	Suppose that the objective function  \eqref{definition-f} is block multiconvex and real analytic. 
	Then the iterates $\omega^{(k,i)}$ produced by \Cref{al-general-polar-S} converge to a stationary point $\omega_{*}$.
	If $f$ is scale (or unitarily) invariant and $\omega_{*}$ is a scale (or unitarily) semi-nondegenerate point,
	then the convergence rate is linear. 
\end{theorem}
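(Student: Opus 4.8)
The plan is to avoid rebuilding any convergence machinery and instead deduce \Cref{theor-isolate-limit-s} from \Cref{theor-isolate-limit} by a lifting trick. Define the \emph{lifted objective}
\[
\hat{f}(\omega) \eqdef f(\omega) + \frac{\gamma}{2}\sum_{i=1}^{d}\|\matr{U}^{(i)}\|^{2},
\qquad \omega = (\matr{U}^{(1)},\cdots,\matr{U}^{(d)}),
\]
with restricted functions $\hat{h}_{(k,i)}(\matr{U}) = h_{(k,i)}(\matr{U}) + \frac{\gamma}{2}\|\matr{U}\|^{2}$ (up to an additive constant coming from the frozen blocks). The first thing I would observe is that \emph{\Cref{al-general-polar-S} applied to $f$ is literally \Cref{al-general-polar} applied to $\hat{f}$}: the natural extension of $\hat{h}_{(k,i)}$ has Euclidean gradient $\nabla\hat{h}_{(k,i)}(\matr{U}) = \nabla h_{(k,i)}(\matr{U}) + \gamma\matr{U}$, so evaluating at $\matr{U}^{(i)}_{k-1}$ reproduces exactly the shifted matrix $\nabla h_{(k,i)}(\matr{U}^{(i)}_{k-1}) + \gamma\matr{U}^{(i)}_{k-1}$ whose orthogonal polar factor is taken in the inner step. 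Hence, from a common starting point, the two algorithms generate identical iterates $\omega^{(k,i)}$.

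Next I would verify that $\hat{f}$ satisfies every hypothesis of \Cref{theor-isolate-limit}. It is block multiconvex, since each $\hat{h}_{(k,i)}$ is a sum of the convex $h_{(k,i)}$ and the convex quadratic $\frac{\gamma}{2}\|\cdot\|^{2}$; it is real analytic, being $f$ plus a polynomial; and, crucially, condition \eqref{main-condition-convergence} now holds \emph{unconditionally} for $\hat{f}$, because it reads $\sigma_{\rm min}(\nabla h_{(k,i)}(\matr{U}^{(i)}_{k-1}) + \gamma\matr{U}^{(i)}_{k-1}) > \delta$ with $\delta = \gamma-\Delta_{1} > 0$, which is exactly \Cref{lem:sigma_delta}. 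Applying \Cref{theor-isolate-limit} to $\hat{f}$ then gives that $\omega^{(k,i)}$ converges to a stationary point $\omega_{*}$ of $\hat{f}$. Since $\hat{f}|_{\Omega} = f|_{\Omega} + \frac{\gamma}{2}\sum_i r_i$ differs from $f|_{\Omega}$ by a constant, the Riemannian gradients coincide, $\ProjGrad{\hat{f}}{\omega} = \ProjGrad{f}{\omega}$, so $\omega_{*}$ is also a stationary point of $f$, establishing the first assertion.

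For the linear-rate claim I would transfer both the scale/unitary invariance and the semi-nondegeneracy from $f$ to $\hat{f}$. Invariance is immediate, as $\|\matr{U}\matr{R}\| = \|\matr{U}\|$ for $\matr{R}\in\UN{r}$ (in particular for $\matr{R}\in\set{DU}_{r}$), so $\hat{h}$ is scale/unitarily invariant precisely when $h$ is. The delicate point, which I expect to be the main obstacle, is that $f$ and $\hat{f}$ must have the \emph{same Riemannian Hessian} at every $\omega\in\Omega$. This follows cleanly from the intrinsic definition $\Hess{h}{\matr{U}} = \nabla^2(h\circ\text{Exp}_{\matr{U}})(\matr{0}_{\matr{U}})$, which depends only on the values of $h$ on the manifold, and those differ by a mere additive constant; alternatively one checks via \eqref{eq:Hessian_reduction} that the extra Euclidean Hessian term $\gamma\matr{Z}$ is cancelled exactly by the Weingarten contribution $\gamma\mathfrak{A}_{\matr{U}}(\matr{Z},\matr{U}) = -\gamma\matr{Z}$, using that ${\rm Proj}^{\bot}_{\matr{U}}\matr{U} = \matr{U}$. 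Consequently $\rank{\Hess{\hat{f}}{\omega_{*}}} = \rank{\Hess{f}{\omega_{*}}}$, so $\omega_{*}$ is a scale (or unitarily) semi-nondegenerate point of $\hat{f}$ if and only if it is one for $f$. The linear-convergence conclusion of \Cref{theor-isolate-limit} for $\hat{f}$ then transfers verbatim to $f$, completing the argument; all remaining steps are routine checks that the lift preserves the structural hypotheses.
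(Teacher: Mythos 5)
Your proposal is correct, but it takes a genuinely different route from the paper. The paper proves \Cref{theor-isolate-limit-s} by re-running the machinery of \Cref{subsec-cost-increase-general-product} with the shifted matrix $\nabla h_{(k,i)}(\matr{U}^{(i)}_{k-1})+\gamma\matr{U}^{(i)}_{k-1}$ in place of the gradient: it states shifted analogues of the monotonicity lemma, of \Cref{lemma-inquality-02-09}, and of the sufficient-increase and gradient-safeguard inequalities (with modified constants involving $\gamma\sqrt{r_{\rm max}}$), and then invokes \Cref{theorem-SU15} as before. You instead reduce the shifted theorem to the unshifted one: APDOI-S on $f$ is APDOI on the lifted objective $\hat{f}=f+\frac{\gamma}{2}\sum_i\|\matr{U}^{(i)}\|^2$, whose restricted Euclidean gradients are exactly the shifted matrices; $\hat{f}$ inherits block multiconvexity and real analyticity, and \Cref{lem:sigma_delta} says precisely that condition \eqref{main-condition-convergence} holds unconditionally for $\hat{f}$, so \Cref{theor-isolate-limit} applies. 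The key observations that make the transfer of conclusions legitimate are all verified correctly: on $\Omega$ the lift differs from $f$ by the constant $\frac{\gamma}{2}\sum_i r_i$, so Riemannian gradients coincide (indeed ${\rm Proj}_{\matr{U}}(\gamma\matr{U})=0$ by \eqref{eq:Rie_grad}), and the Riemannian Hessians coincide — your Weingarten cancellation via \eqref{eq:Hessian_reduction} checks out, since $\symmm{\matr{Z}^{\H}\matr{U}}=0$ for tangent $\matr{Z}$ gives $\mathfrak{A}_{\matr{U}}(\matr{Z},\gamma\matr{U})=-\gamma\matr{Z}$, and the intrinsic definition via ${\rm Exp}_{\matr{U}}$ makes this immediate — so scale/unitary invariance and semi-nondegeneracy pass back and forth between $f$ and $\hat{f}$, and the full-rank guarantee from \Cref{lem:sigma_delta} even ensures the polar factors (hence the iterates) are uniquely determined and literally identical for the two algorithms. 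What each approach buys: yours is more economical and conceptually sharper — it exposes the shift as the addition of a term that is constant on the manifold, hence invisible to the Riemannian geometry but regularizing for the Euclidean polar step — while the paper's direct route stays self-contained, avoids the Hessian-transfer argument entirely, and produces explicit quantitative constants in the intermediate inequalities, which is why the paper records those lemmas even while omitting the proof.
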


Since the proof of \Cref{theor-isolate-limit-s} is very similar to the derivations in \Cref{subsec-cost-increase-general-product},
we omit the details here, and only present some important intermediate lemmas.

\begin{lemma}
	Suppose that the objective function \eqref{definition-f} is block multiconvex. 
	Then the objective value of the iterates produced by \Cref{al-general-polar-S} monotonically increases and converges. 
\end{lemma}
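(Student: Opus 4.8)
The plan is to mirror the argument that was used for \Cref{al-general-polar} (Algorithm APDOI), replacing the role of the Euclidean gradient $\nabla h_{(k,i)}(\matr{U}_{k-1}^{(i)})$ by the \emph{shifted} matrix $\nabla h_{(k,i)}(\matr{U}_{k-1}^{(i)})+\gamma\matr{U}^{(i)}_{k-1}$. The crucial point is that \Cref{al-general-polar-S} updates $\matr{U}_k^{(i)}$ to be the orthogonal polar factor of this shifted matrix, and \Cref{lemma-polar-decom}(i) gives the maximizing property of the polar factor with respect to the real inner product with the matrix being decomposed. First I would invoke \Cref{lemma-polar-decom}(i) with $\matr{X}=\nabla h_{(k,i)}(\matr{U}_{k-1}^{(i)})+\gamma\matr{U}^{(i)}_{k-1}$ to obtain
\begin{equation*}
\langle\matr{U}_{k}^{(i)},\nabla h_{(k,i)}(\matr{U}_{k-1}^{(i)})+\gamma\matr{U}^{(i)}_{k-1}\rangle_{\R}
\geq
\langle\matr{U}_{k-1}^{(i)},\nabla h_{(k,i)}(\matr{U}_{k-1}^{(i)})+\gamma\matr{U}^{(i)}_{k-1}\rangle_{\R},
\end{equation*}
which rearranges to
\begin{equation*}
\langle\matr{U}_{k}^{(i)}-\matr{U}_{k-1}^{(i)},\nabla h_{(k,i)}(\matr{U}_{k-1}^{(i)})\rangle_{\R}
\geq
\gamma\,\langle\matr{U}_{k-1}^{(i)}-\matr{U}_{k}^{(i)},\matr{U}^{(i)}_{k-1}\rangle_{\R}
=\frac{\gamma}{2}\|\matr{U}_{k}^{(i)}-\matr{U}_{k-1}^{(i)}\|^2,
\end{equation*}
using $\langle\matr{U}_{k-1}^{(i)}-\matr{U}_{k}^{(i)},\matr{U}^{(i)}_{k-1}\rangle_{\R}=\tfrac12\|\matr{U}_{k}^{(i)}-\matr{U}_{k-1}^{(i)}\|^2$, which follows because both matrices lie on the Stiefel manifold so $\|\matr{U}_{k}^{(i)}\|^2=\|\matr{U}_{k-1}^{(i)}\|^2=r_i$.

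Next I would combine this with the block multiconvexity gradient inequality \eqref{eq-cost-increase-general-1-symme-general}, exactly as in \Cref{lemma-incresease-cost-general}, to get the monotone increase
\begin{equation*}
h_{(k,i)}(\matr{U}_{k}^{(i)})-h_{(k,i)}(\matr{U}_{k-1}^{(i)})
\geq\langle\matr{U}_{k}^{(i)}-\matr{U}_{k-1}^{(i)},\nabla h_{(k,i)}(\matr{U}_{k-1}^{(i)})\rangle_{\R}
\geq 0.
\end{equation*}
Since the objective function \eqref{definition-f} is continuous and $\Omega$ is compact, the objective value is bounded above, so the monotone increasing sequence converges. This establishes the stated lemma.

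The deeper point, which powers the full convergence \Cref{theor-isolate-limit-s} but is not needed for this lemma, is that \Cref{lem:sigma_delta} guarantees $\sigma_{\rm min}(\nabla h_{(k,i)}(\matr{U}_{k-1}^{(i)})+\gamma\matr{U}^{(i)}_{k-1})>\delta=\gamma-\Delta_1$ \emph{unconditionally}, so one can replay \Cref{lemma-inquality-01-general} and \Cref{lemma-main-01} on the shifted matrix to obtain the sufficient-descent bound $h_{(k,i)}(\matr{U}_{k}^{(i)})-h_{(k,i)}(\matr{U}_{k-1}^{(i)})\geq\frac{\delta}{2}\|\matr{U}_{k}^{(i)}-\matr{U}_{k-1}^{(i)}\|^2$ with no extra hypothesis. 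For the present lemma the only obstacle is cosmetic: one must be careful that the polar-factor optimality in \Cref{lemma-polar-decom}(i) is stated for the matrix being decomposed, here the shifted matrix, and then correctly isolate the unshifted inner product $\langle\matr{U}_{k}^{(i)}-\matr{U}_{k-1}^{(i)},\nabla h_{(k,i)}(\matr{U}_{k-1}^{(i)})\rangle_{\R}\geq 0$ that feeds the block multiconvexity inequality. Once that bookkeeping is done, monotone increase plus compactness gives convergence immediately.
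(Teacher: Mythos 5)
Your proposal is correct and follows essentially the same route as the paper: both apply \Cref{lemma-polar-decom}(i) to the shifted matrix $\nabla h_{(k,i)}(\matr{U}_{k-1}^{(i)})+\gamma\matr{U}^{(i)}_{k-1}$, use the Stiefel identity $\langle\matr{U}_{k-1}^{(i)}-\matr{U}_{k}^{(i)},\matr{U}^{(i)}_{k-1}\rangle_{\R}=\tfrac12\|\matr{U}_{k}^{(i)}-\matr{U}_{k-1}^{(i)}\|^2$ to isolate the unshifted inner product $\langle\matr{U}_{k}^{(i)}-\matr{U}_{k-1}^{(i)},\nabla h_{(k,i)}(\matr{U}_{k-1}^{(i)})\rangle_{\R}\geq 0$, and then feed it into the block multiconvexity gradient inequality and compactness exactly as in \Cref{lemma-incresease-cost-general}. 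The only cosmetic difference is that you carry the extra $\tfrac{\gamma}{2}\|\matr{U}_{k}^{(i)}-\matr{U}_{k-1}^{(i)}\|^2$ term explicitly (a slightly sharper statement the paper defers to a later lemma), while the paper simply writes the two-step chain of inequalities.
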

\begin{proof}
	By the proof of \Cref{lemma-incresease-cost-general},
	we only need to note that 
	\begin{equation*}
	\langle \matr{U}_{k}^{(i)}-\matr{U}_{k-1}^{(i)}, \nabla h_{(k,i)}(\matr{U}_{k-1}^{(i)})\rangle_{\R} 
	\geq \langle \matr{U}_{k}^{(i)}-\matr{U}_{k-1}^{(i)}, \nabla h_{(k,i)}(\matr{U}_{k-1}^{(i)})+\gamma\matr{U}^{(i)}_{k-1}\rangle_{\R}\geq 0.
	\end{equation*}
	The proof is complete.  
\end{proof}

\begin{lemma}
	Let $h:\text{St}(r,n,\CC)\rightarrow\RR$ be a differentiable function and $\matr{U}\in\text{St}(r,n,\CC)$. 
	Suppose that $\nabla h(\matr{U})+\gamma\matr{U}=\matr{U}_{*}\matr{P}$ is the polar decomposition.
	Then
	\begin{equation*}
	\|\matr{U}_{*}-\matr{U}\|\geq \frac{\|\ProjGrad{h}{\matr{U}}\|}{(r+1)\|\nabla h(\matr{U})+\gamma\matr{U}\|}.
	\end{equation*}
\end{lemma}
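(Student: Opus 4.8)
The plan is to reduce the claim to the already-established \Cref{lemma-inquality-02-09}, exploiting the elementary fact that shifting the Euclidean gradient by a multiple of $\matr{U}$ leaves the Riemannian gradient unchanged. Write $\matr{G}\eqdef\nabla h(\matr{U})+\gamma\matr{U}$ and introduce the extended function $\tilde h(\matr{X})\eqdef h(\matr{X})+\tfrac{\gamma}{2}\|\matr{X}\|^2$ on $\CC^{n\times r}$, whose Euclidean gradient at $\matr{U}$ is exactly $\nabla\tilde h(\matr{U})=\matr{G}$. Since $\|\matr{X}\|^2$ restricts to the constant $r$ on $\text{St}(r,n,\CC)$, the functions $h$ and $\tilde h$ differ only by an additive constant on the manifold, so I expect them to share the same Riemannian gradient.

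The one point that needs a short verification is precisely this shift-invariance, which follows directly from the projection formula \eqref{eq:proj_Stiefel}. Using $\matr{U}^{\H}\matr{U}=\matr{I}_r$ and $\symmm{\matr{I}_r}=\matr{I}_r$, I would compute ${\rm Proj}_{\matr{U}}(\gamma\matr{U})=\gamma\matr{U}-\gamma\matr{U}\,\symmm{\matr{U}^{\H}\matr{U}}=\matr{0}$, and hence by \eqref{eq:Rie_grad}
\begin{equation*}
{\rm Proj}_{\matr{U}}\matr{G}=\nabla h(\matr{U})-\matr{U}\,\symmm{\matr{U}^{\H}\nabla h(\matr{U})}=\ProjGrad{h}{\matr{U}},
\end{equation*}
so that $\ProjGrad{\tilde h}{\matr{U}}=\ProjGrad{h}{\matr{U}}$.

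With this identity, the final step is to apply \Cref{lemma-inquality-02-09} to $\tilde h$: its hypothesis is that $\nabla\tilde h(\matr{U})=\matr{G}=\matr{U}_{*}\matr{P}$ is the given polar decomposition (the required nondegeneracy $\matr{G}\neq\matr{0}$ holds automatically in the APDOI-S setting by \Cref{lem:sigma_delta}), and its conclusion reads
\begin{equation*}
\|\matr{U}_{*}-\matr{U}\|\geq\frac{\|\ProjGrad{\tilde h}{\matr{U}}\|}{(r+1)\|\nabla\tilde h(\matr{U})\|}=\frac{\|\ProjGrad{h}{\matr{U}}\|}{(r+1)\|\nabla h(\matr{U})+\gamma\matr{U}\|},
\end{equation*}
which is the desired bound. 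Alternatively, one may bypass $\tilde h$ entirely and simply repeat verbatim the triangle-inequality estimates in the proof of \Cref{lemma-inquality-02-09} with $\matr{G}$ in place of $\nabla h(\matr{U})$, starting from $\ProjGrad{h}{\matr{U}}={\rm Proj}_{\matr{U}}\matr{G}$; the factor $(r+1)\|\matr{G}\|$ then appears identically. I do not anticipate a genuine obstacle beyond the shift-invariance observation itself, which is exactly the structural reason the shifted iteration APDOI-S admits the same convergence analysis as the unshifted \Cref{al-general-polar}.
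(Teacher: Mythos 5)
Your proof is correct. Note that the paper itself gives no proof of this lemma: it is listed among the ``important intermediate lemmas'' for \Cref{theor-isolate-limit-s}, with the understanding that one repeats the triangle-inequality estimates from the proof of \Cref{lemma-inquality-02-09} with $\nabla h(\matr{U})+\gamma\matr{U}$ in place of $\nabla h(\matr{U})$ --- exactly your ``verbatim'' alternative. Your primary route is a genuinely different, and arguably cleaner, packaging: introducing $\tilde h(\matr{X})\eqdef h(\matr{X})+\tfrac{\gamma}{2}\|\matr{X}\|^2$, whose Euclidean gradient at $\matr{U}$ is the shifted matrix $\matr{G}$ and whose restriction to $\text{St}(r,n,\CC)$ differs from $h$ only by the constant $\tfrac{\gamma r}{2}$, lets you invoke \Cref{lemma-inquality-02-09} as a black box instead of recopying its computation. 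Both arguments rest on the same identity ${\rm Proj}_{\matr{U}}(\gamma\matr{U})=\matr{0}$ (the shift lies in the normal space), whence $\ProjGrad{\tilde h}{\matr{U}}=\ProjGrad{h}{\matr{U}}$; your explicit verification of this is the real content of the lemma and is precisely the structural reason the shifted iteration APDOI-S admits the same convergence analysis. Your treatment of the nondegeneracy hypothesis is also adequate: \Cref{lemma-inquality-02-09} assumes a nonzero gradient, and $\nabla h(\matr{U})+\gamma\matr{U}\neq\matr{0}$ is implicitly required in the present statement as well (otherwise the right-hand side is of the form $0/0$, since ${\rm Proj}_{\matr{U}}\matr{G}=\matr{0}$ when $\matr{G}=\matr{0}$), and it holds wherever the lemma is used, by \Cref{lem:sigma_delta}.
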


\begin{lemma}
	Suppose that the objective function \eqref{definition-f} is block multiconvex. 
	Let $\delta=\gamma-\Delta_{1}$. 
	In \Cref{al-general-polar-S}, for any $k\geq 1$ and $1\leq i\leq d$, we have 
	\begin{align*}
	&\text{(i)}\ h_{(k,i)}(\matr{U}_{k}^{(i)}) - h_{(k,i)}(\matr{U}_{k-1}^{(i)}) \geq \frac{\delta}{2} \|\matr{U}_{k}^{(i)}-\matr{U}_{k-1}^{(i)}\|^{2};\\
	&\text{(ii)}\ \|\omega^{(k)} - \omega^{(k-1)}\|\geq 
	\frac{\|\ProjGrad{f}{\omega^{(k-1)}}\|}{\sqrt{d}(1+r_{\rm max})(\rho+\Delta_{1}+ \gamma\sqrt{r_{\rm max}})};\\
	&\text{(iii)}\ f(\omega^{(k)}) - f(\omega^{(k-1)}) \geq 
	\frac{\delta\|\ProjGrad{f}{\omega^{(k-1)}}\|}{2\sqrt{d}(1+r_{\rm max})(\rho+\Delta_{1}+\gamma\sqrt{r_{\rm max}})}
	\|\omega^{(k)} - \omega^{(k-1)}\|.
	\end{align*}
\end{lemma}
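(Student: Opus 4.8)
The plan is to mirror the three-part argument already carried out for the unshifted \Cref{al-general-polar} (namely \Cref{lemma-main-01}, \Cref{lemma-098} and \Cref{coro-big-improvement}), the only genuinely new ingredient being \Cref{lem:sigma_delta}, which guarantees that the Hermitian polar factor of the shifted matrix $\nabla h_{(k,i)}(\matr{U}_{k-1}^{(i)})+\gamma\matr{U}^{(i)}_{k-1}$ has minimal singular value exceeding $\delta=\gamma-\Delta_1$. Throughout I write $\nabla h_{(k,i)}(\matr{U}_{k-1}^{(i)})+\gamma\matr{U}^{(i)}_{k-1}=\matr{U}_k^{(i)}\matr{P}$ for the polar decomposition produced at step $k$, so that $\sigma_{\rm min}(\matr{P})>\delta$ by \Cref{lem:sigma_delta}.

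For part (i), first I would record the auxiliary inequality $\langle\matr{U}_k^{(i)}-\matr{U}_{k-1}^{(i)},\matr{U}_{k-1}^{(i)}\rangle_{\R}\leq 0$, which holds because $\|\matr{U}_{k-1}^{(i)}\|^2=r_i$ while $\Re\,\trace{(\matr{U}_k^{(i)})^{\H}\matr{U}_{k-1}^{(i)}}\leq r_i$ (the singular values of $(\matr{U}_k^{(i)})^{\H}\matr{U}_{k-1}^{(i)}$ are all at most $1$, since it is a product of two matrices with orthonormal columns). Combined with $\gamma>0$, this yields $\langle\matr{U}_k^{(i)}-\matr{U}_{k-1}^{(i)},\nabla h_{(k,i)}(\matr{U}_{k-1}^{(i)})\rangle_{\R}\geq\langle\matr{U}_k^{(i)}-\matr{U}_{k-1}^{(i)},\nabla h_{(k,i)}(\matr{U}_{k-1}^{(i)})+\gamma\matr{U}_{k-1}^{(i)}\rangle_{\R}$, exactly as noted in the preceding monotonicity lemma. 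I would then chain three estimates: the convexity (gradient) inequality as in \Cref{lemma-incresease-cost-general} bounds $h_{(k,i)}(\matr{U}_k^{(i)})-h_{(k,i)}(\matr{U}_{k-1}^{(i)})$ below by the unshifted inner product; the auxiliary inequality passes from the unshifted to the shifted inner product; and \Cref{lemma-inquality-01-general} applied with $\matr{P}$ bounds the shifted inner product $\langle\matr{U}_k^{(i)}-\matr{U}_{k-1}^{(i)},\matr{U}_k^{(i)}\matr{P}\rangle_{\R}$ below by $\frac{\delta}{2}\|\matr{U}_k^{(i)}-\matr{U}_{k-1}^{(i)}\|^2$. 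Concatenating these gives (i).

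Parts (ii) and (iii) are then routine adaptations. For (ii) I would repeat the estimate of \Cref{lemma-098} almost verbatim, replacing \Cref{lemma-inquality-02-09} by its shifted counterpart stated just above (with $\nabla h(\matr{U})+\gamma\matr{U}=\matr{U}_*\matr{P}$ in the denominator); the only arithmetic change is that $\|\nabla h_{(k,i)}(\matr{U}_{k-1}^{(i)})+\gamma\matr{U}_{k-1}^{(i)}\|\leq\Delta_1+\gamma\sqrt{r_{\rm max}}$ (using $\|\matr{U}_{k-1}^{(i)}\|=\sqrt{r_i}\leq\sqrt{r_{\rm max}}$ together with \Cref{remark-smaller-delta}), which is precisely what upgrades the constant $\rho+\Delta_1$ to $\rho+\Delta_1+\gamma\sqrt{r_{\rm max}}$. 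Summing the per-block bounds over $1\leq i\leq d$ in the Frobenius norm and rearranging yields (ii). For (iii) I would first sum the per-block inequality (i) over $i$ to obtain the shifted analogue of \Cref{coro-big-improvement}, namely $f(\omega^{(k)})-f(\omega^{(k-1)})\geq\frac{\delta}{2}\|\omega^{(k)}-\omega^{(k-1)}\|^2$, and then substitute one factor of $\|\omega^{(k)}-\omega^{(k-1)}\|$ using the lower bound (ii).

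The main obstacle is genuinely confined to part (i): one must simultaneously exploit the shift (to obtain the quadratic lower bound via the spectral gap $\sigma_{\rm min}(\matr{P})>\delta$) and the genuine cost increase (which is controlled by the convexity inequality through the \emph{unshifted} gradient). The reconciling device is the sign inequality $\langle\matr{U}_k^{(i)}-\matr{U}_{k-1}^{(i)},\matr{U}_{k-1}^{(i)}\rangle_{\R}\leq 0$, which lets the extra $\gamma$-shift be discarded in the favorable direction; once this is in place, everything else is a faithful transcription of the unshifted arguments with the constant $\Delta_1$ augmented to $\Delta_1+\gamma\sqrt{r_{\rm max}}$.
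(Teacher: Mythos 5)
Your proposal is correct and follows essentially the route the paper intends: the paper omits this proof, remarking only that it parallels the unshifted derivations in \Cref{subsec-cost-increase-general-product}, and your argument is exactly that adaptation — the sign inequality $\langle\matr{U}_{k}^{(i)}-\matr{U}_{k-1}^{(i)},\matr{U}_{k-1}^{(i)}\rangle_{\Re}\leq 0$ together with \Cref{lem:sigma_delta} feeds the shifted polar factor into \Cref{lemma-inquality-01-general} for part (i), and the constant $\Delta_1$ is upgraded to $\Delta_1+\gamma\sqrt{r_{\rm max}}$ in the \Cref{lemma-098}-style estimates for parts (ii) and (iii). No gaps.
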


\subsection{Special case: the product of unit spheres}\label{sec:condition_conver}

We say that the objective function $f$ in  \eqref{definition-f} is \emph{restricted $\frac{1}{\alpha}-$homogeneous}, if there is some fixed $\alpha\in (0,1)$ such that, for any $\nu^{(i)}\in\Omega^{(i)}$ and $1\leq i\leq d$, it holds that 
\begin{align}\label{eq-general-assumption-product}
h_{(i)}(\matr{U}) = \alpha\langle \matr{U}, \nabla h_{(i)}(\matr{U})\rangle_{\R},
\end{align}
for any $\matr{U}\in\text{St}(r_i,n_i,\CC)$. 
If $r_i=1$ for all $1\leq i\leq d$ in \eqref{eq:Omega}, 
then $f$ is defined on the product of $d$ unit spheres.
In this case, if $f$ is restricted $\frac{1}{\alpha}-$homogeneous, 
we have 
\begin{equation*}
\|\nabla h_{(k,i)}(\matr{U}_{k-1}^{(i)})\|\geq\langle\matr{U}_{k-1}^{(i)}, \nabla h_{(k,i)}(\matr{U}_{k-1}^{(i)})\rangle_{\R} = \frac{1}{\alpha}h_{(k,i)}(\matr{U}_{k-1}^{(i)})\geq \frac{1}{\alpha}f(\omega^{(0)})>0
\end{equation*}
by equation \eqref{eq-general-assumption-product} and \Cref{lemma-incresease-cost-general}. 
Note that $\nabla h_{(k,i)}(\matr{U}_{k-1}^{(i)})$ is a vector, and thus has only one singular value. 
Therefore, the condition \eqref{main-condition-convergence} is always satisfied with $\delta=\frac{1}{2\alpha}f(\omega^{(0)})$. 
The next result follows directly from \Cref{theor-isolate-limit}.

\begin{corollary}\label{coro:conver_sphere}
	Let $r_i=1$ for all $1\leq i\leq d$.
	Suppose that the objective function \eqref{definition-f} is restricted $\frac{1}{\alpha}-$homogeneous, block multiconvex and real analytic. 
	Then the iterates $\omega^{(k,i)}$ produced by \Cref{al-general-polar} converge to a stationary point $\omega_{*}$.
	If $f$ is scale (or unitarily) invariant and $\omega_{*}$ is a scale (or unitarily) semi-nondegenerate point,
	then the convergence rate is linear. 
\end{corollary}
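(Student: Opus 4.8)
The plan is to show that the uniform nondegeneracy condition \eqref{main-condition-convergence} is automatically satisfied under the hypotheses, after which \Cref{theor-isolate-limit} applies without modification. The decisive structural observation is that, because $r_i=1$ for every $i$, each iterate $\matr{U}_{k-1}^{(i)}$ is a unit vector and each Euclidean gradient $\nabla h_{(k,i)}(\matr{U}_{k-1}^{(i)})$ is a single column vector. Consequently its only singular value equals its Euclidean norm, so that $\sigma_{\rm min}(\nabla h_{(k,i)}(\matr{U}_{k-1}^{(i)})) = \|\nabla h_{(k,i)}(\matr{U}_{k-1}^{(i)})\|$. This reduces the task to bounding that norm uniformly away from zero.

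To produce such a bound I would combine the homogeneity identity \eqref{eq-general-assumption-product} with Cauchy--Schwarz and the monotonicity established in \Cref{lemma-incresease-cost-general}. Since $\|\matr{U}_{k-1}^{(i)}\|=1$, Cauchy--Schwarz gives
\begin{equation*}
\|\nabla h_{(k,i)}(\matr{U}_{k-1}^{(i)})\| \ge \RInner{\matr{U}_{k-1}^{(i)}}{\nabla h_{(k,i)}(\matr{U}_{k-1}^{(i)})} = \frac{1}{\alpha}\, h_{(k,i)}(\matr{U}_{k-1}^{(i)}),
\end{equation*}
where the equality is exactly the restricted $\frac{1}{\alpha}$-homogeneity assumption. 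The monotone-increase property yields $h_{(k,i)}(\matr{U}_{k-1}^{(i)}) = f(\omega^{(k,i-1)}) \ge f(\omega^{(0)}) > 0$, the final strict inequality being the standing assumption on the starting point. Chaining these gives the uniform lower bound $\sigma_{\rm min}(\nabla h_{(k,i)}(\matr{U}_{k-1}^{(i)})) \ge \frac{1}{\alpha} f(\omega^{(0)})$, so \eqref{main-condition-convergence} holds with the explicit choice $\delta = \frac{1}{2\alpha} f(\omega^{(0)}) > 0$, valid for all $k\ge 1$ and $1\le i\le d$.

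With \eqref{main-condition-convergence} verified and the objective being block multiconvex and real analytic by hypothesis, I would then simply invoke \Cref{theor-isolate-limit}: it delivers convergence of the whole sequence $\omega^{(k,i)}$ to a stationary point $\omega_*$, together with the linear rate when $f$ is scale (or unitarily) invariant and $\omega_*$ is a scale (or unitarily) semi-nondegenerate point. I do not expect any genuine obstacle in this argument; it is essentially a specialization in which the abstract hypothesis \eqref{main-condition-convergence} of \Cref{theor-isolate-limit} is discharged for free. The only point meriting care is the elementary but essential remark that, for a vector, the minimal singular value coincides with the norm, since this is precisely what converts the scalar homogeneity estimate into the matrix-type condition required by the general theorem.
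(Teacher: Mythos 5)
Your proposal is correct and follows essentially the same route as the paper: you observe that for $r_i=1$ the gradient is a vector whose single singular value is its norm, bound that norm below by $\frac{1}{\alpha}f(\omega^{(0)})>0$ via Cauchy--Schwarz, the homogeneity identity \eqref{eq-general-assumption-product} and the monotonicity from \Cref{lemma-incresease-cost-general}, and then invoke \Cref{theor-isolate-limit} with $\delta=\frac{1}{2\alpha}f(\omega^{(0)})$. This matches the paper's argument in \Cref{sec:condition_conver} step for step, including the choice of $\delta$.
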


\section{Algorithm PDOI on the Stiefel manifold}\label{sec-opt-manifold}

In this section, based on the matrix polar decomposition, we propose a symmetric variant of \Cref{al-general-polar} and its shifted version to solve model \eqref{cost-function-genral-h},
and establish their convergence. 
These two algorithms and convergence results will be applied to the objective function \eqref{cost-fn-general-1-s} in \Cref{sec:objec_01-s}. 

\subsection{Algorithm PDOI}

By a similar argument as in \eqref{eq-gradient-polar-decom},
if $\matr{U}_{*}$ is a maximal point of the function $g$ in \eqref{cost-function-genral-h},
then it should satisfy 
\begin{equation}\label{eq-gradient-polar-decom-general}
\nabla g(\matr{U}_{*}) = \matr{U}_{*}\symmm{\matr{U}_{*}^{\H}\nabla g(\matr{U}_{*})},
\end{equation}
which is close to a polar decomposition form of $\nabla g(\matr{U}_{*})$.
Inspired by this observation, we propose the following \emph{polar decomposition based orthogonal iteration} (PDOI) algorithm to solve model \eqref{cost-function-genral-h}.
We assume that the starting point $\matr{U}_{0}$ satisfies\footnote{This condition will be used in \Cref{subsection:spe_unit_sphere}.} that $g(\matr{U}_{0})>0$.

\begin{algorithm}
	\caption{PDOI}\label{al-general-polar-general}
	\begin{algorithmic}[1]
		\STATE{{\bf Input:} starting point $\matr{U}_{0}$.}
		\STATE{{\bf Output:} $\matr{U}_{k}$, $k\geq 1$.}
		\FOR{$k=1,2,\cdots,$}
		\STATE Compute $\nabla g(\matr{U}_{k-1})$;
		\STATE Compute the polar decomposition of $\nabla g(\matr{U}_{k-1})$;
		\STATE Update $\matr{U}_{k}$ to be the orthogonal polar factor.
		\ENDFOR
	\end{algorithmic}
\end{algorithm}

\begin{remark} \label{remark-smaller-delta-1}
	Note that the objective function \eqref{cost-function-genral-h} is smooth and $\text{St}(r,n,\CC)$ is compact. 
	We denote 
	$\Delta_{2}\eqdef\max_{\matr{U}\in\text{St}(r,n,\CC)} \|\nabla g(\matr{U})\|.$
\end{remark}

\begin{remark}
	By \Cref{lemma-equiva-no-move-general},
	we know that, in \Cref{al-general-polar-general},
	if the gradient $\nabla g(\matr{U}_{k-1})$ is of full rank and $\symmm{\matr{U}_{k-1}^{\H}\nabla g(\matr{U}_{k-1})}$ is positive semidefinite, 
	then $\matr{U}_{k}=\matr{U}_{k-1}$ if and only if 
	$\ProjGrad{g}{\matr{U}_{k-1}}=0$.
\end{remark}

\begin{remark}
	\Cref{al-general-polar-general} can be seen as a special case of \emph{generalized power method} in \cite{journee2010generalized},
	which was developed for maximizing a convex
	function on a compact set. 
	However, in \cite{journee2010generalized}, only the \emph{stepsize convergence} was proved, \emph{i.e.}, $\|\matr{U}_{k+1}-\matr{U}_k\|\rightarrow 0$ under some conditions,
	while we will further establish its global convergence and linear convergence rate in \Cref{subsec-cost-increase-general}. 
\end{remark}

\subsection{Convergence analysis}\label{subsec-cost-increase-general}

For \Cref{al-general-polar-general}, we mainly prove the following results about its weak convergence, global convergence and convergence rate. 

\begin{theorem}\label{theorem-weak-conver-single}
	Suppose that the objective function \eqref{cost-function-genral-h} is convex. 
	If, in \Cref{al-general-polar-general} there exists $\delta>0$ such that 
	\begin{equation}\label{main-condition-convergence-symme-general}
	\sigma_{\rm min}(\nabla g(\matr{U}_{k-1}))>\delta
	\end{equation}
	always holds, then every accumulation point of the iterates $\matr{U}_{k}$ is a stationary point. 
\end{theorem}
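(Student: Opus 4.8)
The plan is to mirror the proof of \Cref{theorem-weak-conver-product} in this single-block setting, where the argument simplifies considerably because there are no separate blocks to track and no subsequence selection is needed. First I would show that the objective value increases monotonically and converges, exactly as in \Cref{lemma-incresease-cost-general}. Since $\matr{U}_{k}$ is the orthogonal polar factor of $\nabla g(\matr{U}_{k-1})$, \Cref{lemma-polar-decom}(i) gives $\langle \matr{U}_{k}, \nabla g(\matr{U}_{k-1})\rangle_{\R}\geq\langle \matr{U}_{k-1}, \nabla g(\matr{U}_{k-1})\rangle_{\R}$, and the gradient inequality for the convex function $g$ \cite[Eq. (3.2)]{boyd2004convex} then yields
\begin{equation*}
g(\matr{U}_{k}) - g(\matr{U}_{k-1}) \geq \langle \matr{U}_{k}-\matr{U}_{k-1}, \nabla g(\matr{U}_{k-1})\rangle_{\R}\geq 0.
\end{equation*}
Because $g$ is continuous on the compact manifold $\text{St}(r,n,\CC)$, the bounded increasing sequence $\{g(\matr{U}_{k})\}$ converges, so $g(\matr{U}_{k})-g(\matr{U}_{k-1})\to 0$.

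Next I would convert this increase into a quadratic lower bound in the step length, mirroring \Cref{lemma-main-01}. Writing the polar decomposition $\nabla g(\matr{U}_{k-1})=\matr{U}_{k}\matr{P}$, the Hermitian factor $\matr{P}$ shares the singular values of $\nabla g(\matr{U}_{k-1})$ since $\matr{U}_{k}$ has orthonormal columns; hence $\sigma_{\rm min}(\matr{P})=\sigma_{\rm min}(\nabla g(\matr{U}_{k-1}))>\delta$ by \eqref{main-condition-convergence-symme-general}. Applying \Cref{lemma-inquality-01-general} to $\matr{U}_{k},\matr{U}_{k-1},\matr{P}$ gives
\begin{equation*}
g(\matr{U}_{k}) - g(\matr{U}_{k-1}) \geq \langle \matr{U}_{k}-\matr{U}_{k-1}, \matr{U}_{k}\matr{P}\rangle_{\R}\geq \frac{\delta}{2}\|\matr{U}_{k}-\matr{U}_{k-1}\|^{2}.
\end{equation*}
Combining this with \Cref{lemma-inquality-02-09} applied to $g$ (whose orthogonal polar factor is precisely $\matr{U}_{k}$, and noting $\nabla g(\matr{U}_{k-1})\neq\matr{0}$) together with the uniform bound $\|\nabla g(\matr{U}_{k-1})\|\leq\Delta_{2}$ from \Cref{remark-smaller-delta-1}, I obtain
\begin{equation*}
g(\matr{U}_{k}) - g(\matr{U}_{k-1}) \geq \frac{\delta}{2(r+1)^{2}\Delta_{2}^{2}}\|\ProjGrad{g}{\matr{U}_{k-1}}\|^{2}.
\end{equation*}

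Finally, since the left-hand side tends to $0$, it follows that $\|\ProjGrad{g}{\matr{U}_{k-1}}\|\to 0$. As the Riemannian gradient \eqref{eq:Rie_grad} depends continuously on $\matr{U}$, every accumulation point $\matr{U}_{*}$ of $\{\matr{U}_{k}\}$ satisfies $\ProjGrad{g}{\matr{U}_{*}}=0$ and is therefore a stationary point, completing the proof. The one point I would check carefully is the identification $\sigma_{\rm min}(\matr{P})=\sigma_{\rm min}(\nabla g(\matr{U}_{k-1}))$ that licenses the use of \eqref{main-condition-convergence-symme-general}; apart from this, the single-block structure eliminates the delicate subsequence and ``occurs infinitely often'' bookkeeping that \Cref{theorem-weak-conver-product} needed, so I do not anticipate a substantial obstacle here.
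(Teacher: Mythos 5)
Your proposal is correct and takes essentially the same route as the paper: the paper likewise combines the convexity gradient inequality, \Cref{lemma-polar-decom}(i) and \Cref{lemma-inquality-01-general} (giving its \Cref{lemma-main-01-09}) with \Cref{lemma-inquality-02-09} and the bound $\Delta_{2}$ to reach the same key inequality $g(\matr{U}_{k})-g(\matr{U}_{k-1})\geq \frac{\delta}{2(r+1)^{2}\Delta_{2}^{2}}\|\ProjGrad{g}{\matr{U}_{k-1}}\|^{2}$, and then concludes stationarity of accumulation points (by contradiction along a subsequence, versus your direct whole-sequence limit --- a purely cosmetic difference). The identification you flagged, $\sigma_{\rm min}(\matr{P})=\sigma_{\rm min}(\nabla g(\matr{U}_{k-1}))$, is indeed valid since $\nabla g(\matr{U}_{k-1})^{\H}\nabla g(\matr{U}_{k-1})=\matr{P}^{2}$, and it is the same implicit step underlying the paper's \Cref{lemma-main-01}.
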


\begin{theorem}\label{theorem-main-global-convergence-symme-general}
	Suppose that the objective function  \eqref{cost-function-genral-h} is convex and real analytic.
	If, in \Cref{al-general-polar-general} there exists $\delta>0$ such that condition \eqref{main-condition-convergence-symme-general}
	always holds,
	then the iterates $\matr{U}_{k}$ converge to a stationary point $\matr{U}_{*}$. 
	If $g$ is scale (or unitarily) invariant and $\matr{U}_{*}$ is a scale (or unitarily) semi-nondegenerate point,
	then the convergence rate is linear. 
\end{theorem}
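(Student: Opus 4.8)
The plan is to mirror the argument for the product case in \Cref{theor-isolate-limit}, specialized to the single Stiefel manifold $\text{St}(r,n,\CC)$ and the single function $g$. The backbone is a \emph{sufficient increase} estimate together with a \emph{step lower bound}, which jointly verify the hypotheses of the \L{}ojasiewicz convergence result \Cref{theorem-SU15}; the linear rate then comes from upgrading the \L{}ojasiewicz exponent to $\zeta=\tfrac12$ through the Morse-Bott property. A pleasant simplification relative to \Cref{theor-isolate-limit} is that there is no inter-block coupling here: the update uses $\nabla g(\matr{U}_{k-1})$ and stationarity is measured by $\ProjGrad{g}{\matr{U}_{k-1}}$ at the \emph{same} point, so the Lipschitz constant $\rho$ used in \Cref{lemma-098} is not needed.

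First I would establish the two basic estimates. For the increase, convexity of (the extension of) $g$ gives the gradient inequality $g(\matr{U}_{k}) - g(\matr{U}_{k-1}) \ge \langle \matr{U}_{k}-\matr{U}_{k-1}, \nabla g(\matr{U}_{k-1})\rangle_{\R}$; writing $\nabla g(\matr{U}_{k-1}) = \matr{U}_{k}\matr{P}$ for the polar decomposition (so $\matr{U}_k$ is the orthogonal polar factor) and applying \Cref{lemma-inquality-01-general} with $\sigma_{\rm min}(\matr{P}) = \sigma_{\rm min}(\nabla g(\matr{U}_{k-1})) > \delta$ yields
\begin{equation*}
g(\matr{U}_{k}) - g(\matr{U}_{k-1}) \ge \frac{\delta}{2}\,\|\matr{U}_{k}-\matr{U}_{k-1}\|^{2}.
\end{equation*}
(\Cref{lemma-polar-decom}(i) justifies that the discarded term is nonnegative.) For the step lower bound, \Cref{lemma-inquality-02-09} applied to $h=g$ together with the uniform bound $\|\nabla g(\matr{U}_{k-1})\|\le \Delta_{2}$ from \Cref{remark-smaller-delta-1} gives $\|\matr{U}_{k}-\matr{U}_{k-1}\| \ge \|\ProjGrad{g}{\matr{U}_{k-1}}\| / ((r+1)\Delta_{2})$. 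Combining these two produces the sufficient-descent inequality
\begin{equation*}
g(\matr{U}_{k}) - g(\matr{U}_{k-1}) \ge \frac{\delta}{2(r+1)\Delta_{2}}\,\|\matr{U}_{k}-\matr{U}_{k-1}\|\,\|\ProjGrad{g}{\matr{U}_{k-1}}\|,
\end{equation*}
while the first estimate also shows that $g(\matr{U}_{k})=g(\matr{U}_{k-1})$ forces $\matr{U}_{k}=\matr{U}_{k-1}$.

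Global convergence then follows: $g$ is real analytic and $\text{St}(r,n,\CC)$ is a compact analytic submanifold, so \Cref{theorem-SU15} (with condition (ii) replaced as in \Cref{remark-condition-change}) applies and the whole sequence $\matr{U}_{k}$ converges to a single limit $\matr{U}_{*}$, which is a stationary point by \Cref{theorem-weak-conver-single}. For the linear rate, assume $g$ is scale (or unitarily) invariant and $\matr{U}_{*}$ is scale (or unitarily) semi-nondegenerate. I would verify that $g$ is Morse-Bott at $\matr{U}_{*}$ by taking the critical submanifold to be the group orbit
\begin{equation*}
\set{C} \eqdef \{\matr{U}_{*}\matr{R} : \matr{R}\in\set{DU}_{r}\ (\text{or}\ \UN{r})\},
\end{equation*}
which is smooth of dimension $r$ (resp.\ $r^{2}$) since $\matr{U}_{*}$ has full column rank, and consists of stationary points of equal value by invariance. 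By \Cref{remMorseBott}(ii) the Morse-Bott condition reduces to the rank identity $\rank{\Hess{g}{\matr{U}_{*}}} = \dim\text{St}(r,n,\CC) - \dim\set{C}$; since $\dim\text{St}(r,n,\CC)=2nr-r^{2}$ this right-hand side equals $2nr-r^{2}-r$ (resp.\ $2r(n-r)$), which is exactly the value supplied by semi-nondegeneracy, while \Cref{lem:f_invariant_Hessian} (resp.\ \Cref{lem:f_invariant_Hessian-2}) shows that the orbit tangent directions lie in $\mathrm{Ker}\,\Hess{g}{\matr{U}_{*}}$. With Morse-Bott in hand, \Cref{cor:PLMorseBottManifold} gives the Polyak-\L{}ojasiewicz inequality ($\zeta=\tfrac12$) near $\matr{U}_{*}$, and feeding this back into \Cref{theorem-SU15} via the step lower bound (which is precisely the safeguard hypothesis) yields the linear rate.

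The step I expect to be the main obstacle is the Morse-Bott verification, specifically confirming that $\mathrm{Ker}\,\Hess{g}{\matr{U}_{*}}$ is \emph{exactly} the orbit tangent space rather than merely containing it. The containment $\mathbf{T}_{\matr{U}_{*}}\set{C}\subseteq\mathrm{Ker}\,\Hess{g}{\matr{U}_{*}}$ is automatic from invariance, but equality is a genuine hypothesis encoded in the semi-nondegeneracy assumption; the real content is the dimension bookkeeping matching $\rank{\Hess{g}{\matr{U}_{*}}}$ against $\dim\text{St}(r,n,\CC)-\dim\set{C}$ in both the scale and unitary cases, and checking that $\set{C}$ is relatively open in $\mathrm{Crit}\,g$ so that \Cref{def-local-morse-bott}(i) is met.
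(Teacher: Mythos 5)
Your proposal is correct and follows essentially the same route as the paper's proof: the same sufficient-increase estimate (via convexity, \Cref{lemma-polar-decom}(i) and \Cref{lemma-inquality-01-general}), the same step lower bound from \Cref{lemma-inquality-02-09}, the same appeal to \Cref{theorem-SU15} with \Cref{remark-condition-change} and \Cref{theorem-weak-conver-single}, and the same Morse-Bott verification with the orbit $\set{C}=\{\matr{U}_{*}\matr{R}\}$ feeding into \Cref{cor:PLMorseBottManifold} for the linear rate. Your added dimension bookkeeping ($\dim\set{C}=r$ or $r^{2}$ against $\rank{\Hess{g}{\matr{U}_{*}}}$) and your flagging of the relative-openness requirement in \Cref{def-local-morse-bott}(i) make explicit details the paper leaves implicit, but they do not change the argument.
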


Before proving \Cref{theorem-weak-conver-single} and \Cref{theorem-main-global-convergence-symme-general}, 
we need some lemmas,
which can be shown similarly 
as in \Cref{subsec-cost-increase-general-product}. 

\begin{lemma}\label{lemma-symmetric-increasing-general}
	Suppose that the objective function \eqref{cost-function-genral-h} is convex. 
	Then, the objective value of the iterates produced by \Cref{al-general-polar-general} monotonically increases and converges. 
\end{lemma}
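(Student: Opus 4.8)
The plan is to adapt the argument for the block multiconvex case in \Cref{lemma-incresease-cost-general} to the single-manifold symmetric setting. The two ingredients are the convexity of the natural extension of $g$ to $\CC^{n\times r}$ together with the gradient inequality, and the variational characterization of the orthogonal polar factor in \Cref{lemma-polar-decom}(i). First I would record that, since $g$ is convex on $\CC^{n\times r}$, the gradient inequality \cite[Eq. (3.2)]{boyd2004convex} applied with respect to the real inner product \eqref{eq:RInner} yields
\begin{equation*}
g(\matr{U}_{k}) - g(\matr{U}_{k-1}) \geq \langle \matr{U}_{k}-\matr{U}_{k-1}, \nabla g(\matr{U}_{k-1})\rangle_{\R}.
\end{equation*}

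Next I would exploit the fact that, in \Cref{al-general-polar-general}, $\matr{U}_{k}$ is by construction the orthogonal polar factor of $\nabla g(\matr{U}_{k-1})$. Applying \Cref{lemma-polar-decom}(i) with the competitor $\matr{U}' = \matr{U}_{k-1} \in \text{St}(r,n,\CC)$ gives $\langle \matr{U}_{k}, \nabla g(\matr{U}_{k-1})\rangle_{\R} \geq \langle \matr{U}_{k-1}, \nabla g(\matr{U}_{k-1})\rangle_{\R}$, hence
\begin{equation*}
\langle \matr{U}_{k}-\matr{U}_{k-1}, \nabla g(\matr{U}_{k-1})\rangle_{\R} \geq 0.
\end{equation*}
Combining the two displays shows $g(\matr{U}_{k}) \geq g(\matr{U}_{k-1})$, so the objective value is monotonically nondecreasing along the iterates.

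Finally, I would conclude that the sequence $\{g(\matr{U}_{k})\}$ converges: since $g$ is continuous and $\text{St}(r,n,\CC)$ is compact, $g$ is bounded above, and a monotonically increasing sequence that is bounded above converges. I do not anticipate any genuine obstacle here, as the statement is the exact symmetric counterpart of \Cref{lemma-incresease-cost-general} and the proof transfers essentially verbatim once the block objective $h_{(k,i)}$ is replaced by $g$ and the block index is dropped. The only point requiring mild care is to apply the gradient inequality to the natural extension of $g$ to the Euclidean space $\CC^{n\times r}$ (rather than to its restriction to the manifold), which is legitimate precisely because the convexity hypothesis is imposed on that extension and the update in \Cref{al-general-polar-general} uses the Euclidean gradient $\nabla g(\matr{U}_{k-1})$.
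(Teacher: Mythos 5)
Your proof is correct and follows exactly the paper's own route: the paper proves this lemma by the same two-step argument used for \Cref{lemma-incresease-cost-general} (gradient inequality from convexity plus the maximizing property of the orthogonal polar factor in \Cref{lemma-polar-decom}(i)), followed by boundedness from continuity and compactness. Nothing is missing, and your remark about applying convexity to the Euclidean extension of $g$ is precisely the intended reading of the hypothesis.
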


\begin{lemma}\label{lemma-main-01-09}
	Suppose that the objective function \eqref{cost-function-genral-h} is convex. 
	If there exists $\delta>0$ such that condition 
	\eqref{main-condition-convergence-symme-general} always holds in \Cref{al-general-polar-general}, 
	then 
	\begin{equation*}
	g(\matr{U}_{k}) - g(\matr{U}_{k-1}) \geq \frac{\delta}{2} \|\matr{U}_{k}-\matr{U}_{k-1}\|^{2}.
	\end{equation*}
\end{lemma}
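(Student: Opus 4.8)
The plan is to mirror the proof of \Cref{lemma-main-01} from the non-symmetric setting, simply replacing $h_{(k,i)}$ by $g$, the block $\matr{U}_{k}^{(i)}$ by $\matr{U}_{k}$, and block multiconvexity by the convexity of $g$. The two essential ingredients are the \emph{gradient inequality} for convex functions combined with the optimality of the orthogonal polar factor (\Cref{lemma-polar-decom}(i)), which together bound the objective increase below by an inner product with the Euclidean gradient, and \Cref{lemma-inquality-01-general}, which converts that inner product into a quadratic lower bound governed by $\sigma_{\rm min}$.

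First I would record the polar decomposition $\nabla g(\matr{U}_{k-1}) = \matr{U}_{k}\matr{P}$ supplied by \Cref{al-general-polar-general}, where $\matr{P}\in\CC^{r\times r}$ is Hermitian positive semidefinite and $\matr{U}_{k}$ is the updated orthogonal polar factor. Since $\matr{U}_{k}$ has orthonormal columns, the singular values of $\nabla g(\matr{U}_{k-1})$ coincide with the eigenvalues of $\matr{P}$, so condition \eqref{main-condition-convergence-symme-general} yields $\sigma_{\rm min}(\matr{P}) = \sigma_{\rm min}(\nabla g(\matr{U}_{k-1})) > \delta$. Next, using the convexity of $g$ and the polar-factor optimality exactly as in the proof of \Cref{lemma-incresease-cost-general}, I would establish the intermediate chain
\begin{equation*}
g(\matr{U}_{k}) - g(\matr{U}_{k-1}) \geq \langle \matr{U}_{k}-\matr{U}_{k-1}, \nabla g(\matr{U}_{k-1})\rangle_{\R}\geq 0,
\end{equation*}
where the first inequality is the gradient inequality at the base point $\matr{U}_{k-1}$ and the second is \Cref{lemma-polar-decom}(i) applied with $\matr{U}=\matr{U}_{k}$ and $\matr{U}'=\matr{U}_{k-1}$. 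Finally, I would invoke \Cref{lemma-inquality-01-general} with $\matr{U}=\matr{U}_{k}$, $\matr{U}'=\matr{U}_{k-1}$ and the above $\matr{P}$ to get
\begin{equation*}
\langle \matr{U}_{k}-\matr{U}_{k-1}, \matr{U}_{k}\matr{P}\rangle_{\R} \geq \frac{1}{2}\sigma_{\rm min}(\matr{P})\|\matr{U}_{k}-\matr{U}_{k-1}\|^{2}\geq \frac{\delta}{2}\|\matr{U}_{k}-\matr{U}_{k-1}\|^{2},
\end{equation*}
which, combined with the previous display and the identity $\matr{U}_{k}\matr{P}=\nabla g(\matr{U}_{k-1})$, gives the claim.

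I do not expect a genuine obstacle, since the argument is a direct specialization of the already-established \Cref{lemma-main-01} to a single Stiefel manifold. The only points demanding care are the bookkeeping identification $\sigma_{\rm min}(\matr{P}) = \sigma_{\rm min}(\nabla g(\matr{U}_{k-1}))$ and the verification that the gradient inequality is applied at $\matr{U}_{k-1}$, so that the gradient appearing is precisely $\nabla g(\matr{U}_{k-1})$, matching the argument of the polar decomposition. The hypotheses of \Cref{lemma-inquality-01-general}, namely Hermitian positive semidefiniteness of $\matr{P}$ and $\sigma_{\rm min}(\matr{P})>0$, are guaranteed by \Cref{lemma-polar-decom} and condition \eqref{main-condition-convergence-symme-general}, so no additional work is required.
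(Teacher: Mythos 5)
Your proposal is correct and follows exactly the route the paper intends: the paper omits the proof of \Cref{lemma-main-01-09}, stating only that it "can be shown similarly as in \Cref{subsec-cost-increase-general-product}," and your argument is precisely that specialization of \Cref{lemma-main-01} — gradient inequality plus \Cref{lemma-polar-decom}(i) to get the inner-product lower bound, then \Cref{lemma-inquality-01-general} with the polar factor $\matr{P}$ of $\nabla g(\matr{U}_{k-1})$ to obtain the quadratic bound. The bookkeeping points you flag (identifying $\sigma_{\rm min}(\matr{P})$ with $\sigma_{\rm min}(\nabla g(\matr{U}_{k-1}))$ and evaluating the gradient at $\matr{U}_{k-1}$) are handled correctly, so nothing is missing.
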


\begin{corollary}\label{corollary-main-01-04}
	Suppose that the objective function \eqref{cost-function-genral-h} is convex. 
	In the $k$-th iteration of \Cref{al-general-polar-general},
	if $\sigma_{\rm min}(\nabla g(\matr{U}_{k-1}))>0$,
	then
	$g(\matr{U}_{k}) = g(\matr{U}_{k-1})$ implies that $\matr{U}_{k} = \matr{U}_{k-1}$.
\end{corollary}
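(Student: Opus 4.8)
The plan is to reproduce, at the single iteration in question, the quadratic progress estimate already recorded in \Cref{lemma-main-01-09}, and then read off the conclusion. Concretely, I would first establish the one-step bound
$$g(\matr{U}_{k}) - g(\matr{U}_{k-1}) \geq \tfrac{1}{2}\,\sigma_{\rm min}(\nabla g(\matr{U}_{k-1}))\,\|\matr{U}_{k}-\matr{U}_{k-1}\|^2.$$
Granting this, the corollary is immediate: if $g(\matr{U}_{k}) = g(\matr{U}_{k-1})$ then the left-hand side vanishes, and since $\sigma_{\rm min}(\nabla g(\matr{U}_{k-1}))>0$ by hypothesis, we are forced to have $\|\matr{U}_{k}-\matr{U}_{k-1}\|=0$, that is, $\matr{U}_{k}=\matr{U}_{k-1}$.

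To obtain the displayed estimate I would proceed in three steps. First, write the polar decomposition $\nabla g(\matr{U}_{k-1}) = \matr{U}_{k}\matr{P}$ produced by the polar decomposition step of \Cref{al-general-polar-general}, where $\matr{U}_{k}$ is the orthogonal polar factor and $\matr{P}$ is Hermitian positive semidefinite by \Cref{lemma-polar-decom}; since the singular values of $\nabla g(\matr{U}_{k-1})$ coincide with the eigenvalues of $\matr{P}$, the hypothesis $\sigma_{\rm min}(\nabla g(\matr{U}_{k-1}))>0$ forces $\matr{P}$ to be positive definite with $\sigma_{\rm min}(\matr{P})=\sigma_{\rm min}(\nabla g(\matr{U}_{k-1}))$. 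Second, by the gradient inequality for convex functions \cite[Eq. (3.2)]{boyd2004convex}, applied to the extension of $g$ to $\CC^{n\times r}$, together with the optimality of the orthogonal polar factor in \Cref{lemma-polar-decom}(i), I get
$$g(\matr{U}_{k}) - g(\matr{U}_{k-1}) \geq \langle \matr{U}_{k}-\matr{U}_{k-1},\, \nabla g(\matr{U}_{k-1})\rangle_{\R} \geq 0.$$
Third, I apply \Cref{lemma-inquality-01-general} with $\matr{U}=\matr{U}_{k}$ and $\matr{U}'=\matr{U}_{k-1}$ to the product $\matr{U}_{k}\matr{P}=\nabla g(\matr{U}_{k-1})$, which bounds $\langle \matr{U}_{k}-\matr{U}_{k-1},\, \nabla g(\matr{U}_{k-1})\rangle_{\R}$ from below by $\tfrac{1}{2}\sigma_{\rm min}(\matr{P})\|\matr{U}_{k}-\matr{U}_{k-1}\|^2$. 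Chaining these two inequalities yields the one-step estimate.

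This is exactly the single-block specialization of the reasoning behind \Cref{lemma-main-01-09}, itself the symmetric counterpart of \Cref{lemma-main-01}, so there is no genuine obstacle. The one point deserving care is that \Cref{lemma-main-01-09} is stated under the assumption that condition \eqref{main-condition-convergence-symme-general} holds at \emph{every} iteration with a single uniform $\delta$, whereas here the hypothesis is imposed only at the one index $k$ and no uniform bound is available. I would therefore not invoke that lemma as a black box, but instead rerun its one-step argument using the local quantity $\sigma_{\rm min}(\nabla g(\matr{U}_{k-1}))$ in place of $\delta$; everything else in the derivation carries over verbatim.
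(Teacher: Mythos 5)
Your proposal is correct and follows essentially the same route as the paper: the corollary is intended as an immediate consequence of the one-step estimate behind \Cref{lemma-main-01-09} (the symmetric counterpart of \Cref{lemma-main-01}), obtained by chaining the convexity gradient inequality, the optimality of the orthogonal polar factor from \Cref{lemma-polar-decom}(i), and the quadratic lower bound of \Cref{lemma-inquality-01-general}, exactly as you do. Your observation that the uniform $\delta$ in \Cref{lemma-main-01-09} should be replaced by the local quantity $\sigma_{\rm min}(\nabla g(\matr{U}_{k-1}))$ at the single iteration $k$ is the right reading of the paper's (implicit) argument, mirroring how \Cref{corollary-main-01} follows from the proof of \Cref{lemma-main-01} in the product-manifold case.
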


\paragraph{Proof of \Cref{theorem-weak-conver-single}}
By \Cref{lemma-main-01-09} and \Cref{lemma-inquality-02-09},
we have 
\begin{equation}\label{eq-inequality-import-g-general-2}
g(\matr{U}_{k}) - g(\matr{U}_{k-1}) \geq 
\frac{\delta}{2 (r+1)^2\Delta_{2}^2}
\|\ProjGrad{g}{\matr{U}_{k-1}}\|^2.
\end{equation}
Assume that $\matr{U}_{*}$ is an accumulation point and $\|\ProjGrad{g}{\matr{U}_{*}}\|\neq 0$.
Then there exists a subsequence $\{\matr{U}_{k_\ell}\}$ converging to $\matr{U}_{*}$.
Let $\ell\rightarrow\infty$. 
We see that, in \eqref{eq-inequality-import-g-general-2},
the left side converges to 0,
while the right side converges to $\frac{\delta}{2 (r+1)^2\Delta_{2}^2}\|\ProjGrad{g}{\matr{U}_{*}}\|^2> 0$.
This is impossible, and thus $\matr{U}_{*}$ is a stationary point. 
The proof is complete.  

\paragraph{Proof of \Cref{theorem-main-global-convergence-symme-general}}
By \Cref{lemma-main-01-09} and \Cref{lemma-inquality-02-09},
we have  
\begin{equation*}
g(\matr{U}_{k}) - g(\matr{U}_{k-1}) \geq 
\frac{\delta}{2(r+1)\Delta_{2}}
\|\matr{U}_{k}-\matr{U}_{k-1}\|\|\ProjGrad{g}{\matr{U}_{k-1}}\|.
\end{equation*}
It is clear that 
$g(\matr{U}_{k+1}) = g(\matr{U}_{k})$ implies that $\matr{U}_{k+1} = \matr{U}_{k}$ by \Cref{lemma-main-01-09}. 
Then, by \Cref{theorem-SU15} and \Cref{remark-condition-change}, it follows that the iterates converge to $\matr{U}_{*}$. 
Moreover, by \Cref{theorem-weak-conver-single}, $\matr{U}_{*}$ is a stationary point. 
If $g$ is scale (or unitarily) invariant and $\matr{U}_{*}$ is a scale (or unitarily) semi-nondegenerate point,
by \Cref{def-local-morse-bott} and \Cref{remMorseBott}(ii), 
we obtain that the function $g$ is Morse-Bott at $\matr{U}_{*}$
by letting 
$\set{C} \eqdef\left\{\matr{U}_{*}\matr{R}:
\matr{R} \in\set{DU}_{r}\ (\textrm{or}\ \UN{r})\right\}.$
Then the proof is complete by \Cref{cor:PLMorseBottManifold}, \Cref{lemma-inquality-02-09} and \Cref{theorem-SU15}.  

\subsection{Algorithm PDOI-S}
As in \Cref{sec:APDOI_S},
we now propose the following shifted version of \Cref{al-general-polar-general} to rid the dependence of convergence results on condition \eqref{main-condition-convergence-symme-general}.
This variant is called Algorithm PDOI-S. 

\begin{algorithm}
	\caption{PDOI-S}\label{al-general-polar-general-s}
	\begin{algorithmic}[1]
		\STATE{{\bf Input:} starting point $\matr{U}_{0}$, $\gamma>\Delta_{2}$.}
		\STATE{{\bf Output:} $\matr{U}_{k}$, $k\geq 1$.}
		\FOR{$k=1,2,\cdots,$}
		\STATE Compute $\nabla g(\matr{U}_{k-1})$;
		\STATE Compute the polar decomposition of $\nabla g(\matr{U}_{k-1})+\gamma\matr{U}_{k-1}$;
		\STATE Update $\matr{U}_{k}$ to be the orthogonal polar factor.
		\ENDFOR
	\end{algorithmic}
\end{algorithm}

We can prove the following lemma similarly as for \Cref{lem:sigma_delta}. 

\begin{lemma}
	Let $\delta=\gamma-\Delta_{2}>0$. 
	Then, in \Cref{al-general-polar-general-s}, we always have that 
	\begin{equation}\label{eq:Shifted_conditon_S}
	\sigma_{\rm min}(\nabla g(\matr{U}_{k-1})+\gamma\matr{U}_{k-1})>\delta.
	\end{equation}
\end{lemma}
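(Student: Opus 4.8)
The plan is to follow the proof of \Cref{lem:sigma_delta} essentially verbatim, merely transcribing the block notation $h_{(k,i)},\matr{U}^{(i)}_{k-1},\Delta_{1}$ into the single-manifold notation $g,\matr{U}_{k-1},\Delta_{2}$. The organizing observation is the variational characterization $\sigma_{\rm min}(\matr{M})=\min_{\|\vect{x}\|=1}\|\matr{M}\vect{x}\|$, so it suffices to bound $\|(\nabla g(\matr{U}_{k-1})+\gamma\matr{U}_{k-1})\vect{x}\|$ from below, uniformly over all unit vectors $\vect{x}\in\CC^{r}$.

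First I would fix a unit vector $\vect{x}$ and set $\vect{v}_1=\nabla g(\matr{U}_{k-1})\vect{x}$ and $\vect{v}_2=\matr{U}_{k-1}\vect{x}$. By \Cref{remark-smaller-delta-1}, which defines $\Delta_{2}$ as the maximum of the Frobenius norm $\|\nabla g\|$ over the compact manifold $\text{St}(r,n,\CC)$, and since the operator norm is dominated by the Frobenius norm, we get $\|\vect{v}_1\|\leq\|\nabla g(\matr{U}_{k-1})\|\leq\Delta_{2}$; and since $\matr{U}_{k-1}^{\H}\matr{U}_{k-1}=\matr{I}_r$, we have $\|\vect{v}_2\|^2=\vect{x}^{\H}\matr{U}_{k-1}^{\H}\matr{U}_{k-1}\vect{x}=\|\vect{x}\|^2=1$.

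Next I would expand the squared norm of the shifted image using the real inner product \eqref{eq:RInner} and bound the cross term by Cauchy--Schwarz:
\begin{align*}
\|\vect{v}_1+\gamma\vect{v}_2\|^2
&= \|\vect{v}_1\|^2 + \gamma^2\|\vect{v}_2\|^2 + 2\gamma\langle\vect{v}_1,\vect{v}_2\rangle_{\R} \\
&\geq \|\vect{v}_1\|^2 + \gamma^2 - 2\gamma\|\vect{v}_1\| = (\gamma-\|\vect{v}_1\|)^2.
\end{align*}
Because $\gamma>\Delta_{2}\geq\|\vect{v}_1\|$, the quantity $\gamma-\|\vect{v}_1\|$ is nonnegative and bounded below by $\gamma-\Delta_{2}=\delta>0$, so $(\gamma-\|\vect{v}_1\|)^2\geq\delta^2$. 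Taking the minimum over all unit $\vect{x}$ then yields the estimate in \eqref{eq:Shifted_conditon_S}.

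Since the argument is a direct transcription, I do not expect any genuine obstacle. The one place requiring care is the sign condition $\gamma-\|\vect{v}_1\|\geq0$, which is what licenses passing from the completed square to the bound $\delta^2$ (squaring reverses order on the negatives), and this is exactly where the standing hypothesis $\gamma>\Delta_{2}$ enters. A secondary, purely cosmetic subtlety is strict versus non-strict inequality: the chain above delivers $\geq\delta$ (matching what the proof of \Cref{lem:sigma_delta} literally establishes), and the strict form written in \eqref{eq:Shifted_conditon_S} follows from the same slack, since $\|\vect{v}_1\|<\Delta_{2}$ away from the degenerate configuration in which the maximizer of $\|\nabla g\|$ coincides with the current iterate and $\nabla g(\matr{U}_{k-1})$ acts isometrically along $\vect{x}$.
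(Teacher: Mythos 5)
Your proposal is correct and is essentially the paper's own argument: the paper proves this lemma by declaring it ``similar to \Cref{lem:sigma_delta},'' whose proof is exactly your expansion $\|\vect{v}_1+\gamma\vect{v}_2\|^2\geq\|\vect{v}_1\|^2+\gamma^2-2\gamma\|\vect{v}_1\|=(\gamma-\|\vect{v}_1\|)^2\geq\delta^2$ with $\vect{v}_1=\nabla g(\matr{U}_{k-1})\vect{x}$, $\vect{v}_2=\matr{U}_{k-1}\vect{x}$. The strict-versus-nonstrict issue you flag is inherited from the paper itself (its proof of \Cref{lem:sigma_delta} likewise only delivers $\geq\delta$), so your transcription matches the paper's proof in both substance and in this small slack.
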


For \Cref{al-general-polar-general-s}, based on \eqref{eq:Shifted_conditon_S}, we can prove the following convergence result without further assumption.
The proof is similar to that in \Cref{subsec-cost-increase-general}. 
We omit the details here.

\begin{theorem}\label{theorem-main-global-convergence-symme-general-s}
	Suppose that the objective function  \eqref{cost-function-genral-h} is convex and real analytic.
	Then the iterates $\matr{U}_{k}$ produced by \Cref{al-general-polar-general-s} converge to a stationary point $\matr{U}_{*}$.
	If $g$ is scale (or unitarily) invariant and $\matr{U}_{*}$ is a scale (or unitarily) semi-nondegenerate point,
	then the convergence rate is linear.  
\end{theorem}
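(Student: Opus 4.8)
The plan is to reproduce verbatim the Łojasiewicz-based argument of \Cref{theorem-main-global-convergence-symme-general}, the only structural difference being that the shift by $\gamma\matr{U}_{k-1}$ makes the minimal-singular-value hypothesis \eqref{main-condition-convergence-symme-general} automatic, so that no extra assumption is needed. First I would fix $\delta=\gamma-\Delta_{2}>0$ and invoke \eqref{eq:Shifted_conditon_S}, which guarantees that the matrix $\nabla g(\matr{U}_{k-1})+\gamma\matr{U}_{k-1}$ whose orthogonal polar factor defines $\matr{U}_{k}$ has $\sigma_{\rm min}>\delta$ at every iteration of \Cref{al-general-polar-general-s}. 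The monotone-ascent step (the analogue of \Cref{lemma-symmetric-increasing-general}) then follows from \Cref{lemma-polar-decom}(i) together with the elementary fact $\langle\matr{U}_{k}-\matr{U}_{k-1},\gamma\matr{U}_{k-1}\rangle_{\R}\le 0$ on $\text{St}(r,n,\CC)$, since $\langle\matr{U}_{k},\matr{U}_{k-1}\rangle_{\R}\le r$: this lets me pass from the shifted inner product $\langle\matr{U}_{k}-\matr{U}_{k-1},\nabla g(\matr{U}_{k-1})+\gamma\matr{U}_{k-1}\rangle_{\R}\ge 0$ to $\langle\matr{U}_{k}-\matr{U}_{k-1},\nabla g(\matr{U}_{k-1})\rangle_{\R}\ge 0$, and then the gradient inequality of the convex $g$ gives $g(\matr{U}_{k})-g(\matr{U}_{k-1})\ge 0$.

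Next I would establish the two quantitative estimates that feed \Cref{theorem-SU15}. A sufficient-ascent bound $g(\matr{U}_{k})-g(\matr{U}_{k-1})\ge\frac{\delta}{2}\|\matr{U}_{k}-\matr{U}_{k-1}\|^{2}$ (the analogue of \Cref{lemma-main-01-09}) follows by applying \Cref{lemma-inquality-01-general} to the Hermitian polar factor of the shifted gradient, whose minimal eigenvalue exceeds $\delta$. For the step-size bound I would prove the shifted analogue of \Cref{lemma-inquality-02-09}, namely $\|\matr{U}_{k}-\matr{U}_{k-1}\|\ge\|\ProjGrad{g}{\matr{U}_{k-1}}\|/\big((r+1)\|\nabla g(\matr{U}_{k-1})+\gamma\matr{U}_{k-1}\|\big)$. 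The crucial observation here is that $\gamma\matr{U}_{k-1}$ lies in the normal space, so ${\rm Proj}_{\matr{U}_{k-1}}(\gamma\matr{U}_{k-1})=\matr{0}$ by \eqref{eq:proj_Stiefel} and the Riemannian gradient $\ProjGrad{g}{\matr{U}_{k-1}}$ is unchanged by the shift, while trivially $\|\nabla g(\matr{U}_{k-1})+\gamma\matr{U}_{k-1}\|\le\Delta_{2}+\gamma\sqrt{r}$. Combining the two estimates yields both condition (i) of \Cref{theorem-SU15}, in the form $g(\matr{U}_{k})-g(\matr{U}_{k-1})\ge c\,\|\matr{U}_{k}-\matr{U}_{k-1}\|\,\|\ProjGrad{g}{\matr{U}_{k-1}}\|$ for a constant $c>0$, and the safeguard \eqref{eq:safeguard}.

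With these in hand, condition (ii) of \Cref{theorem-SU15} in the relaxed form of \Cref{remark-condition-change} (that $g(\matr{U}_{k})=g(\matr{U}_{k-1})$ forces $\matr{U}_{k}=\matr{U}_{k-1}$) is immediate from the sufficient-ascent bound since $\delta>0$. Because $\text{St}(r,n,\CC)$ is a real analytic submanifold and $g$ is real analytic, \Cref{theorem-SU15} applies and the iterates converge to a single limit $\matr{U}_{*}$. To see that $\matr{U}_{*}$ is stationary, I would chain the two estimates into $\|\ProjGrad{g}{\matr{U}_{k-1}}\|^{2}\lesssim g(\matr{U}_{k})-g(\matr{U}_{k-1})\to 0$ (the objective being monotone and bounded on the compact $\text{St}(r,n,\CC)$), whence $\ProjGrad{g}{\matr{U}_{*}}=\matr{0}$; this simultaneously recovers the weak-convergence statement of \Cref{theorem-weak-conver-single} without its extra hypothesis.

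For the linear rate, assume $g$ is scale (or unitarily) invariant and $\matr{U}_{*}$ is a scale (or unitarily) semi-nondegenerate point. I would take $\set{C}=\{\matr{U}_{*}\matr{R}:\matr{R}\in\set{DU}_{r}\ (\text{or}\ \UN{r})\}$, which is a smooth submanifold of $\text{St}(r,n,\CC)$ contained in ${\rm Crit}\,g$ by invariance. \Cref{lem:f_invariant_Hessian} and \Cref{lem:f_invariant_Hessian-2} show that the orbit directions lie in the kernel of the Hessian, i.e. $\mathbf{T}_{\matr{U}_{*}}\set{C}\subseteq{\rm Ker}\,\Hess{g}{\matr{U}_{*}}$, and the semi-nondegeneracy hypothesis gives $\rank{\Hess{g}{\matr{U}_{*}}}=\dim\text{St}(r,n,\CC)-\dim\set{C}$; the resulting dimension count forces equality of these two spaces, so $g$ is Morse-Bott at $\matr{U}_{*}$ by \Cref{remMorseBott}(ii). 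Then \Cref{cor:PLMorseBottManifold} yields the Polyak--Łojasiewicz inequality (the exponent $\zeta=\frac{1}{2}$ in \eqref{eq:Lojasiewicz}), and \Cref{theorem-SU15} with the safeguard \eqref{eq:safeguard} delivers the geometric rate $Ce^{-ck}$. I expect the only genuinely delicate step to be this last Morse-Bott verification: everything reduces to confirming that ${\rm Ker}\,\Hess{g}{\matr{U}_{*}}$ is \emph{exactly} the tangent space to the orbit $\set{C}$, and this equality is precisely what the notion of a semi-nondegenerate point is engineered to guarantee through the matching rank values $2nr-r^{2}-r$ and $2r(n-r)$.
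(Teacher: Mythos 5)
Your proposal is correct and follows essentially the same route as the paper's (omitted) proof: the shifted singular-value bound \eqref{eq:Shifted_conditon_S}, the monotone-ascent argument via $\langle\matr{U}_{k}-\matr{U}_{k-1},\gamma\matr{U}_{k-1}\rangle_{\Re}\le 0$, the shifted analogues of \Cref{lemma-main-01-09} and \Cref{lemma-inquality-02-09} (using that ${\rm Proj}_{\matr{U}}(\gamma\matr{U})=\matr{0}$, exactly as in the paper's lemmas for \Cref{al-general-polar-S}), then \Cref{theorem-SU15} with \Cref{remark-condition-change}, and finally the Morse-Bott orbit argument with $\set{C}=\{\matr{U}_{*}\matr{R}\}$ for the linear rate. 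The quantitative constants you obtain (e.g. the denominator $(r+1)(\Delta_{2}+\gamma\sqrt{r})$) match the paper's shifted estimates, so no gap remains.
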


\subsection{Special case: the unit sphere}\label{subsection:spe_unit_sphere}

We say that the objective function \eqref{cost-function-genral-h} is \emph{$\frac{1}{\beta}-$homo-} \emph{geneous}, if there exists some fixed $\beta\in (0,1)$ such that 
\begin{equation}\label{eq-general-assumption}
g(\matr{U}) = \beta\langle \matr{U}, \nabla g(\matr{U})\rangle_{\R}
\end{equation}
for any $\matr{U}\in\text{St}(r,n,\CC)$.
Suppose that the objective function  \eqref{definition-f} is restricted $\frac{1}{\alpha}-$homogene- ous and symmetric on $\Omega_{s}$. 
It follows from \eqref{eq-grad-rela-g-h} that the corresponding $g$ is $\frac{1}{\beta}-$homogeneous with $\beta=\frac{\alpha}{d}$.

If $r=1$, by similar derivations as in \Cref{sec:condition_conver},
we obtain that the condition \eqref{main-condition-convergence-symme-general} is always satisfied with $\delta=\frac{1}{2\beta}g(\matr{U}_{0})$ by \eqref{eq-general-assumption} and \Cref{lemma-symmetric-increasing-general}. 
The next result follows directly from \Cref{theorem-main-global-convergence-symme-general}.

\begin{corollary}\label{coro:r1_symmetric}
	Let $r=1$. 
	Suppose that the objective function \eqref{cost-function-genral-h} is $\frac{1}{\beta}-$homogeneous, convex and real analytic.
	Then the iterates $\matr{U}_{k}$ produced by \Cref{al-general-polar-general} converge to a stationary point $\matr{U}_{*}$. 
	If $g$ is scale (or unitarily) invariant and $\matr{U}_{*}$ is a scale (or unitarily) semi-nondegenerate point,
	then the convergence rate is linear.  
\end{corollary}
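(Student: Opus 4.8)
The plan is to reduce Corollary~\ref{coro:r1_symmetric} entirely to the already-established global convergence theorem \Cref{theorem-main-global-convergence-symme-general}. That theorem delivers exactly the desired conclusion (convergence to a stationary point $\matr{U}_*$, plus a linear rate when $g$ is scale/unitarily invariant and $\matr{U}_*$ is semi-nondegenerate) under the single hypothesis that the lower-bound condition \eqref{main-condition-convergence-symme-general} holds throughout \Cref{al-general-polar-general}. So the only thing I actually need to verify is that the homogeneity assumption, together with $r=1$, forces \eqref{main-condition-convergence-symme-general} to hold with an explicit $\delta$. Once that is in hand, the corollary follows verbatim by invoking the theorem.

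The key steps proceed as follows. First I would note that when $r=1$ the iterate $\matr{U}_{k-1}$ is a unit vector, so $\nabla g(\matr{U}_{k-1})$ is a column vector and therefore possesses a single singular value, namely $\sigma_{\rm min}(\nabla g(\matr{U}_{k-1})) = \|\nabla g(\matr{U}_{k-1})\|$. Thus controlling the minimal singular value is the same as lower-bounding the norm of the gradient. Second, I would exploit the $\frac{1}{\beta}$-homogeneity relation \eqref{eq-general-assumption} combined with the Cauchy--Schwarz inequality for $\RInner{\cdot}{\cdot}$ and the fact that $\|\matr{U}_{k-1}\|=1$: this gives
\begin{equation*}
\|\nabla g(\matr{U}_{k-1})\| \geq \RInner{\matr{U}_{k-1}}{\nabla g(\matr{U}_{k-1})} = \frac{1}{\beta}\, g(\matr{U}_{k-1}).
\end{equation*}
Third, I would invoke \Cref{lemma-symmetric-increasing-general}, which guarantees that the objective value is monotonically nondecreasing along the iterates of \Cref{al-general-polar-general}; hence $g(\matr{U}_{k-1}) \geq g(\matr{U}_0) > 0$ for every $k$, the positivity of $g(\matr{U}_0)$ being the standing assumption on the starting point. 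Chaining these, $\sigma_{\rm min}(\nabla g(\matr{U}_{k-1})) = \|\nabla g(\matr{U}_{k-1})\| \geq \frac{1}{\beta} g(\matr{U}_0) > 0$ for all $k$, so \eqref{main-condition-convergence-symme-general} is satisfied with, say, $\delta = \frac{1}{2\beta} g(\matr{U}_0)$. This is precisely the parallel of the argument already carried out in \Cref{sec:condition_conver} for the non-symmetric product case, so I would simply point to those derivations rather than repeat them.

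There is essentially no hard part here; the result is a corollary in the literal sense, and the main work has been front-loaded into \Cref{theorem-main-global-convergence-symme-general} and \Cref{lemma-symmetric-increasing-general}. If anything requires a moment of care, it is confirming that the scalar case genuinely makes the singular value equal to the Euclidean norm (so that the homogeneity-based norm bound is really a bound on $\sigma_{\rm min}$, not merely on some larger quantity), and that the monotone-increase lemma applies under the convexity hypothesis that the corollary carries. Both are immediate. I would therefore structure the proof as: establish the uniform bound \eqref{main-condition-convergence-symme-general} via homogeneity and monotonicity as above, then write ``the conclusion follows directly from \Cref{theorem-main-global-convergence-symme-general}.''
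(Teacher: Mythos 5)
Your proposal is correct and follows exactly the paper's own argument: the paper likewise verifies condition \eqref{main-condition-convergence-symme-general} with $\delta=\frac{1}{2\beta}g(\matr{U}_{0})$ by combining the homogeneity relation \eqref{eq-general-assumption}, the Cauchy--Schwarz bound with $\|\matr{U}_{k-1}\|=1$, the monotonicity guaranteed by \Cref{lemma-symmetric-increasing-general}, and the observation that a vector has a single singular value equal to its norm, and then invokes \Cref{theorem-main-global-convergence-symme-general}. Your write-up just spells out the ``similar derivations as in \Cref{sec:condition_conver}'' that the paper references rather than repeats.
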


\section{Simultaneous approximate tensor diagonalization}\label{sec:objec_01}

\subsection{Euclidean gradient}\label{subsec:eucli_prob_01}

In this subsection, we mainly prove that the objective function \eqref{cost-fn-general-1} is restricted $\frac{1}{\alpha}-$homogeneous with $\alpha=\frac{1}{2}$ and block multiconvex.
Then all the algorithms and convergence results in \Cref{sec-opt-manifold-product} apply to this function. 
The following simple lemma follows directly from the gradient inequality \cite[Eq. (3.2)]{boyd2004convex}.

\begin{lemma}\label{remar:conv_inequality}
	If the objective function \eqref{definition-f} is restricted $\frac{1}{\alpha}-$homogeneous, then it is block multiconvex if and only if the restricted function $h_{(i)}$ in \eqref{definition-h} satisfies
	\begin{equation*}\label{eq-grad-convex-inequality}
	\langle\matr{U}', \nabla h_{(i)}(\matr{U})\rangle_{\R} \leq (1-\alpha)\langle\matr{U}, \nabla h_{(i)}(\matr{U})\rangle_{\R} + \alpha\langle\matr{U}', \nabla h_{(i)}(\matr{U}')\rangle_{\R},
	\end{equation*}
	for any $\matr{U}, \matr{U}'\in\CC^{n_i\times r_i}$. 
\end{lemma}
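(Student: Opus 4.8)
The plan is to prove the claimed equivalence by combining the \emph{first-order characterization of convexity} for the differentiable function $h_{(i)}$ with the \emph{Euler homogeneity identity} supplied by the restricted $\frac{1}{\alpha}$-homogeneity assumption. Recall that a differentiable function $h_{(i)}$ on $\CC^{n_i\times r_i}$ is convex if and only if the gradient inequality \cite[Eq. (3.2)]{boyd2004convex}
\begin{equation*}
h_{(i)}(\matr{U}') - h_{(i)}(\matr{U}) \geq \langle \matr{U}'-\matr{U}, \nabla h_{(i)}(\matr{U})\rangle_{\R}
\end{equation*}
holds for all $\matr{U},\matr{U}'\in\CC^{n_i\times r_i}$. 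The restricted $\frac{1}{\alpha}$-homogeneity \eqref{eq-general-assumption-product}, applied at the two points $\matr{U}$ and $\matr{U}'$, gives $h_{(i)}(\matr{U}) = \alpha\langle \matr{U}, \nabla h_{(i)}(\matr{U})\rangle_{\R}$ and $h_{(i)}(\matr{U}') = \alpha\langle \matr{U}', \nabla h_{(i)}(\matr{U}')\rangle_{\R}$; these two identities are exactly what is needed to turn the gradient inequality into the stated inequality, and conversely.

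For the forward direction, I would assume $h_{(i)}$ convex, write out the gradient inequality, and substitute the two Euler identities for $h_{(i)}(\matr{U})$ and $h_{(i)}(\matr{U}')$. After expanding $\langle\matr{U}'-\matr{U}, \nabla h_{(i)}(\matr{U})\rangle_{\R}$ and collecting the $\langle\matr{U},\nabla h_{(i)}(\matr{U})\rangle_{\R}$ terms with coefficient $1-\alpha$, the inequality rearranges precisely into
\begin{equation*}
\langle\matr{U}', \nabla h_{(i)}(\matr{U})\rangle_{\R} \leq (1-\alpha)\langle\matr{U}, \nabla h_{(i)}(\matr{U})\rangle_{\R} + \alpha\langle\matr{U}', \nabla h_{(i)}(\matr{U}')\rangle_{\R}.
\end{equation*}
For the converse, I would run the same substitution in reverse: starting from the displayed inequality, replace $\alpha\langle\matr{U}',\nabla h_{(i)}(\matr{U}')\rangle_{\R}$ by $h_{(i)}(\matr{U}')$ and $(1-\alpha)\langle\matr{U},\nabla h_{(i)}(\matr{U})\rangle_{\R}$ by $\langle\matr{U},\nabla h_{(i)}(\matr{U})\rangle_{\R}-h_{(i)}(\matr{U})$, which recovers the gradient inequality and hence convexity. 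Since the whole argument is a single chain of equivalent algebraic rearrangements, both implications are handled at once.

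The only delicate point, and it is minor, is that the gradient inequality and the stated inequality both quantify over all $\matr{U},\matr{U}'\in\CC^{n_i\times r_i}$, whereas the homogeneity identity \eqref{eq-general-assumption-product} is phrased on $\text{St}(r_i,n_i,\CC)$. I would handle this by invoking that the natural extension of $h_{(i)}$ is genuinely homogeneous of degree $\frac{1}{\alpha}$, so that Euler's identity $h_{(i)}(\matr{X}) = \alpha\langle\matr{X},\nabla h_{(i)}(\matr{X})\rangle_{\R}$ in fact holds at every $\matr{X}\in\CC^{n_i\times r_i}$; this is the form actually needed at the general points $\matr{U},\matr{U}'$. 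I expect no genuine obstacle beyond this bookkeeping, since the result is a direct consequence of the two first-order identities above.
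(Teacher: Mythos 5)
Your proof is correct and is essentially the paper's own argument: the paper dismisses this lemma with the single remark that it ``follows directly from the gradient inequality'' of \cite{boyd2004convex}, which is precisely the substitution of the two Euler identities $h_{(i)}(\matr{U}) = \alpha\langle\matr{U},\nabla h_{(i)}(\matr{U})\rangle_{\R}$ and $h_{(i)}(\matr{U}') = \alpha\langle\matr{U}',\nabla h_{(i)}(\matr{U}')\rangle_{\R}$ into the first-order convexity characterization that you carry out, with both directions following from the reversibility of the substitution. Your closing remark about the domain mismatch is also well taken: the equivalence as stated (quantified over all $\matr{U},\matr{U}'\in\CC^{n_i\times r_i}$) does require the Euler identity for the natural extension on all of $\CC^{n_i\times r_i}$, not merely on $\text{St}(r_i,n_i,\CC)$, which is exactly how the paper uses it in its applications (e.g.\ \Cref{coro:h_homege}, where the identity is established on all of $\CC^{n\times r}$).
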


\begin{lemma}\label{lemma:h_U_T-sp}
	Let $\vect{v}\in\CC^{n}$.
	Define
	\begin{equation}\label{eq:form_h_01-sp-sp}
	h(\vect{u}) = |w|^2,
	\end{equation}
	where $\vect{u}\in\CC^{n}$ and $w=\vect{u}^{\dagger}\vect{v}$. 
	Then we have  
	\begin{align}\label{eq:delta_h-sp}
	\nabla h(\vect{u}) =
	\begin{cases}
	2w^{*}\vect{v}, & \text{ if } (\cdot)^{\dagger}=(\cdot)^{\H}; \\
	2w\vect{v}^{*},  & \text{ if } (\cdot)^{\dagger}=(\cdot)^{\T}.
	\end{cases}
	\end{align}
\end{lemma}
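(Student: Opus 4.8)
The plan is to compute the Euclidean gradient directly from the Wirtinger identity \eqref{eq:matr_Euci_grad}, which for a real-valued function $h$ on $\CC^{n}$ reads $\nabla h(\vect{u}) = 2\,\frac{\partial h}{\partial\vect{u}^{*}}$. Since $h(\vect{u}) = |w|^{2}$ is indeed real-valued, the whole task reduces to evaluating this single Wirtinger derivative in each of the two cases $(\cdot)^{\dagger} = (\cdot)^{\H}$ and $(\cdot)^{\dagger} = (\cdot)^{\T}$, treating $\vect{u}$ and $\vect{u}^{*}$ as formally independent variables in the sense of the Wirtinger calculus.

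First I would rewrite the objective as $h(\vect{u}) = w\,w^{*}$ and apply the product rule for the Wirtinger derivative,
\[
\frac{\partial h}{\partial\vect{u}^{*}} = w^{*}\,\frac{\partial w}{\partial\vect{u}^{*}} + w\,\frac{\partial w^{*}}{\partial\vect{u}^{*}}.
\]
The argument then reduces to deciding, in each case, which of the two factors $w$ and $w^{*}$ is holomorphic in $\vect{u}$ (and hence annihilated by $\partial/\partial\vect{u}^{*}$) and which carries the antiholomorphic dependence.

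In the Hermitian case $w = \vect{u}^{\H}\vect{v} = \sum_{j} u_{j}^{*} v_{j}$ is antiholomorphic, so that $\frac{\partial w}{\partial\vect{u}^{*}} = \vect{v}$, while $w^{*} = \vect{v}^{\H}\vect{u}$ is holomorphic and contributes nothing; this gives $\frac{\partial h}{\partial\vect{u}^{*}} = w^{*}\vect{v}$ and hence $\nabla h(\vect{u}) = 2w^{*}\vect{v}$. In the transpose case the roles are interchanged: $w = \vect{u}^{\T}\vect{v}$ is holomorphic while $w^{*} = \sum_{j} u_{j}^{*} v_{j}^{*}$ is antiholomorphic with $\frac{\partial w^{*}}{\partial\vect{u}^{*}} = \vect{v}^{*}$, yielding $\frac{\partial h}{\partial\vect{u}^{*}} = w\,\vect{v}^{*}$ and therefore $\nabla h(\vect{u}) = 2w\,\vect{v}^{*}$.

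The computation is entirely routine; the only point demanding care is the bookkeeping of conjugation, namely correctly identifying in each case whether $w$ or $w^{*}$ carries the dependence on $\vect{u}^{*}$, since interchanging them would swap the two formulas. Once this is settled the two displayed expressions follow at once, and the same product-rule mechanism will later extend verbatim to the matrix-valued summands appearing in \eqref{cost-fn-general-1}.
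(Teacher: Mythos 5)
Your proof is correct, and it takes a genuinely different computational route from the paper's. The paper proves only the Hermitian case, by brute-force real calculus: it expands $h(\vect{u}) = |\vect{u}^{\H}\vect{v}|^2 = \left(\langle\vect{v}^{\R},\vect{u}^{\R}\rangle+\langle\vect{v}^{\I},\vect{u}^{\I}\rangle\right)^2 +\left(\langle\vect{v}^{\I},\vect{u}^{\R}\rangle-\langle\vect{v}^{\R},\vect{u}^{\I}\rangle\right)^2$, differentiates with respect to $\vect{u}^{\R}$ and $\vect{u}^{\I}$ directly, recombines into $\nabla h(\vect{u})=2w^{*}\vect{v}$, and declares the transpose case ``similar.'' You instead stay in complex coordinates throughout: you invoke the identity $\nabla h(\vect{u}) = 2\,\frac{\partial h}{\partial\vect{u}^{*}}$ of \eqref{eq:matr_Euci_grad}, apply the Leibniz rule to $h = ww^{*}$, and reduce each case to deciding which of $w$, $w^{*}$ is annihilated by $\partial/\partial\vect{u}^{*}$. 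This is a complete argument --- the Wirtinger operators are derivations, so the product rule is legitimate, and your holomorphy bookkeeping is right in both cases ($w=\vect{u}^{\H}\vect{v}$ antiholomorphic versus $w=\vect{u}^{\T}\vect{v}$ holomorphic), which is indeed the only place one could go wrong. What your route buys: it treats $(\cdot)^{\dagger}=(\cdot)^{\H}$ and $(\cdot)^{\dagger}=(\cdot)^{\T}$ on a symmetric footing rather than deferring one case; it avoids the expansion into four real inner products; and the same mechanism extends directly to the second-order computations the paper performs later via \eqref{eq:Hess_computation} (e.g.\ \Cref{lemma:Eu_Hessi_g_1-2}), where one reads off $\nabla^2_{c}h$ and $\nabla^2_{r}h$ from exactly this kind of Wirtinger structure. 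What the paper's route buys is that it is entirely elementary: only real partial derivatives, with no appeal to the Wirtinger product rule or the holomorphic/antiholomorphic dichotomy.
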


\begin{proof}
	We only prove the case $(\cdot)^{\dagger}=(\cdot)^{\H}$. 
	The other case is similar.
	We see that 
	\begin{align*}
	h(\vect{u}) = |\vect{u}^{\H}\vect{v}|^2
	=\left(\langle\vect{v}^{\R},\vect{u}^{\R}\rangle+\langle\vect{v}^{\I},\vect{u}^{\I}\rangle\right)^2
	+\left(\langle\vect{v}^{\I},\vect{u}^{\R}\rangle-\langle\vect{v}^{\R},\vect{u}^{\I}\rangle\right)^2.
	\end{align*}
	It follows that 
	$\nabla h(\vect{u}) =  2w^{*}\vect{v}.$ 
	The proof is complete.  
\end{proof}

The following lemma for matrix case is a direct consequence of \Cref{lemma:h_U_T-sp}. 

\begin{lemma}\label{lemma:h_U_T}
	Let $\tenss{V}\in\CC^{n\times r\times\cdots r}$.
	Define
	\begin{equation}\label{eq:form_h_01}
	h(\matr{U}) = \|\diag{\tenss{W}}\|^2,
	\end{equation}
	where $\matr{U}\in\CC^{n\times r}$ and $\tenss{W}=\tenss{V} \contr{1} \matr{U}^{\dagger}$. 
	Let $\vect{v}_{p}=\tenss{V}_{:,p\cdots p}\in\CC^{n}$ for $1\leq p\leq r$.
	Then 
	\begin{align}\label{eq:delta_h}
	\nabla h(\matr{U})^{\dagger} =  2\begin{bmatrix}
	\tenselem{W}_{1\cdots 1} & & \\
	& \ddots & \\
	& & \tenselem{W}_{r\cdots r}
	\end{bmatrix}[\vect{v}_{1},\cdots,\vect{v}_{r}]^{\H}.
	\end{align}
\end{lemma}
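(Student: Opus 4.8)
The plan is to reduce this matrix statement to the scalar \Cref{lemma:h_U_T-sp} by exploiting a decoupling of $h$ across the columns of $\matr{U}$. Writing $\matr{U}=[\vect{u}_1,\cdots,\vect{u}_r]$ and unravelling the definitions of the $1$-mode product and of the tensor diagonal, I would first verify that the $p$-th diagonal entry of $\tenss{W}=\tenss{V}\contr{1}\matr{U}^{\dagger}$ is
\begin{equation*}
\tenselem{W}_{p\cdots p} = \sum_{p_1}\tenselem{V}_{p_1 p\cdots p}\,(\matr{U}^{\dagger})_{p p_1} = \vect{u}_p^{\dagger}\vect{v}_p,
\end{equation*}
since $\vect{v}_p=\tenss{V}_{:,p\cdots p}$ has entries $(\vect{v}_p)_{p_1}=\tenselem{V}_{p_1 p\cdots p}$, and $(\matr{U}^{\dagger})_{p p_1}$ equals $\overline{(\vect{u}_p)_{p_1}}$ when $(\cdot)^{\dagger}=(\cdot)^{\H}$ and $(\vect{u}_p)_{p_1}$ when $(\cdot)^{\dagger}=(\cdot)^{\T}$. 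Consequently $h(\matr{U})=\|\diag{\tenss{W}}\|^2=\sum_{p=1}^{r}|\vect{u}_p^{\dagger}\vect{v}_p|^2$, a sum whose $p$-th term depends only on the single column $\vect{u}_p$.

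Because of this separation, the Euclidean gradient of $h$ with respect to the inner product \eqref{eq:RInner} splits column by column: the $p$-th column of $\nabla h(\matr{U})$ is exactly $\nabla h_p(\vect{u}_p)$, where $h_p(\vect{u})=|\vect{u}^{\dagger}\vect{v}_p|^2$ is the scalar model of \Cref{lemma:h_U_T-sp} with $\vect{v}=\vect{v}_p$ and $w=\tenselem{W}_{p\cdots p}$. Applying \eqref{eq:delta_h-sp} then yields the $p$-th column of $\nabla h(\matr{U})$ as $2\,\overline{\tenselem{W}_{p\cdots p}}\,\vect{v}_p$ in the $(\cdot)^{\H}$ case and as $2\,\tenselem{W}_{p\cdots p}\,\vect{v}_p^{*}$ in the $(\cdot)^{\T}$ case.

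It then remains only to stack these columns into $\nabla h(\matr{U})$ and apply $(\cdot)^{\dagger}$. In the $(\cdot)^{\H}$ case the $p$-th row of $\nabla h(\matr{U})^{\H}$ is $2\,\tenselem{W}_{p\cdots p}\,\vect{v}_p^{\H}$; in the $(\cdot)^{\T}$ case, using $(\vect{v}_p^{*})^{\T}=\vect{v}_p^{\H}$, the $p$-th row of $\nabla h(\matr{U})^{\T}$ is again $2\,\tenselem{W}_{p\cdots p}\,\vect{v}_p^{\H}$. In both cases the rows assemble into $2\,\Diag{\tenselem{W}_{1\cdots 1},\cdots,\tenselem{W}_{r\cdots r}}\,[\vect{v}_1,\cdots,\vect{v}_r]^{\H}$, which is precisely \eqref{eq:delta_h}. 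I expect the only delicate point to be the index bookkeeping that makes the conjugate transpose $[\vect{v}_1,\cdots,\vect{v}_r]^{\H}$ appear uniformly on the right in both cases, even though the left-hand operation is $(\cdot)^{\dagger}$; this uniformity is exactly what the conjugation pattern of \Cref{lemma:h_U_T-sp} is designed to produce, so carefully tracking which quantity gets conjugated in each of the two cases is where all the care is needed.
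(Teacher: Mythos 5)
Your proposal is correct and matches the paper's approach: the paper dispatches this lemma in one line as ``a direct consequence of \Cref{lemma:h_U_T-sp},'' and your column-by-column decoupling $h(\matr{U})=\sum_{p=1}^{r}|\vect{u}_p^{\dagger}\vect{v}_p|^2$ followed by an application of \eqref{eq:delta_h-sp} to each column $\vect{u}_p$ is exactly the reduction that one-line claim leaves implicit. Your index bookkeeping, including the verification that both the $(\cdot)^{\H}$ and $(\cdot)^{\T}$ cases assemble into the same factor $[\vect{v}_1,\cdots,\vect{v}_r]^{\H}$ on the right of \eqref{eq:delta_h}, is accurate.
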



\begin{corollary} \label{coro:h_homege}
	Let $h$ be as in \Cref{lemma:h_U_T}.
	Then we have 
	\begin{align*}
	h(\matr{U}) = \frac{1}{2}\langle \matr{U}, \nabla h(\matr{U})\rangle_{\R}.
	\end{align*}
\end{corollary}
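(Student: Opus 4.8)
The plan is to feed the closed-form gradient from \Cref{lemma:h_U_T} straight into the real inner product and watch it collapse to $2h(\matr{U})$. Recall from \eqref{eq:RInner} that $\langle \matr{U}, \nabla h(\matr{U})\rangle_{\R} = \Re\left(\trace{\matr{U}^{\H}\nabla h(\matr{U})}\right)$, so the whole computation is a single trace identity. First I would record that the relevant diagonal entries of $\tenss{W} = \tenss{V}\contr{1}\matr{U}^{\dagger}$ are exactly $\tenselem{W}_{p\cdots p} = \vect{u}_p^{\dagger}\vect{v}_p$, where $\vect{u}_p$ is the $p$-th column of $\matr{U}$ and $\vect{v}_p = \tenss{V}_{:,p\cdots p}$, so that $h(\matr{U}) = \sum_{p=1}^{r}|\tenselem{W}_{p\cdots p}|^2$.

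For the case $\dagger = \H$, taking the conjugate transpose of \eqref{eq:delta_h} gives $\nabla h(\matr{U}) = 2\,\matr{V}\matr{D}^{\H}$, where $\matr{V} = [\vect{v}_1,\cdots,\vect{v}_r]$ and $\matr{D} = \diag{\tenselem{W}_{1\cdots 1},\cdots,\tenselem{W}_{r\cdots r}}$. Then $\trace{\matr{U}^{\H}\nabla h(\matr{U})} = 2\,\trace{(\matr{U}^{\H}\matr{V})\matr{D}^{\H}}$, and since the $(p,p)$ entry of $\matr{U}^{\H}\matr{V}$ is $\vect{u}_p^{\H}\vect{v}_p = \tenselem{W}_{p\cdots p}$ while $(\matr{D}^{\H})_{pp} = \overline{\tenselem{W}_{p\cdots p}}$, only the diagonal survives in the trace and it yields $\sum_{p}|\tenselem{W}_{p\cdots p}|^2 = h(\matr{U})$. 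As this is real, $\langle \matr{U}, \nabla h(\matr{U})\rangle_{\R} = 2h(\matr{U})$, which is the claim. The case $\dagger = \T$ runs identically, using instead that $\vect{u}_p^{\H}\vect{v}_p^{*} = \overline{\vect{u}_p^{\T}\vect{v}_p}$ when simplifying the diagonal.

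Conceptually, the cleanest way to see why the constant is exactly $\tfrac12$ is that $h$ is homogeneous of degree $2$ when $\matr{U}$ is regarded as a point of the real Euclidean space $\CC^{n\times r}\cong\RR^{2nr}$: replacing $\matr{U}$ by $t\matr{U}$ with real $t$ scales each $|\vect{u}_p^{\dagger}\vect{v}_p|^2$ by $t^2$. Euler's homogeneous function theorem, applied with respect to the inner product \eqref{eq:RInner}, then gives $\langle\matr{U},\nabla h(\matr{U})\rangle_{\R} = 2h(\matr{U})$ in one line and covers both $\dagger$ cases uniformly; I would likely present the direct trace computation as the main argument and note the homogeneity as the underlying reason. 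There is no real obstacle here — the only point requiring care is bookkeeping the conjugation so that the diagonal product genuinely produces $|\tenselem{W}_{p\cdots p}|^2$ rather than $\tenselem{W}_{p\cdots p}^2$, which is precisely where taking the real part (or the $\Re$ in \eqref{eq:RInner}) is used.
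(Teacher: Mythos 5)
Your proposal is correct and is precisely the argument the paper leaves implicit: the corollary is stated without proof as an immediate consequence of \Cref{lemma:h_U_T}, and your substitution of \eqref{eq:delta_h} into $\Re\left(\trace{\matr{U}^{\H}\nabla h(\matr{U})}\right)$, with the conjugation tracked so the diagonal products give $|\tenselem{W}_{p\cdots p}|^2$, is exactly that intended computation. The Euler-homogeneity remark is a nice unifying observation but not a departure from the paper's route.
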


\begin{corollary}\label{lem:conv_h}
	Let $h$ be as in \Cref{lemma:h_U_T}.
	Then $h$ is convex. 
\end{corollary}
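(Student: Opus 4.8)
The goal is to show that the function $h(\matr{U}) = \|\diag{\tenss{W}}\|^2$ from \Cref{lemma:h_U_T}, with $\tenss{W}=\tenss{V} \contr{1} \matr{U}^{\dagger}$, is convex on $\CC^{n\times r}$ viewed as the real Euclidean space of dimension $2nr$ equipped with the inner product \eqref{eq:RInner}. The plan is to write $h$ explicitly as a sum over the diagonal entries, namely $h(\matr{U}) = \sum_{p=1}^{r} |\tenselem{W}_{p\cdots p}|^2 = \sum_{p=1}^{r} |\vect{u}_p^{\dagger}\vect{v}_p|^2$, where $\vect{u}_p$ is the $p$-th column of $\matr{U}$ and $\vect{v}_{p}=\tenss{V}_{:,p\cdots p}$ as in \Cref{lemma:h_U_T}. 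Since a finite sum of convex functions is convex, and since the $p$-th summand depends only on the single column $\vect{u}_p$, it suffices to prove that each scalar function $\vect{u}\mapsto |\vect{u}^{\dagger}\vect{v}|^2$ is convex on $\CC^{n}$.

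First I would reduce to the vector case $h_0(\vect{u}) = |\vect{u}^{\dagger}\vect{v}|^2$, exactly the function studied in \Cref{lemma:h_U_T-sp}. The cleanest route is to observe that $|\vect{u}^{\dagger}\vect{v}|^2$ is the square of an absolute value, and that $\vect{u}\mapsto \vect{u}^{\dagger}\vect{v}$ is a (real-)linear map from $\CC^n$ to $\CC\cong\RR^2$. Concretely, in the $(\cdot)^{\dagger}=(\cdot)^{\H}$ case one has, as already expanded in the proof of \Cref{lemma:h_U_T-sp},
\begin{equation*}
h_0(\vect{u}) = \left(\langle\vect{v}^{\R},\vect{u}^{\R}\rangle+\langle\vect{v}^{\I},\vect{u}^{\I}\rangle\right)^2
+\left(\langle\vect{v}^{\I},\vect{u}^{\R}\rangle-\langle\vect{v}^{\R},\vect{u}^{\I}\rangle\right)^2.
\end{equation*}
Each of the two bracketed terms is a real-linear functional $\ell_j(\vect{u})$ of the real coordinates of $\vect{u}$, so $h_0(\vect{u}) = \ell_1(\vect{u})^2 + \ell_2(\vect{u})^2$ is a sum of squares of affine (indeed linear) functions. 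The square of a real-affine function is convex, and a sum of convex functions is convex, so $h_0$ is convex; the $(\cdot)^{\dagger}=(\cdot)^{\T}$ case is identical after the analogous expansion.

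Summing over $p$ then gives convexity of $h$, because each summand $|\vect{u}_p^{\dagger}\vect{v}_p|^2$ is a convex function of $\matr{U}$ (it is convex in $\vect{u}_p$ and constant in the remaining columns, hence convex jointly). I would not invoke the Hessian formula \eqref{eq:Hess_computation} here, since the sum-of-squares-of-linear-forms argument is elementary and avoids checking positive semidefiniteness of a $2nr\times 2nr$ real Hessian; but if one prefers a second-derivative proof, the same conclusion follows from noting that the Euclidean Hessian of $h_0$ is $2(\nabla\ell_1\,\nabla\ell_1^{\T} + \nabla\ell_2\,\nabla\ell_2^{\T})$, a sum of rank-one positive semidefinite matrices. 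I do not anticipate a genuine obstacle; the only point requiring minor care is bookkeeping the identification of $\CC^{n\times r}$ with $\RR^{2nr}$ and confirming that the diagonal entry $\tenselem{W}_{p\cdots p}$ really depends only on the column $\vect{u}_p$, which is immediate from the definition of the mode products and the diagonal.
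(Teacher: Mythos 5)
Your proof is correct, but it follows a genuinely different route from the paper's. The paper proves \Cref{lem:conv_h} through its first-order machinery: since $h$ is restricted $\frac{1}{\alpha}$-homogeneous with $\alpha=\frac{1}{2}$ (\Cref{coro:h_homege}), \Cref{remar:conv_inequality} reduces convexity to the gradient inequality $\langle\matr{U}', \nabla h(\matr{U})\rangle_{\R} \leq \frac{1}{2}\langle\matr{U}, \nabla h(\matr{U})\rangle_{\R} + \frac{1}{2}\langle\matr{U}', \nabla h(\matr{U}')\rangle_{\R}$, which the paper then verifies from the explicit gradient formula \eqref{eq:delta_h} together with the Cauchy--Schwarz inequality (columnwise it amounts to $2\Re\left(w_p^{*}w_p'\right)\le |w_p|^2+|w_p'|^2$ with $w_p=\vect{u}_p^{\dagger}\vect{v}_p$ and $w_p'=(\vect{u}_p')^{\dagger}\vect{v}_p$). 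You instead argue at the zeroth order: $h(\matr{U})=\sum_{p}|\vect{u}_p^{\dagger}\vect{v}_p|^2$ is a sum of squared moduli of real-linear functionals of $\matr{U}$, i.e.\ a composition of the convex function $\|\cdot\|^2$ with a real-linear map, hence convex --- no gradient, no homogeneity, no Cauchy--Schwarz. Your argument is more elementary and more general: it shows that any ``squared norm of a linear image'' is convex, independently of the homogeneity structure, and it is self-contained (the only structural input, which you correctly isolate, is that $\tenselem{W}_{p\cdots p}$ depends real-linearly on $\matr{U}$ through the single column $\vect{u}_p$). What the paper's route buys is integration with its surrounding development: the gradient formula \eqref{eq:delta_h} and the homogeneity identity are needed anyway for \Cref{prop:multicon_01} and for the algorithms themselves, and \Cref{remar:conv_inequality} packages convexity in exactly the first-order form that the convergence analysis consumes later (e.g.\ the inequality \eqref{eq-cost-increase-general-1-symme-general} in \Cref{lemma-incresease-cost-general}). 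Both proofs are sound; yours could even replace the paper's with a net saving in logical dependencies, at the cost of not exercising the lemmas the paper reuses elsewhere.
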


\begin{proof}
	We only prove the case $(\cdot)^{\dagger}=(\cdot)^{\H}$. 
	The other case  is similar. By \Cref{lemma:h_U_T} and Cauchy-Schwarz inequality, we see that 
	\begin{equation*}
	\langle\matr{U}', \nabla h(\matr{U})\rangle_{\R} \leq \frac{1}{2}\langle\matr{U}, \nabla h(\matr{U})\rangle_{\R} + \frac{1}{2}\langle\matr{U}', \nabla h(\matr{U}')\rangle_{\R}.
	\end{equation*}
	The proof is complete by \Cref{remar:conv_inequality}.  
\end{proof}

\begin{proposition}\label{prop:multicon_01}
	The objective function \eqref{cost-fn-general-1} is restricted $\frac{1}{\alpha}-$homogeneous with $\alpha=\frac{1}{2}$ and block multiconvex.
\end{proposition}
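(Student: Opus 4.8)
The plan is to fix an index $i$ and an arbitrary $\nu^{(i)}\in\Omega^{(i)}$, and to show that the restricted function $h_{(i)}$ decomposes into a nonnegative combination of functions to which \Cref{lemma:h_U_T} and its two corollaries apply directly. Concretely, for each $1\leq\ell\leq L$ I would contract $\tenss{A}^{(\ell)}$ with all the fixed blocks $(\matr{U}^{(j)})^{\dagger}$, $j\neq i$, to obtain a tensor $\tenss{V}^{(\ell)}$ whose $i$-th mode has dimension $n_i$ and whose remaining modes all have dimension $r$, so that $\tenss{W}^{(\ell)}=\tenss{V}^{(\ell)}\contr{i}\matr{U}^{\dagger}$. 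This exhibits the restriction as
\begin{equation*}
h_{(i)}(\matr{U})=\sum_{\ell=1}^{L}\alpha_\ell\,h^{(\ell)}(\matr{U}),\qquad h^{(\ell)}(\matr{U})\eqdef\|\diag{\tenss{V}^{(\ell)}\contr{i}\matr{U}^{\dagger}}\|^2,
\end{equation*}
a positive combination of functions of exactly the form \eqref{eq:form_h_01}.

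The first technical point to settle is that \Cref{lemma:h_U_T} was stated for a mode-$1$ contraction, whereas here the active mode is the $i$-th; this is the only real obstacle, and it is mild. The diagonal $\diag{\tenss{W}^{(\ell)}}$ depends only on the entries $\tenselem{W}^{(\ell)}_{p\cdots p}=\vect{u}_p^{\dagger}\vect{v}_p^{(\ell)}$, where $\vect{u}_p$ is the $p$-th column of $\matr{U}$ and $\vect{v}_p^{(\ell)}$ is the $i$-mode fiber of $\tenss{V}^{(\ell)}$ with all remaining indices set to $p$. Since this expression is invariant under permuting the tensor modes, I would move the $i$-th mode into the first position (which leaves the diagonal, hence $h^{(\ell)}$, unchanged, and produces a tensor in $\CC^{n_i\times r\times\cdots\times r}$) and then invoke \Cref{lemma:h_U_T} verbatim on $\tenss{V}^{(\ell)}$.

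With this reduction in place, the two claimed properties follow by summation. For homogeneity, \Cref{coro:h_homege} gives $h^{(\ell)}(\matr{U})=\frac{1}{2}\langle\matr{U},\nabla h^{(\ell)}(\matr{U})\rangle_{\R}$ for each $\ell$; multiplying by $\alpha_\ell>0$, summing over $\ell$, and using linearity of the Euclidean gradient yields
\begin{equation*}
h_{(i)}(\matr{U})=\frac{1}{2}\langle\matr{U},\nabla h_{(i)}(\matr{U})\rangle_{\R},
\end{equation*}
which is precisely \eqref{eq-general-assumption-product} with $\alpha=\frac{1}{2}$. Since $i$ and $\nu^{(i)}$ were arbitrary, \eqref{cost-fn-general-1} is restricted $\frac{1}{\alpha}$-homogeneous with $\alpha=\frac{1}{2}$. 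For block multiconvexity, \Cref{lem:conv_h} shows that each $h^{(\ell)}$ is convex; as a nonnegative combination of convex functions is convex, $h_{(i)}$ is convex for every fixed $\nu^{(i)}$, which is exactly the definition of block multiconvex. I would close by observing that the whole argument is insensitive to whether $(\cdot)^{\dagger}=(\cdot)^{\H}$ or $(\cdot)^{\dagger}=(\cdot)^{\T}$, since both cases are already subsumed in \Cref{lemma:h_U_T} and its corollaries.
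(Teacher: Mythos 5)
Your proposal is correct and follows essentially the same route as the paper: contract $\tenss{A}^{(\ell)}$ with the fixed blocks to exhibit $h_{(i)}$ as a nonnegative combination of functions of the form \eqref{eq:form_h_01}, then invoke \Cref{coro:h_homege} and \Cref{lem:conv_h}. The only difference is that you explicitly justify the reduction from a mode-$i$ to a mode-$1$ contraction via permutation invariance of the diagonal, a detail the paper's proof leaves implicit.
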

\begin{proof}
	For $\nu^{(i)}\in\Omega^{(i)}$, we denote that
	\begin{equation*}
	\tenss{V}^{(\ell)}=\tenss{A}^{(\ell)} \contr{1} (\matr{U}^{(1)})^{\dagger}\cdots \contr{i-1} (\matr{U}^{(i-1)})^{\dagger} \contr{i+1} (\matr{U}^{(i+1)})^{\dagger} \cdots \contr{d} (\matr{U}^{(d)})^{\dagger}.
	\end{equation*}
	Then we see that 
	\begin{align*}
	h_{(i)}(\matr{U})=  \sum\limits_{\ell=1}^{L}
	\alpha_{\ell}\|\diag{\tenss{V}^{(\ell)} \contr{i} \matr{U}^{\dagger}}\|^2 
	\end{align*}
	is a sum of forms in \eqref{eq:form_h_01}.  
	The proof is complete by \Cref{coro:h_homege} and \Cref{lem:conv_h}.  
\end{proof}

\subsection{Riemannian Hessian}\label{subsec:eucli_prob_01-H}

In this subsection, we mainly focus on the Riemannian Hessian of objective function \eqref{cost-fn-general-1}, and prove that there exist scale semi-nondegenerate points for this function.

\begin{lemma}\label{lemma:Eu_Hessi_g_1-2}
	Let the function $h$ be as in \eqref{eq:form_h_01-sp-sp}. 
	Then, for $\vect{z}\in\CC^{n}$, we have 
	\begin{align*}
	\nabla^{2}h(\vect{u})[\vect{z}] =
	\begin{cases}
	2\vect{v}\vect{v}^{\H}\vect{z}, & \text{ if } (\cdot)^{\dagger}=(\cdot)^{\H}; \\
	2\vect{v}^{*}\vect{v}^{\T}\vect{z}, & \text{ if } (\cdot)^{\dagger}=(\cdot)^{\T}.
	\end{cases}
	\end{align*}
\end{lemma}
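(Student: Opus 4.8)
The plan is to invoke the Wirtinger second-order formula \eqref{eq:Hess_computation}, which expresses the action of the Euclidean Hessian as
$$\nabla^2 h(\vect{u})[\vect{z}] = 2\left(\nabla^2_{c} h(\vect{u})\vect{z} + (\nabla^2_{r} h(\vect{u})\vect{z})^{*}\right),$$
so the whole task reduces to identifying the two matrices $\nabla^2_{c}h(\vect{u})=\frac{\partial^2 h}{\partial\vect{u}^{*}\partial\vect{u}^{\T}}$ and $\nabla^2_{r}h(\vect{u})=\frac{\partial^2 h}{\partial\vect{u}\partial\vect{u}^{\T}}$. I already have the first-order Wirtinger derivative for free: by \eqref{eq:matr_Euci_grad} and \Cref{lemma:h_U_T-sp}, $\frac{\partial h}{\partial\vect{u}^{*}}=\tfrac12\nabla h(\vect{u})$ equals $w^{*}\vect{v}$ in the Hermitian case and $w\vect{v}^{*}$ in the transpose case. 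Differentiating these once more, holding $\vect{u}^{*}$ and $\vect{u}$ formally independent, is exactly what is needed for $\nabla^2_{c}h$ (and an analogous start from $\frac{\partial h}{\partial\vect{u}}$ for $\nabla^2_{r}h$).

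The structural observation that makes the computation short is that $h(\vect{u})=|w|^2=w\,w^{*}$ factors into one piece that is linear in $\vect{u}$ and one that is linear in $\vect{u}^{*}$. In the case $(\cdot)^{\dagger}=(\cdot)^{\H}$, the factor $w=\vect{u}^{\H}\vect{v}$ is conjugate-linear (depends only on $\vect{u}^{*}$) while $w^{*}=\vect{v}^{\H}\vect{u}$ is linear (depends only on $\vect{u}$); in the case $(\cdot)^{\dagger}=(\cdot)^{\T}$ the roles are swapped. Consequently the pure derivative $\nabla^2_{r}h=\frac{\partial}{\partial\vect{u}^{\T}}\frac{\partial h}{\partial\vect{u}}$ vanishes identically in both cases: after taking $\frac{\partial h}{\partial\vect{u}}$ one is left with a factor depending only on $\vect{u}^{*}$, which a further $\vect{u}$-derivative kills. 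Meanwhile the mixed derivative $\nabla^2_{c}h$ survives and evaluates to $\vect{v}\vect{v}^{\H}$ in the Hermitian case and $\vect{v}^{*}\vect{v}^{\T}$ in the transpose case, as one sees by checking the $(k,l)$ entry $\frac{\partial}{\partial u_k^{*}}\frac{\partial h}{\partial u_l}$.

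Substituting these into \eqref{eq:Hess_computation}, the second summand drops out because $\nabla^2_{r}h=\matr{0}$, leaving $\nabla^2 h(\vect{u})[\vect{z}]=2\,\nabla^2_{c}h(\vect{u})\vect{z}$, which is precisely $2\vect{v}\vect{v}^{\H}\vect{z}$ (Hermitian) or $2\vect{v}^{*}\vect{v}^{\T}\vect{z}$ (transpose), as claimed. I expect the only delicate point to be the Wirtinger bookkeeping — keeping $\vect{u}$ and $\vect{u}^{*}$ as independent variables and matching the index convention implicit in $\frac{\partial^2 h}{\partial\vect{u}^{*}\partial\vect{u}^{\T}}$; once the holomorphic/antiholomorphic splitting of $w\,w^{*}$ is recognized, the actual differentiations are immediate and the two cases are fully parallel, so it suffices to carry out one (say $(\cdot)^{\dagger}=(\cdot)^{\H}$) in detail and remark that the other is identical.
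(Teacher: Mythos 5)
Your proposal is correct and follows exactly the paper's own route: compute the Wirtinger gradient from \Cref{lemma:h_U_T-sp}, observe that the holomorphic/antiholomorphic splitting of $h=ww^{*}$ gives $\nabla^2_{c}h(\vect{u})=\vect{v}\vect{v}^{\H}$ and $\nabla^2_{r}h(\vect{u})=\matr{0}$ in the Hermitian case, and conclude via \eqref{eq:Hess_computation}. The paper's proof is just a terser version of the same computation, likewise treating only the case $(\cdot)^{\dagger}=(\cdot)^{\H}$ and declaring the other case similar.
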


\begin{proof}
	We only prove the case $(\cdot)^{\dagger}=(\cdot)^{\H}$. 
	The other case is similar.
	By \eqref{eq:delta_h-sp}, we have 
	\begin{align*}
	\nabla^2_{c} h(\vect{u})= \vect{v}\vect{v}^{\H}
	\ \ \textrm{and}\ \  
	\nabla^2_{r} h(\vect{u})=\matr{0}.
	\end{align*} 
	Then, by \eqref{eq:Hess_computation}, we obtain that 
	$\nabla^2 h(\vect{u})[\vect{z}] = 2\vect{v}\vect{v}^{\H}\vect{z}.$ The proof is complete. 
\end{proof}

The following result for matrix case is a direct consequence of  \Cref{lemma:Eu_Hessi_g_1-2}. 

\begin{lemma}\label{coro:h_U_T-H}
	Let $h$ and $\vect{v}_{q}$ be defined as in \Cref{lemma:h_U_T}. 
	Then, for $\matr{Z}\in\CC^{n\times r}$, we have 
	\begin{align}\label{eq:Hess_h_1}
	\left(\nabla^{2}h(\matr{U})[\matr{Z}]\right)^{\dagger}=
	2\begin{bmatrix}
	(\tenss{V} \contr{1} \matr{Z}^{\dagger})_{1\cdots 1} & & \\
	& \ddots & \\
	& & (\tenss{V}\contr{1} \matr{Z}^{\dagger})_{r\cdots r}
	\end{bmatrix}[\vect{v}_{1},\cdots,\vect{v}_{r}]^{\H}. 
	\end{align}
\end{lemma}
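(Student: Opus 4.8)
The plan is to exploit the fact that the function $h$ in \Cref{lemma:h_U_T} decouples completely across the columns of $\matr{U}$, which reduces the matrix statement to $r$ independent copies of the scalar computation in \Cref{lemma:Eu_Hessi_g_1-2}. Writing $\matr{U}=[\vect{u}_1,\cdots,\vect{u}_r]$ and unfolding the definition of the $1$-mode product, I would first record that the diagonal entries of $\tenss{W}=\tenss{V}\contr{1}\matr{U}^{\dagger}$ satisfy $\tenselem{W}_{p\cdots p}=\vect{u}_p^{\dagger}\vect{v}_p$ for each $1\leq p\leq r$, where $\vect{v}_p=\tenss{V}_{:,p\cdots p}$; this holds uniformly for both choices $(\cdot)^{\dagger}=(\cdot)^{\H}$ and $(\cdot)^{\dagger}=(\cdot)^{\T}$. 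Consequently $h(\matr{U})=\sum_{p=1}^{r}|\vect{u}_p^{\dagger}\vect{v}_p|^2$ is a sum of $r$ terms, the $p$-th of which depends only on the column $\vect{u}_p$ and is exactly of the scalar form \eqref{eq:form_h_01-sp-sp} with data vector $\vect{v}_p$.

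Second, I would use this separability to conclude that the Euclidean Hessian operator is block diagonal with respect to the column structure: since the columns of $\matr{U}$ form disjoint sets of variables and $h$ is a sum over them, the $p$-th column of $\nabla^{2}h(\matr{U})[\matr{Z}]$ equals the scalar Hessian of the $p$-th term evaluated at $\vect{u}_p$ in the direction $\vect{z}_p$, where $\matr{Z}=[\vect{z}_1,\cdots,\vect{z}_r]$. Applying \Cref{lemma:Eu_Hessi_g_1-2} term by term then gives this $p$-th column explicitly as $2\vect{v}_p\vect{v}_p^{\H}\vect{z}_p$ when $(\cdot)^{\dagger}=(\cdot)^{\H}$, and as $2\vect{v}_p^{*}\vect{v}_p^{\T}\vect{z}_p$ when $(\cdot)^{\dagger}=(\cdot)^{\T}$.

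Finally, I would assemble these columns into the matrix $\nabla^{2}h(\matr{U})[\matr{Z}]$ and apply $(\cdot)^{\dagger}$. In the case $(\cdot)^{\dagger}=(\cdot)^{\H}$, the conjugate transpose of the $p$-th column $2\vect{v}_p\vect{v}_p^{\H}\vect{z}_p=2(\vect{v}_p^{\H}\vect{z}_p)\vect{v}_p$ becomes the $p$-th row $2(\vect{z}_p^{\H}\vect{v}_p)\vect{v}_p^{\H}$. Recognizing $\vect{z}_p^{\H}\vect{v}_p=(\tenss{V}\contr{1}\matr{Z}^{\dagger})_{p\cdots p}$ by the very identity established in the first step (now with $\matr{Z}$ in place of $\matr{U}$) exhibits these rows as the diagonal scaling of $[\vect{v}_1,\cdots,\vect{v}_r]^{\H}$ asserted in \eqref{eq:Hess_h_1}, and the $(\cdot)^{\dagger}=(\cdot)^{\T}$ case follows identically after replacing conjugate transposes by transposes and inserting the conjugations dictated by \Cref{lemma:Eu_Hessi_g_1-2}.

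The only real care needed — and what I would call the main obstacle, though it is bookkeeping rather than a conceptual difficulty — is tracking the conjugation and transposition precisely when converting the column-wise operator form $2\vect{v}_p\vect{v}_p^{\H}\vect{z}_p$ into the row/diagonal form of the statement, so that each scalar lands as $(\tenss{V}\contr{1}\matr{Z}^{\dagger})_{p\cdots p}$ and not its complex conjugate, and keeping the two cases of $\dagger$ straight throughout.
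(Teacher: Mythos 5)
Your proof is correct and takes essentially the same approach as the paper: the paper's entire proof is the remark that the matrix case is a direct consequence of \Cref{lemma:Eu_Hessi_g_1-2}, and your column-decoupling argument (writing $h(\matr{U})=\sum_{p=1}^{r}|\vect{u}_p^{\dagger}\vect{v}_p|^2$, applying the vector lemma column by column, then taking $(\cdot)^{\dagger}$ and identifying $(\tenss{V}\contr{1}\matr{Z}^{\dagger})_{p\cdots p}$) is precisely the bookkeeping that justifies that remark. The conjugation tracking in your final step is also correct in both cases of $\dagger$.
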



\begin{lemma}\label{lemm:h_hess_neq0}
	Let $\{\tenss{D}^{(\ell)}\}_{1\leq\ell\leq L}\subseteq\CC^{n\times r\times\cdots r}$ be a set of diagonal tensors. 
	Let $\alpha_\ell \in \RR^{+}$ for $1\leq\ell\leq L$.
	Define
	\begin{equation}\label{eq:form_h_01-h-ex}
	h(\matr{U}) = \sum_{\ell=1}^{L}\alpha_\ell\|\diag{\tenss{W}^{(\ell)}}\|^2,
	\end{equation}
	where $\matr{U}\in\CC^{n\times r}$ and $\tenss{W}^{(\ell)}=\tenss{D}^{(\ell)} \contr{1} \matr{U}^{\dagger}$. 
	Suppose that the diagonal elements $\tenselem{D}^{(\ell)}_{p\cdots p}\in\CC$ satisfy:
	\begin{align}
	\sum_{\ell=1}^{L}\alpha_\ell|\tenselem{D}^{(\ell)}_{p\cdots p}|^2\neq 0, \ \textrm{for any}\ 1\leq p\leq r.\label{cond_Hess_ex_02}
	\end{align}
	Then $\matr{I}_{n\times r}$ is a scale semi-nondegenerate point of $h$. 
\end{lemma}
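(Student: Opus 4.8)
The plan is to exploit the diagonal structure of the tensors $\tenss{D}^{(\ell)}$ to reduce $h$ to an elementary quadratic, and then to compute the Riemannian Hessian at $\matr{I}_{n\times r}$ in closed form and read off its kernel. First I would note that, since each $\tenss{D}^{(\ell)}$ is diagonal, the only nonzero diagonal entry of $\tenss{W}^{(\ell)}=\tenss{D}^{(\ell)}\contr{1}\matr{U}^{\dagger}$ is $\tenss{W}^{(\ell)}_{p\cdots p}=\tenselem{D}^{(\ell)}_{p\cdots p}(\matr{U}^{\dagger})_{pp}$, so that the natural extension of $h$ to $\CC^{n\times r}$ collapses to
$$h(\matr{U})=\sum_{p=1}^{r}c_p\,|U_{pp}|^2,\qquad c_p\eqdef\sum_{\ell=1}^{L}\alpha_\ell\,|\tenselem{D}^{(\ell)}_{p\cdots p}|^2,$$
where $c_p>0$ for every $p$ by hypothesis \eqref{cond_Hess_ex_02}. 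Scale invariance of $h$ is then immediate, since right multiplication by any $\matr{R}\in\set{DU}_r$ only rescales each $U_{pp}$ by a unimodular factor. Using \Cref{lemma:h_U_T} (or differentiating the displayed form) shows that $\nabla h(\matr{I}_{n\times r})$ equals $2\Diag{c_1,\cdots,c_r}$ in its top $r\times r$ block and vanishes elsewhere; since $(\matr{I}_{n\times r})^{\H}\nabla h(\matr{I}_{n\times r})=2\Diag{c_1,\cdots,c_r}$ is Hermitian and $\nabla h(\matr{I}_{n\times r})$ lies in the column space of $\matr{I}_{n\times r}$, equation \eqref{eq:Rie_grad} gives $\ProjGrad{h}{\matr{I}_{n\times r}}=0$, so $\matr{I}_{n\times r}$ is a stationary point.

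Because $h$ is scale invariant, \Cref{lem:f_invariant_Hessian} already provides the upper bound $\rank{\Hess{h}{\matr{I}_{n\times r}}}\le 2nr-r^2-r$ together with the inclusion $\sspan\{\matr{Z}_p=\matr{I}_{n\times r}\matr{\Omega}_p:1\le p\le r\}\subseteq{\rm Ker}\,\Hess{h}{\matr{I}_{n\times r}}$. By the definition of a scale semi-nondegenerate point it therefore suffices to establish the matching lower bound, i.e. that this $r$-dimensional space is the \emph{entire} kernel. To this end I would parametrize a tangent vector at $\matr{I}_{n\times r}$, following \eqref{eq:tanget_spac}, as $\matr{Z}=\begin{bmatrix}\matr{A}\\\matr{B}\end{bmatrix}$ with $\matr{A}\in\CC^{r\times r}$ skew-Hermitian and $\matr{B}\in\CC^{(n-r)\times r}$ arbitrary, and then evaluate \eqref{eq:Hessian_reduction} term by term. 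The projected Euclidean Hessian is supplied by \Cref{coro:h_U_T-H}: it is supported on the diagonal and contributes $2\Diag{c_1A_{11},\cdots,c_rA_{rr}}$ in the top block (the projection leaving it unchanged, as its top block is skew-Hermitian). The normal part of the gradient is ${\rm Proj}^{\bot}_{\matr{I}_{n\times r}}\nabla h(\matr{I}_{n\times r})=\nabla h(\matr{I}_{n\times r})=2\Diag{c_1,\cdots,c_r}$ in the top block, and feeding this into the Weingarten operator $\mathfrak{A}_{\matr{U}}(\matr{Z},\matr{V})=-\matr{Z}\matr{U}^{\H}\matr{V}-\matr{U}\symmm{\matr{Z}^{\H}\matr{V}}$ produces, after simplification, the term $-\matr{A}\matr{C}-\matr{C}\matr{A}$ in the top block and $-2\matr{B}\matr{C}$ in the bottom block, where $\matr{C}=\Diag{c_1,\cdots,c_r}$.

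Collecting these contributions, the key computation is that the diagonal terms cancel, leaving
$$\HessAppl{h}{\matr{I}_{n\times r}}{\matr{Z}}=\begin{bmatrix}\matr{M}\\ -2\matr{B}\matr{C}\end{bmatrix},\qquad M_{pq}=-(c_p+c_q)A_{pq}\ \ (p\ne q),\qquad M_{pp}=0.$$
Setting this equal to zero and using $c_p>0$ for all $p$ forces $\matr{B}=\matr{0}$ (from $-2\matr{B}\matr{C}=\matr{0}$) and $A_{pq}=0$ for $p\ne q$ (from $(c_p+c_q)A_{pq}=0$), so that $\matr{A}$ is a diagonal skew-Hermitian matrix and hence $\matr{Z}\in\sspan\{\matr{Z}_p:1\le p\le r\}$. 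Thus the kernel has dimension exactly $r$, whence $\rank{\Hess{h}{\matr{I}_{n\times r}}}=\dim\text{St}(r,n,\CC)-r=2nr-r^2-r$, and $\matr{I}_{n\times r}$ is a scale semi-nondegenerate point of $h$.

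The main obstacle I anticipate is the bookkeeping in the Weingarten term: one must verify carefully that the diagonal of the projected Euclidean Hessian is exactly cancelled by the diagonal of $-\matr{U}\symmm{\matr{Z}^{\H}\matr{V}}$, so that the clean off-diagonal factors $(c_p+c_q)$ emerge, while the skew-Hermitian constraint on $\matr{A}$ keeps track of which entries survive. The positivity of every $c_p$, which is precisely the content of hypothesis \eqref{cond_Hess_ex_02}, is what prevents the kernel from being any larger than the unavoidable $r$-dimensional space identified in \Cref{lem:f_invariant_Hessian}.
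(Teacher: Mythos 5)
Your proof is correct, and while it shares the paper's overall skeleton (stationarity of $\matr{I}_{n\times r}$ from the gradient formula, the upper bound $\rank{\Hess{h}{\matr{I}_{n\times r}}}\le 2nr-r^2-r$ from \Cref{lem:f_invariant_Hessian}, and the Weingarten formula \eqref{eq:Hessian_reduction} for the Hessian at a stationary point), it executes the crucial rank bound by a genuinely different route. Two differences are worth recording. First, your reduction $h(\matr{U})=\sum_{p=1}^{r}c_p|U_{pp}|^2$ with $c_p=\sum_{\ell=1}^{L}\alpha_\ell|\tenselem{D}^{(\ell)}_{p\cdots p}|^2>0$ handles both cases $(\cdot)^{\dagger}=(\cdot)^{\H}$ and $(\cdot)^{\dagger}=(\cdot)^{\T}$ simultaneously, since $|(\matr{U}^{\dagger})_{pp}|=|U_{pp}|$ either way; the paper proves the Hermitian case and declares the other ``similar''. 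Second, the paper gets the lower bound by evaluating the Hessian on three explicit families of directions, namely $\matr{I}_{n\times r}(\vect{e}_p\vect{e}_q^{\T}+\vect{e}_q\vect{e}_p^{\T})\ui$, $\matr{I}_{n\times r}(\vect{e}_p\vect{e}_q^{\T}-\vect{e}_q\vect{e}_p^{\T})$ and $\matr{I}^{\perp}_{n\times r}\matr{X}$, and checking the images are nonzero; as literally stated that inference is incomplete (an operator can be nonzero on every element of a spanning set and still have small rank), and it is rescued only by the fact, visible in the paper's computations, that each such direction is mapped to a nonzero scalar multiple of itself, so the image contains their span. Your argument closes this gap: writing a general tangent vector as $\matr{Z}=\matr{I}_{n\times r}\matr{A}+\matr{I}^{\perp}_{n\times r}\matr{B}$ with $\matr{A}$ skew-Hermitian, you obtain the closed form with vanishing diagonal, off-diagonal top-block entries $-(c_p+c_q)A_{pq}$ and bottom block $-2\matr{B}\matr{C}$ where $\matr{C}=\Diag{c_1,\cdots,c_r}$ (I verified this against \eqref{eq:Hessian_reduction}; the cancellation of the diagonal terms is exactly right), and you read off that the kernel is precisely $\sspan\{\matr{Z}_p: 1\le p\le r\}$, giving the rank by rank--nullity. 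What the paper's route buys is brevity and explicit eigendirections; what yours buys is an exact kernel description, which is precisely the content of the Morse--Bott rank condition \eqref{eq:Hessian_rank_condition}, and a logically airtight lower bound.
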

\begin{proof}
	We only prove the case $(\cdot)^{\dagger}=(\cdot)^{\H}$. 
	The other case is similar.
	By \eqref{eq:delta_h}, we get that
	\begin{align*}
	\nabla h(\matr{I}_{n\times r}) =  2\sum_{\ell=1}^{L}\alpha_\ell\matr{I}_{n\times r}\begin{bmatrix}
	|\tenselem{D}^{(\ell)}_{1\cdots 1}|^2 & & \\
	& \ddots & \\
	& & |\tenselem{D}^{(\ell)}_{r\cdots r}|^2
	\end{bmatrix}.
	\end{align*}
	Then, by \eqref{eq:Rie_grad}, we have 
	$\ProjGrad{h}{\matr{I}_{n\times r}} =\matr{0}$, and thus 
	${\rm Proj}^{\bot}_{\matr{I}_{n\times r}} \nabla h(\matr{I}_{n\times r})=\nabla h(\matr{I}_{n\times r})$. 
	Since $h$ is scale invariant,  
	by \Cref{lem:f_invariant_Hessian}, we know that  $\rank{\Hess{h}{\matr{I}_{n\times r}}} \le 2nr-r^2-r$. 
	Note that
	\begin{align*}
	\HessAppl{h}{\matr{I}_{n\times r}}{\matr{Z}} 
	&= \nabla^2 h(\matr{I}_{n\times r}) [\matr{Z} ] -\matr{I}_{n\times r}\symmm{\matr{I}_{n\times r}^{\H}\nabla^2 h(\matr{I}_{n\times r}) [\matr{Z}]} \notag\\
	&\quad -\matr{Z}\matr{I}_{n\times r}^{\H}\nabla h(\matr{I}_{n\times r}) -
	\matr{I}_{n\times r} \symmm{\matr{Z}^{\H} \nabla h(\matr{I}_{n\times r})} \label{Hess_h_inter}
	\end{align*}
	by \eqref{eq:Hessian_reduction}, and
	\begin{align*}
	\nabla^2 h(\matr{I}_{n\times r}) [\matr{Z} ] = 
	2\sum_{\ell=1}^{L}\alpha_\ell\matr{I}_{n\times r}\begin{bmatrix}
	|\tenselem{D}^{(\ell)}_{1\cdots 1}|^2Z_{11}& & \\
	& \ddots & \\
	& & |\tenselem{D}^{(\ell)}_{r\cdots r}|^2Z_{rr}
	\end{bmatrix}
	\end{align*}
	by \eqref{eq:Hess_h_1}. 
	To prove that $\rank{\Hess{h}{\matr{I}_{n\times r}}} = 2nr-r^2-r$, we now show that $\Hess{h}{\matr{I}_{n\times r}}[\matr{Z}]\neq 0$ in the following cases.
	\begin{itemize}
		\item[$\bullet$] $\matr{Z}=\matr{I}_{n\times r}(\vect{e}_p\vect{e}_q^{\T}+\vect{e}_q\vect{e}_p^{\T})\ui,\ 1\leq p<q\leq r.$
		\begin{align*}
		\HessAppl{h}{\matr{I}_{n\times r}}{\matr{Z}} &= -\matr{I}_{n\times r}(\vect{e}_p\vect{e}_q^{\T}+\vect{e}_q\vect{e}_p^{\T})\ui \sum_{\ell=1}^{L}\alpha_\ell\left(|\tenselem{D}^{(\ell)}_{p\cdots p}|^2+|\tenselem{D}^{(\ell)}_{q\cdots q}|^2\right)\neq0. 
		\end{align*}
		\item[$\bullet$] $\matr{Z}=\matr{I}_{n\times r}(\vect{e}_p\vect{e}_q^{\T}-\vect{e}_q\vect{e}_p^{\T}),\ 1\leq p<q\leq r.$
		\begin{align*}
		\HessAppl{h}{\matr{I}_{n\times r}}{\matr{Z}} &= -\matr{I}_{n\times r}(\vect{e}_p\vect{e}_q^{\T}-\vect{e}_q\vect{e}_p^{\T}) \sum_{\ell=1}^{L}\alpha_\ell\left(|\tenselem{D}^{(\ell)}_{p\cdots p}|^2+|\tenselem{D}^{(\ell)}_{q\cdots q}|^2\right)\neq0. 
		\end{align*}
		\item[$\bullet$] $\matr{Z}=\matr{I}^{\perp}_{n\times r}\matr{X}, \ \matr{X}\in\CC^{(n-r)\times r}.$
		\begin{align*}
		\HessAppl{h}{\matr{I}_{n\times r}}{\matr{Z}} &= -2\matr{I}^{\perp}_{n\times r}\matr{X}\sum_{\ell=1}^{L}\alpha_\ell\begin{bmatrix}
		|\tenselem{D}^{(\ell)}_{1\cdots 1}|^2& & \\
		& \ddots & \\
		& & |\tenselem{D}^{(\ell)}_{r\cdots r}|^2
		\end{bmatrix}\neq0.
		\end{align*}
	\end{itemize}
	Note that the tangent space ${\rm\bf T}_{\matr{I}_{n\times r}} \text{St}(r,n,\CC)$ in \eqref{eq:tanget_spac} is spanned by the matrices $\matr{Z}$ in above cases and $\matr{Z}_p$ (as in \Cref{lem:f_invariant_Hessian}). 
	The proof is complete. 
\end{proof}

\begin{remark}
	Under the conditions of \Cref{lemm:h_hess_neq0}, 
	since the function $h$ in \eqref{eq:form_h_01-h-ex} is scale invariant and $\matr{I}_{n\times r}$ is a scale semi-nondegenerate point, we obtain that the matrix 
	$\matr{I}_{n\times r}\matr{R}$ with $\matr{R}\in\set{DU}_{r}$ is also a a scale semi-nondegenerate point of $h$ . 
\end{remark}

The following result can be proved by a similar method as in the proof of \Cref{lemm:h_hess_neq0}. 
We omit the details here. 

\begin{proposition}\label{prop:hess_semi_stric_01}
	Let $\{\tenss{D}^{(\ell)}\}_{1\leq\ell\leq L}\subseteq\CC^{n_1\times n_2\times\cdots n_d}$ be a set of diagonal tensors
	and 
	\begin{align*}
	\tenss{A}^{(\ell)} = \tenss{D}^{(\ell)} \contr{1} (\matr{Q}_{1}^{\H})^{\dagger}  \cdots \contr{d}(\matr{Q}_{d}^{\H})^{\dagger}
	\end{align*}
	for $1\leq\ell\leq n$, where $\matr{Q}_{i}\in\UN{n_i}$ for $1\leq i\leq d$. 
	Suppose that the diagonal tensors $\{\tenss{D}^{(\ell)}\}_{1\leq\ell\leq L}$ satisfy the condition \eqref{cond_Hess_ex_02}. 
	Then $\omega_0=(\matr{Q}_{1}\matr{I}_{n_1\times r},\cdots,\matr{Q}_{d}\matr{I}_{n_d\times r})$ is a scale semi-nondegenerate point of the objective function \eqref{cost-fn-general-1}.
\end{proposition}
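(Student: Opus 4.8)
The plan is to reduce the claim, block by block, to \Cref{lemm:h_hess_neq0}. By the definition of a scale semi-nondegenerate point of $f$ and the fact that \eqref{cost-fn-general-1} is scale invariant, it suffices to prove that for each $1\le i\le d$ the matrix $\matr{U}^{(i)}_0=\matr{Q}_i\matr{I}_{n_i\times r}$ is a scale semi-nondegenerate point of the restricted function $h_{(i)}$ obtained by fixing the remaining blocks of $\omega_0$. First I would compute $h_{(i)}$ explicitly. Treating the Hermitian case $(\cdot)^\dagger=(\cdot)^\H$ (the transpose case being analogous), we have $(\matr{Q}_i^\H)^\dagger=\matr{Q}_i$, so that
\[
\tenss{W}^{(\ell)}=\tenss{D}^{(\ell)}\contr{1}\big((\matr{U}^{(1)})^\H\matr{Q}_1\big)\cdots\contr{d}\big((\matr{U}^{(d)})^\H\matr{Q}_d\big).
\]
Substituting $\matr{U}^{(j)}=\matr{Q}_j\matr{I}_{n_j\times r}$ for $j\ne i$ and using $(\matr{Q}_j\matr{I}_{n_j\times r})^\H\matr{Q}_j=\matr{I}_{n_j\times r}^\H$ collapses each such mode onto its first $r$ indices.

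Carrying out this substitution, the restricted function becomes
\[
h_{(i)}(\matr{U}^{(i)})=\sum_{\ell=1}^{L}\alpha_\ell\big\|\diag{\tilde{\tenss{D}}^{(\ell)}\contr{i}\big((\matr{U}^{(i)})^\H\matr{Q}_i\big)}\big\|^2,
\]
where $\tilde{\tenss{D}}^{(\ell)}\in\CC^{n_i\times r\times\cdots\times r}$ is obtained from $\tenss{D}^{(\ell)}$ by contracting every mode $j\ne i$ with $\matr{I}_{n_j\times r}^\H$. Since $\tenss{D}^{(\ell)}$ is diagonal, $\tilde{\tenss{D}}^{(\ell)}$ is again diagonal, with diagonal entries $\tilde{\tenselem{D}}^{(\ell)}_{p\cdots p}=\tenselem{D}^{(\ell)}_{p\cdots p}$ for $1\le p\le r$; hence the $\tilde{\tenss{D}}^{(\ell)}$ inherit condition \eqref{cond_Hess_ex_02}. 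Performing the change of variable $\matr{U}^{(i)}=\matr{Q}_i\matr{V}$, so that $(\matr{U}^{(i)})^\H\matr{Q}_i=\matr{V}^\H$, turns $h_{(i)}$ into $\tilde h(\matr{V})=\sum_{\ell}\alpha_\ell\|\diag{\tilde{\tenss{D}}^{(\ell)}\contr{i}\matr{V}^\H}\|^2$, which is exactly the function of \Cref{lemm:h_hess_neq0} (with mode $i$ relabelled as the first mode), and $\matr{U}^{(i)}_0$ corresponds to $\matr{V}=\matr{I}_{n_i\times r}$. By \Cref{lemm:h_hess_neq0}, $\matr{I}_{n_i\times r}$ is a scale semi-nondegenerate point of $\tilde h$.

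It then remains to transfer this property back to $h_{(i)}=\tilde h\circ L_{\matr{Q}_i}^{-1}$, where $L_{\matr{Q}_i}:\matr{V}\mapsto\matr{Q}_i\matr{V}$ is left multiplication by the fixed unitary $\matr{Q}_i$. The key observation is that $L_{\matr{Q}_i}$ maps $\text{St}(r,n_i,\CC)$ onto itself and, because $\RInner{\matr{Q}_i\matr{X}}{\matr{Q}_i\matr{Y}}=\Re\,\trace{\matr{X}^\H\matr{Q}_i^\H\matr{Q}_i\matr{Y}}=\RInner{\matr{X}}{\matr{Y}}$, is a Riemannian isometry of the Stiefel manifold. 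An isometry sends stationary points to stationary points and conjugates the Riemannian Hessian by its (invertible) differential, whence $\rank{\Hess{h_{(i)}}{\matr{U}^{(i)}_0}}=\rank{\Hess{\tilde h}{\matr{I}_{n_i\times r}}}=2n_ir-r^2-r$, while scale invariance of $h_{(i)}$ follows from that of $\tilde h$. Thus $\matr{U}^{(i)}_0$ is a scale semi-nondegenerate point of $h_{(i)}$ for every $i$, and therefore $\omega_0$ is a scale semi-nondegenerate point of \eqref{cost-fn-general-1}. I expect the main obstacle to be the careful verification that left multiplication by $\matr{Q}_i$ preserves the rank of the Riemannian Hessian (equivalently, redoing the explicit gradient and Weingarten computations of \Cref{lemm:h_hess_neq0} with the extra factor $\matr{Q}_i$ carried through, which is the ``similar method'' the statement alludes to); by contrast, the mode-product bookkeeping showing that $\tilde{\tenss{D}}^{(\ell)}$ is diagonal and inherits \eqref{cond_Hess_ex_02} is routine.
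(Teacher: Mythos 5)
Your proposal is correct, and it proves exactly what is needed: since semi-nondegeneracy of $f$ is defined blockwise through the restricted functions $h_{(i)}$, the blockwise reduction is the right first step, your mode-product bookkeeping (collapsing each mode $j\neq i$ via $(\matr{Q}_j\matr{I}_{n_j\times r})^{\H}\matr{Q}_j=\matr{I}_{n_j\times r}^{\H}$, so that $\tilde{\tenss{D}}^{(\ell)}$ is diagonal with entries $\tenselem{D}^{(\ell)}_{p\cdots p}$, $1\le p\le r$, inheriting \eqref{cond_Hess_ex_02}) is accurate, and the transfer back through $\matr{V}\mapsto\matr{Q}_i\matr{V}$ is sound. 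Where you differ from the paper is in how the unitary factors $\matr{Q}_i$ are handled. The paper's (omitted) proof is advertised as ``a similar method as in the proof of \Cref{lemm:h_hess_neq0}'', i.e.\ redoing the explicit Euclidean-gradient and Weingarten--Hessian computations with $\matr{Q}_i$ carried through and exhibiting a spanning set of tangent directions on which the Hessian is nonzero. You instead invoke \Cref{lemm:h_hess_neq0} as a black box and dispose of $\matr{Q}_i$ by an equivariance argument: left multiplication $L_{\matr{Q}_i}$ is the restriction of a linear isometry of $\bigl(\CC^{n_i\times r},\langle\cdot,\cdot\rangle_{\Re}\bigr)$ preserving $\text{St}(r,n_i,\CC)$, it intertwines the exponential maps ($\text{Exp}_{\matr{Q}_i\matr{U}}(\matr{Q}_i\matr{Z})=\matr{Q}_i\,\text{Exp}_{\matr{U}}(\matr{Z})$, as one checks from the formula in \Cref{subsec:Rie_Hess}), hence maps stationary points to stationary points and conjugates Riemannian Hessians by the tangent isometry $\matr{Z}\mapsto\matr{Q}_i\matr{Z}$, preserving their rank. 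This buys reusability and makes transparent why the $\matr{Q}_i$ are harmless, at the cost of setting up the isometry/equivariance facts; the paper's direct recomputation is more pedestrian but self-contained. One cosmetic remark: the isometry step you flag as ``the main obstacle'' is in fact the easy, standard part, while the genuinely case-specific content (which you handle correctly) is the diagonal-tensor bookkeeping and the reduction to \Cref{lemm:h_hess_neq0}.
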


\subsection{Equivalent form}\label{subsec:equiva_01}
In this subsection,
we give an equivalent characterization of the objective function \eqref{cost-fn-general-1}. 
Let $\tenss{B}\in\CC^{n_1\times n_2\times\cdots n_d\times n_1\times n_2\times\cdots n_d}$ be a $2d$-th order complex tensor.
We say that $\tenss{B}$ is \emph{Hermitian} \cite{NieY19:Hermitian,ULC2019} if 
\begin{equation*}
\tenselem{B}_{p_1\cdots p_d p_{d+1} \cdots p_{2d}} = \tenselem{B}^{*}_{p_{d+1} \cdots p_{2d} p_1\cdots p_d}
\end{equation*}
for any $1\leq p_i, p_{d+i}\leq n_i$ and $1\leq i\leq d$. 
We say that $\tenss{B}$ is \emph{positive semidefinite} if $\tenss{B}$ is Hermitian and the unfolding $\tau(\tenss{B})\in\CC^{R\times R}$ ($R=n_1n_2\cdots n_d$) is positive semidefinite. 
Now we present the following lemma, which is an easy extension of \cite[Lem. 4.11]{ULC2019} and \cite[Prop. 3.5]{jiang2016characterizing}.
It can be proved by the spectral theorem for Hermitian matrices applied to the unfolding $\tau(\tenss{B})\in\CC^{R\times R}$. 

\begin{lemma}\label{lemm:equiv_B_A}
	Let $\tenss{B}\in\CC^{n_1\times n_2\times\cdots n_d\times n_1\times n_2\times\cdots n_d}$ be a $2d$-th order positive semidefinite tensor.
	Then there exist a set of complex tensors $\{\tenss{A}^{(\ell)}\}_{1\leq\ell\leq L}\subseteq \CC^{n_1\times n_2\times\cdots n_d}$ and $\alpha_{\ell}\geq 0$ such that
	\begin{equation*}\label{eq:hermitian_spectral}
	\tenss{B} = \sum\limits_{\ell=1}^{L} \alpha_\ell (\tenss{A}^{(\ell)})^* \otimes \tenss{A}^{(\ell)},\ \text{i.e.,}\ \tenselem{B}_{p_1\ldots p_dp_{d+1}\ldots p_{2d}} = \sum\limits_{\ell=1}^{L} \alpha_\ell \left(\tenselem{A}^{(\ell)}_{p_1\ldots p_d}\right)^* \tenselem{A}^{(\ell)}_{p_{d+1}\ldots p_{2d}}
	\end{equation*}
	for any $1\leq p_i, p_{d+i}\leq n_i$ and $1\leq i\leq d$. 
\end{lemma}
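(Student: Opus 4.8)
The plan is to reduce the statement entirely to the spectral theorem for Hermitian positive semidefinite matrices, transported through the unfolding map $\tau$. The definition of positive semidefiniteness given just above already guarantees that $\matr{M}\eqdef\tau(\tenss{B})\in\CC^{R\times R}$, with $R=n_1n_2\cdots n_d$, is a Hermitian positive semidefinite matrix: the Hermitian symmetry of $\tenss{B}$ is precisely the statement that $\matr{M}^{\H}=\matr{M}$, since $\tau$ sends the first block of $d$ indices to the row multi-index $(p_1,\ldots,p_d)$ and the second block to the column multi-index $(p_{d+1},\ldots,p_{2d})$. So the real content is only to translate a matrix factorization back into tensor language.

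First I would apply the spectral theorem to $\matr{M}$ to obtain $\matr{M}=\sum_{\ell=1}^{L}\alpha_\ell\,\vect{b}_\ell\vect{b}_\ell^{\H}$, where the $\alpha_\ell\geq 0$ are the eigenvalues and the $\vect{b}_\ell\in\CC^{R}$ are corresponding orthonormal eigenvectors; one may take $L=R$, or more economically $L=\rank{\matr{M}}$ by discarding the zero eigenvalues. Next I would define $\tenss{A}^{(\ell)}\in\CC^{n_1\times\cdots\times n_d}$ to be the tensor obtained by reshaping the conjugated eigenvector $\vect{b}_\ell^{*}$ along the $d$ modes, i.e. $\tenselem{A}^{(\ell)}_{p_1\ldots p_d}\eqdef(\vect{b}_\ell)^{*}_{(p_1,\ldots,p_d)}$. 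The final step is the entrywise verification: reading off the $\bigl((p_1,\ldots,p_d),(p_{d+1},\ldots,p_{2d})\bigr)$ entry of the rank-one term $\vect{b}_\ell\vect{b}_\ell^{\H}$ gives $(\vect{b}_\ell)_{(p_1,\ldots,p_d)}(\vect{b}_\ell)^{*}_{(p_{d+1},\ldots,p_{2d})}=(\tenselem{A}^{(\ell)}_{p_1\ldots p_d})^{*}\,\tenselem{A}^{(\ell)}_{p_{d+1}\ldots p_{2d}}$, which is exactly the claimed summand; summing over $\ell$ and undoing $\tau$ yields both the entrywise identity and its coordinate-free form $\tenss{B}=\sum_\ell\alpha_\ell(\tenss{A}^{(\ell)})^{*}\otimes\tenss{A}^{(\ell)}$.

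The only place demanding care — hence the step I expect to be the main, if modest, obstacle — is keeping the conjugation and transpose conventions straight when passing between the matrix and tensor pictures. The desired summand pairs a conjugate $(\tenselem{A}^{(\ell)}_{p_1\ldots p_d})^{*}$ on the first index block with an unconjugated $\tenselem{A}^{(\ell)}_{p_{d+1}\ldots p_{2d}}$ on the second, whereas the spectral summand $\vect{b}_\ell\vect{b}_\ell^{\H}$ places the conjugate on the second index block; this is exactly why $\tenss{A}^{(\ell)}$ must be built from $\vect{b}_\ell^{*}$ rather than $\vect{b}_\ell$, and confirming that this single conjugation reconciles the two conventions is the crux of the argument. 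Everything else is the routine identification of $\tau$ with vectorization along the first $d$ modes, for which I would simply invoke the spectral theorem together with the cited extensions in \cite[Lem. 4.11]{ULC2019} and \cite[Prop. 3.5]{jiang2016characterizing}.
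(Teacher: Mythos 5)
Your proposal is correct and follows exactly the route the paper indicates: apply the spectral theorem to the unfolding $\tau(\tenss{B})\in\CC^{R\times R}$ (which is Hermitian positive semidefinite by the definition given just before the lemma) and reshape the eigenvectors back into $d$-th order tensors. Your attention to the conjugation convention — building $\tenss{A}^{(\ell)}$ from $\vect{b}_\ell^{*}$ so that the conjugate lands on the first index block as the statement requires — correctly supplies the one detail the paper leaves implicit.
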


\begin{proposition}\label{prop:equiv_form_B}
	The function $f$ admits representation \eqref{cost-fn-general-1} if and only if there exists a $2d$-th order positive semidefinite tensor $\tenss{B}$ 
	such that 
	\begin{equation}\label{cost-fn-general-trace}
	f(\omega) = \trace{\tenss{B} \contr{1} (\matr{U}^{(1)})^{\H}  \cdots \contr{d} (\matr{U}^{(d)})^{\H}\contr{d+1} (\matr{U}^{(1)})^{\T}\cdots \contr{2d} (\matr{U}^{(d)})^{\T}}.
	\end{equation}
\end{proposition}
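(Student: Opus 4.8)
The plan is to prove both implications through a single index-level computation that converts the squared diagonal entries in \eqref{cost-fn-general-1} into the contraction pattern of \eqref{cost-fn-general-trace}, using \Cref{lemm:equiv_B_A} as the bridge between positive semidefinite $2d$-th order tensors and their Gram-type decompositions $\sum_\ell \alpha_\ell (\tenss{A}^{(\ell)})^{*}\otimes\tenss{A}^{(\ell)}$. In one direction I expand and collect; in the other I substitute the spectral decomposition supplied by the lemma and reverse the bookkeeping.

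For the ``only if'' direction, I would start from \eqref{cost-fn-general-1}, write $\|\diag{\tenss{W}^{(\ell)}}\|^2 = \sum_{p} |\tenselem{W}^{(\ell)}_{p\cdots p}|^2$, and expand each diagonal entry $\tenselem{W}^{(\ell)}_{p\cdots p}$ as a multilinear expression in the entries of $\tenss{A}^{(\ell)}$ and the columns of the $\matr{U}^{(i)}$. The key observation is that writing $|w|^2 = w\,\overline{w}$ produces, for each mode $i$, one conjugated factor coming from $w$ and one unconjugated factor coming from $\overline{w}$; this is precisely why \eqref{cost-fn-general-trace} carries $(\matr{U}^{(i)})^{\H}$ on modes $1,\dots,d$ and $(\matr{U}^{(i)})^{\T}$ on modes $d+1,\dots,2d$. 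Collecting the $\tenss{A}^{(\ell)}$-dependent coefficients into the $2d$-th order tensor $\tenss{B} = \sum_{\ell} \alpha_\ell (\tenss{A}^{(\ell)})^{*}\otimes\tenss{A}^{(\ell)}$ then rewrites $f$ as the contraction in \eqref{cost-fn-general-trace}, with the trace read as the sum of the fully diagonal entries so as to match the single summation index $p$ on all $2d$ modes. Positive semidefiniteness of $\tenss{B}$ is immediate, since its unfolding $\tau(\tenss{B}) = \sum_\ell \alpha_\ell \vecl(\tenss{A}^{(\ell)})^{*}\vecl(\tenss{A}^{(\ell)})^{\T}$ is a nonnegative combination ($\alpha_\ell\ge 0$) of rank-one Hermitian positive semidefinite matrices.

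For the ``if'' direction I would run the same computation backwards. Given a positive semidefinite $\tenss{B}$ realizing \eqref{cost-fn-general-trace}, \Cref{lemm:equiv_B_A} supplies tensors $\{\tenss{A}^{(\ell)}\}$ and weights $\alpha_\ell\ge 0$ with $\tenss{B} = \sum_\ell \alpha_\ell (\tenss{A}^{(\ell)})^{*}\otimes\tenss{A}^{(\ell)}$. Substituting this decomposition into the contraction and regrouping the conjugated and unconjugated mode products recovers $\sum_\ell \alpha_\ell\sum_p |\tenselem{W}^{(\ell)}_{p\cdots p}|^2$ with $\tenss{W}^{(\ell)} = \tenss{A}^{(\ell)}\contr{1}(\matr{U}^{(1)})^{\dagger}\cdots\contr{d}(\matr{U}^{(d)})^{\dagger}$, which is exactly the form \eqref{cost-fn-general-1}.

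The main obstacle is purely the bookkeeping of conjugations. One must check that the $\H$/$\T$ split in \eqref{cost-fn-general-trace} reproduces the $w\,\overline{w}$ pattern consistently under both conventions $(\cdot)^{\dagger}=(\cdot)^{\H}$ and $(\cdot)^{\dagger}=(\cdot)^{\T}$; the two choices merely interchange which index block of $\tenss{B}$ is conjugated (equivalently, relabel $\tenss{A}^{(\ell)}$ by its conjugate), a discrepancy that \Cref{lemm:equiv_B_A} absorbs since any Gram decomposition with nonnegative weights again yields a representation of the form \eqref{cost-fn-general-1}. The remaining care is to confirm that ``trace'' means the fully diagonal sum, so that the single index $p$ of the diagonal in \eqref{cost-fn-general-1} is matched on all $2d$ contracted modes.
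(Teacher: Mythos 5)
Your proposal is correct and follows essentially the same route as the paper: both expand the diagonal norm (respectively the trace) as a sum over the column index $q$ of contractions with the columns $\vect{u}_q^{(i)}$, and both invoke \Cref{lemm:equiv_B_A} as the bridge between positive semidefinite $2d$-th order tensors and Gram decompositions $\sum_\ell \alpha_\ell (\tenss{A}^{(\ell)})^{*}\otimes\tenss{A}^{(\ell)}$. You merely spell out the conjugation bookkeeping and the PSD verification of the unfolding, which the paper leaves implicit.
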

\begin{proof}
	Note that the objective function \eqref{cost-fn-general-1} satisfies
	\begin{equation*}\label{cost-fn-general-12}
	f(\omega) = \sum\limits_{\ell=1}^{L} \sum_{q=1}^{r}\alpha_\ell|\tenss{A}^{(\ell)} \contr{1} (\vect{u}_{q}^{(1)})^{\dagger}\cdots \contr{d} (\vect{u}_{q}^{(d)})^{\dagger}|^2,
	\end{equation*}
	and the function \eqref{cost-fn-general-trace} satisfies 
	\begin{equation*}\label{cost-fn-general-trace-2}
	f(\omega) 
	=\sum_{q=1}^{r}(\tenss{B} \contr{1} (\vect{u}_{q}^{(1)})^{\H}  \cdots \contr{d} (\vect{u}_{q}^{(d)})^{\H}\contr{d+1} (\vect{u}_{q}^{(1)})^{\T}\cdots \contr{2d} (\vect{u}_{q}^{(d)})^{\T}).
	\end{equation*}
	Then the equivalence between \eqref{cost-fn-general-1} and \eqref{cost-fn-general-trace} follows directly from \Cref{lemm:equiv_B_A}. 
	The proof is complete.  
\end{proof}

\subsection{Example: low rank orthogonal approximation}
\label{sub-pro-sec-best-rank-ort-appro-1}

Let $\tenss{A}\in\CC^{n_1\times n_2\times\cdots\times n_d}$ and $r\leq n_i$ for $1\leq i\leq d$.
The \emph{low rank orthogonal approximation} \cite{chen2009tensor,hu2019LROAT,kolda2001orthogonal,yang2019epsilon} is to find 
\begin{equation}\label{eq-problem-low-orth-rank}
\tenss{B}_{*} =\arg\min \|\tenss{A} - \tenss{B}\|,
\end{equation}
where 
$\tenss{B} = \sum_{p=1}^{r}\mu_{p}\vect{u}_{p}^{(1)}\otimes\cdots\otimes\vect{u}_{p}^{(d)}$ 
satisfies that
$\langle\vect{u}_{p_1}^{(i)}, \vect{u}_{p_2}^{(i)}\rangle=0$ for any $1\leq i\leq d$ and $p_1\neq p_2$.
Let $\matr{U}^{(i)}=[\vect{u}_{1}^{(i)},\cdots,\vect{u}_{r}^{(i)}]\in\text{St}(r,n_i,\CC)$ for $1\leq i\leq d.$
It is clear that
\begin{equation*}
\tenss{B} = \tenss{D}\contr{1}\matr{U}^{(1)}\contr{2}\matr{U}^{(2)}\contr{3}\cdots\contr{d}\matr{U}^{(d)},
\end{equation*}
where $\tenss{D}\in\CC^{r\times r\times\cdots\times r}$ is a diagonal tensor satisfying that $\diag{\tenss{D}}=(\mu_1,\cdots,\mu_r)^{\T}$. 
The following lemma is a direct extension of \cite[Prop. 5.1]{chen2009tensor}.
We omit the detailed proof here.

\begin{lemma}\label{lemm:equiv_ortho_low}
	Problem \eqref{eq-problem-low-orth-rank} is equivalent to the maximization of 
	\begin{equation}\label{eq-cost-function-general-orth}
	f(\matr{U}^{(1)}, \matr{U}^{(2)}, \cdots, \matr{U}^{(d)}) = \|\diag{\tenss{W}}\|^2,
	\end{equation}
	where $\tenss{W}=\tenss{A}\contr{1}(\matr{U}^{(1)})^\H\contr{2}(\matr{U}^{(2)})^\H\cdots\contr{d}(\matr{U}^{(d)})^\H\in\CC^{r\times r\times\cdots\times r}$. 
\end{lemma}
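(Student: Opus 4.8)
The plan is to reduce the constrained best-approximation problem \eqref{eq-problem-low-orth-rank} to the maximization of $\|\diag{\tenss{W}}\|^2$ by decoupling the optimization over the scalar weights $\mu_p$ from that over the Stiefel factors $\matr{U}^{(i)}$, and then solving the inner scalar problem in closed form. I would treat only the case $(\cdot)^{\dagger}=(\cdot)^{\H}$, as in the statement of \Cref{lemm:equiv_ortho_low}; the transpose case is entirely analogous. First I would fix the factors $\matr{U}^{(i)}=[\vect{u}_{1}^{(i)},\cdots,\vect{u}_{r}^{(i)}]\in\text{St}(r,n_i,\CC)$ and expand
\begin{equation*}
\|\tenss{A}-\tenss{B}\|^2 = \|\tenss{A}\|^2 - 2\langle\tenss{A},\tenss{B}\rangle_{\R} + \|\tenss{B}\|^2,
\end{equation*}
where $\tenss{B}=\tenss{D}\contr{1}\matr{U}^{(1)}\cdots\contr{d}\matr{U}^{(d)}=\sum_{p=1}^{r}\mu_p\,\vect{u}_p^{(1)}\otimes\cdots\otimes\vect{u}_p^{(d)}$.

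The central observation is that the column orthonormality $\langle\vect{u}_{p_1}^{(i)},\vect{u}_{p_2}^{(i)}\rangle=0$ for $p_1\neq p_2$ makes the $r$ rank-one summands of $\tenss{B}$ mutually orthogonal, so that $\|\tenss{B}\|^2=\sum_{p=1}^{r}|\mu_p|^2$ with no cross terms. For the linear term, writing $w_p\eqdef\tenselem{W}_{p\cdots p}=\tenss{A}\contr{1}(\vect{u}_p^{(1)})^{\H}\cdots\contr{d}(\vect{u}_p^{(d)})^{\H}$, a direct computation of the $i$-mode contractions gives $\langle\tenss{A},\vect{u}_p^{(1)}\otimes\cdots\otimes\vect{u}_p^{(d)}\rangle_{\R}=\Re(w_p^{*})$ in each summand, hence $\langle\tenss{A},\tenss{B}\rangle_{\R}=\sum_{p=1}^{r}\Re(\mu_p w_p^{*})$. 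Substituting back and completing the square yields
\begin{equation*}
\|\tenss{A}-\tenss{B}\|^2 = \|\tenss{A}\|^2 + \sum_{p=1}^{r}\left(|\mu_p|^2 - 2\Re(\mu_p w_p^{*})\right) = \|\tenss{A}\|^2 + \sum_{p=1}^{r}|\mu_p - w_p|^2 - \sum_{p=1}^{r}|w_p|^2.
\end{equation*}

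From this identity the conclusion is immediate: for fixed $\matr{U}^{(i)}$ the unconstrained scalars $\mu_p\in\CC$ can be chosen optimally as $\mu_p=w_p$, driving the middle sum to zero and producing the minimal value $\|\tenss{A}\|^2-\sum_{p=1}^{r}|w_p|^2=\|\tenss{A}\|^2-\|\diag{\tenss{W}}\|^2$. Since $\|\tenss{A}\|^2$ is constant, minimizing $\|\tenss{A}-\tenss{B}\|$ over the admissible $\tenss{B}$ is equivalent to maximizing $\|\diag{\tenss{W}}\|^2$ over $\matr{U}^{(i)}\in\text{St}(r,n_i,\CC)$, which is exactly \eqref{eq-cost-function-general-orth}. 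The step requiring the most care is the bookkeeping of the complex conjugation in the identity relating $\langle\tenss{A},\vect{u}_p^{(1)}\otimes\cdots\otimes\vect{u}_p^{(d)}\rangle$ to $w_p^{*}$, together with the verification that the weights $\mu_p$ enter only through the per-index quadratic $|\mu_p-w_p|^2$; once this decoupling is established, the Eckart--Young-type completion of the square and the equivalence follow at once, and the argument is the natural tensor analogue of \cite[Prop. 5.1]{chen2009tensor}.
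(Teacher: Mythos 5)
Your proof is correct and is essentially the argument the paper itself relies on: the paper omits the proof, citing it as a direct extension of \cite[Prop. 5.1]{chen2009tensor}, and that cited proof is exactly your decoupling of the weights $\mu_p$ from the Stiefel factors via orthonormality of the rank-one terms followed by completing the square in $\mu_p$. Nothing further is needed.
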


We now make some notations to present \Cref{al-general-polar} for objective function \eqref{eq-cost-function-general-orth}. 
Let $\matr{U}_{k}^{(i)}=[\vect{u}_{1}^{(k,i)},\cdots,\vect{u}_{r}^{(k,i)}]\in\text{St}(r,n_i,\CC)$ for $k\geq 1$ and $1\leq i\leq d$. 
Denote 
\begin{align*}
\tenss{V}^{(k,i)}&\eqdef\tenss{A}\contr{1}(\matr{U}_{k}^{(1)})^\H\cdots\contr{i-1}(\matr{U}_{k}^{(i-1)})^\H\contr{i+1}(\matr{U}_{k-1}^{(i+1)})^\H\cdots\contr{d}(\matr{U}_{k-1}^{(d)})^\H,\\
\vect{v}^{(k,i,p)}
&\eqdef\tenss{A}\contr{1}(\vect{u}_{p}^{(k,1)})^\H\cdots\contr{i-1}(\vect{u}_{p}^{(k,i-1)})^\H\contr{i+1}(\vect{u}_{p}^{(k-1,i+1)})^\H\cdots\contr{d}(\vect{u}_{p}^{(k-1,d)})^\H\in\CC^{n_{i}},
\end{align*}
for $1\leq p\leq r$.
Let $\tenss{W}^{(k,i-1)}\eqdef\tenss{V}^{(k,i)}\contr{i}(\matr{U}_{k-1}^{(i)})^\H$. 
Note that 
\begin{equation}\label{eq-delta-low-orth}
\nabla h_{(k,i)}(\matr{U}_{k-1}^{(i)}) = 2
[\vect{v}^{(k,i,1)},\cdots,\vect{v}^{(k,i,r)}]
\begin{bmatrix}
\left(\tenselem{W}^{(k,i-1)}_{1\cdots 1}\right)^{*} & & \\
& \ddots & \\
& & \left(\tenselem{W}^{(k,i-1)}_{r\cdots r}\right)^{*}
\end{bmatrix}
\end{equation}
by equation \eqref{eq:delta_h}.
Then \Cref{al-general-polar} for objective function \eqref{eq-cost-function-general-orth} specializes to \Cref{al-general-polar-LROAT}, which is the \emph{low rank orthogonal approximation of tensors} (LROAT) algorithm
in \cite[Sec. 5.4]{chen2009tensor}. 
In real case, the decomposition form in \eqref{eq-gradient-polar-decom} for objective function \eqref{eq-cost-function-general-orth} is the same as \cite[Eq. (5.16)]{chen2009tensor}. 

\begin{algorithm}
	\caption{LROAT}\label{al-general-polar-LROAT}
	\begin{algorithmic}[1]
		\STATE{{\bf Input:} starting point $\omega^{(0)}$.}
		\STATE{{\bf Output:} $\omega^{(k,i)}$, $k\geq 1$, $1\leq i\leq d$.}
		\FOR{$k=1,2,\cdots,$}
		\FOR{$i=1,2,\cdots,d$}
		\STATE Compute $\nabla h_{(k,i)}(\matr{U}^{(i)}_{k-1})$ by \eqref{eq-delta-low-orth};
		\STATE Compute the polar decomposition of $\nabla h_{(k,i)}(\matr{U}^{(i)}_{k-1})$;
		\STATE Update $\matr{U}_{k}^{(i)}$ to be the orthogonal polar factor.
		\ENDFOR
		\ENDFOR
	\end{algorithmic}
\end{algorithm}

Note that the objective function \eqref{eq-cost-function-general-orth} is restricted $\frac{1}{\alpha}-$homogeneous with $\alpha=\frac{1}{2}$ and block multiconvex by \Cref{prop:multicon_01}.
The next result follows directly from \Cref{theorem-weak-conver-product}.

\begin{corollary}\label{theorem-main-global-convergence-LROAT-G}
	If, in \Cref{al-general-polar-LROAT} there exists $\delta>0$ such that condition \eqref{main-condition-convergence}
	always holds,
	then the iterates $\omega^{(k,i)}$ converge to a stationary point $\omega_{*}$. 
	If $\omega_{*}$ is a scale semi-nondegenerate point,
	then the convergence rate is linear. 
\end{corollary}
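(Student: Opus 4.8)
The plan is to recognize this corollary as a direct specialization of the general convergence theory already established, rather than to prove anything new from scratch. First I would observe that the objective function \eqref{eq-cost-function-general-orth} is exactly the objective function \eqref{cost-fn-general-1} in the special case $L=1$, $\alpha_1=1$, $(\cdot)^{\dagger}=(\cdot)^{\H}$, and with all $d$ Stiefel factors sharing the common rank $r$. Consequently, by \Cref{prop:multicon_01}, the function \eqref{eq-cost-function-general-orth} is restricted $\frac{1}{\alpha}$-homogeneous with $\alpha=\frac{1}{2}$ and block multiconvex, exactly as stated in the sentence preceding the corollary. Moreover, it is scale invariant, since this was already noted to hold for the general form \eqref{cost-fn-general-1}.

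Next I would confirm that \Cref{al-general-polar-LROAT} (LROAT) is nothing but \Cref{al-general-polar} (APDOI) applied to this particular objective: the only difference between the two algorithms is that LROAT computes the Euclidean gradient $\nabla h_{(k,i)}(\matr{U}^{(i)}_{k-1})$ through the explicit closed form \eqref{eq-delta-low-orth}, which is itself just the instantiation of \eqref{eq:delta_h}. Hence the two algorithms generate identical iterate sequences $\omega^{(k,i)}$, and any convergence statement proved for APDOI transfers verbatim. I would also record the one remaining hypothesis needed by the general theory, namely real analyticity: the objective \eqref{eq-cost-function-general-orth} is a real polynomial in the real and imaginary parts of the entries of the $\matr{U}^{(i)}$, so it is real analytic on the (analytic) product of Stiefel manifolds $\Omega$.

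With these identifications in place, the corollary follows by invoking \Cref{theor-isolate-limit} (which subsumes the weak convergence of \Cref{theorem-weak-conver-product}): block multiconvexity and real analyticity hold, and condition \eqref{main-condition-convergence} is assumed, so the iterates $\omega^{(k,i)}$ converge to a stationary point $\omega_{*}$; and because $f$ is scale invariant, whenever $\omega_{*}$ is a scale semi-nondegenerate point the convergence rate is linear. I do not expect any genuine obstacle here, as the argument is purely a matter of matching the special case to the general framework; the only point requiring care is verifying that \eqref{eq-cost-function-general-orth} really is an instance of \eqref{cost-fn-general-1} with the correct parameters (so that \Cref{prop:multicon_01} and the scale-invariance assertion apply), and confirming real analyticity, both of which are routine.
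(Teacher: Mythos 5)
Your proposal is correct and takes essentially the same route as the paper: the corollary is obtained by specializing the general APDOI convergence theory to the objective \eqref{eq-cost-function-general-orth}, using \Cref{prop:multicon_01} for restricted homogeneity and block multiconvexity and the identification of \Cref{al-general-polar-LROAT} with \Cref{al-general-polar}. If anything, your write-up is more careful than the paper's own text, which cites \Cref{theorem-weak-conver-product} (weak convergence only) even though the stated conclusions (global convergence and linear rate) require \Cref{theor-isolate-limit}, the theorem you correctly invoke, and you also verify real analyticity explicitly, a hypothesis the paper leaves implicit.
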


\begin{remark}
	In \cite[Thm.  5.7]{chen2009tensor},
	the weak convergence of iterates $\omega^{(k)}$ produced by the real case of \Cref{al-general-polar-LROAT} was shown under the condition that $\nabla h_{(k,i)}(\matr{U}_{k-1}^{(i)})$ is always of full rank.
	Very recently, we discovered that an \emph{epsilon-alternating least square} method was proposed to solve a problem more general than real case of \eqref{eq-problem-low-orth-rank}, and its global convergence was established in \cite{yang2019epsilon}. 
	Meanwhile, for an \emph{improved version} of real case of  \Cref{al-general-polar-LROAT}, the global convergence was established and its convergence rate was studied in \cite{hu2019LROAT}.
\end{remark}

\subsection{Example: best rank-1 approximation}\label{sec-best-rank-1-appro-11}

For $\tenss{A}\in\CC^{n_1\times n_2\times\cdots\times n_d}$, 
the \emph{best rank-1 approximation} \cite{de1997signal,DeLathauwerLieven1995,Lathauwer00:rank-1approximation,zhang2001rank} is to find 
\begin{equation}\label{eq-problem-low-multi-rank-1}
\tenss{B}_{*} =\arg\min \|\tenss{A} - \tenss{B}\|,
\end{equation}
where $\tenss{B} = \lambda\vect{u}^{(1)}\otimes\vect{u}^{(2)}\otimes\cdots\otimes\vect{u}^{(d)}$ with $\lambda\in\RR$ and $\vect{u}^{(i)}\in\mathbb{S}^{n_{i}-1}$. 
It was shown\footnote{This equivalence is in fact a special case of \Cref{lemm:equiv_ortho_low}.} in \cite{de1997signal,Lathauwer00:rank-1approximation} that 
problem \eqref{eq-problem-low-multi-rank-1} is equivalent to the maximization of 
\begin{equation}\label{eq-cost-function-general-rank-1-best}
f(\vect{u}^{(1)}, \vect{u}^{(2)}, \cdots, \vect{u}^{(d)}) = |\tenss{A}\contr{1}(\vect{u}^{(1)})^\H\contr{2}(\vect{u}^{(2)})^\H\cdots\contr{d}(\vect{u}^{(d)})^\H|^2,
\end{equation}
where 
$\vect{u}^{(i)}\in\mathbb{S}^{n_i-1}$ for $1\leq i\leq d$.

Let $k\geq 1$ and
$$\vect{v}^{(k,i)}\eqdef\tenss{A}\contr{1}(\vect{u}_{k}^{(1)})^{\H}\cdots\contr{i-1}(\vect{u}_{k}^{(i-1)})^{\H}\contr{i+1}(\vect{u}_{k-1}^{(i+1)})^{\H}\cdots\contr{d}(\vect{u}_{k-1}^{(d)})^{\H}.$$
Note that 
\begin{equation}\label{eq-engrad-09}
\nabla h_{(k,i)}(\vect{u}^{(i)}_{k-1}) = 2\left((\vect{u}^{(i)}_{k-1})^{\T}(\vect{v}^{(k,i)})^{*}\right)\vect{v}^{(k,i)}
\end{equation}
by \eqref{eq:delta_h-sp}.
Then \Cref{al-general-polar} for objective function \eqref{eq-cost-function-general-rank-1-best} specializes to \Cref{al-general-polar-rank-1}, which is the well-known \emph{higher order power method} (HOPM) \cite{DeLathauwerLieven1995,Lathauwer00:rank-1approximation} algorithm.
The equation in \cite[Eq. (3.9)]{Lathauwer00:rank-1approximation} obtained by Lagrange multipliers is in fact a special case of the decomposition form in \eqref{eq-gradient-polar-decom}.

\begin{algorithm}
	\caption{HOPM}\label{al-general-polar-rank-1}
	\begin{algorithmic}[1]
		\STATE{{\bf Input:} starting point $\omega^{(0)} = (\vect{u}_{0}^{(1)}, \vect{u}_{0}^{(2)},\cdots,\vect{u}_{0}^{(d)})$.}
		\STATE{{\bf Output:} $\omega^{(k,i)}=(\vect{u}_{k}^{(1)},\cdots,\vect{u}_{k}^{(i)},\vect{u}_{k-1}^{(i+1)},\cdots,\vect{u}_{k-1}^{(d)})$, $k\geq 1$, $1\leq i\leq d$.}
		\FOR{$k=1,2,\cdots,$}
		\FOR{$i=1,2,\cdots,d$}
		\STATE Compute $\vect{v}^{(k,i)}$;
		\STATE Update $\vect{u}_{k}^{(i)}=\frac{\vect{v}^{(k,i)}}{\|\vect{v}^{(k,i)}\|}$.
		\ENDFOR
		\ENDFOR
	\end{algorithmic}
\end{algorithm}

Note that the objective function \eqref{eq-cost-function-general-rank-1-best} is restricted $\frac{1}{\alpha}-$homogeneous with $\alpha=\frac{1}{2}$ and block multiconvex by \Cref{prop:multicon_01}.
The next result follows directly from \Cref{coro:conver_sphere}.

\begin{corollary}\label{theorem-main-global-convergence-HOOI-G-rank-1}
	The iterates $\omega^{(k,i)}$ produced by \Cref{al-general-polar-rank-1} converge to a stationary point $\omega_{*}$. 
	If $\omega_{*}$ is a scale semi-nondegenerate point,
	then the convergence rate is linear. 
\end{corollary}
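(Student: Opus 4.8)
The plan is to deduce this statement directly from \Cref{coro:conver_sphere}, after confirming that \Cref{al-general-polar-rank-1} is \Cref{al-general-polar} applied to the objective \eqref{eq-cost-function-general-rank-1-best}, and that the hypotheses of \Cref{coro:conver_sphere} are met. The starting observation is that \eqref{eq-cost-function-general-rank-1-best} is the single-term ($L=1$, $r=1$) instance of the simultaneous diagonalization objective \eqref{cost-fn-general-1}, so its structural properties can be inherited rather than re-derived.

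First I would check the hypotheses of \Cref{coro:conver_sphere}. Since each mode carries a unit-norm vector, $r_i=1$ for all $i$. By \Cref{prop:multicon_01}, the objective is restricted $\tfrac{1}{\alpha}$-homogeneous with $\alpha=\tfrac12$ and block multiconvex. It is real analytic, being a polynomial in the real and imaginary parts of the entries of the $\vect{u}^{(i)}$ over the real-analytic product of unit spheres. Finally, it is scale invariant: each block restriction has the form $\vect{u}\mapsto|\vect{u}^{\H}\vect{v}|^2$, unchanged under $\vect{u}\mapsto\vect{u}z$ for $z\in\mathbb{S}=\set{DU}_{1}$ (and since $r=1$, scale and unitary invariance coincide here).

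Next I would make the algorithmic identification and verify that the convergence condition holds. As $r_i=1$, the Euclidean gradient in \eqref{eq-engrad-09} is a scalar multiple of the vector $\vect{v}^{(k,i)}$, and the orthogonal polar factor of a nonzero vector is its normalization, so the polar-factor update of \Cref{al-general-polar} reduces to the step $\vect{u}_k^{(i)}=\vect{v}^{(k,i)}/\|\vect{v}^{(k,i)}\|$ of \Cref{al-general-polar-rank-1}. Moreover, condition \eqref{main-condition-convergence} is automatically in force: $\nabla h_{(k,i)}(\vect{u}^{(i)}_{k-1})$ is a vector, so its single singular value equals its norm, and the homogeneity identity together with the monotone increase of the objective (\Cref{lemma-incresease-cost-general}) gives $\sigma_{\rm min}(\nabla h_{(k,i)}(\vect{u}^{(i)}_{k-1}))=\|\nabla h_{(k,i)}(\vect{u}^{(i)}_{k-1})\|\geq\tfrac{1}{\alpha}f(\omega^{(0)})>0$, so that $\delta=\tfrac{1}{2\alpha}f(\omega^{(0)})$ works. \Cref{coro:conver_sphere} would then give convergence of $\omega^{(k,i)}$ to a stationary point $\omega_*$ and, at a scale semi-nondegenerate $\omega_*$, a linear rate.

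I expect the main obstacle to be the precision of the algorithmic identification in the complex case rather than any new estimate. The scalar multiplying $\vect{v}^{(k,i)}$ in \eqref{eq-engrad-09} carries a unit-modulus phase, so the genuine orthogonal polar factor differs from the bare normalization $\vect{v}^{(k,i)}/\|\vect{v}^{(k,i)}\|$ by an element of $\set{DU}_{1}=\mathbb{S}$. The point to handle carefully is that this phase lies exactly in the scale-invariance group, under which each block objective $h_{(k,i)}$ and its set of stationary points are unchanged; I would argue that the sufficient-increase and gradient-stepsize inequalities underlying \Cref{coro:conver_sphere} are therefore insensitive to it, so the conclusions carry over unchanged to \Cref{al-general-polar-rank-1}.
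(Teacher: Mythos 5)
Your main line is exactly the paper's proof: verify $r_i=1$, get restricted $\tfrac{1}{\alpha}$-homogeneity ($\alpha=\tfrac12$) and block multiconvexity from \Cref{prop:multicon_01}, note real analyticity and that \eqref{main-condition-convergence} is automatic on spheres with $\delta=\tfrac{1}{2\alpha}f(\omega^{(0)})$, and invoke \Cref{coro:conver_sphere}. You also correctly spotted a point the paper passes over in silence: the orthogonal polar factor of $\nabla h_{(k,i)}(\vect{u}^{(i)}_{k-1})=2c\,\vect{v}^{(k,i)}$ with $c=(\vect{u}^{(i)}_{k-1})^{\T}(\vect{v}^{(k,i)})^{*}$ is $\tfrac{c}{|c|}\,\vect{v}^{(k,i)}/\|\vect{v}^{(k,i)}\|$, not the bare normalization used by \Cref{al-general-polar-rank-1}. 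However, your proposed repair of this discrepancy is not valid as stated. Scale invariance of $f$ makes the objective \emph{values} insensitive to phases, but the inequalities driving \Cref{coro:conver_sphere} --- the sufficient increase of \Cref{lemma-main-01}, the safeguard of \Cref{lemma-098}, and hypotheses (i) and \eqref{eq:safeguard} of \Cref{theorem-SU15} --- are statements about the displacements $\|\omega^{(k,i)}-\omega^{(k,i-1)}\|$ of the actual iterates, and displacements are \emph{not} invariant under step-dependent phase changes. Concretely, writing $w=(\vect{u}^{(i)}_{k-1})^{\H}\vect{v}^{(k,i)}$, the bare-normalization step has
\begin{equation*}
h_{(k,i)}(\vect{u}^{(i)}_{k})-h_{(k,i)}(\vect{u}^{(i)}_{k-1})=\|\vect{v}^{(k,i)}\|^2-|w|^2,
\qquad
\|\vect{u}^{(i)}_{k}-\vect{u}^{(i)}_{k-1}\|^2=2-2\,\Re(w)/\|\vect{v}^{(k,i)}\|,
\end{equation*}
so if $w$ is (say) purely imaginary with $|w|$ close to $\|\vect{v}^{(k,i)}\|$, the step moves a distance of order $1$ while the gain is arbitrarily small: sufficient increase with the uniform $\delta$ fails for such a step (and this configuration genuinely can occur at the first HOPM update). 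Phase jitter is precisely what the \L{}ojasiewicz descent mechanism cannot absorb, so invariance of $f$ alone does not transfer the conclusions.

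The gap closes by a different observation, specific to this objective: $c$ is the conjugate of the full contraction $\tenss{A}\contr{1}(\vect{u}^{(1)})^{\H}\cdots\contr{d}(\vect{u}^{(d)})^{\H}$ at the current iterate, and immediately after any HOPM block update this contraction equals $(\vect{u}^{(i)}_{k})^{\H}\vect{v}^{(k,i)}=\|\vect{v}^{(k,i)}\|>0$ (nonvanishing because $f(\omega^{(0)})>0$ and the value is nondecreasing by \Cref{lemma-incresease-cost-general}). Hence from the \emph{second} block update onward the scalar $c$ is a positive real number, the polar factor is exactly $\vect{v}^{(k,i)}/\|\vect{v}^{(k,i)}\|$, and \Cref{al-general-polar-rank-1} coincides verbatim with \Cref{al-general-polar}. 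The only residual discrepancy is the very first update, where the two algorithms differ by the constant phase $z=c/|c|$ of the starting contraction; an easy induction shows that thereafter the APDOI and HOPM iterates agree except that mode $1$ carries the fixed factor $z$, and multiplication by a fixed unimodular scalar is an isometry preserving $f$, stationarity, and scale semi-nondegeneracy, so the conclusions of \Cref{coro:conver_sphere} transfer. Equivalently, and most simply: apply \Cref{coro:conver_sphere} directly to the tail of the HOPM sequence starting from $\omega^{(1,1)}$, which is itself an APDOI run (with the cyclic order of blocks relabeled) whose starting value satisfies $f(\omega^{(1,1)})\geq f(\omega^{(0)})>0$.
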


\begin{remark} 
	It was shown in \cite[Thm.  11]{Usch15:pjo} and \cite[Thm.  4.4, Cor. 5.4]{hu2018convergence} that the iterates $\omega^{(k)}$ produced by the real case of  \Cref{al-general-polar-rank-1} converge to a stationary point, and the convergence rate is linear for almost all tensors in $\RR^{n_1\times n_2\times\cdots\times n_d}$.
\end{remark}

\section{Simultaneous approximate symmetric tensor diagonalization}\label{sec:objec_01-s}

\subsection{Euclidean gradient}\label{subsec:eucli_prob_01-s}
By \Cref{prop:multicon_01} and \Cref{remar:symme_grad},
we see that the objective function \eqref{cost-fn-general-1-s} is $\frac{1}{\beta}-$homogeneous with $\beta=\frac{1}{2d}$. 
Therefore, all the algorithms and convergence results in \Cref{sec-opt-manifold} apply to this function. 

Let $\tenss{V}^{(\ell)}=\tenss{S}^{(\ell)} \contr{2} \matr{U}^\dagger\cdots \contr{d} \matr{U}^\dagger$
and $\vect{v}^{(\ell)}_{p}=\tenss{V}^{(\ell)}_{:,p\cdots p}\in\CC^{n}$ for $1\leq p\leq r$.
Let $\tenss{W}^{(\ell)}=\tenss{V}^{(\ell)} \contr{1} \matr{U}^{\dagger}$. 
By equations \eqref{eq:delta_h} and \eqref{eq-grad-rela-g-h},
we have the Euclidean gradient of objective function \eqref{cost-fn-general-1-s} as follows: 
\begin{align}\label{eq:delta_h-s}
\nabla g(\matr{U})^{\dagger} =  2d\sum_{\ell=1}^{L}\alpha_\ell\begin{bmatrix}
\tenselem{W}^{(\ell)}_{1\cdots 1} & & \\
& \ddots & \\
& & \tenselem{W}^{(\ell)}_{r\cdots r}
\end{bmatrix}[\vect{v}^{(\ell)}_{1},\cdots,\vect{v}^{(\ell)}_{r}]^{\H}.
\end{align}

\begin{remark}
	In the real case, when $r=n$,
	the equation \eqref{eq:delta_h-s} was obtained in \cite[Sec. 4.1]{LUC2017globally}. 
	In the real case, when $L=1$, the equation \eqref{eq:delta_h-s} was obtained in \cite[Sec. 2.2]{LUC2019}. 
	In the complex case, when $r=n$,
	the equation \eqref{eq:delta_h-s} was obtained in \cite[Sec. 4.5]{ULC2019}. 
\end{remark}

\subsection{Riemannian Hessian}

In this subsection, we mainly focus on the Riemannian Hessian of objective function \eqref{cost-fn-general-1-s}, and prove that there exist scale semi-nondegenerate points for this function. 

\begin{lemma}\label{lemma:Eu_Hessi_g_1-0}
	Let $L=1$, $\alpha_{1}=1$ and $r=1$ in the objective function \eqref{cost-fn-general-1-s}.
	In this case, 
	for a $d$-th order symmetric tensor $\tenss{S}$,
	the objective function is 
	\begin{align}\label{cost-fn-general-1-s-1-0}
	g(\vect{u}) &=|w|^2,
	\end{align}
	where $\vect{u}\in\textbf{S}_{n-1}$ and $w=\tenss{S} \contr{1} \vect{u}^\dagger\cdots  \contr{d} \vect{u}^\dagger$. Let $\vect{v} = \tenss{S}\contr{2} \vect{u}^\dagger\cdots  \contr{d} \vect{u}^\dagger$
	and $\matr{C} = \tenss{S}\contr{3} \vect{u}^\dagger\cdots \contr{d} \vect{u}^\dagger$.
	Then, for $\vect{z}\in\CC^{n}$, we have 
	\begin{align*}
	\nabla^2 g(\vect{u})[\vect{z}] =
	\begin{cases}
	2d^2\vect{v}\vect{v}^{\H}\vect{z} + 2d(d-1)w^{*}\matr{C}\vect{z}^{*}, & \text{ if } (\cdot)^{\dagger}=(\cdot)^{\H}; \\
	2d^2\vect{v}^{*}\vect{v}^{\T}\vect{z} + 2d(d-1)w\matr{C}^{*}\vect{z}^{*}, & \text{ if } (\cdot)^{\dagger}=(\cdot)^{\T}.
	\end{cases}
	\end{align*}
\end{lemma}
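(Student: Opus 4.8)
The plan is to reduce the Euclidean Hessian to the two Wirtinger second derivatives and then invoke the formula \eqref{eq:Hess_computation}. Throughout I would treat only the Hermitian case $(\cdot)^{\dagger}=(\cdot)^{\H}$, the transpose case being identical with all conjugations interchanged. First I would write $g(\vect{u}) = w w^{*}$ and observe that, because $\tenss{S}$ is symmetric, $w = \tenss{S}\contr{1}\vect{u}^{\H}\cdots\contr{d}\vect{u}^{\H}$ is a degree-$d$ form in $\vect{u}^{*}$ alone (anti-holomorphic), while $w^{*}$ is the conjugate form in $\vect{u}$ alone (holomorphic). Consequently $\vect{v}$ and $\matr{C}$ depend on $\vect{u}^{*}$ only, one has $w=\vect{u}^{\H}\vect{v}$ with $\matr{C}$ symmetric, and the key differentiation rules coming from symmetry read $\partial w/\partial\vect{u}^{*} = d\,\vect{v}$, $\partial w^{*}/\partial\vect{u} = d\,\vect{v}^{*}$, and $\partial v_{k}^{*}/\partial u_{j} = (d-1)C_{kj}^{*}$. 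These identities, which absorb the $d$ (resp. $d-1$) equal terms arising when one coordinate is singled out of a symmetric degree-$d$ (resp. degree-$(d-1)$) contraction, are the only place where symmetry of $\tenss{S}$ is used, and they let me avoid computing the full array of partials by brute force.

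Next I would assemble the first Wirtinger derivatives. Since $w$ is anti-holomorphic and $w^{*}$ holomorphic, $\partial g/\partial\vect{u}^{*} = w^{*}\,\partial w/\partial\vect{u}^{*} = d\,w^{*}\vect{v}$, which recovers the gradient \eqref{eq:delta_h-s}, and likewise $\partial g/\partial\vect{u} = w\,\partial w^{*}/\partial\vect{u} = d\,w\,\vect{v}^{*}$. Differentiating once more entry by entry then gives the two required mixed second derivatives. For $\nabla^2_{c}g = \partial^{2} g/\partial\vect{u}^{*}\partial\vect{u}^{\T}$ I would differentiate $d\,w^{*}v_{j}$ in $u_{k}$; only $w^{*}$ depends on $\vect{u}$, so the entry is $d\cdot(d\,v_{k}^{*})\,v_{j} = d^{2}v_{j}v_{k}^{*}$, i.e. $\nabla^2_{c}g = d^{2}\vect{v}\vect{v}^{\H}$. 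For $\nabla^2_{r}g = \partial^{2} g/\partial\vect{u}\partial\vect{u}^{\T}$ I would differentiate $\partial g/\partial u_{k} = d\,w\,v_{k}^{*}$ in $u_{j}$; here $w$ is constant in $\vect{u}$ while $v_{k}^{*}$ is holomorphic of degree $d-1$, so the entry is $d\,w\,(d-1)C_{kj}^{*}$, and the symmetry $C_{kj}^{*}=C_{jk}^{*}$ yields $\nabla^2_{r}g = d(d-1)\,w\,\matr{C}^{*}$.

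Finally I would substitute into \eqref{eq:Hess_computation}, using $(d(d-1)\,w\,\matr{C}^{*}\vect{z})^{*} = d(d-1)\,w^{*}\matr{C}\vect{z}^{*}$, to obtain
\begin{equation*}
\nabla^{2}g(\vect{u})[\vect{z}] = 2\bigl(\nabla^2_{c}g(\vect{u})\vect{z} + (\nabla^2_{r}g(\vect{u})\vect{z})^{*}\bigr) = 2d^{2}\vect{v}\vect{v}^{\H}\vect{z} + 2d(d-1)\,w^{*}\matr{C}\vect{z}^{*},
\end{equation*}
which is exactly the claimed expression; the transpose case follows verbatim with conjugates swapped. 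I expect the main obstacle to be purely bookkeeping: applying the Wirtinger calculus so that the holomorphic and anti-holomorphic dependencies are cleanly separated at each differentiation, and rigorously justifying the combinatorial prefactors $d$ and $d-1$ from the symmetry of $\tenss{S}$. Once those differentiation rules are established, the remainder is a single substitution.
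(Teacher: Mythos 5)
Your proposal is correct and follows essentially the same route as the paper: compute the two Wirtinger second derivatives $\nabla^2_{c} g(\vect{u})= d^2\vect{v}\vect{v}^{\H}$ and $\nabla^2_{r} g(\vect{u})=d(d-1)w\matr{C}^{*}$ (using the gradient formula and the symmetry of $\tenss{S}$ to get the factors $d$ and $d-1$), then substitute into \eqref{eq:Hess_computation}. The paper's proof states these two identities without derivation, so your write-up simply supplies the bookkeeping the paper leaves implicit.
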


\begin{proof}
	We only prove the case $(\cdot)^{\dagger}=(\cdot)^{\H}$. 
	The other case is similar.
	By \eqref{eq:delta_h-sp}, we have 
	\begin{align*}
	\nabla^2_{c} g(\vect{u})= d^2\vect{v}\vect{v}^{\H}
	\ \ \textrm{and}\ \  
	\nabla^2_{r} g(\vect{u})=d(d-1)w\matr{C}^{*}.
	\end{align*} 
	The proof is complete by \eqref{eq:Hess_computation}. 
\end{proof}

The following lemma for matrix case is a direct consequence of \Cref{lemma:Eu_Hessi_g_1-0}.

\begin{lemma}\label{lemma:h_U_T-H-2-2}
	Let $g$, $\tenss{V}^{(\ell)}$, $\vect{v}^{(\ell)}_{p}$ and $\tenss{W}^{(\ell)}$ be defined as in \eqref{cost-fn-general-1-s} and \eqref{eq:delta_h-s} for $1\leq p\leq r$ and $1\leq \ell\leq L$. 
	Let $\matr{U}=[\vect{u}_1,\cdots,\vect{u}_{r}]$ and 
	$\matr{C}^{(\ell)}_p = \tenss{S}^{(\ell)}\contr{3} \vect{u}_{p}^\dagger\cdots \contr{d} \vect{u}_{p}^\dagger$.
	Then, for $\matr{Z}=[\vect{z}_1,\cdots,\vect{z}_{r}]\in\CC^{n\times r}$, we have
	\begin{align}\label{eq:Hess_h_1-2-2}
	\left(\nabla^{2}g(\matr{U})[\matr{Z}]\right)^{\dagger}&=
	2d^2\sum_{\ell=1}^{L}\begin{bmatrix}
	(\tenss{V}^{(\ell)} \contr{1} \matr{Z}^{\dagger})_{1\cdots 1}& & \\
	& \ddots & \\
	& & (\tenss{V}^{(\ell)}\contr{1} \matr{Z}^{\dagger})_{r\cdots r}
	\end{bmatrix}\left[\vect{v}^{(\ell)}_{1},\cdots,\vect{v}^{(\ell)}_{r}\right]^{\H}\notag\\
	&\quad +2d(d-1)\sum_{\ell=1}^{L}\begin{bmatrix}
	\tenselem{W}^{(\ell)}_{1\cdots 1} & & \\
	& \ddots & \\
	& & \tenselem{W}^{(\ell)}_{r\cdots r} 
	\end{bmatrix}\left[\matr{C}^{(\ell)}_{1}(\vect{z}^{\H}_{1})^{\dagger},\cdots,\matr{C}^{(\ell)}_{r}(\vect{z}^{\H}_{r})^{\dagger}\right]^{\H}.
	\end{align}
\end{lemma}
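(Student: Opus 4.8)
The plan is to reduce the matrix computation to the single-vector formula of \Cref{lemma:Eu_Hessi_g_1-0} by exploiting the \emph{columnwise separability} of the objective. Writing $\matr{U}=[\vect{u}_1,\dots,\vect{u}_r]$, the key observation is that $g(\matr{U})=\sum_{\ell=1}^{L}\alpha_\ell\sum_{p=1}^{r}|\tenselem{W}^{(\ell)}_{p\cdots p}|^2$, where each diagonal entry $\tenselem{W}^{(\ell)}_{p\cdots p}=\tenss{S}^{(\ell)}\contr{1}\vect{u}_p^{\dagger}\cdots\contr{d}\vect{u}_p^{\dagger}$ depends only on the single column $\vect{u}_p$. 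Hence $g$ is an $\alpha_\ell$-weighted sum of scalar functions of the form $|w|^2$ studied in \Cref{lemma:Eu_Hessi_g_1-0}, each involving exactly one column of $\matr{U}$. First I would record the immediate consequence that the Euclidean Hessian decouples across columns: the mixed second derivatives with respect to two distinct columns $\vect{u}_p$ and $\vect{u}_q$ ($p\neq q$) vanish, so $\nabla^2 g(\matr{U})$ is block diagonal and its action on $\matr{Z}=[\vect{z}_1,\dots,\vect{z}_r]$ is computed column by column.

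Next I would apply \Cref{lemma:Eu_Hessi_g_1-0} to each block. Fixing $p$ and the summand $\ell$, I set $\vect{u}=\vect{u}_p$ and identify the auxiliary quantities of that lemma as $\vect{v}=\vect{v}^{(\ell)}_p=\tenss{V}^{(\ell)}_{:,p\cdots p}$, $w=\tenselem{W}^{(\ell)}_{p\cdots p}$ and $\matr{C}=\matr{C}^{(\ell)}_p$, which match precisely the objects in the statement. Summing over $\ell$ with weights $\alpha_\ell$ then gives the $p$-th column of $\nabla^2 g(\matr{U})[\matr{Z}]$; in the case $(\cdot)^{\dagger}=(\cdot)^{\H}$ this column equals $\sum_{\ell}\alpha_\ell\bigl(2d^2\vect{v}^{(\ell)}_p(\vect{v}^{(\ell)}_p)^{\H}\vect{z}_p+2d(d-1)(\tenselem{W}^{(\ell)}_{p\cdots p})^{*}\matr{C}^{(\ell)}_p\vect{z}_p^{*}\bigr)$, and analogously for $(\cdot)^{\dagger}=(\cdot)^{\T}$.

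Finally I would assemble these columns into the claimed $\dagger$-transposed matrix identity \eqref{eq:Hess_h_1-2-2}. Taking $(\cdot)^{\dagger}$ turns the $p$-th column into the $p$-th row, so the two groups of terms become products of a diagonal matrix carrying the $p$-dependent scalars with the fixed matrices $[\vect{v}^{(\ell)}_1,\dots,\vect{v}^{(\ell)}_r]^{\H}$ and the array built from $\matr{C}^{(\ell)}_p$. To recognize the scalars in the first group as the diagonal entries $(\tenss{V}^{(\ell)}\contr{1}\matr{Z}^{\dagger})_{p\cdots p}$, I would expand the $1$-mode product using $\vect{v}^{(\ell)}_p=\tenss{V}^{(\ell)}_{:,p\cdots p}$ and the definition of the $i$-mode product, so that $(\vect{v}^{(\ell)}_p)^{\dagger}\vect{z}_p$, up to the conjugation induced by $\dagger$, is exactly that entry. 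For the cross term I would use that $\matr{C}^{(\ell)}_p$ is a \emph{symmetric} matrix, since $\tenss{S}^{(\ell)}$ is symmetric, to rewrite $\matr{C}^{(\ell)}_p\vect{z}_p^{*}$ in the form $\matr{C}^{(\ell)}_p(\vect{z}^{\H}_p)^{\dagger}$ appearing in the statement.

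The main obstacle I anticipate is purely the \emph{bookkeeping of the conjugation and transposition conventions}, in particular carrying the two cases $(\cdot)^{\dagger}=(\cdot)^{\H}$ and $(\cdot)^{\dagger}=(\cdot)^{\T}$ in parallel. The first (rank-one) term is routine once the contraction identity $(\vect{v}^{(\ell)}_p)^{\dagger}\vect{z}_p=(\tenss{V}^{(\ell)}\contr{1}\matr{Z}^{\dagger})_{p\cdots p}$ is verified, but the cross term is delicate: one must track the complex conjugate on $\vect{z}_p$ correctly through the outer $(\cdot)^{\H}$ and invoke the symmetry of $\matr{C}^{(\ell)}_p$ at the right moment, since a misplaced conjugate or transpose would swap $\vect{z}_p$ and $\vect{z}_p^{*}$ and break the identity.
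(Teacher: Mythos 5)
Your route is exactly the paper's: the paper proves this lemma in a single line, asserting that it ``is a direct consequence of \Cref{lemma:Eu_Hessi_g_1-0}'', and your first two steps are a correct expansion of that line. Indeed $g(\matr{U})=\sum_{\ell}\alpha_\ell\sum_{p}|\tenselem{W}^{(\ell)}_{p\cdots p}|^2$ with $\tenselem{W}^{(\ell)}_{p\cdots p}=\tenss{S}^{(\ell)}\contr{1}\vect{u}_p^{\dagger}\cdots\contr{d}\vect{u}_p^{\dagger}$ depending only on the column $\vect{u}_p$, so the Euclidean Hessian is block diagonal across columns, and each block is given by \Cref{lemma:Eu_Hessi_g_1-0} with $\vect{v}=\vect{v}^{(\ell)}_p$, $w=\tenselem{W}^{(\ell)}_{p\cdots p}$, $\matr{C}=\matr{C}^{(\ell)}_p$. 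Your identification of the rank-one term via $(\tenss{V}^{(\ell)}\contr{1}\matr{Z}^{\dagger})_{p\cdots p}$ is also correct, and you rightly restore the weights $\alpha_\ell$, which are in fact missing from the printed display \eqref{eq:Hess_h_1-2-2}.

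The genuine gap is in the cross term, at precisely the step you flag as delicate. You propose to use the symmetry $(\matr{C}^{(\ell)}_p)^{\T}=\matr{C}^{(\ell)}_p$ to rewrite $\matr{C}^{(\ell)}_p\vect{z}_p^{*}$ as $\matr{C}^{(\ell)}_p(\vect{z}_p^{\H})^{\dagger}$. Symmetry cannot do this: it trades left multiplication for right multiplication by the transpose, but it never removes a complex conjugation from the vector being multiplied. Concretely, for $(\cdot)^{\dagger}=(\cdot)^{\H}$ one has $(\vect{z}_p^{\H})^{\H}=\vect{z}_p$, so your claim amounts to $\matr{C}^{(\ell)}_p\vect{z}_p^{*}=\matr{C}^{(\ell)}_p\vect{z}_p$, which is false for generic complex $\vect{z}_p$. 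Carrying your per-column computation through faithfully, the $p$-th row of the cross term of $(\nabla^{2}g(\matr{U})[\matr{Z}])^{\H}$ is
\[
2d(d-1)\sum_{\ell=1}^{L}\alpha_\ell\,\tenselem{W}^{(\ell)}_{p\cdots p}\,\vect{z}_p^{\T}\bigl(\matr{C}^{(\ell)}_p\bigr)^{\H},
\]
which corresponds to the bracket entry $\matr{C}^{(\ell)}_p\vect{z}_p^{*}=\matr{C}^{(\ell)}_p(\vect{z}_p^{\T})^{\dagger}$, not $\matr{C}^{(\ell)}_p(\vect{z}_p^{\H})^{\dagger}$; the same mismatch, with conjugations reversed, occurs for $(\cdot)^{\dagger}=(\cdot)^{\T}$. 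In other words, what your (correct) column-by-column argument actually proves is \eqref{eq:Hess_h_1-2-2} with $(\vect{z}_p^{\H})^{\dagger}$ replaced by $(\vect{z}_p^{\T})^{\dagger}$ --- the version consistent with \Cref{lemma:Eu_Hessi_g_1-0}; the printed statement has the inner $\H$ and $\T$ interchanged. Symmetry of $\matr{C}^{(\ell)}_p$ does enter legitimately, but only on the matrix side, to identify $(\matr{C}^{(\ell)}_p)^{\H}$ with $(\matr{C}^{(\ell)}_p)^{*}$ in the $(\cdot)^{\dagger}=(\cdot)^{\T}$ case; it cannot act on the $\vect{z}_p$ side. (The discrepancy is harmless downstream: the tangent directions used in \Cref{lem:hess_symme} have zero diagonal entries $Z_{pp}$, where the two versions agree.) You should therefore conclude with the corrected identity, or flag the typo; asserting that symmetry closes the gap is not a valid step, and as written your proof does not establish the formula in the form stated.
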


\begin{lemma}\label{lem:hess_symme}
	Let $\{\tenss{D}^{(\ell)}\}_{1\leq\ell\leq L}\subseteq\CC^{n\times r\times\cdots r}$ be a set of diagonal tensors. 
	Let $\alpha_\ell \in \RR^{+}$ for $1\leq\ell\leq L$. 
	Define
	\begin{equation}\label{eq:form_h_01-h-ex-sy}
	g(\matr{U}) = \sum_{\ell=1}^{L}\alpha_\ell\|\diag{\tenss{W}^{(\ell)}}\|^2,
	\end{equation}
	where $\matr{U}\in\CC^{n\times r}$ and $\tenss{W}^{(\ell)}=\tenss{D}^{(\ell)} \contr{1} \matr{U}^{\dagger}\cdots\contr{d} \matr{U}^{\dagger}$. 
	Suppose that the diagonal elements $\tenselem{D}^{(\ell)}_{p\cdots p}\in\CC$ satisfy:
	\begin{align}
	\sum_{\ell=1}^{L}\alpha_\ell|\tenselem{D}^{(\ell)}_{p\cdots p}|^2\neq 0, \ \textrm{for}\ 1\leq p\leq r.\label{cond_Hess_ex_02-0-2}
	\end{align}
	Then $\matr{I}_{n\times r}$ is a scale semi-nondegenerate point of $g$. 
\end{lemma}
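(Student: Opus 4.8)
The plan is to transcribe the proof of \Cref{lemm:h_hess_neq0} to the symmetric setting, the only genuinely new feature being the extra coupling term that symmetry produces in the Euclidean Hessian \eqref{eq:Hess_h_1-2-2}. First I would record that $g$ in \eqref{eq:form_h_01-h-ex-sy} is scale invariant (as already noted for \eqref{cost-fn-general-1-s}) and check that $\matr{I}_{n\times r}$ is stationary. At $\matr{U}=\matr{I}_{n\times r}$ the columns are $\vect{u}_p=\vect{e}_p$, and since each $\tenss{D}^{(\ell)}$ is diagonal a direct evaluation of the contractions gives $\vect{v}^{(\ell)}_p=\tenselem{D}^{(\ell)}_{p\cdots p}\vect{e}_p$, $\tenselem{W}^{(\ell)}_{p\cdots p}=\tenselem{D}^{(\ell)}_{p\cdots p}$ and $\matr{C}^{(\ell)}_p=\tenselem{D}^{(\ell)}_{p\cdots p}\vect{e}_p\vect{e}_p^{\T}$. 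Substituting into \eqref{eq:delta_h-s} yields $\nabla g(\matr{I}_{n\times r})=2d\,\matr{I}_{n\times r}\matr{\Lambda}$ with $\matr{\Lambda}\eqdef\Diag{\sum_{\ell}\alpha_\ell|\tenselem{D}^{(\ell)}_{p\cdots p}|^2}$, which is real and, by \eqref{cond_Hess_ex_02-0-2}, positive definite. Since $\matr{I}_{n\times r}^{\H}\nabla g(\matr{I}_{n\times r})=2d\,\matr{\Lambda}$ is Hermitian, \eqref{eq:Rie_grad} gives $\ProjGrad{g}{\matr{I}_{n\times r}}=\matr{0}$, so $\matr{I}_{n\times r}$ is stationary and the whole gradient is normal, ${\rm Proj}^{\bot}_{\matr{I}_{n\times r}}\nabla g(\matr{I}_{n\times r})=2d\,\matr{I}_{n\times r}\matr{\Lambda}$.

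Because $g$ is scale invariant, \Cref{lem:f_invariant_Hessian} already supplies the upper bound $\rank{\Hess{g}{\matr{I}_{n\times r}}}\le 2nr-r^2-r$ together with the kernel directions $\matr{Z}_p=\matr{I}_{n\times r}\,\ui\vect{e}_p\vect{e}_p^{\T}$. For the matching lower bound I would evaluate $\Hess{g}{\matr{I}_{n\times r}}$, assembled from \eqref{eq:Hessian_reduction} with Euclidean Hessian \eqref{eq:Hess_h_1-2-2} and Weingarten term taken at ${\rm Proj}^{\bot}_{\matr{I}_{n\times r}}\nabla g=2d\,\matr{I}_{n\times r}\matr{\Lambda}$, on the same three families that, together with the $\matr{Z}_p$, span $\mathbf{T}_{\matr{I}_{n\times r}}\text{St}(r,n,\CC)$ in \eqref{eq:tanget_spac}: $\matr{Z}=\matr{I}_{n\times r}(\vect{e}_p\vect{e}_q^{\T}+\vect{e}_q\vect{e}_p^{\T})\ui$ and $\matr{Z}=\matr{I}_{n\times r}(\vect{e}_p\vect{e}_q^{\T}-\vect{e}_q\vect{e}_p^{\T})$ for $1\le p<q\le r$, and $\matr{Z}=\matr{I}^{\perp}_{n\times r}\matr{X}$ with $\matr{X}\in\CC^{(n-r)\times r}$.

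The crucial point, and the only place the symmetric case deviates from \Cref{lemm:h_hess_neq0}, is that the second summand of \eqref{eq:Hess_h_1-2-2} (the one carrying $2d(d-1)$) is annihilated on every one of these directions. Indeed $\matr{C}^{(\ell)}_s(\vect{z}_s^{\H})^{\dagger}=\tenselem{D}^{(\ell)}_{s\cdots s}\,(\vect{z}_s)_s\,\vect{e}_s$, and each family has vanishing diagonal $(\vect{z}_s)_s=Z_{ss}=0$ for all $s$ (the first two are off-diagonal in the top $r\times r$ block, the third is supported on rows $r+1,\dots,n$); the same $Z_{ss}=0$ also kills the first summand of \eqref{eq:Hess_h_1-2-2}, whose $s$-th diagonal coefficient is $\overline{Z_{ss}}\,\tenselem{D}^{(\ell)}_{s\cdots s}$. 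Hence on all three families the Riemannian Hessian collapses to the Weingarten term $\mathfrak{A}_{\matr{I}_{n\times r}}(\matr{Z},2d\,\matr{I}_{n\times r}\matr{\Lambda})=-2d\,\matr{Z}\matr{\Lambda}-2d\,\matr{I}_{n\times r}\symmm{\matr{Z}^{\H}\matr{I}_{n\times r}\matr{\Lambda}}$, which up to the harmless scalar $2d$ is exactly the quantity computed in \Cref{lemm:h_hess_neq0}. The three short computations then return nonzero multiples of $\matr{I}_{n\times r}(\vect{e}_p\vect{e}_q^{\T}+\vect{e}_q\vect{e}_p^{\T})$, of $\matr{I}_{n\times r}(\vect{e}_p\vect{e}_q^{\T}-\vect{e}_q\vect{e}_p^{\T})$ and of $\matr{I}^{\perp}_{n\times r}\matr{X}\matr{\Lambda}$ respectively, each nonzero because $\matr{\Lambda}$ is positive definite under \eqref{cond_Hess_ex_02-0-2}. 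Since the first two families are (up to nonzero scalars) eigenvectors of $\Hess{g}{\matr{I}_{n\times r}}$ in the top off-diagonal block and the third is mapped injectively into the orthogonal bottom block (as $\matr{X}\mapsto\matr{I}^{\perp}_{n\times r}\matr{X}\matr{\Lambda}$ has trivial kernel), the Hessian is injective on the complement of $\Span{\matr{Z}_p:1\le p\le r}$, giving $\rank{\Hess{g}{\matr{I}_{n\times r}}}\ge 2nr-r^2-r$; with the upper bound this forces equality, so $\matr{I}_{n\times r}$ is a scale semi-nondegenerate point.

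I expect the main obstacle to be purely computational rather than conceptual: carefully evaluating $\vect{v}^{(\ell)}_p$, $\tenselem{W}^{(\ell)}_{p\cdots p}$ and $\matr{C}^{(\ell)}_p$ at the diagonal point via \Cref{lemma:h_U_T-H-2-2}, and above all checking cleanly that the new $2d(d-1)$ term drops out because the diagonal entries $Z_{ss}$ vanish on all three test families. Once that cancellation is secured, the remaining algebra is a verbatim copy of the non-symmetric argument, and as usual only the case $(\cdot)^{\dagger}=(\cdot)^{\H}$ need be written, the case $(\cdot)^{\dagger}=(\cdot)^{\T}$ being entirely analogous.
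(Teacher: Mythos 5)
Your proposal is correct and follows essentially the same route as the paper's own (much terser) proof: compute $\nabla g(\matr{I}_{n\times r})$ via \eqref{eq:delta_h-s}, observe from \eqref{eq:Hess_h_1-2-2} that the Euclidean Hessian at $\matr{I}_{n\times r}$ is proportional to $\Diag{Z_{pp}}$ — so both the $d^2$ and the $d(d-1)$ terms vanish on the off-diagonal and bottom-block test directions — and then finish exactly as in \Cref{lemm:h_hess_neq0}. The paper states the gradient and Euclidean Hessian formulas and omits the remaining details "by a similar method"; your write-up supplies precisely those details (including the eigenvector/injectivity bookkeeping that makes the rank count rigorous), with only harmless factors of $d$ distinguishing the Weingarten computations from the non-symmetric case.
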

\begin{proof}
	We only prove the case $(\cdot)^{\dagger}=(\cdot)^{\H}$. 
	The other case is similar. 
	By \eqref{eq:delta_h-s}, we get that
	\begin{align}
	\nabla g(\matr{I}_{n\times r}) =  2d\sum_{\ell=1}^{L}\alpha_\ell\matr{I}_{n\times r}\begin{bmatrix}
	|\tenselem{D}^{(\ell)}_{1\cdots 1}|^2 & & \\
	& \ddots & \\
	& & |\tenselem{D}^{(\ell)}_{r\cdots r}|^2
	\end{bmatrix}.
	\end{align}
	Then, by \eqref{eq:Rie_grad}, we have $\ProjGrad{h}{\matr{I}_{n\times r}} =0$. 
	Note that
	\begin{align*}
	\nabla^2 g(\matr{I}_{n\times r}) [\matr{Z} ] = 
	2\left(d^2+d(d-1)\right)\sum_{\ell=1}^{L}\alpha_\ell\matr{I}_{n\times r}\begin{bmatrix}
	|\tenselem{D}^{(\ell)}_{1\cdots 1}|^2Z_{11}& & \\
	& \ddots & \\
	& & |\tenselem{D}^{(\ell)}_{r\cdots r}|^2Z_{rr}
	\end{bmatrix}
	\end{align*}
	by \eqref{eq:Hess_h_1-2-2}. 
	The proof can be complete by a similar method as in the proof of
	\Cref{lemm:h_hess_neq0}, and we omit the details here. 
\end{proof}

\begin{remark}
	Under the conditions of \Cref{lem:hess_symme}, 
	since the objective function  \eqref{cost-fn-general-1-s}  is scale invariant and $\matr{I}_{n\times r}$ is a scale semi-nondegenerate point, we obtain that the matrix 
	$\matr{I}_{n\times r}\matr{R}$ with $\matr{R}\in\set{DU}_{r}$ is also a scale semi-nondegenerate point of $g$. 
	This fact can be also proved by a similar method as in the proof of \Cref{lem:hess_symme}. 
\end{remark}

The following result can be proved by a similar method as in the proof of \Cref{lem:hess_symme}. 
We omit the details here. 

\begin{proposition}\label{prop:hess_semi_stric_02-0}
	Let $\{\tenss{D}^{(\ell)}\}_{1\leq\ell\leq L}\subseteq\CC^{n\times n\times\cdots n}$ be a set of diagonal tensors
	and 
	\begin{align*}
	\tenss{S}^{(\ell)} = \tenss{D}^{(\ell)} \contr{1} (\matr{Q}^{\H})^{\dagger}  \cdots \contr{d}(\matr{Q}^{\H})^{\dagger}
	\end{align*}
	for $1\leq\ell\leq n$, where $\matr{Q}\in\UN{n}$. 
	Suppose that the diagonal tensors $\{\tenss{D}^{(\ell)}\}_{1\leq\ell\leq L}$ satisfy condition \eqref{cond_Hess_ex_02-0-2}. 
	Then $\matr{Q}\matr{I}_{n\times r}$ is a scale semi-nondegenerate point of the objective function \eqref{cost-fn-general-1-s}.
\end{proposition}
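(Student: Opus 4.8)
The plan is to avoid repeating the lengthy Hessian computation of \Cref{lem:hess_symme} and instead reduce the general claim to that lemma by exploiting the fact that left multiplication by a unitary matrix is an isometry of the Stiefel manifold. First I would introduce the auxiliary function
$$\tilde{g}(\matr{V}) \eqdef \sum_{\ell=1}^{L}\alpha_\ell\left\|\diag{\tenss{D}^{(\ell)} \contr{1} \matr{V}^{\dagger}\cdots\contr{d} \matr{V}^{\dagger}}\right\|^2,$$
which is exactly the objective in \Cref{lem:hess_symme} built from the diagonal tensors $\tenss{D}^{(\ell)}$. Since these tensors satisfy \eqref{cond_Hess_ex_02-0-2} by hypothesis, \Cref{lem:hess_symme} already tells us that $\matr{I}_{n\times r}$ is a scale semi-nondegenerate point of $\tilde{g}$; that is, $\matr{I}_{n\times r}$ is stationary and $\rank{\Hess{\tilde{g}}{\matr{I}_{n\times r}}} = 2nr-r^2-r$.

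The next step is to show that $g(\matr{U}) = \tilde{g}(\matr{Q}^{\H}\matr{U})$ for every $\matr{U}\in\text{St}(r,n,\CC)$. Substituting $\tenss{S}^{(\ell)} = \tenss{D}^{(\ell)} \contr{1} (\matr{Q}^{\H})^{\dagger}\cdots\contr{d}(\matr{Q}^{\H})^{\dagger}$ into $\tenss{W}^{(\ell)} = \tenss{S}^{(\ell)} \contr{1} \matr{U}^{\dagger}\cdots\contr{d}\matr{U}^{\dagger}$ and using the composition rule for $i$-mode products, mode $i$ contributes the combined factor $\matr{U}^{\dagger}(\matr{Q}^{\H})^{\dagger}$. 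A one-line identity in each case closes this: for $(\cdot)^{\dagger}=(\cdot)^{\H}$ we have $\matr{U}^{\H}\matr{Q} = (\matr{Q}^{\H}\matr{U})^{\H}$, and for $(\cdot)^{\dagger}=(\cdot)^{\T}$ we have $\matr{U}^{\T}\matr{Q}^{*} = (\matr{Q}^{\H}\matr{U})^{\T}$, so in both cases $\tenss{W}^{(\ell)} = \tenss{D}^{(\ell)} \contr{1} (\matr{Q}^{\H}\matr{U})^{\dagger}\cdots\contr{d}(\matr{Q}^{\H}\matr{U})^{\dagger}$, whence $g = \tilde{g}\circ\Phi$ with $\Phi\colon\matr{U}\mapsto\matr{Q}^{\H}\matr{U}$.

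Then I would observe that $\Phi$ is the restriction to $\text{St}(r,n,\CC)$ of a linear isometry of the ambient real Euclidean space $(\CC^{n\times r},\RInner{\cdot}{\cdot})$, since $\RInner{\matr{Q}^{\H}\matr{X}}{\matr{Q}^{\H}\matr{Y}} = \RInner{\matr{X}}{\matr{Y}}$, and $\Phi$ maps $\text{St}(r,n,\CC)$ onto itself with inverse $\matr{V}\mapsto\matr{Q}\matr{V}$. Because $\Phi$ is an ambient linear isometry that preserves the submanifold setwise, it commutes with the tangential projection ${\rm Proj}_{\matr{U}}$ and with the Weingarten operator $\mathfrak{A}_{\matr{U}}$, so both terms of \eqref{eq:Hessian_reduction} transform equivariantly and one gets $\ProjGrad{g}{\matr{U}} = \matr{Q}\,\ProjGrad{\tilde{g}}{\matr{Q}^{\H}\matr{U}}$ together with $\HessAppl{g}{\matr{U}}{\matr{Z}} = \matr{Q}\,\HessAppl{\tilde{g}}{\matr{Q}^{\H}\matr{U}}{\matr{Q}^{\H}\matr{Z}}$ at stationary points. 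Evaluating at $\matr{U}=\matr{Q}\matr{I}_{n\times r}$, for which $\Phi(\matr{U})=\matr{I}_{n\times r}$, the first identity shows $\matr{Q}\matr{I}_{n\times r}$ is stationary for $g$, and since $\matr{Z}\mapsto\matr{Q}^{\H}\matr{Z}$ is a linear isomorphism of the corresponding tangent spaces, the second gives $\rank{\Hess{g}{\matr{Q}\matr{I}_{n\times r}}} = \rank{\Hess{\tilde{g}}{\matr{I}_{n\times r}}} = 2nr-r^2-r$. As \eqref{cost-fn-general-1-s} is scale invariant, this is exactly the definition of $\matr{Q}\matr{I}_{n\times r}$ being a scale semi-nondegenerate point.

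The main obstacle I anticipate is the careful justification of the Hessian equivariance through the Weingarten term in \eqref{eq:Hessian_reduction}: one must confirm that left multiplication by a unitary, being an ambient linear isometry fixing the manifold setwise, genuinely commutes with both ${\rm Proj}_{\matr{U}}$ and $\mathfrak{A}_{\matr{U}}$, so that the rank of the full Riemannian Hessian is invariant and not merely the rank of the projected Euclidean Hessian. Once this equivariance is in place, everything else is a direct transfer from \Cref{lem:hess_symme}. Alternatively, one could bypass the isometry argument entirely and simply recompute $\nabla g$ and $\nabla^2 g$ at $\matr{Q}\matr{I}_{n\times r}$ along the spanning tangent directions exactly as in the proof of \Cref{lem:hess_symme}, but the isometric reduction has the advantage of avoiding that repetition.
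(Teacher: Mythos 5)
Your proof is correct, but it takes a genuinely different route from the paper. The paper's own argument for \Cref{prop:hess_semi_stric_02-0} is simply to redo the computation of \Cref{lem:hess_symme} at the point $\matr{Q}\matr{I}_{n\times r}$: recompute $\nabla g$ and $\nabla^2 g$ there and check that the Riemannian Hessian is nonzero on a spanning family of tangent directions (the details are omitted with the remark that the method is ``similar''). You instead reduce the general statement to \Cref{lem:hess_symme} by the change of variables $\Phi:\matr{U}\mapsto\matr{Q}^{\H}\matr{U}$, and all the ingredients of this reduction check out: the composition rule for $i$-mode products does give $g=\tilde{g}\circ\Phi$ in both cases $(\cdot)^{\dagger}=(\cdot)^{\H}$ and $(\cdot)^{\dagger}=(\cdot)^{\T}$; left multiplication by $\matr{Q}^{\H}$ preserves $\RInner{\cdot}{\cdot}$ and maps $\text{St}(r,n,\CC)$ onto itself; and from the explicit formulas one verifies ${\rm Proj}_{\matr{Q}\matr{V}}(\matr{Q}\xi)=\matr{Q}\,{\rm Proj}_{\matr{V}}\xi$ by \eqref{eq:proj_Stiefel} and $\mathfrak{A}_{\matr{Q}\matr{V}}(\matr{Q}\matr{Z},\matr{Q}\matr{W})=\matr{Q}\,\mathfrak{A}_{\matr{V}}(\matr{Z},\matr{W})$, so both terms of \eqref{eq:Hessian_reduction} transform equivariantly and $\HessAppl{g}{\matr{Q}\matr{V}}{\matr{Q}\matr{Z}}=\matr{Q}\,\HessAppl{\tilde{g}}{\matr{V}}{\matr{Z}}$ (in fact at every point, not only stationary ones). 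Since $\matr{Z}\mapsto\matr{Q}\matr{Z}$ is a linear isomorphism between the relevant tangent spaces, stationarity and the Hessian rank $2nr-r^2-r$ transfer from $\matr{I}_{n\times r}$ to $\matr{Q}\matr{I}_{n\times r}$, which is exactly the claim given the scale invariance of \eqref{cost-fn-general-1-s}. What your approach buys is modularity: \Cref{lem:hess_symme} is invoked as a black box and no Hessian computation is repeated, and the same isometry argument would dispatch \Cref{prop:hess_semi_stric_01} and \Cref{prop:countexam_02} as well (with right multiplication handled analogously for the unitarily invariant case); what the paper's direct recomputation buys is that it stays entirely within the concrete formulas \eqref{eq:delta_h-s} and \eqref{eq:Hess_h_1-2-2} and needs no equivariance bookkeeping. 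Your anticipated obstacle --- justifying that the full Riemannian Hessian, Weingarten term included, is equivariant --- is real but is settled exactly by the two commutation identities above, so the proof is complete.
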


\begin{remark}
	In \cite[Prop. 7.6]{ULC2019}, for a single unitary matrix case, the existence of scale semi-nondegenerate points for three objective functions in ICA was proved in a different way.
	More precisely, using the Jacobi rotation functions on 2-dimensional subgroups, the Riemannian Hessian was proved to be of maximal rank $n(n-1)$.
\end{remark}

\subsection{Equivalent form}
We give an equivalent characterization of the objective function \eqref{cost-fn-general-1-s}, which is a direct consequence of \Cref{prop:equiv_form_B} (or \cite[Lem. 4.11]{ULC2019}). 
We omit the detailed proof here.

\begin{proposition}\label{prop:equiv_form_B-s-1}
	The function $g$ admits representation \eqref{cost-fn-general-1-s} if and only if there exists a $2d$-th order positive semidefinite tensor $\tenss{B}$ 
	such that 
	\begin{equation}\label{cost-fn-general-trace-s-1}
	g(\matr{U}) = \trace{\tenss{B} \contr{1} \matr{U}^{\H}  \cdots \contr{d} \matr{U}^{\H}\contr{d+1} \matr{U}^{\T}\cdots \contr{2d} \matr{U}^{\T}}.
	\end{equation}
\end{proposition}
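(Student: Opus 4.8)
The plan is to reduce the statement to the non-symmetric case already settled in \Cref{prop:equiv_form_B}, using the defining identity $g(\matr{U}) = f(\matr{U}, \matr{U}, \cdots, \matr{U})$ from \eqref{cost-function-genral-h} that links the symmetric objective \eqref{cost-fn-general-1-s} to the general objective \eqref{cost-fn-general-1}.

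First, for the forward implication I would assume that $g$ admits representation \eqref{cost-fn-general-1-s} with symmetric tensors $\{\tenss{S}^{(\ell)}\}_{1\leq\ell\leq L}$ and weights $\alpha_\ell\geq 0$. Setting $\tenss{A}^{(\ell)} = \tenss{S}^{(\ell)}$ defines a function $f$ of the form \eqref{cost-fn-general-1} on the product $\Omega$, and \Cref{prop:equiv_form_B} then yields a $2d$-th order positive semidefinite tensor $\tenss{B}$ (concretely $\tenss{B} = \sum_{\ell=1}^{L}\alpha_\ell (\tenss{S}^{(\ell)})^{*}\otimes\tenss{S}^{(\ell)}$) satisfying \eqref{cost-fn-general-trace}. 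Specializing all block arguments to a common matrix, $\matr{U}^{(i)} = \matr{U}$ for $1\leq i\leq d$, turns \eqref{cost-fn-general-trace} into \eqref{cost-fn-general-trace-s-1} and $f(\matr{U},\cdots,\matr{U})$ into $g(\matr{U})$, which is exactly the desired conclusion.

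For the backward implication I would start from a positive semidefinite $\tenss{B}$ satisfying \eqref{cost-fn-general-trace-s-1} and invoke \Cref{lemm:equiv_B_A} to write $\tenss{B} = \sum_{\ell=1}^{L}\alpha_\ell (\tenss{A}^{(\ell)})^{*}\otimes\tenss{A}^{(\ell)}$ for some (not necessarily symmetric) complex tensors $\tenss{A}^{(\ell)}$ and $\alpha_\ell\geq 0$. Expanding both sides column-by-column over $\matr{U} = [\vect{u}_1,\cdots,\vect{u}_r]$ as in the proof of \Cref{prop:equiv_form_B} gives $g(\matr{U}) = \sum_{\ell=1}^{L}\sum_{q=1}^{r}\alpha_\ell|\tenss{A}^{(\ell)}\contr{1}\vect{u}_q^{\dagger}\cdots\contr{d}\vect{u}_q^{\dagger}|^2$. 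The key observation is that each diagonal contraction $\tenss{A}^{(\ell)}\contr{1}\vect{u}_q^{\dagger}\cdots\contr{d}\vect{u}_q^{\dagger}$ feeds the same vector $\vect{u}_q$ into every mode, so it depends only on the fully symmetrized tensor $\tenss{S}^{(\ell)}$ obtained by averaging $\tenss{A}^{(\ell)}$ over the $d!$ permutations of its indices. Replacing $\tenss{A}^{(\ell)}$ by $\tenss{S}^{(\ell)}$ leaves every summand unchanged and exhibits $g$ in the form \eqref{cost-fn-general-1-s} with genuinely symmetric tensors.

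I expect the only real obstacle to be this symmetrization step in the backward direction: \Cref{lemm:equiv_B_A} delivers general tensors $\tenss{A}^{(\ell)}$, whereas \eqref{cost-fn-general-1-s} requires symmetric ones, so I must confirm via a short index computation that contracting a single repeated vector in all $d$ modes annihilates the non-symmetric part of $\tenss{A}^{(\ell)}$. Everything else is bookkeeping, which is presumably why the authors regard the result as a direct consequence of \Cref{prop:equiv_form_B} and omit the details.
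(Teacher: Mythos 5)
Your proposal is correct and takes essentially the same route the paper intends: the paper omits the proof entirely, declaring the result a direct consequence of \Cref{prop:equiv_form_B} (equivalently \Cref{lemm:equiv_B_A}), and your argument is precisely that reduction. Your symmetrization step in the backward direction---replacing each $\tenss{A}^{(\ell)}$ by its symmetrization, justified because the monomial $v_{p_1}\cdots v_{p_d}$ arising from contracting one repeated vector into all $d$ modes is invariant under permutation of the indices---is valid and is exactly the detail needed to make the paper's ``direct consequence'' claim rigorous, since \Cref{lemm:equiv_B_A} only delivers non-symmetric tensors while \eqref{cost-fn-general-1-s} requires symmetric ones.
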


Now we show why the objective function \eqref{cost-fn-general-1-s}, or equivalently \eqref{cost-fn-general-trace-s-1}, may be convex in many cases. 

\begin{example}\label{lemma-convex-tenso-form}
	Suppose that the $2d$-th order tensor $\tenss{B}$ in \eqref{cost-fn-general-trace-s-1} is constructed as
	\begin{equation}\label{eq-convex-tensor-form-9}
	\tenss{B}=\tenss{D}\contr{1}{\matr{H}^\H}\cdots\contr{d}{\matr{H}^\H}\contr{d+1}{\matr{H}^\T}\cdots\contr{2d}{\matr{H}^\T},
	\end{equation}
	where $\matr{H}\in\CC^{m\times n}$ and 
	$\tenss{D}\in\RR^{m\times m\times\cdots\times m}$ is a $2d$-th order diagonal tensor satisfying that 
	$\diag{\tenss{D}}=(\mu_1,\cdots,\mu_m)^{\T}$ with $\mu_{j}\geq 0$. 
	Let $\matr{X}\in\CC^{n\times r}$ and $$\tenss{W}=\tenss{B}\contr{1}{\matr{X}^\H}\cdots\contr{d}{\matr{X}^\H}\contr{d+1}{\matr{X}^\T}\cdots\contr{2d}{\matr{X}^\T}.$$ 
	Then, for any $1\leq i\leq r$, the function
	\begin{equation*}
	\tau_{i}:\CC^{n\times r}\longrightarrow \RR^{+},\ \matr{X}\longmapsto \tenselem{W}_{ii\cdots i}
	\end{equation*}
	is convex. 
	Therefore, the function \eqref{cost-fn-general-trace-s-1} is convex. 
\end{example}

\subsection{Example: low rank symmetric orthogonal approximation} 
\label{sub-pro-sec-best-rank-ort-appro-2}

Let 
$\tenss{A}$ be a $d$-th order symmetric tensor in $\text{symm}(\CC^{n\times n\times\cdots\times n})$ and $r\leq n$.
The \emph{low rank symmetric orthogonal approximation} \cite{chen2009tensor,pan2018symmetric,LUC2019} is to find 
\begin{equation}\label{eq-problem-low-orth-rank-symme}
\tenss{B}_{*} =\arg\min \|\tenss{A} - \tenss{B}\|,
\end{equation}
where 
$\tenss{B} = \sum_{p=1}^{r}\mu_{p}\vect{u}_{p}\otimes\cdots\otimes\vect{u}_{p}$ 
satisfies that
$\langle\vect{u}_{p_1}, \vect{u}_{p_2}\rangle=0$ for $p_1\neq p_2$.
It was shown\footnote{This equivalence is in fact a symmetric variant of \Cref{lemm:equiv_ortho_low}.} in \cite{chen2009tensor} that problem \eqref{eq-problem-low-orth-rank-symme} is equivalent to the maximization of 
\begin{equation}\label{eq-cost-function-general-orth-symm}
g(\matr{U}) = \|\diag{\tenss{W}}\|^2,
\end{equation}
where $\tenss{W}=\tenss{A}\contr{1}{\matr{U}^\H}\contr{2}{\matr{U}^\H}\cdots\contr{d}{\matr{U}^\H}\in\CC^{r\times r\times\cdots\times r}$. 

We now make some notations to present \Cref{al-general-polar-general} for objective function \eqref{eq-cost-function-general-orth-symm}. 
Let $k\geq 1$. Denote $\matr{U}_{k}=[\vect{u}_{1}^{(k)},\cdots,\vect{u}_{r}^{(k)}]\in\text{St}(r,n,\CC)$ and 
\begin{align*}
\tenss{V}^{(k)}&\eqdef\tenss{A}\contr{1}\matr{U}_{k-1}^\H\cdots\contr{d-1}\matr{U}_{k-1}^\H,\\
\vect{v}^{(k,p)}&\eqdef
\tenss{A}\contr{1}(\vect{u}_{p}^{(k-1)})^\H\cdots\contr{d-1}(\vect{u}_{p}^{(k-1)})^\H\in\CC^{n},
\end{align*}
for $1\leq p\leq r$. 
Let $\tenss{W}^{(k-1)}\eqdef\tenss{V}^{(k)}\contr{i}\matr{U}_{k-1}^\H$. 
Note that 
\begin{equation}\label{eq-delta-low-orth-1}
\nabla g(\matr{U}_{k-1}) = 2d
[\vect{v}^{(k,1)},\cdots,\vect{v}^{(k,r)}]
\begin{bmatrix}
\left(\tenselem{W}^{(k-1)}_{1\cdots 1}\right)^{*} & & \\
& \ddots & \\
& & \left(\tenselem{W}^{(k-1)}_{r\cdots r}\right)^{*}
\end{bmatrix}
\end{equation}
by equation 
\eqref{eq:delta_h-s}. 
In this case, \Cref{al-general-polar-general} for objective function \eqref{eq-cost-function-general-orth-symm} specializes to \Cref{al-general-polar-general-SLROAT}, which is the \emph{symmetric variant} of LROAT (S-LROAT) in \cite[Sec. 5.6]{chen2009tensor}.

\begin{algorithm}
	\caption{S-LROAT}\label{al-general-polar-general-SLROAT}
	\begin{algorithmic}[1]
		\STATE{{\bf Input:} starting point $\matr{U}_{0}$.}
		\STATE{{\bf Output:} $\matr{U}_{k}$, $k\geq 1$.}
		\FOR{$k=1,2,\cdots,$}
		\STATE Compute $\nabla g(\matr{U}_{k-1})$ by \eqref{eq-delta-low-orth-1};
		\STATE Compute the polar decomposition of $\nabla g(\matr{U}_{k-1})$;
		\STATE Update $\matr{U}_{k}$ to be the orthogonal polar factor.
		\ENDFOR
	\end{algorithmic}
\end{algorithm}

The next result follows directly from \Cref{theorem-main-global-convergence-symme-general}.

\begin{corollary}
	Suppose that the objective function \eqref{eq-cost-function-general-orth-symm} is convex.
	If, in \Cref{al-general-polar-general-SLROAT} 
	there exists $\delta>0$ such that condition 
	\eqref{main-condition-convergence-symme-general}
	always holds,
	then the iterates $\matr{U}_{k}$ converge to a stationary point $\matr{U}_{*}$. 
	If $\matr{U}_{*}$ is a scale semi-nondegenerate point,
	then the convergence rate is linear. 
\end{corollary}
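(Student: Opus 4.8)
The plan is to recognize Algorithm S-LROAT (\Cref{al-general-polar-general-SLROAT}) as nothing but Algorithm PDOI (\Cref{al-general-polar-general}) specialized to the objective function \eqref{eq-cost-function-general-orth-symm}, and then to invoke \Cref{theorem-main-global-convergence-symme-general} essentially verbatim. Observe that \eqref{eq-cost-function-general-orth-symm} is exactly the case $L=1$, $\alpha_1=1$ of \eqref{cost-fn-general-1-s}, and that its Euclidean gradient \eqref{eq-delta-low-orth-1} is the corresponding specialization of \eqref{eq:delta_h-s}. Comparing the two algorithm boxes, one iteration of S-LROAT computes $\nabla g(\matr{U}_{k-1})$, forms its polar decomposition, and sets $\matr{U}_k$ to the orthogonal polar factor, which is precisely the update rule defining PDOI for the model \eqref{cost-function-genral-h}. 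Hence the iterates generated by the two algorithms coincide, and it suffices to check that the hypotheses of \Cref{theorem-main-global-convergence-symme-general} are satisfied.

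First I would dispatch the analytic hypotheses. Convexity of $g$ is assumed directly in the statement, so that requirement is immediate. Next I would verify that $g$ is real analytic: by \eqref{eq-cost-function-general-orth-symm}, $g(\matr{U})$ is a sum of squared moduli of multilinear forms in the entries of $\matr{U}$, hence a polynomial in the real and imaginary parts $\matr{U}^{\R},\matr{U}^{\I}$, and is therefore real analytic on $\text{St}(r,n,\CC)$. Since condition \eqref{main-condition-convergence-symme-general} is assumed to hold with some $\delta>0$ along the iterates, the first conclusion of \Cref{theorem-main-global-convergence-symme-general} applies and yields convergence of $\matr{U}_k$ to a stationary point $\matr{U}_{*}$.

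For the rate statement I would recall that \eqref{cost-fn-general-1-s}, and hence its special case \eqref{eq-cost-function-general-orth-symm}, is scale invariant, as already noted after \eqref{eq:invariance_scaling-0}. Consequently, if the limit $\matr{U}_{*}$ is a scale semi-nondegenerate point, the scale-invariant branch of \Cref{theorem-main-global-convergence-symme-general} provides the linear convergence rate, completing the argument.

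The step I expect to require the most care is the identification of the two update maps: one must confirm that the polar-factor step written out with the explicit gradient \eqref{eq-delta-low-orth-1} is genuinely the abstract PDOI update, i.e.\ that no extra normalization or altered block structure is introduced when passing from the generic $g$ in \eqref{cost-function-genral-h} to the tensor-specific $g$ in \eqref{eq-cost-function-general-orth-symm}. Once this correspondence is established, every remaining assertion is a direct citation of \Cref{theorem-main-global-convergence-symme-general}, and the proof reduces to checking hypotheses rather than carrying out new estimates.
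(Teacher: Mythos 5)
Your proposal is correct and follows exactly the paper's route: the paper proves this corollary by noting that S-LROAT is precisely Algorithm PDOI specialized to the objective \eqref{eq-cost-function-general-orth-symm} (with gradient \eqref{eq-delta-low-orth-1} given by \eqref{eq:delta_h-s}) and then citing \Cref{theorem-main-global-convergence-symme-general} directly. Your additional verifications (real analyticity of the polynomial objective, scale invariance, and the identification of the update maps) are exactly the details the paper leaves implicit, so nothing is missing.
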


\begin{remark}
	The real case of problem \eqref{eq-cost-function-general-orth-symm} was studied in \cite{pan2018symmetric,LUC2019}.
	In \cite{pan2018symmetric}, based on a decomposition form \cite[Eq. (11)]{pan2018symmetric} which is similar to \cite[Eq. (5.16)]{chen2009tensor}, an iterative algorithm was proposed, and the convergence of its shifted variant was studied as well. 
	In \cite{LUC2019},
	the real case of problem \eqref{eq-cost-function-general-orth-symm} is transformed to an equivalent problem on the orthogonal group $\ON{n}$, and the \emph{Jacobi low rank orthogonal approximation} (JLROA) algorithm was proposed, which includes the well-known Jacobi CoM2 algorithm \cite{Como94:sp,Como10:book} in ICA as a special case.
\end{remark}

\subsection{Example: best symmetric rank-1 approximation} 
\label{sub-pro-sec-best-rank-1-appro-2}
Let 
$\tenss{A}$ be a $d$-th order symmetric tensor in $\text{symm}(\CC^{n\times n\times\cdots\times n})$. 
The \emph{best symmetric rank-1 approximation} \cite{Lathauwer00:rank-1approximation,kofidis2002best,kolda2011shifted,qi2009z,zhang2012best} is to find 
\begin{equation}\label{eq-problem-low-multi-rank-1-symme}
\tenss{B}_{*} =\arg\min \|\tenss{A} - \tenss{B}\|,
\end{equation}
where $\tenss{B} = \lambda\vect{u}\otimes\vect{u}\otimes\cdots\otimes\vect{u}$ with $\lambda\in\RR$ and $\vect{u}\in\mathbb{S}^{n-1}$. 
It was shown\footnote{This equivalence is in fact a symmetric variant of the equivalence for  \eqref{eq-cost-function-general-rank-1-best}.} in \cite{de1997signal,Lathauwer00:rank-1approximation} that problem \eqref{eq-problem-low-multi-rank-1-symme} is equivalent to the maximization of 
\begin{equation}\label{eq-cost-function-general-rank-1-best-symme}
g(\vect{u})\eqdef f(\vect{u}, \vect{u}, \cdots, \vect{u}) = |\tenss{A}\contr{1}{\vect{u}^\H}\contr{2}{\vect{u}^\H}\cdots\contr{d}{\vect{u}^\H}|^2,
\end{equation}
where $\vect{u}\in\mathbb{S}^{n-1}$.

Let $k\geq 1$ and 
$\vect{v}^{(k)}\eqdef\tenss{A}\contr{1}{{\vect{u}_{k-1}^{\H}}}\cdots\contr{d-1}{{\vect{u}_{k-1}^{\H}}}.$
Note that 
\begin{equation}\label{eq-grad-g-rank-1}
\nabla g(\vect{u}_{k-1}) = 2d\left(\vect{u}_{k-1}^{\T}(\vect{v}^{(k)})^{*}\right)
\vect{v}^{(k)}
\end{equation}
by \eqref{eq:delta_h-s}. 
Then \Cref{al-general-polar-general} for objective function \eqref{eq-cost-function-general-rank-1-best-symme} specializes to \Cref{al-general-polar-rank-1-symme},
which is the well-known \emph{symmetric higher order power method} (S-HOPM) \cite{Lathauwer00:rank-1approximation,kofidis2002best}.

\begin{algorithm}
	\caption{S-HOPM}\label{al-general-polar-rank-1-symme}
	\begin{algorithmic}[1]
		\STATE{{\bf Input:} starting point $\vect{u}_{0}$.}
		\STATE{{\bf Output:} $\vect{u}_{k}$, $k\geq 1$.}
		\FOR{$k=1,2,\cdots,$}
		\STATE Compute $\vect{v}^{(k)}$;
		\STATE Update $\vect{u}_{k}=\frac{\vect{v}^{(k)}}{\|\vect{v}^{(k)}\|}$. 
		\ENDFOR
	\end{algorithmic}
\end{algorithm}

Note that the objective function \eqref{eq-cost-function-general-rank-1-best-symme} is $\frac{1}{\beta}-$homogeneous with $\beta=\frac{1}{2d}$ by \Cref{remar:symme_grad} and \Cref{prop:multicon_01}. 
The next result follows directly from \Cref{coro:r1_symmetric}.

\begin{corollary}
	If the objective function \eqref{eq-cost-function-general-rank-1-best-symme} is convex, then the iterates $\vect{u}_{k}$ produced by \Cref{al-general-polar-rank-1-symme} converge to a stationary point $\vect{u}_{*}$. 
	If $\vect{u}_{*}$ is a scale semi-nondegenerate point,
	then the convergence rate is linear. 
\end{corollary}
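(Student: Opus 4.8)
The plan is to deduce this corollary directly from \Cref{coro:r1_symmetric}, exactly as the sentence preceding the statement indicates. Two things must be checked: that \Cref{al-general-polar-rank-1-symme} (S-HOPM) is really \Cref{al-general-polar-general} (PDOI) applied to the objective function \eqref{eq-cost-function-general-rank-1-best-symme}, and that this $g$ satisfies all the hypotheses of \Cref{coro:r1_symmetric}, namely $r=1$, $\frac{1}{\beta}$-homogeneity, convexity, real analyticity, and scale invariance.

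First I would verify the hypotheses. Since $\vect{u}\in\mathbb{S}^{n-1}=\text{St}(1,n,\CC)$ we are in the case $r=1$. Taking $L=1$, $\alpha_1=1$ and $\tenss{S}^{(1)}=\tenss{A}$ exhibits $g$ as a special instance of \eqref{cost-fn-general-1-s}, so by \Cref{remar:symme_grad} and \Cref{prop:multicon_01} it is $\frac{1}{\beta}$-homogeneous with $\beta=\frac{1}{2d}$, and it is scale invariant (indeed for $r=1$ scale and unitary invariance coincide, since $\set{DU}_1=\UN{1}=\mathbb{S}$, and the two semi-nondegeneracy ranks $2nr-r^2-r$ and $2r(n-r)$ both equal $2n-2$). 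Convexity is assumed in the statement. Finally, writing $\vect{u}$ in terms of its real and imaginary parts shows that $g(\vect{u})=|\tenss{A}\contr{1}\vect{u}^\H\cdots\contr{d}\vect{u}^\H|^2$ is a polynomial of degree $2d$, hence real analytic, while $\mathbb{S}^{n-1}$ is an analytic submanifold. Thus every hypothesis of \Cref{coro:r1_symmetric} is in force.

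Next I would confirm the identification of the two algorithms. By \eqref{eq-grad-g-rank-1} the gradient $\nabla g(\vect{u}_{k-1})$ is a scalar multiple $c\,\vect{v}^{(k)}$ of the vector $\vect{v}^{(k)}$; since for a nonzero column vector the Hermitian polar factor is its real nonnegative norm, the orthogonal polar factor selected by PDOI is $\frac{c}{|c|}\,\vect{v}^{(k)}/\|\vect{v}^{(k)}\|$, which agrees with the S-HOPM update $\vect{v}^{(k)}/\|\vect{v}^{(k)}\|$ up to the unimodular factor $c/|c|$. This phase discrepancy is the only delicate point, and I expect it to be the main (if minor) obstacle: the bare normalization in \Cref{al-general-polar-rank-1-symme} discards the factor $c/|c|$ carried by the true polar factor. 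I would dispose of it using scale invariance of $g$: multiplying an iterate by a unimodular scalar changes neither $g$, its Riemannian gradient norm, nor the stationarity condition, and the Morse-Bott set $\set{C}=\{z\vect{u}_*:z\in\mathbb{S}\}$ used in \Cref{coro:r1_symmetric} is precisely this phase orbit; hence the monotone-increase, sufficient-ascent and safeguard estimates underlying \Cref{theorem-SU15} are unaffected and the whole argument transfers to the S-HOPM iterates. Invoking \Cref{coro:r1_symmetric} then yields convergence of $\vect{u}_k$ to a stationary point $\vect{u}_*$ together with the linear rate whenever $\vect{u}_*$ is scale semi-nondegenerate, which completes the proof.
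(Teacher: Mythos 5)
Your hypothesis-verification half is exactly the paper's own proof: the paper merely notes that \eqref{eq-cost-function-general-rank-1-best-symme} is $\frac{1}{\beta}$-homogeneous with $\beta=\frac{1}{2d}$ (by \Cref{remar:symme_grad} and \Cref{prop:multicon_01}) and then invokes \Cref{coro:r1_symmetric}; your additional checks ($r=1$, real analyticity, scale invariance, and the observation that for $r=1$ the two semi-nondegeneracy notions coincide) are all correct. You have also correctly isolated the point the paper passes over in silence: by \eqref{eq-grad-g-rank-1}, \Cref{al-general-polar-general} applied to $g$ updates to the orthogonal polar factor of $\nabla g(\vect{u}_{k-1})=2dw^{*}\vect{v}^{(k)}$ (where $w=\vect{u}_{k-1}^{\H}\vect{v}^{(k)}$, so $g(\vect{u}_{k-1})=|w|^{2}$), namely $\frac{w^{*}}{|w|}\,\vect{v}^{(k)}/\|\vect{v}^{(k)}\|$, whereas \Cref{al-general-polar-rank-1-symme} as written updates to $\vect{v}^{(k)}/\|\vect{v}^{(k)}\|$; the paper simply asserts that the former ``specializes to'' the latter.

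The gap is your final claim that scale invariance makes ``the whole argument transfer to the S-HOPM iterates.'' It does not, and in fact no argument can: the transfer is false in general. Scale invariance renders $g$, $\|\ProjGrad{g}{\cdot}\|$ and stationarity phase-blind, but \Cref{theorem-SU15} is a statement about the point sequence itself; its condition (i) and the safeguard \eqref{eq:safeguard} tie the increase of $g$ to the actual step $\|\vect{x}_{k+1}-\vect{x}_{k}\|$, and multiplying each iterate by a different, iterate-dependent phase destroys precisely this tie. Quantitatively, the PDOI and S-HOPM sequences launched from the same $\vect{u}_{0}$ satisfy $\vect{u}^{P}_{k}=\zeta_{k}\vect{u}^{S}_{k}$, where $\zeta_{k}$ is the cumulative product of the unimodular numbers $w_{j}^{*}/|w_{j}|$ along the S-HOPM iterates, so convergence of $\{\vect{u}^{P}_{k}\}$ yields convergence of $\{\vect{u}^{S}_{k}\}$ only if this infinite product of phases converges, which need not happen. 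Concrete counterexample (real case, $d=2$): take $\tenss{A}=-\matr{I}_{n}$, so the extension of $g$ is $g(\vect{u})=\|\vect{u}\|^{4}$, which is convex, homogeneous and real analytic; then $\vect{v}^{(k)}=-\vect{u}_{k-1}$ and $w=-1$, so \Cref{al-general-polar-rank-1-symme} produces $\vect{u}_{k}=(-1)^{k}\vect{u}_{0}$, oscillating forever, while PDOI (whose update is the true polar factor $\text{sign}(w)\vect{v}^{(k)}/\|\vect{v}^{(k)}\|=\vect{u}_{k-1}$) is constant. This does not contradict the weak convergence result of \cite{kofidis2002best}, since here every point of $\mathbb{S}^{n-1}$ is stationary, but it does show that the global convergence conclusion fails for the literal normalized iteration. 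So the corollary is really a statement about PDOI applied to $g$ — one must read the update in \Cref{al-general-polar-rank-1-symme} as the full orthogonal polar factor, phase included, which is what the paper implicitly does — and the delicate point you flagged is not a removable technicality but an actual discrepancy between the two iterations.
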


\begin{remark}
	In real case, the weak convergence of \Cref{al-general-polar-rank-1-symme} was established in
	\cite[Thm.  4]{kofidis2002best}. 
\end{remark}

\begin{remark}\label{exam-convex-rank-1}
	In real case, the objective function \eqref{eq-cost-function-general-rank-1-best-symme} was shown to be possibly convex by the method of \emph{square matrix unfolding} in \cite[Eq. (4.2)]{kofidis2002best}.
\end{remark}

\section{Simultaneous approximate tensor compression}\label{sec:objec_02}

\subsection{Euclidean gradient}\label{subsec:eucli_prob_03}
In this subsection, we mainly prove that the objective function \eqref{cost-fn-general-2} is restricted $\frac{1}{\alpha}-$homogeneous with $\alpha=\frac{1}{2}$ and block multiconvex.
Therefore, all the algorithms and convergence results in \Cref{sec-opt-manifold-product} apply to this function. 

\begin{lemma}\label{lemma:h_U_T-2-sp}
	Let $\tenss{V}\in\CC^{n\times r_2\times\cdots r_d}$.
	Define
	\begin{equation}\label{eq:form_h_02-sp}
	h(\vect{u}) = \|\tenss{W}\|^2,
	\end{equation}
	where $\vect{u}\in\CC^{n}$ and $\tenss{W}=\tenss{V} \contr{1} \vect{u}^{\dagger}$. 
	Let $\tenss{V}_{(1)}=[\vect{v}_1,\cdots,\vect{v}_{R_1}]\in\CC^{n\times R_1}$ be the 1-mode unfolding of $\tenss{V}$ with $R_1=r_2\cdots r_d$  
	and $\matr{V}=\sum_{j=1}^{R_1}\vect{v}_j\vect{v}_j^{\H}$. 
	Then 
	\begin{equation}\label{eq-euclidean-gradient-multilinear-2-sp}
	\nabla h(\vect{u}) 
	= \begin{cases}
	2\matr{V}\vect{u}, & \text{ if } (\cdot)^{\dagger}=(\cdot)^{\H}; \\
	2\matr{V}^{*}\vect{u}, & \text{ if } (\cdot)^{\dagger}=(\cdot)^{\T}.
	\end{cases}
	\end{equation}
\end{lemma}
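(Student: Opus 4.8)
The plan is to reduce the statement to the scalar case already settled in \Cref{lemma:h_U_T-sp}. First I would unfold along the first mode: since $\tenss{W} = \tenss{V}\contr{1}\vect{u}^{\dagger}$ contracts the first index of $\tenss{V}$ against $\vect{u}^{\dagger}$, and the columns $\vect{v}_1,\cdots,\vect{v}_{R_1}$ of $\tenss{V}_{(1)}$ are exactly the $1$-mode fibers of $\tenss{V}$, each entry of the order-$(d-1)$ tensor $\tenss{W}$ is a scalar $w_j = \vect{u}^{\dagger}\vect{v}_j$, where $j$ runs over the combined indices $(p_2,\cdots,p_d)$ with $1\leq j\leq R_1$. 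Hence
\[
h(\vect{u}) = \|\tenss{W}\|^2 = \sum_{j=1}^{R_1}|w_j|^2 = \sum_{j=1}^{R_1}|\vect{u}^{\dagger}\vect{v}_j|^2,
\]
which exhibits $h$ as a finite sum of functions of precisely the form \eqref{eq:form_h_01-sp-sp} treated in \Cref{lemma:h_U_T-sp}.

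Next I would differentiate termwise, using the linearity of the Euclidean gradient together with the two cases of \eqref{eq:delta_h-sp}. For $(\cdot)^{\dagger}=(\cdot)^{\H}$, \Cref{lemma:h_U_T-sp} gives $\nabla(|w_j|^2) = 2w_j^{*}\vect{v}_j$, and the conjugation identity $w_j^{*} = (\vect{u}^{\H}\vect{v}_j)^{*} = \vect{v}_j^{\H}\vect{u}$ turns this into $2\vect{v}_j\vect{v}_j^{\H}\vect{u}$; summing over $j$ and recalling $\matr{V} = \sum_{j=1}^{R_1}\vect{v}_j\vect{v}_j^{\H}$ yields $\nabla h(\vect{u}) = 2\matr{V}\vect{u}$. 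For $(\cdot)^{\dagger}=(\cdot)^{\T}$, the lemma gives $\nabla(|w_j|^2) = 2w_j\vect{v}_j^{*} = 2\vect{v}_j^{*}\vect{v}_j^{\T}\vect{u}$, and noting $\matr{V}^{*} = \sum_{j=1}^{R_1}\vect{v}_j^{*}\vect{v}_j^{\T}$ gives $\nabla h(\vect{u}) = 2\matr{V}^{*}\vect{u}$. This establishes \eqref{eq-euclidean-gradient-multilinear-2-sp}.

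I expect no genuine obstacle here; the lemma is essentially the vector analogue of how \Cref{lemma:h_U_T} was deduced from \Cref{lemma:h_U_T-sp}. The only points demanding care are the bookkeeping of the $1$-mode unfolding, so that the order-$(d-1)$ tensor $\tenss{W}$ is correctly flattened into the scalars $w_j = \vect{u}^{\dagger}\vect{v}_j$, and the conjugation identities $w_j^{*} = \vect{v}_j^{\H}\vect{u}$ and $(\vect{v}_j\vect{v}_j^{\H})^{*} = \vect{v}_j^{*}\vect{v}_j^{\T}$, which are exactly what convert the two cases of \eqref{eq:delta_h-sp} into the two cases of the claimed formula.
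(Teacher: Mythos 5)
Your proof is correct and follows essentially the same route as the paper's: both reduce $h$ via the $1$-mode unfolding to $\sum_{j=1}^{R_1}|\vect{u}^{\dagger}\vect{v}_j|^2$ and then differentiate termwise. The only difference is presentational---the paper re-expands each summand in real and imaginary parts (in effect inlining the proof of \Cref{lemma:h_U_T-sp}), whereas you invoke \Cref{lemma:h_U_T-sp} directly and finish with the conjugation identities $w_j^{*}=\vect{v}_j^{\H}\vect{u}$ and $(\vect{v}_j\vect{v}_j^{\H})^{*}=\vect{v}_j^{*}\vect{v}_j^{\T}$; both yield the same computation.
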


\begin{proof}
	We only prove the case $(\cdot)^{\dagger}=(\cdot)^{\H}$. 
	The other case  is similar.
	We see that 
	\begin{align*}
	h(\vect{u}) &= \|\vect{u}^{\H}\tenss{V}_{(1)}\|^2
	= \sum_{j=1}^{R_1}|\vect{u}^{\H}\vect{v}_j|^2\\
	&= \sum_{j=1}^{R_1}\left(\left(\langle\vect{u}^{\R},\vect{v}^{\R}_{j}\rangle+\langle\vect{u}^{\I},\vect{v}^{\I}_{j}\rangle\right)^2
	+\left(\langle\vect{u}^{\R},\vect{v}^{\I}_{j}\rangle-\langle\vect{u}^{\I},\vect{v}^{\R}_{j}\rangle\right)^2\right).
	\end{align*}
	It follows that 
	$\nabla h(\vect{u}) =  2\sum_{j=1}^{R}\vect{v}_j\vect{v}^{\H}_j\vect{u}
	=2\matr{V}\vect{u}.$  
	The proof is complete.  
\end{proof}

The following result for matrix case is a direct consequence of  \Cref{lemma:h_U_T-2-sp}. 

\begin{lemma}\label{lemma:h_U_T-2}
	Let $\tenss{V}$ and $\matr{V}$ be as in \Cref{lemma:h_U_T-2-sp}.
	Define
	\begin{equation}\label{eq:form_h_02}
	h(\matr{U}) = \|\tenss{W}\|^2,
	\end{equation}
	where $\matr{U}\in\CC^{n\times r_1}$ and $\tenss{W}=\tenss{V} \contr{1} \matr{U}^{\dagger}$. 
	Then 
	\begin{equation}\label{eq-euclidean-gradient-multilinear-2}
	\nabla h(\matr{U}) 
	= \begin{cases}
	2\matr{V}\matr{U}, & \text{ if } (\cdot)^{\dagger}=(\cdot)^{\H}; \\
	2\matr{V}^{*}\matr{U}, & \text{ if } (\cdot)^{\dagger}=(\cdot)^{\T}.
	\end{cases}
	\end{equation}
\end{lemma}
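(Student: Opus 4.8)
The plan is to reduce the matrix case to the vector case already settled in \Cref{lemma:h_U_T-2-sp}, exploiting the fact that $h$ decomposes as a sum of functions each depending on a single column of $\matr{U}$.

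First, I would write $\matr{U}=[\vect{u}_1,\cdots,\vect{u}_{r_1}]\in\CC^{n\times r_1}$ and examine the structure of $\tenss{W}=\tenss{V}\contr{1}\matr{U}^{\dagger}$. Since the $1$-mode product contracts the first ($n$-dimensional) mode of $\tenss{V}$ against the rows of $\matr{U}^{\dagger}$, the tensor $\tenss{W}$ has first-mode dimension $r_1$, and its $q$-th slice $\tenss{W}_{q,:,\cdots,:}$ depends only on the $q$-th column $\vect{u}_q$. Concretely, $(\matr{U}^{\dagger})_{q p_1}$ is the (conjugated, in the $\H$ case) $q$-th column, so $\tenss{W}_{q,:,\cdots,:}=\tenss{V}\contr{1}\vect{u}_q^{\dagger}$. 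Taking squared Frobenius norms and summing over the first index therefore gives $h(\matr{U})=\sum_{q=1}^{r_1}\|\tenss{V}\contr{1}\vect{u}_q^{\dagger}\|^2$, which is a sum of $r_1$ copies of the vector-case objective \eqref{eq:form_h_02-sp}, one per column.

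Next, because each summand involves only its own column $\vect{u}_q$ and the real inner product \eqref{eq:RInner} splits $\CC^{n\times r_1}$ into an orthogonal direct sum of its columns, the Euclidean gradient assembles columnwise: the $q$-th column of $\nabla h(\matr{U})$ is exactly the vector-case gradient $\nabla(\|\tenss{V}\contr{1}\vect{u}_q^{\dagger}\|^2)$. Applying the vector-case formula \eqref{eq-euclidean-gradient-multilinear-2-sp}, this column equals $2\matr{V}\vect{u}_q$ when $(\cdot)^{\dagger}=(\cdot)^{\H}$ and $2\matr{V}^{*}\vect{u}_q$ when $(\cdot)^{\dagger}=(\cdot)^{\T}$. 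Reassembling the columns yields $\nabla h(\matr{U})=2\matr{V}\matr{U}$ and $\nabla h(\matr{U})=2\matr{V}^{*}\matr{U}$, respectively, which is the claim.

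There is no genuinely hard step here: the whole lemma is a bookkeeping consequence of \Cref{lemma:h_U_T-2-sp}. The only point requiring care is the slicewise/columnwise separation of $\tenss{W}$ and of the Frobenius norm, together with the observation that differentiation with respect to the real inner product \eqref{eq:RInner} respects this column splitting; once that is made precise the two cases follow immediately and identically, so I would treat the $\H$ case in full and note that the $\T$ case is verbatim with $\matr{V}$ replaced by $\matr{V}^{*}$.
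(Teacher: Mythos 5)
Your proposal is correct and is exactly the argument the paper intends: the paper offers no separate proof, stating only that the matrix case ``is a direct consequence of \Cref{lemma:h_U_T-2-sp}'', and your columnwise decomposition $h(\matr{U})=\sum_{q=1}^{r_1}\|\tenss{V}\contr{1}\vect{u}_q^{\dagger}\|^2$ together with the column-by-column assembly of the gradient is precisely that reduction, spelled out. Nothing is missing; the only substantive observations --- that the $q$-th slice of $\tenss{W}$ depends only on $\vect{u}_q$ and that the real inner product \eqref{eq:RInner} splits over columns --- are handled correctly.
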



As in \Cref{subsec:eucli_prob_01}, we can similarly obtain the following results.

\begin{corollary} \label{coro:h_homege-2}
	Let $h$ be as in \Cref{lemma:h_U_T-2}.
	Then we have  
	\begin{align*}
	h(\matr{U}) = \frac{1}{2}\langle \matr{U}, \nabla h(\matr{U})\rangle_{\R}.
	\end{align*}
\end{corollary}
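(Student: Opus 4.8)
The plan is to exploit the fact that the Euclidean gradient of $h$ has already been computed in \Cref{lemma:h_U_T-2}, so that the identity collapses to a one-line trace manipulation. First I would record that the squared norm is a Hermitian quadratic form in $\matr{U}$: since $\matr{V}=\tenss{V}_{(1)}\tenss{V}_{(1)}^{\H}$ and the $1$-mode unfolding satisfies $\tenss{W}_{(1)}=\matr{U}^{\dagger}\tenss{V}_{(1)}$, one gets $h(\matr{U})=\|\tenss{W}\|^2=\trace{\matr{U}^{\H}\matr{V}\matr{U}}$ in the case $(\cdot)^{\dagger}=(\cdot)^{\H}$, and $h(\matr{U})=\trace{\matr{U}^{\T}\matr{V}\matr{U}^{*}}$ in the case $(\cdot)^{\dagger}=(\cdot)^{\T}$. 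In either case $\matr{V}$ is Hermitian positive semidefinite, so $h(\matr{U})$ is a nonnegative real number.

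Next I would substitute the gradient formula from \Cref{lemma:h_U_T-2}. In the case $(\cdot)^{\dagger}=(\cdot)^{\H}$, where $\nabla h(\matr{U})=2\matr{V}\matr{U}$, the definition \eqref{eq:RInner} gives $\langle\matr{U},\nabla h(\matr{U})\rangle_{\R}=\Re\,\trace{\matr{U}^{\H}(2\matr{V}\matr{U})}=2\,\Re\,\trace{\matr{U}^{\H}\matr{V}\matr{U}}$. Because $\matr{V}$ is Hermitian positive semidefinite, $\trace{\matr{U}^{\H}\matr{V}\matr{U}}$ is already real and equals $h(\matr{U})$, so the right-hand side is $2h(\matr{U})$, which is the claim. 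The case $(\cdot)^{\dagger}=(\cdot)^{\T}$, with $\nabla h(\matr{U})=2\matr{V}^{*}\matr{U}$, is handled identically after noting that $\matr{V}^{*}$ is again Hermitian positive semidefinite and that $\trace{\matr{U}^{\H}\matr{V}^{*}\matr{U}}=\trace{\matr{U}^{\T}\matr{V}\matr{U}^{*}}=h(\matr{U})$ by taking complex conjugates inside the (real) trace.

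Alternatively, and more transparently, I would observe that $h$ is real-homogeneous of degree two: the map $\matr{U}\mapsto\tenss{W}=\tenss{V}\contr{1}\matr{U}^{\dagger}$ is $\RR$-linear (whether $\dagger$ is $\H$ or $\T$), so $h(t\matr{U})=\|t\tenss{W}\|^{2}=t^{2}h(\matr{U})$ for every $t>0$. Euler's identity for degree-two homogeneous functions, applied with respect to the real inner product \eqref{eq:RInner} of which $\nabla h$ is the gradient, then yields $\langle\matr{U},\nabla h(\matr{U})\rangle_{\R}=2h(\matr{U})$ uniformly in both cases, mirroring \Cref{coro:h_homege} in \Cref{subsec:eucli_prob_01}. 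There is essentially no hard step here; the only point requiring a moment's care is the passage $\Re\,\trace{\cdots}=\trace{\cdots}$, i.e. checking that the quadratic form is real before dropping the real part and correctly tracking the conjugation in the transpose case, and both are immediate from the Hermitian positive semidefiniteness of $\matr{V}$.
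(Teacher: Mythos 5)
Your proof is correct and follows the same route the paper intends: the paper states this corollary without proof as an immediate consequence of the gradient formula $\nabla h(\matr{U})=2\matr{V}\matr{U}$ (resp. $2\matr{V}^{*}\matr{U}$) from \Cref{lemma:h_U_T-2}, exactly the substitution-and-trace computation you carry out, with the realness of $\trace{\matr{U}^{\H}\matr{V}\matr{U}}$ handled correctly in both cases. Your Euler-homogeneity remark is a valid alternative but adds nothing beyond the direct calculation.
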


\begin{corollary}\label{lem:conv_h-2}
	Let $h$ be as in \Cref{lemma:h_U_T-2}.
	Then $h$ is convex. 
\end{corollary}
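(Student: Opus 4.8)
The plan is to mirror the proof of \Cref{lem:conv_h}, exploiting that $h$ is a quadratic form built from a positive semidefinite matrix. I treat the case $(\cdot)^{\dagger}=(\cdot)^{\H}$; the case $(\cdot)^{\dagger}=(\cdot)^{\T}$ is identical with $\matr{V}$ replaced by $\matr{V}^{*}$, which is again positive semidefinite. First I would record that, by \Cref{lemma:h_U_T-2}, the Euclidean gradient is $\nabla h(\matr{U}) = 2\matr{V}\matr{U}$, where $\matr{V}=\sum_{j=1}^{R_1}\vect{v}_j\vect{v}_j^{\H}$ is Hermitian positive semidefinite. Equivalently, $h(\matr{U}) = \|\matr{U}^{\H}\tenss{V}_{(1)}\|^2 = \trace{\matr{U}^{\H}\matr{V}\matr{U}}$, so $h$ is a Hermitian quadratic form in $\matr{U}$ with positive semidefinite Gram matrix $\matr{V}$.

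Since \Cref{coro:h_homege-2} shows $h$ is restricted $\frac{1}{\alpha}$-homogeneous with $\alpha=\frac{1}{2}$, I would invoke the convexity criterion of \Cref{remar:conv_inequality}, namely that $h$ is convex if and only if
\[
\langle\matr{U}', \nabla h(\matr{U})\rangle_{\R} \leq \tfrac{1}{2}\langle\matr{U}, \nabla h(\matr{U})\rangle_{\R} + \tfrac{1}{2}\langle\matr{U}', \nabla h(\matr{U}')\rangle_{\R}
\]
for all $\matr{U},\matr{U}'\in\CC^{n\times r_1}$. Substituting $\nabla h(\matr{U}) = 2\matr{V}\matr{U}$ and $\nabla h(\matr{U}') = 2\matr{V}\matr{U}'$, and using $\langle\matr{X},\matr{Y}\rangle_{\R} = \Re\trace{\matr{X}^{\H}\matr{Y}}$ together with the Hermiticity of $\matr{V}$ (so that $\Re\trace{\matr{U}^{\H}\matr{V}\matr{U}'} = \Re\trace{(\matr{U}')^{\H}\matr{V}\matr{U}}$), this inequality rearranges to
\[
\Re\trace{(\matr{U}-\matr{U}')^{\H}\matr{V}(\matr{U}-\matr{U}')} \geq 0.
\]

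The final step is to observe that this last inequality is immediate: writing $\matr{W}=\matr{U}-\matr{U}'$ with columns $\vect{w}_1,\dots,\vect{w}_{r_1}$, the left-hand side equals $\sum_{p=1}^{r_1}\vect{w}_p^{\H}\matr{V}\vect{w}_p$, a sum of nonnegative reals because $\matr{V}$ is positive semidefinite. This verifies the criterion, so $h$ is convex.

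There is essentially no serious obstacle here, as the content is merely that a quadratic form with positive semidefinite Gram matrix is convex. The only points needing care are the bookkeeping in passing between the complex variable $\matr{U}$ and the underlying real inner product $\langle\cdot,\cdot\rangle_{\R}$, so that the Hermitian cross terms combine into a single real part correctly, and checking that in the transpose case the relevant matrix $\matr{V}^{*}$ remains positive semidefinite.
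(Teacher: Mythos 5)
Your proof is correct and follows essentially the same route the paper intends: the paper omits the proof of \Cref{lem:conv_h-2}, stating only that it is obtained ``as in'' \Cref{subsec:eucli_prob_01}, where the analogous \Cref{lem:conv_h} is proved by combining the homogeneity-based convexity criterion of \Cref{remar:conv_inequality} with the gradient formula. Your argument is precisely that template spelled out for the compression objective, with the cross-term inequality verified by completing the square against the positive semidefinite matrix $\matr{V}$ (the natural counterpart of the Cauchy--Schwarz step used in \Cref{lem:conv_h}).
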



\begin{proposition}\label{prop:multicon_02}
	The objective function \eqref{cost-fn-general-2} is restricted $\frac{1}{\alpha}-$homogeneous with $\alpha=\frac{1}{2}$ and block multiconvex.
\end{proposition}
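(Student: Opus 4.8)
The plan is to mimic the proof of \Cref{prop:multicon_01}, reducing the claim to the single-block building blocks already analysed in \Cref{lemma:h_U_T-2}, \Cref{coro:h_homege-2} and \Cref{lem:conv_h-2}. First I would fix an index $1\leq i\leq d$ together with a point $\nu^{(i)}\in\Omega^{(i)}$, and contract each $\tenss{A}^{(\ell)}$ in every mode other than $i$ to form
\begin{equation*}
\tenss{V}^{(\ell)}=\tenss{A}^{(\ell)}\contr{1}(\matr{U}^{(1)})^{\dagger}\cdots\contr{i-1}(\matr{U}^{(i-1)})^{\dagger}\contr{i+1}(\matr{U}^{(i+1)})^{\dagger}\cdots\contr{d}(\matr{U}^{(d)})^{\dagger}.
\end{equation*}
Since $\tenss{W}^{(\ell)}=\tenss{V}^{(\ell)}\contr{i}\matr{U}^{\dagger}$, the restricted function takes the form
\begin{equation*}
h_{(i)}(\matr{U})=\sum_{\ell=1}^{L}\alpha_\ell\|\tenss{V}^{(\ell)}\contr{i}\matr{U}^{\dagger}\|^2,
\end{equation*}
which is a nonnegative linear combination of functions of the shape \eqref{eq:form_h_02}.

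The only mismatch with the hypotheses is that \Cref{lemma:h_U_T-2} (and hence \Cref{coro:h_homege-2}, \Cref{lem:conv_h-2}) is stated for a contraction in the first mode, whereas here the active mode is $i$. I would resolve this by observing that the Frobenius norm is invariant under permutations of the modes of a tensor, so $\|\tenss{V}^{(\ell)}\contr{i}\matr{U}^{\dagger}\|^2=\|\widetilde{\tenss{V}}^{(\ell)}\contr{1}\matr{U}^{\dagger}\|^2$, where $\widetilde{\tenss{V}}^{(\ell)}$ is obtained from $\tenss{V}^{(\ell)}$ by moving the $i$-th mode into first position. Each summand of $h_{(i)}$ then falls exactly under \Cref{coro:h_homege-2} and \Cref{lem:conv_h-2}.

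With this reduction in place, the homogeneity is obtained by summing the identity $\|\tenss{V}^{(\ell)}\contr{i}\matr{U}^{\dagger}\|^2=\tfrac12\langle\matr{U},\nabla\bigl(\|\tenss{V}^{(\ell)}\contr{i}\matr{U}^{\dagger}\|^2\bigr)\rangle_{\R}$ supplied by \Cref{coro:h_homege-2}, weighted by $\alpha_\ell\geq 0$, and invoking linearity of the Euclidean gradient; this yields $h_{(i)}(\matr{U})=\tfrac12\langle\matr{U},\nabla h_{(i)}(\matr{U})\rangle_{\R}$, which is precisely restricted $\tfrac1\alpha$-homogeneity with $\alpha=\tfrac12$ in the sense of \eqref{eq-general-assumption-product}. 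Convexity of $h_{(i)}$ follows because each summand is convex by \Cref{lem:conv_h-2} and a nonnegative combination of convex functions is convex; as $i$ and $\nu^{(i)}$ were arbitrary, this is exactly block multiconvexity.

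I do not expect a genuine obstacle here, since the statement is the tensor-compression analogue of \Cref{prop:multicon_01} and all the analytic content has already been isolated into the per-mode lemmas. The only point requiring care is the bookkeeping for the mode reordering, ensuring that fixing $\nu^{(i)}$ and contracting the remaining modes indeed puts $h_{(i)}$ into the single-block form \eqref{eq:form_h_02} to which \Cref{coro:h_homege-2} and \Cref{lem:conv_h-2} apply.
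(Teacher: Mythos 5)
Your proof is correct and takes essentially the paper's approach: the paper states this proposition without an explicit proof, deferring to the same reduction it uses for \Cref{prop:multicon_01}, namely fixing $\nu^{(i)}$, contracting the remaining modes to get $\tenss{V}^{(\ell)}$, and invoking the single-block results \Cref{coro:h_homege-2} and \Cref{lem:conv_h-2} together with linearity of the gradient and nonnegativity of the weights $\alpha_\ell$. Your explicit handling of the mode-reordering (Frobenius norm invariance under permutation of modes) is a detail the paper glosses over in both propositions, and including it is harmless.
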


\begin{remark}
	By equation \eqref{eq-euclidean-gradient-multilinear-2}, we see that the objective function \eqref{cost-fn-general-2} satisfies that $\matr{U}^{\H}\nabla h_{(i)}(\matr{U})$ is always positive semidefinite for any $\matr{U}\in\text{St}(r_i,n_i,\CC)$ and $1\leq i\leq d$.
\end{remark}

\subsection{Riemannian Hessian}\label{subsec:eucli_prob_01-H-7}
In this subsection, we mainly focus on the Riemannian Hessian of objective function \eqref{cost-fn-general-2}, and prove that there exist unitarily semi-nondegenerate points for this function. 

\begin{lemma}\label{lemma:h_U_T-H-7-sp}
	Let $h$ and $\matr{V}$ be defined as in \Cref{lemma:h_U_T-2-sp}. 
	Then, for $\vect{z}\in\CC^{n}$, we have  
	\begin{equation}\label{eq-euclidean-gradient-multilinear-2-7-sp}
	\nabla^{2}h(\vect{u})[\vect{z}]
	= \begin{cases}
	2\matr{V}\vect{z}, & \text{ if } (\cdot)^{\dagger}=(\cdot)^{\H}; \\
	2\matr{V}^{*}\vect{z}, & \text{ if } (\cdot)^{\dagger}=(\cdot)^{\T}.
	\end{cases}
	\end{equation}
\end{lemma}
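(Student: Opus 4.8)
The plan is to mirror the proof of \Cref{lemma:Eu_Hessi_g_1-2} and invoke the Euclidean Hessian formula \eqref{eq:Hess_computation}. I would prove only the case $(\cdot)^{\dagger}=(\cdot)^{\H}$, the transpose case being entirely analogous with $\matr{V}$ replaced by $\matr{V}^{*}$ throughout.

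First I would record the first-order Wirtinger data. By \Cref{lemma:h_U_T-2-sp} and \eqref{eq:matr_Euci_grad}, the gradient $\nabla h(\vect{u}) = 2\matr{V}\vect{u}$ gives $\partial h/\partial\vect{u}^{*} = \matr{V}\vect{u}$, which is $\CC$-linear in $\vect{u}$ and carries no dependence on $\vect{u}^{*}$; since $h$ is real-valued, $\partial h/\partial\vect{u} = (\matr{V}\vect{u})^{*}$ depends only on $\vect{u}^{*}$. Differentiating once more then yields $\nabla^{2}_{c}h(\vect{u}) = \matr{V}$ and $\nabla^{2}_{r}h(\vect{u}) = \matr{0}$, the latter because $\partial h/\partial\vect{u}$ is independent of $\vect{u}$. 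Substituting these into \eqref{eq:Hess_computation} gives $\nabla^{2}h(\vect{u})[\vect{z}] = 2(\matr{V}\vect{z} + (\matr{0}\,\vect{z})^{*}) = 2\matr{V}\vect{z}$, as claimed.

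An equivalent and perhaps cleaner route, which I would also note, exploits linearity of the Euclidean Hessian together with the decomposition established in the proof of \Cref{lemma:h_U_T-2-sp}, namely $h(\vect{u}) = \sum_{j=1}^{R_1} |\vect{u}^{\H}\vect{v}_j|^2$. Each summand is exactly of the form \eqref{eq:form_h_01-sp-sp} with $\vect{v}=\vect{v}_j$, so \Cref{lemma:Eu_Hessi_g_1-2} applies termwise, and summing gives $\nabla^{2}h(\vect{u})[\vect{z}] = 2\sum_{j=1}^{R_1}\vect{v}_j\vect{v}_j^{\H}\vect{z} = 2\matr{V}\vect{z}$, using $\matr{V}=\sum_{j=1}^{R_1} \vect{v}_j\vect{v}_j^{\H}$. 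There is no genuine obstacle here: the only point requiring care is the bookkeeping of the Wirtinger second derivatives, i.e. confirming that the $\vect{u}^{*}$-free, $\CC$-linear form of the gradient forces $\nabla^{2}_{r}h=\matr{0}$ and $\nabla^{2}_{c}h=\matr{V}$; everything else is a direct substitution into \eqref{eq:Hess_computation}.
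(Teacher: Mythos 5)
Your proposal is correct and its main argument is essentially the paper's own proof: compute the Wirtinger second derivatives $\nabla^2_{c}h(\vect{u})=\matr{V}$ and $\nabla^2_{r}h(\vect{u})=\matr{0}$ from the gradient formula in \Cref{lemma:h_U_T-2-sp}, then substitute into \eqref{eq:Hess_computation}. Your alternative route---applying \Cref{lemma:Eu_Hessi_g_1-2} termwise to the decomposition $h(\vect{u})=\sum_{j=1}^{R_1}|\vect{u}^{\H}\vect{v}_j|^2$ and summing---is also valid, but adds nothing beyond the same computation packaged differently.
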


\begin{proof}
	We only prove the case $(\cdot)^{\dagger}=(\cdot)^{\H}$. 
	The other case is similar.
	By \eqref{eq-euclidean-gradient-multilinear-2-sp}, we have 
	\begin{align*}
	\nabla^2_{c} h(\vect{u})= \sum_{j=1}^{R_1}\vect{v}_j\vect{v}_j^{\H}
	\ \ \textrm{and}\ \  
	\nabla^2_{r} h(\vect{u})=\matr{0}.
	\end{align*} 
	Then, by \eqref{eq:Hess_computation}, we obtain that 
	$\nabla^2 h(\vect{u})[\vect{z}] = 2\matr{V}\vect{z}.$ The proof is complete. 
\end{proof}

The following result for matrix case is a direct consequence of  \Cref{lemma:h_U_T-H-7-sp}. 

\begin{lemma}\label{lemma:h_U_T-H-7}
	Let $h$ and $\matr{V}$ be defined as in \Cref{lemma:h_U_T-2}. 
	Then, for $\matr{Z}\in\CC^{n\times r}$, we have  
	\begin{equation}\label{eq-euclidean-gradient-multilinear-2-7}
	\nabla^{2}h(\matr{U})[\matr{Z}]
	= \begin{cases}
	2\matr{V}\matr{Z}, & \text{ if } (\cdot)^{\dagger}=(\cdot)^{\H}; \\
	2\matr{V}^{*}\matr{Z}, & \text{ if } (\cdot)^{\dagger}=(\cdot)^{\T}.
	\end{cases}
	\end{equation}
\end{lemma}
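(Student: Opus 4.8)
The plan is to reduce the matrix statement to the vector statement of \Cref{lemma:h_U_T-H-7-sp} by exploiting that $h$ separates over the columns of $\matr{U}$. Writing $\matr{U}=[\vect{u}_1,\cdots,\vect{u}_{r}]$ by columns and computing the $1$-mode product, the entries of $\tenss{W}=\tenss{V}\contr{1}\matr{U}^{\dagger}$ carrying a fixed first index $q$ depend only on the column $\vect{u}_q$, namely $(\tenss{W})_{q p_2\cdots p_d}=(\tenss{V}\contr{1}\vect{u}_q^{\dagger})_{p_2\cdots p_d}$. Summing the squared moduli therefore yields
\begin{equation*}
h(\matr{U}) = \|\tenss{W}\|^2 = \sum_{q=1}^{r}\|\tenss{V}\contr{1}\vect{u}_q^{\dagger}\|^2 = \sum_{q=1}^{r} h^{\mathrm{v}}(\vect{u}_q),
\end{equation*}
where $h^{\mathrm{v}}$ denotes the vector function of \Cref{lemma:h_U_T-2-sp} associated with the same $\tenss{V}$ and $\matr{V}=\sum_{j=1}^{R_1}\vect{v}_j\vect{v}_j^{\H}$.

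Since $h$ is a separable sum over columns, its Euclidean Hessian is block diagonal with respect to the columns: for $\matr{Z}=[\vect{z}_1,\cdots,\vect{z}_{r}]$, the $q$-th column of $\nabla^2 h(\matr{U})[\matr{Z}]$ equals $\nabla^2 h^{\mathrm{v}}(\vect{u}_q)[\vect{z}_q]$, with no cross terms between distinct columns. Applying \Cref{lemma:h_U_T-H-7-sp} columnwise in the case $(\cdot)^{\dagger}=(\cdot)^{\H}$ gives $\nabla^2 h^{\mathrm{v}}(\vect{u}_q)[\vect{z}_q]=2\matr{V}\vect{z}_q$, and reassembling the columns produces $\nabla^2 h(\matr{U})[\matr{Z}]=2\matr{V}\matr{Z}$. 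The case $(\cdot)^{\dagger}=(\cdot)^{\T}$ is identical with $\matr{V}$ replaced by $\matr{V}^{*}$.

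There is essentially no obstacle here; the only point worth noting is that $\matr{V}$ does not depend on $\matr{U}$, which is exactly why the columnwise Hessians decouple and the answer is itself linear in $\matr{Z}$. Equivalently, and even more directly, the Euclidean gradient $\nabla h(\matr{U})=2\matr{V}\matr{U}$ of \Cref{lemma:h_U_T-2} is already a $\CC$-linear function of $\matr{U}$, so the Euclidean Hessian, being the directional derivative of the gradient, satisfies $\nabla^2 h(\matr{U})[\matr{Z}]=\frac{d}{dt}\,2\matr{V}(\matr{U}+t\matr{Z})\big|_{t=0}=2\matr{V}\matr{Z}$, and likewise $2\matr{V}^{*}\matr{Z}$ in the transpose case; I would keep the column-separation argument as the main route since it is the reduction to \Cref{lemma:h_U_T-H-7-sp} advertised in the text.
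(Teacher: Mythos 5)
Your proof is correct and takes essentially the same route as the paper, which offers no written proof at all but simply asserts that the matrix case is a direct consequence of \Cref{lemma:h_U_T-H-7-sp}; your column-separation argument ($h(\matr{U})=\sum_{q}h^{\mathrm{v}}(\vect{u}_q)$, hence a block-diagonal Hessian) is precisely the reduction that assertion leaves implicit. Your alternative observation --- that $\nabla h(\matr{U})=2\matr{V}\matr{U}$ is linear in $\matr{U}$, so the Hessian is obtained by differentiating the gradient along $\matr{Z}$ --- is also valid and arguably the shortest correct justification.
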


\begin{lemma}\label{lemma:h_U_T-H-7-s}
	Let $h$ and $\matr{V}$ be defined as in \Cref{lemma:h_U_T-2}. 
	Suppose that $\ProjGrad{h}{\matr{U}_{*}} =0.$ 
	Then, for $\matr{Z}\in{\rm\bf T}_{\matr{U}_{*}} \text{St}(r,n,\CC)$, we have  
	\begin{equation}\label{eq-euclidean-gradient-multilinear-2-7-s}
	\HessAppl{h}{\matr{U}_{*}}{\matr{Z}} 
	= \begin{cases}
	2\left(\matr{I}_n-\matr{U}_{*}\matr{U}_{*}^{\H}\right)\matr{V}\matr{Z}-2\left(\matr{Z}\matr{U}_{*}^\H+\matr{U}_{*}\matr{Z}^\H\right)\matr{V}\matr{U}_{*}, & \text{if } (\cdot)^{\dagger}=(\cdot)^{\H}; \\
	2\left(\matr{I}_n-\matr{U}_{*}\matr{U}_{*}^{\H}\right)\matr{V}^{*}\matr{Z}-2\left(\matr{Z}\matr{U}_{*}^\H+\matr{U}_{*}\matr{Z}^\H\right)\matr{V}^{*}\matr{U}_{*}, & \text{if } (\cdot)^{\dagger}=(\cdot)^{\T}.
	\end{cases}
	\end{equation}
\end{lemma}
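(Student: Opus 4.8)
The plan is to treat this as a direct computation from the general Riemannian Hessian formula \eqref{eq:Hessian_reduction}, specializing the ingredients to the function \eqref{eq:form_h_02}. I would prove only the case $(\cdot)^{\dagger}=(\cdot)^{\H}$ in detail, since the $(\cdot)^{\dagger}=(\cdot)^{\T}$ case follows by replacing $\matr{V}$ with $\matr{V}^{*}$ throughout, exactly as in \Cref{lemma:h_U_T-2} and \Cref{lemma:h_U_T-H-7}. Recall that by \eqref{eq:Hessian_reduction} the Riemannian Hessian splits as
\[
\HessAppl{h}{\matr{U}_{*}}{\matr{Z}} = {\rm Proj}_{\matr{U}_{*}} \nabla^2 h(\matr{U}_{*})[\matr{Z}] + \mathfrak{A}_{\matr{U}_{*}}(\matr{Z}, {\rm Proj}^{\bot}_{\matr{U}_{*}} \nabla h(\matr{U}_{*})),
\]
so I need the two summands separately.

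First I would exploit the stationarity hypothesis. Since $\ProjGrad{h}{\matr{U}_{*}} = {\rm Proj}_{\matr{U}_{*}}\nabla h(\matr{U}_{*}) = 0$, the Euclidean gradient lies entirely in the normal space, hence ${\rm Proj}^{\bot}_{\matr{U}_{*}}\nabla h(\matr{U}_{*}) = \nabla h(\matr{U}_{*}) = 2\matr{V}\matr{U}_{*}$ by \Cref{lemma:h_U_T-2}. This is the one place where the hypothesis $\ProjGrad{h}{\matr{U}_{*}}=0$ is used, and it is what lets the Weingarten term appear in closed form. For the first summand, I substitute the Euclidean Hessian $\nabla^2 h(\matr{U}_{*})[\matr{Z}] = 2\matr{V}\matr{Z}$ from \Cref{lemma:h_U_T-H-7} into the projection formula \eqref{eq:proj_Stiefel}, obtaining $2\matr{V}\matr{Z} - 2\matr{U}_{*}\symmm{\matr{U}_{*}^{\H}\matr{V}\matr{Z}}$. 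Expanding the symmetrization and using that $\matr{V}=\sum_j \vect{v}_j\vect{v}_j^{\H}$ is Hermitian (so $\matr{V}^{\H}=\matr{V}$) turns this into $2\matr{V}\matr{Z} - \matr{U}_{*}\matr{U}_{*}^{\H}\matr{V}\matr{Z} - \matr{U}_{*}\matr{Z}^{\H}\matr{V}\matr{U}_{*}$.

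Next I would evaluate the Weingarten term $\mathfrak{A}_{\matr{U}_{*}}(\matr{Z}, 2\matr{V}\matr{U}_{*})$ using its definition, giving $-2\matr{Z}\matr{U}_{*}^{\H}\matr{V}\matr{U}_{*} - 2\matr{U}_{*}\symmm{\matr{Z}^{\H}\matr{V}\matr{U}_{*}}$, and again expand $\symmm{\cdot}$ with $\matr{V}^{\H}=\matr{V}$ to get $-2\matr{Z}\matr{U}_{*}^{\H}\matr{V}\matr{U}_{*} - \matr{U}_{*}\matr{Z}^{\H}\matr{V}\matr{U}_{*} - \matr{U}_{*}\matr{U}_{*}^{\H}\matr{V}\matr{Z}$. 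Finally I add the two summands; the two copies of $\matr{U}_{*}\matr{U}_{*}^{\H}\matr{V}\matr{Z}$ combine with the leading $2\matr{V}\matr{Z}$ to produce $2(\matr{I}_n-\matr{U}_{*}\matr{U}_{*}^{\H})\matr{V}\matr{Z}$, and the terms $-2\matr{U}_{*}\matr{Z}^{\H}\matr{V}\matr{U}_{*}$ and $-2\matr{Z}\matr{U}_{*}^{\H}\matr{V}\matr{U}_{*}$ group into $-2(\matr{Z}\matr{U}_{*}^{\H}+\matr{U}_{*}\matr{Z}^{\H})\matr{V}\matr{U}_{*}$, which is exactly \eqref{eq-euclidean-gradient-multilinear-2-7-s}. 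The computation is entirely routine; the only subtlety — and the step I would flag as the main place to be careful — is the consistent use of $\matr{V}^{\H}=\matr{V}$ when expanding each $\symmm{\cdot}$, together with correctly identifying the normal component of the gradient from the stationarity assumption, since these are what make the Weingarten contribution cancel against the projected Euclidean Hessian in the stated form.
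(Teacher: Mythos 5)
Your proposal is correct and follows essentially the same route as the paper: invoke stationarity to identify ${\rm Proj}^{\bot}_{\matr{U}_{*}}\nabla h(\matr{U}_{*})=\nabla h(\matr{U}_{*})=2\matr{V}\matr{U}_{*}$, then substitute the explicit Euclidean gradient and Hessian into the reduction formula \eqref{eq:Hessian_reduction} and simplify using $\matr{V}^{\H}=\matr{V}$. The algebra in your expansion of the projection and Weingarten terms checks out and yields exactly \eqref{eq-euclidean-gradient-multilinear-2-7-s}.
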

\begin{proof}
	We only prove the case $(\cdot)^{\dagger}=(\cdot)^{\H}$. 
	The other case is similar.
	Since $\matr{U}_{*}$ is a stationary point, we have 
	${\rm Proj}^{\bot}_{\matr{U}_{*}} \nabla h(\matr{U}_{*})=\nabla h(\matr{U}_{*})$. 
	Then, by equation \eqref{eq:Hessian_reduction}, we obtain that 
	\begin{align}
	\HessAppl{h}{\matr{U}_{*}}{\matr{Z}} 
	&= \nabla^2 h(\matr{U}_{*}) [\matr{Z} ] -\matr{U}_{*}\symmm{\matr{U}_{*}^{\H}\nabla^2 h(\matr{U}_{*}) [\matr{Z}]}\notag\\
	&\quad -\matr{Z}\matr{U}_{*}^{\H}\nabla h(\matr{U}_{*}) -
	\matr{U}_{*} \symmm{\matr{Z}^{\H} \nabla h(\matr{U}_{*})}\notag\\
	&=2\left(\matr{I}_n-\matr{U}_{*}\matr{U}_{*}^{\H}\right)\matr{V}\matr{Z}-2\left(\matr{Z}\matr{U}_{*}^\H+\matr{U}_{*}\matr{Z}^\H\right)\matr{V}\matr{U}_{*}.\label{Hess_h_inter-0}\notag
	\end{align}
	The proof is complete.  
\end{proof}

\begin{lemma}\label{lemm:h_hess_neq0-7-0}
	Let $\{\tenss{D}^{(\ell)}\}_{1\leq\ell\leq L}\subseteq\CC^{n\times r_2\times\cdots r_d}$ be a set of diagonal tensors. 
	Let $\alpha_\ell \in \RR^{+}$ for $1\leq\ell\leq L$. 
	Define
	\begin{equation}\label{eq:form_h_01-h-ex-7}
	h(\matr{U}) = \sum_{\ell=1}^{L}\alpha_\ell\|\tenss{W}^{(\ell)}\|^2,
	\end{equation}
	where $\matr{U}\in\text{St}(r_1,n,\CC)$ and $\tenss{W}^{(\ell)}=\tenss{D}^{(\ell)} \contr{1} \matr{U}^{\dagger}$. 
	Let $r\eqdef\min(n,r_2,r_3,\cdots,r_d)$. 
	Suppose that $r_1\leq r$ and the diagonal elements $\tenselem{D}^{(\ell)}_{p\cdots p}\in\CC$ satisfy:
	\begin{align}
	&\bullet\ \sum_{\ell=1}^{L}\alpha_\ell\left(|\tenselem{D}^{(\ell)}_{r_1+s}|^2-|\tenselem{D}^{(\ell)}_{t\cdots t}|^2\right)\neq 0, \ \textrm{for any}\ 1\leq s\leq r-r_1,\ 1\leq t\leq r_1;\label{cond_Hess_ex_02-7-1}\\
	&\bullet\ \sum_{\ell=1}^{L}\alpha_\ell|\tenselem{D}^{(\ell)}_{p\cdots p}|^2\neq 0, \ \textrm{for any}\ 1\leq p\leq r_1\ \textrm{(only needed when $r<n$)}.\label{cond_Hess_ex_02-7}
	\end{align}
	Then $\matr{I}_{n\times r_1}$ is a unitarily semi-nondegenerate point of $h$. 
\end{lemma}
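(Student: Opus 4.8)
The plan is to reduce everything to the single diagonal matrix $\matr{V}$ that governs both the gradient and the Hessian of $h$, and then to show that the Hessian is injective exactly on the $2r_1(n-r_1)$-dimensional ``off-block'' part of the tangent space, which is the maximal rank permitted by unitary invariance. First I would identify $\matr{V}$ explicitly. Since each $\tenss{D}^{(\ell)}$ is diagonal, its $1$-mode unfolding has a single nonzero column for each diagonal index $p$, namely $\tenselem{D}^{(\ell)}_{p\cdots p}\vect{e}_p$. Hence, with $\matr{V}\eqdef\sum_{\ell}\alpha_\ell\matr{V}^{(\ell)}$ as in \Cref{lemma:h_U_T-2}, one obtains $\matr{V}=\diag{d_1,\ldots,d_n}$ where $d_p=\sum_{\ell}\alpha_\ell|\tenselem{D}^{(\ell)}_{p\cdots p}|^2$ for $1\le p\le r$ and $d_p=0$ for $r<p\le n$. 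By \Cref{lemma:h_U_T-2} the gradient is $\nabla h(\matr{U})=2\matr{V}\matr{U}$, so $\nabla h(\matr{I}_{n\times r_1})=2\matr{I}_{n\times r_1}\diag{d_1,\ldots,d_{r_1}}$; substituting into \eqref{eq:Rie_grad} and using that $\matr{I}_{n\times r_1}^{\H}\nabla h$ is real diagonal (hence Hermitian) yields $\ProjGrad{h}{\matr{I}_{n\times r_1}}=\matr{0}$, so $\matr{I}_{n\times r_1}$ is stationary and \Cref{lemma:h_U_T-H-7-s} applies.

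Next I would split the tangent space \eqref{eq:tanget_spac} at $\matr{I}_{n\times r_1}$ into the skew-Hermitian part $\matr{Z}=\matr{I}_{n\times r_1}\matr{A}$ and the block part $\matr{Z}=\matr{I}^{\perp}_{n\times r_1}\matr{B}$ with $\matr{B}\in\CC^{(n-r_1)\times r_1}$. Because $h$ is unitarily invariant, \Cref{lem:f_invariant_Hessian} and \Cref{lem:f_invariant_Hessian-2} already give that the whole skew-Hermitian part lies in $\ker\Hess{h}{\matr{I}_{n\times r_1}}$ and that $\rank{\Hess{h}{\matr{I}_{n\times r_1}}}\le 2r_1(n-r_1)$. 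It therefore remains only to show that the Hessian is injective on the $2r_1(n-r_1)$-dimensional block part. Substituting $\matr{Z}=\matr{I}^{\perp}_{n\times r_1}\matr{B}$ into \eqref{eq-euclidean-gradient-multilinear-2-7-s}, the diagonal structure of $\matr{V}$ together with $\matr{I}_{n\times r_1}^{\H}\matr{Z}=\matr{0}$ should collapse the expression to $\HessAppl{h}{\matr{I}_{n\times r_1}}{\matr{Z}}=2\matr{I}^{\perp}_{n\times r_1}(\matr{D}_B\matr{B}-\matr{B}\matr{D}_A)$, where $\matr{D}_A=\diag{d_1,\ldots,d_{r_1}}$ and $\matr{D}_B=\diag{d_{r_1+1},\ldots,d_n}$.

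Finally, reading off entries, the $(s,t)$ component of the image is $2(d_{r_1+s}-d_t)B_{st}$, so the Hessian acts on the block part as a real scaling of each complex coordinate by $2(d_{r_1+s}-d_t)$. I would then check nonvanishing of these scalars in two ranges: for $1\le s\le r-r_1$ this is precisely condition \eqref{cond_Hess_ex_02-7-1}, while for $r-r_1<s\le n-r_1$ (which can only occur when $r<n$) we have $d_{r_1+s}=0$, so the scalar equals $-2d_t\neq0$ by condition \eqref{cond_Hess_ex_02-7}. Thus the Hessian is injective on the block part, its image has real dimension $2r_1(n-r_1)$, and combined with the rank bound this forces $\rank{\Hess{h}{\matr{I}_{n\times r_1}}}=2r_1(n-r_1)$; by the definition in \Cref{subsec:Rie_Hess}, $\matr{I}_{n\times r_1}$ is a unitarily semi-nondegenerate point of $h$.

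The main obstacle I anticipate is the bookkeeping needed to collapse \eqref{eq-euclidean-gradient-multilinear-2-7-s} to the clean commutator form $\matr{D}_B\matr{B}-\matr{B}\matr{D}_A$. Concretely, one must verify that on the block part the correction term $\matr{Z}\matr{U}_{*}^{\H}\matr{V}\matr{U}_{*}$ reduces to $\matr{I}^{\perp}_{n\times r_1}\matr{B}\matr{D}_A$, that $\matr{U}_{*}\matr{Z}^{\H}\matr{V}\matr{U}_{*}$ vanishes, and that the projected Euclidean-Hessian term $(\matr{I}_n-\matr{U}_{*}\matr{U}_{*}^{\H})\matr{V}\matr{Z}$ equals $\matr{I}^{\perp}_{n\times r_1}\matr{D}_B\matr{B}$. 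Everything else is a direct case analysis matching the index ranges of $\matr{B}$ against the diagonal profile of $\matr{V}$, where the threshold at $p=r$ (beyond which $d_p=0$) is exactly what separates the two hypotheses \eqref{cond_Hess_ex_02-7-1} and \eqref{cond_Hess_ex_02-7}.
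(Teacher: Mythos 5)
Your proposal is correct and follows essentially the same route as the paper's proof: establish stationarity from $\nabla h(\matr{I}_{n\times r_1})=2\matr{V}\matr{I}_{n\times r_1}$, invoke the unitary-invariance rank bound $2r_1(n-r_1)$ from \Cref{lem:f_invariant_Hessian-2}, and apply the stationary-point Hessian formula \eqref{eq-euclidean-gradient-multilinear-2-7-s} on the block part of the tangent space, where the two index ranges $1\le s\le r-r_1$ and $r-r_1<s\le n-r_1$ correspond exactly to hypotheses \eqref{cond_Hess_ex_02-7-1} and \eqref{cond_Hess_ex_02-7}. The only difference is presentational: the paper evaluates the Hessian on rank-one blocks $\matr{B}=z\vect{e}_s\vect{e}_t^{\T}$, while you collapse it once to the Sylvester form $2\matr{I}^{\perp}_{n\times r_1}(\matr{D}_B\matr{B}-\matr{B}\matr{D}_A)$, which makes the diagonal action, and hence injectivity on the block part, slightly more explicit.
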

\begin{proof}
	We only prove the case $(\cdot)^{\dagger}=(\cdot)^{\H}$. 
	The other case is similar. 
	By \eqref{eq-euclidean-gradient-multilinear-2}, we get that
	\begin{align*}
	\nabla h(\matr{I}_{n\times r_1}) =  2\sum_{\ell=1}^{L}\alpha_\ell\matr{I}_{n\times r_1}\begin{bmatrix}
	|\tenselem{D}^{(\ell)}_{1\cdots 1}|^2 & & \\
	& \ddots & \\
	& & |\tenselem{D}^{(\ell)}_{r_1\cdots r_1}|^2
	\end{bmatrix}.
	\end{align*}
	Then, by \eqref{eq:Rie_grad}, we have 
	$\ProjGrad{h}{\matr{I}_{n\times r_1}} =\matr{0}$. 
	Since $h$ is unitarily invariant,  
	by \Cref{lem:f_invariant_Hessian-2}, we know that  $\rank{\Hess{h}{\matr{I}_{n\times r_1}}} \le 2r_1(n-r_1)$. 
	Note that
	$$\nabla^2 h(\matr{I}_{n\times r_1}) [\matr{Z} ] = 
	2\sum_{\ell=1}^{L}\alpha_\ell\matr{V}^{(\ell)}\matr{Z}$$ 
	by \eqref{eq-euclidean-gradient-multilinear-2-7}. 
	To prove $\rank{\Hess{h}{\matr{I}_{n\times r_1}}} = 2r_1(n-r_1)$, we now show that $\Hess{h}{\matr{I}_{n\times r_1}}[\matr{Z}]\neq 0$ by \eqref{eq-euclidean-gradient-multilinear-2-7-s} in the following cases.
	\begin{itemize}
		\item[$\bullet$] $\matr{Z}=\matr{I}^{\perp}_{n\times r_1}\matr{B}, \ \matr{B}=z\vect{e}_s\vect{e}_t^{\T}\in\CC^{(n-r_1)\times r_1},\ z\in\CC,\ z\neq 0,\ 1\leq s\leq r-r_1,\ 1\leq t\leq r_1.$
		\begin{align*}
		\HessAppl{h}{\matr{I}_{n\times r_1}}{\matr{Z}} &=
		2\matr{I}^{\perp}_{n\times r_1}\vect{e}_{s}\vect{e}_t^{\T}z\sum_{\ell=1}^{L}\alpha_\ell\left(|\tenselem{D}^{(\ell)}_{r_1+s}|^2-|\tenselem{D}^{(\ell)}_{t\cdots t}|^2\right)\neq 0;
		\end{align*}
		\item[$\bullet$] $\matr{Z}=\matr{I}^{\perp}_{n\times r_1}\matr{B}, \ \matr{B}=z\vect{e}_s\vect{e}_t^{\T}\in\CC^{(n-r_1)\times r_1},\ z\in\CC,\ z\neq 0,\ r-r_1< s\leq n-r_1,\ 1\leq t\leq r_1.$
		\begin{align*}
		\HessAppl{h}{\matr{I}_{n\times r_1}}{\matr{Z}} &=
		-2\matr{I}^{\perp}_{n\times r_1}\vect{e}_{s}\vect{e}_t^{\T}z\sum_{\ell=1}^{L}\alpha_\ell|\tenselem{D}^{(\ell)}_{t\cdots t}|^2\neq 0.
		\end{align*}
	\end{itemize}
	Note that the tangent space ${\rm\bf T}_{\matr{I}_{n\times r}} \text{St}(r,n,\CC)$ in \eqref{eq:tanget_spac} is spanned by the matrices $\matr{Z}$ in above cases and $\matr{Z}_p$ (as in \Cref{lem:f_invariant_Hessian}) and $\matr{Z}_{p,q}, \matr{Z}_{p,q}^{'}$ (as in \Cref{lem:f_invariant_Hessian-2}). 
	The proof is complete. 
\end{proof}

\begin{remark}
	Under the conditions of \Cref{lemm:h_hess_neq0-7-0}, 
	since the function $h$ in \eqref{eq:form_h_01-h-ex-7} is unitarily invariant and $\matr{I}_{n\times r}$ is a unitarily semi-nondegenerate point, we obtain that the matrix 
	$\matr{I}_{n\times r}\matr{R}$ with $\matr{R}\in\UN{r}$ is also a a unitarily semi-nondegenerate point of $h$. 
	This fact can be also proved by a similar method as in the proof of \Cref{lemm:h_hess_neq0-7-0}. 
\end{remark}

The following result can be proved by a similar method as in the proof of \Cref{lemm:h_hess_neq0-7-0}. 
We omit the details here. 

\begin{proposition}\label{prop:countexam_02}
	Let $\{\tenss{D}^{(\ell)}\}_{1\leq\ell\leq L}\subseteq\CC^{n_1\times n_2\times\cdots n_d}$ be a set of diagonal tensors
	and 
	\begin{align*}
	\tenss{A}^{(\ell)} = \tenss{D}^{(\ell)} \contr{1} (\matr{Q}_{1}^\H)^{\dagger}  \cdots \contr{d}(\matr{Q}_{d}^\H)^{\dagger}
	\end{align*}
	for $1\leq\ell\leq L$, where $\matr{Q}_{i}\in\ON{n_i}$ for $1\leq i\leq d$. 
	Suppose that $r=r_1=\cdots = r_d$ and the diagonal tensors $\{\tenss{D}^{(\ell)}\}_{1\leq\ell\leq L}$ satisfy the conditions \eqref{cond_Hess_ex_02-7-1} and \eqref{cond_Hess_ex_02-7}. 
	Then $\omega_0=(\matr{Q}_{1}\matr{I}_{n_1\times r},\cdots,\matr{Q}_{d}\matr{I}_{n_d\times r})$ is a unitarily semi-nondegenerate point of the objective function \eqref{cost-fn-general-2}.
\end{proposition}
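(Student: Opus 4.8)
The plan is to reduce the statement to \Cref{lemm:h_hess_neq0-7-0} one block at a time. By the definition of a unitarily semi-nondegenerate point of $f$ in \eqref{cost-fn-general-2}, it suffices to prove that for each $1\le i\le d$ the $i$-th factor $\matr{Q}_i\matr{I}_{n_i\times r}$ is a unitarily semi-nondegenerate point of the restricted function $h_{(i)}$ obtained by freezing $\matr{U}^{(j)}=\matr{Q}_j\matr{I}_{n_j\times r}$ for all $j\ne i$. So the first step is to identify $h_{(i)}$ explicitly.

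Since $\matr{Q}_j\in\ON{n_j}$ is real, we have $(\matr{Q}_j^{\H})^{\dagger}=\matr{Q}_j$ for both $(\cdot)^{\dagger}=(\cdot)^{\H}$ and $(\cdot)^{\dagger}=(\cdot)^{\T}$, hence $\tenss{A}^{(\ell)}=\tenss{D}^{(\ell)}\contr{1}\matr{Q}_1\cdots\contr{d}\matr{Q}_d$. Contracting each mode $j\ne i$ against $(\matr{Q}_j\matr{I}_{n_j\times r})^{\dagger}$ and using $\matr{Q}_j^{\H}\matr{Q}_j=\matr{I}$ together with the diagonality of $\tenss{D}^{(\ell)}$, I would show that $h_{(i)}(\matr{U})=\sum_{\ell=1}^{L}\alpha_\ell\|\widetilde{\tenss{D}}^{(\ell)}\contr{i}(\matr{U}^{\dagger}\matr{Q}_i)\|^2$, where $\widetilde{\tenss{D}}^{(\ell)}\in\CC^{n_i\times r\times\cdots\times r}$ is the diagonal tensor with diagonal $(\tenselem{D}^{(\ell)}_{1\cdots1},\dots,\tenselem{D}^{(\ell)}_{r\cdots r})$. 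The only delicate point here is the bookkeeping of the contraction: each frozen mode collapses $\tenss{D}^{(\ell)}$ to its leading $r\times\cdots\times r$ diagonal sub-tensor, while mode $i$ is merely twisted by the surviving factor $\matr{Q}_i$.

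The cleanest way to finish is a change of variables $\matr{U}=\matr{Q}_i\widetilde{\matr{U}}$. Left multiplication by the fixed matrix $\matr{Q}_i$ is a Riemannian isometry of $\text{St}(r,n_i,\CC)$: it preserves $\RInner{\cdot}{\cdot}$ and maps the manifold onto itself, it sends $\matr{I}_{n_i\times r}$ to $\matr{Q}_i\matr{I}_{n_i\times r}$, and, using $\matr{U}^{\dagger}\matr{Q}_i=\widetilde{\matr{U}}^{\dagger}$, it intertwines $h_{(i)}$ with $\widetilde{h}(\widetilde{\matr{U}})=\sum_{\ell}\alpha_\ell\|\widetilde{\tenss{D}}^{(\ell)}\contr{i}\widetilde{\matr{U}}^{\dagger}\|^2$. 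The latter is exactly of the form \eqref{eq:form_h_01-h-ex-7}, and in this restriction the lemma's parameter $r_1$ equals $r$, so \eqref{cond_Hess_ex_02-7-1} becomes vacuous and the hypothesis reduces to \eqref{cond_Hess_ex_02-7} for $\widetilde{\tenss{D}}^{(\ell)}$, which holds because $\widetilde{\tenselem{D}}^{(\ell)}_{p\cdots p}=\tenselem{D}^{(\ell)}_{p\cdots p}$. Thus \Cref{lemm:h_hess_neq0-7-0} gives that $\matr{I}_{n_i\times r}$ is a unitarily semi-nondegenerate point of $\widetilde{h}$. Because an isometry preserves unitary invariance and the rank of the Riemannian Hessian, this property transfers to $h_{(i)}$ at $\matr{Q}_i\matr{I}_{n_i\times r}$, which completes the block-wise argument and hence the proof.

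Alternatively --- this is the route indicated by the remark ``by a similar method'' --- I would compute directly: by \Cref{lemma:h_U_T-2} the relevant Gram matrix at $\matr{Q}_i\matr{I}_{n_i\times r}$ is $\matr{Q}_i\matr{\Lambda}\matr{Q}_i^{\H}$ for a nonnegative real diagonal $\matr{\Lambda}$, from which stationarity is immediate, and then evaluate $\HessAppl{h_{(i)}}{\matr{Q}_i\matr{I}_{n_i\times r}}{\matr{Z}}$ via \Cref{lemma:h_U_T-H-7} and \Cref{lemma:h_U_T-H-7-s} on the basis of \eqref{eq:tanget_spac}, exactly reproducing the case analysis of \Cref{lemm:h_hess_neq0-7-0}; the conjugation by $\matr{Q}_i$ does not affect the rank count $2r(n_i-r)$, while the kernel directions of \Cref{lem:f_invariant_Hessian} and \Cref{lem:f_invariant_Hessian-2} account for the drop from $\dim\text{St}(r,n_i,\CC)$. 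The main obstacle in either route is precisely the contraction bookkeeping of the second paragraph, together with carrying the two conjugation conventions $(\cdot)^{\dagger}=(\cdot)^{\H}$ and $(\cdot)^{\dagger}=(\cdot)^{\T}$ through in parallel.
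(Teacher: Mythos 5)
Your proposal is correct, and it supplies in full the argument the paper only gestures at (the paper's entire proof is the remark that the result ``can be proved by a similar method as in the proof of \Cref{lemm:h_hess_neq0-7-0}''). The content is essentially the same, but the packaging differs mildly: the paper intends a direct repetition of the lemma's Hessian computation for each restricted function $h_{(i)}$ --- which is exactly your ``alternative'' route via \Cref{lemma:h_U_T-2}, \Cref{lemma:h_U_T-H-7} and \Cref{lemma:h_U_T-H-7-s} with Gram matrix $\matr{Q}_i\matr{\Lambda}\matr{Q}_i^{\H}$ --- whereas your primary route invokes \Cref{lemm:h_hess_neq0-7-0} as a black box after the change of variables $\matr{U}=\matr{Q}_i\widetilde{\matr{U}}$. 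Both are sound: the blockwise reduction is legitimate because the paper defines a unitarily semi-nondegenerate point of $f$ blockwise; the frozen factors act on mode $j\neq i$ by $(\matr{Q}_j\matr{I}_{n_j\times r})^{\dagger}\matr{Q}_j=\matr{I}_{r\times n_j}$, i.e.\ by truncation (the reality of $\matr{Q}_j\in\ON{n_j}$ is indeed what makes both conventions $(\cdot)^{\dagger}=(\cdot)^{\H}$ and $(\cdot)^{\dagger}=(\cdot)^{\T}$ work, as you note); and left multiplication by $\matr{Q}_i$ is an isometry of $\text{St}(r,n_i,\CC)$ preserving stationarity, unitary invariance, and Riemannian Hessian rank. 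Your reduction also surfaces a point the paper's phrasing hides: since the truncation kills every diagonal entry $\tenselem{D}^{(\ell)}_{p\cdots p}$ with $p>r$, each block application of the lemma has Stiefel rank $r_1=r$ equal to the lemma's parameter $\min(n_i,r,\dots,r)=r$, so the index range in \eqref{cond_Hess_ex_02-7-1} is empty and only \eqref{cond_Hess_ex_02-7} is actually used; under the equal-rank hypothesis $r_1=\cdots=r_d$ the first condition is therefore redundant (it would only become active for unequal block ranks). This does not contradict the proposition --- its hypotheses remain sufficient --- but your proof establishes it under strictly weaker assumptions.
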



\subsection{Equivalent form}
By a similar method as in \Cref{subsec:equiva_01}, we could directly get the following equivalent characterization of objective function \eqref{cost-fn-general-2}. 

\begin{proposition}\label{prop:equiv_form_B-2}
	The function $f$ admits representation \eqref{cost-fn-general-2} if and only if there exists a $2d$-th order positive semidefinite tensor $\tenss{B}$ 
	such that 
	\begin{equation}\label{cost-fn-general-trace-2-1}
	f(\omega) = \sum_{1\leq p_1\leq r_1}\cdots\sum_{1\leq p_d\leq r_d}\tenselem{T}_{p_1\cdots p_dp_1\cdots p_d},
	\end{equation}
	where $\tenss{T}=\tenss{B} \contr{1} (\matr{U}^{(1)})^{\H}  \cdots \contr{d} (\matr{U}^{(d)})^{\H}\contr{d+1} (\matr{U}^{(1)})^{\T}\cdots \contr{2d} (\matr{U}^{(d)})^{\T}$.
\end{proposition}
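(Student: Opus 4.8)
The plan is to mirror the proof of \Cref{prop:equiv_form_B} almost verbatim, the only structural difference being that here the Frobenius norm runs over \emph{all} entries of $\tenss{W}^{(\ell)}\in\CC^{r_1\times\cdots\times r_d}$ rather than only its diagonal, so the single index sum $\sum_{q=1}^{r}$ producing a trace is replaced by the multi-index sum $\sum_{p_1\cdots p_d}$ over the generalized diagonal $\tenselem{T}_{p_1\cdots p_d p_1\cdots p_d}$. Writing $\matr{U}^{(i)}=[\vect{u}^{(i)}_{1},\cdots,\vect{u}^{(i)}_{r_i}]$ and using $\tenselem{W}^{(\ell)}_{p_1\cdots p_d}=\tenss{A}^{(\ell)}\contr{1}(\vect{u}^{(1)}_{p_1})^{\dagger}\cdots\contr{d}(\vect{u}^{(d)}_{p_d})^{\dagger}$, I would first expand the objective \eqref{cost-fn-general-2} entrywise as
\begin{equation*}
f(\omega)=\sum_{\ell=1}^{L}\alpha_\ell\|\tenss{W}^{(\ell)}\|^2
=\sum_{\ell=1}^{L}\sum_{1\leq p_1\leq r_1}\cdots\sum_{1\leq p_d\leq r_d}\alpha_\ell\bigl|\tenss{A}^{(\ell)}\contr{1}(\vect{u}^{(1)}_{p_1})^{\dagger}\cdots\contr{d}(\vect{u}^{(d)}_{p_d})^{\dagger}\bigr|^2,
\end{equation*}
and, in the same way, expand the candidate right-hand side \eqref{cost-fn-general-trace-2-1} as
\begin{equation*}
\sum_{1\leq p_1\leq r_1}\cdots\sum_{1\leq p_d\leq r_d}\tenss{B}\contr{1}(\vect{u}^{(1)}_{p_1})^{\H}\cdots\contr{d}(\vect{u}^{(d)}_{p_d})^{\H}\contr{d+1}(\vect{u}^{(1)}_{p_1})^{\T}\cdots\contr{2d}(\vect{u}^{(d)}_{p_d})^{\T}.
\end{equation*}

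Next I would observe that these two displays agree summand by summand over each fixed multi-index $(p_1,\ldots,p_d)$. For a single choice of columns $\vect{u}^{(i)}_{p_i}$ (possibly distinct across the modes), the identity between $\sum_\ell\alpha_\ell|\tenss{A}^{(\ell)}\contr{1}(\vect{u}^{(1)}_{p_1})^{\dagger}\cdots|^2$ and the corresponding $\tenss{B}$-contraction is exactly the content of \Cref{lemm:equiv_B_A}: it only uses $\tenselem{B}_{s_1\cdots s_d t_1\cdots t_d}=\sum_\ell\alpha_\ell(\tenselem{A}^{(\ell)}_{s_1\cdots s_d})^*\tenselem{A}^{(\ell)}_{t_1\cdots t_d}$ and holds for arbitrary vectors, just as in the single-column computation inside the proof of \Cref{prop:equiv_form_B}. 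Summing over all multi-indices then recovers $f(\omega)$ in the form \eqref{cost-fn-general-trace-2-1}, and both implications of the ``if and only if'' drop out: given a positive semidefinite $\tenss{B}$, \Cref{lemm:equiv_B_A} supplies $\{\tenss{A}^{(\ell)}\}$ and $\alpha_\ell\geq0$ with $\tenss{B}=\sum_\ell\alpha_\ell(\tenss{A}^{(\ell)})^*\otimes\tenss{A}^{(\ell)}$ so that \eqref{cost-fn-general-trace-2-1} becomes \eqref{cost-fn-general-2}; conversely, given $f$ in the form \eqref{cost-fn-general-2}, setting $\tenss{B}=\sum_\ell\alpha_\ell(\tenss{A}^{(\ell)})^*\otimes\tenss{A}^{(\ell)}$ yields a positive semidefinite tensor realizing \eqref{cost-fn-general-trace-2-1}.

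The only point that demands care — and hence the main (though minor) obstacle — is the placement of the conjugation, namely matching the fixed convention $(\cdot)^{\dagger}$ of \eqref{cost-fn-general-2} against the explicit $\H$/$\T$ split imposed on the two index blocks of $\tenss{B}$ in \eqref{cost-fn-general-trace-2-1}. A direct entrywise check shows that the $\tenss{B}$-form as written corresponds literally to the $\dagger=\T$ case, while the $\dagger=\H$ case is recovered after conjugating the representing tensors (equivalently, replacing $\tenss{B}$ by its index-block transpose). Since $\|\tenss{A}^{(\ell)}\contr{1}(\matr{U}^{(1)})^{\H}\cdots\|^2=\|(\tenss{A}^{(\ell)})^*\contr{1}(\matr{U}^{(1)})^{\T}\cdots\|^2$, the two choices of $\dagger$ generate the same family of objective functions, so the stated equivalence is insensitive to the convention once this conjugation is tracked. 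This is precisely the bookkeeping already implicit in \Cref{prop:equiv_form_B}, so I would simply invoke it rather than re-derive the full term-by-term identity, which is why the authors phrase the result as following ``by a similar method.''
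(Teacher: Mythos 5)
Your proposal is correct and is essentially the paper's own argument: the paper proves \Cref{prop:equiv_form_B-2} only by remarking that it follows ``by a similar method'' as \Cref{prop:equiv_form_B}, and your write-up is precisely that method spelled out, replacing the diagonal sum $\sum_{q=1}^{r}$ by the multi-index sum over $(p_1,\ldots,p_d)$ and invoking \Cref{lemm:equiv_B_A} for the per-multi-index identity. Your extra care about the $(\cdot)^{\H}$ versus $(\cdot)^{\T}$ convention (conjugating the representing tensors $\tenss{A}^{(\ell)}$, hence $\tenss{B}$) is a valid and harmless refinement of the bookkeeping the paper leaves implicit.
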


\subsection{Example: low multilinear rank approximation}
\label{sub-pro-sec-best-rank-mul-appro} 

Let $\tenss{A}\in\CC^{n_1\times n_2\times\cdots\times n_d}$. 
We denote by $\text{rank}_{i}(\tenss{A})$ the \emph{i-rank}, that is, the rank of $i$-mode unfolding $\tenss{A}_{(i)}$. 
The vector $(\text{rank}_{1}(\tenss{A}),\cdots,\text{rank}_{d}(\tenss{A}))$ is called the \emph{multilinear rank} of $\tenss{A}$. 
Let $r_i\leq n_i$ for $1\leq i\leq d$. 
The \emph{low multilinear rank approximation} \cite{de1997signal,Lathauwer00:rank-1approximation,ishteva2009numerical,kruskal1989rank} is to find 
\begin{equation}\label{eq-problem-low-multi-rank}
\tenss{B}_{*} =\arg\min \|\tenss{A} - \tenss{B}\|,
\end{equation}
where $\text{rank}_{i}(\tenss{B})\leq r_i$ for $1\leq i\leq d$.
Note that any such $\tenss{B}$ can be decomposed as 
\begin{equation*}
\tenss{B} = \tenss{C}\contr{1}\matr{U}^{(1)}\contr{2}\matr{U}^{(2)}\contr{3}\cdots\contr{d}\matr{U}^{(d)},
\end{equation*}
with $\tenss{C}\in\CC^{r_1\times r_2\times\cdots \times r_{d}}$ and 
$\matr{U}^{(i)}\in\text{St}(r_i,n_i,\CC)$ for $1\leq i\leq d$.
Problem \eqref{eq-problem-low-multi-rank} is in fact equivalent to the following \emph{Tucker decomposition} \cite{de1997signal,Lathauwer00:rank-1approximation,tucker1966some}: 
\begin{equation}\label{eq-problem-low-multi-rank-equi}
\min\|\tenss{A} - \tenss{C}\contr{1}\matr{U}^{(1)}\contr{2}\matr{U}^{(2)}\contr{3}\cdots\contr{d}\matr{U}^{(d)}\|.
\end{equation}
The next lemma is a direct extension of \cite[Thm.  4.2]{Lathauwer00:rank-1approximation}. 
Its proof needs a complex analogue of \cite[Thm.  4.1]{Lathauwer00:rank-1approximation}, which can be obtained by the method in \cite[Prop. 5.1]{chen2009tensor}. 
We omit the detailed proof here.

\begin{lemma}
	Problem \eqref{eq-problem-low-multi-rank-equi} is equivalent to the maximization of 
	\begin{equation}\label{eq-cost-function-general}
	f(\matr{U}^{(1)}, \matr{U}^{(2)}, \cdots, \matr{U}^{(d)}) = \|\tenss{W}\|^2,
	\end{equation}
	where $\tenss{W}=\tenss{A}\contr{1}(\matr{U}^{(1)})^\H\contr{2}(\matr{U}^{(2)})^\H\cdots\contr{d}(\matr{U}^{(d)})^\H$
	and
	$\matr{U}^{(i)}\in\text{St}(r_i,n_i,\CC)$ for $1\leq i\leq d$.
\end{lemma}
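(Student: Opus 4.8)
The plan is to reduce Problem \eqref{eq-problem-low-multi-rank-equi} to a linear least squares problem in the core tensor $\tenss{C}$ after fixing the factor matrices, and then exploit the orthonormality of the columns of each $\matr{U}^{(i)}\in\text{St}(r_i,n_i,\CC)$. First I would fix $\matr{U}^{(1)},\dots,\matr{U}^{(d)}$ and regard the cost $\|\tenss{A} - \tenss{C}\contr{1}\matr{U}^{(1)}\cdots\contr{d}\matr{U}^{(d)}\|^2$ as a function of $\tenss{C}\in\CC^{r_1\times\cdots\times r_d}$ alone. Since the $\matr{U}^{(i)}$ have orthonormal columns, the linear map $\tenss{C}\mapsto\tenss{C}\contr{1}\matr{U}^{(1)}\cdots\contr{d}\matr{U}^{(d)}$ is an isometry onto a subspace of $\CC^{n_1\times\cdots\times n_d}$; this is the complex analogue of \cite[Thm. 4.1]{Lathauwer00:rank-1approximation} and is verified exactly as in \cite[Prop. 5.1]{chen2009tensor} by writing the squared norm through mode unfoldings and using $(\matr{U}^{(i)})^\H\matr{U}^{(i)}=\matr{I}_{r_i}$ at each mode.

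Then the minimization over $\tenss{C}$ is an orthogonal projection onto this subspace, whose unique minimizer is the projection of $\tenss{A}$, namely $\tenss{C}^{*}=\tenss{A}\contr{1}(\matr{U}^{(1)})^\H\cdots\contr{d}(\matr{U}^{(d)})^\H=\tenss{W}$. Substituting $\tenss{C}^{*}$ back and invoking the Pythagorean identity for the orthogonal projection gives the crucial decomposition
\begin{equation*}
\min_{\tenss{C}}\ \big\|\tenss{A} - \tenss{C}\contr{1}\matr{U}^{(1)}\cdots\contr{d}\matr{U}^{(d)}\big\|^2 = \|\tenss{A}\|^2 - \|\tenss{W}\|^2,
\end{equation*}
where $\|\tenss{W}\|=\|\tenss{C}^{*}\contr{1}\matr{U}^{(1)}\cdots\contr{d}\matr{U}^{(d)}\|$ by the isometry. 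Since $\|\tenss{A}\|^2$ does not depend on the factor matrices, minimizing the left-hand side over all $\matr{U}^{(i)}\in\text{St}(r_i,n_i,\CC)$ is equivalent to maximizing $\|\tenss{W}\|^2 = f(\matr{U}^{(1)},\dots,\matr{U}^{(d)})$, which is exactly \eqref{eq-cost-function-general}. This establishes the claimed equivalence.

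The main obstacle I anticipate is the careful verification of the complex-case orthogonality statements: in the complex setting the inner product \eqref{eq:RInner} involves conjugate transposes, so one must check both the first-order orthogonality condition $\langle \tenss{C}\contr{1}\matr{U}^{(1)}\cdots\contr{d}\matr{U}^{(d)},\, \tenss{A}-\tenss{C}^{*}\contr{1}\matr{U}^{(1)}\cdots\contr{d}\matr{U}^{(d)}\rangle_{\R}=0$ for all admissible $\tenss{C}$ and that the embedding genuinely preserves the Frobenius norm. Both reduce to repeated use of $(\matr{U}^{(i)})^\H\matr{U}^{(i)}=\matr{I}_{r_i}$, but bookkeeping the conjugations consistently across the $d$ modes—precisely the point where the complex analogue of \cite[Thm. 4.1]{Lathauwer00:rank-1approximation} is required—is the delicate part; the remainder is the routine projection argument.
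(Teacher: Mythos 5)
Your proposal is correct and follows essentially the same route the paper takes: the paper omits the details but explicitly points to the complex analogue of \cite[Thm. 4.1]{Lathauwer00:rank-1approximation} (optimal core $\tenss{C}^{*}=\tenss{W}$ and the Pythagorean identity $\min_{\tenss{C}}\|\tenss{A}-\tenss{C}\contr{1}\matr{U}^{(1)}\cdots\contr{d}\matr{U}^{(d)}\|^2=\|\tenss{A}\|^2-\|\tenss{W}\|^2$) obtained via the method of \cite[Prop. 5.1]{chen2009tensor}, which is exactly the projection argument you spell out. Your write-up simply makes explicit the isometry and least-squares steps that the paper leaves to the cited references.
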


We now make some notations to present \Cref{al-general-polar} for objective function \eqref{eq-cost-function-general}.  
Let $k\geq 1$ and $1\leq i\leq d$. Denote 
\begin{equation*}\label{eq-V-k-i}
\tenss{V}^{(k,i)}\eqdef\tenss{A}\contr{1}(\matr{U}_{k}^{(1)})^\H\cdots\contr{i-1}(\matr{U}_{k}^{(i-1)})^\H\contr{i+1}(\matr{U}_{k-1}^{(i+1)})^\H\cdots\contr{d}(\matr{U}_{k-1}^{(d)})^\H.
\end{equation*}
Let $\tenss{V}_{(i)}^{(k,i)}=[\vect{v}^{(k,i)}_1,\vect{v}^{(k,i)}_2,\cdots,\vect{v}^{(k,i)}_{R_i}]\in\CC^{n_{i}\times R_i}$ and $\matr{V}^{(k,i)}\eqdef\sum_{j=1}^{R_i}\vect{v}^{(k,i)}_j(\vect{v}^{(k,i)}_j)^{\H}$.
Then 
\begin{equation}\label{eq-cost-function-general-1-2-31-01}
\nabla h_{(k,i)}(\matr{U}_{k-1}^{(i)}) = 
2\matr{V}^{(k,i)}\matr{U}_{k-1}^{(i)}
\end{equation}
by equation \eqref{eq-euclidean-gradient-multilinear-2}. 
We call \Cref{al-general-polar} for objective function \eqref{eq-cost-function-general} to be the \emph{low multilinear rank approximation based on polar decomposition} (LMPD),
which is presented as \Cref{al-low-multilinear-hooiPD}. 
For objective function \eqref{eq-cost-function-general}, 
we can also derive the decomposition form in \eqref{eq-gradient-polar-decom} similarly as in \cite[Eq. (5.16)]{chen2009tensor}. 
We omit the details here. 

\begin{algorithm}
	\caption{LMPD}\label{al-low-multilinear-hooiPD}
	\begin{algorithmic}[1]
		\STATE{{\bf Input:} starting point $\omega^{(0)}$, multilinear rank $(r_1,\cdots,r_d)$.}
		\STATE{{\bf Output:} $\omega^{(k,i)}$, $k\geq 1$, $1\leq i\leq d$.}
		\FOR{$k=1,2,\cdots,$}
		\FOR{$i=1,2,\cdots,d$}
		\STATE Compute $\nabla h_{(k,i)}(\matr{U}^{(i)}_{k-1})$ by \eqref{eq-cost-function-general-1-2-31-01};
		\STATE Compute the polar decomposition of $\nabla h_{(k,i)}(\matr{U}^{(i)}_{k-1})$;
		\STATE Update $\matr{U}_{k}^{(i)}$ to be the orthogonal polar factor.
		\ENDFOR
		\ENDFOR
	\end{algorithmic}
\end{algorithm}

Note that the objective function \eqref{eq-cost-function-general} is restricted $\frac{1}{\alpha}-$homogeneous with $\alpha=\frac{1}{2}$ and block multiconvex by \Cref{prop:multicon_02}.
The next result follows directly from \Cref{theor-isolate-limit}.

\begin{corollary}\label{theorem-main-global-convergence-HOOI-G}
	If, in \Cref{al-low-multilinear-hooiPD} there exists $\delta>0$ such that condition \eqref{main-condition-convergence}
	always holds,
	then the iterates $\omega^{(k,i)}$ converge to a stationary point $\omega_{*}$. 
	If $\omega_{*}$ is a unitarily semi-nondegenerate point,
	then the convergence rate is linear. 
\end{corollary}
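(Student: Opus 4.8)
The plan is to derive this corollary directly from \Cref{theor-isolate-limit} by verifying that its hypotheses transfer to the objective function \eqref{eq-cost-function-general} and that \Cref{al-low-multilinear-hooiPD} coincides with \Cref{al-general-polar} for this function. First I would observe that LMPD (\Cref{al-low-multilinear-hooiPD}) is nothing but APDOI (\Cref{al-general-polar}) instantiated for \eqref{eq-cost-function-general}: the only specialization is that Step 5 evaluates the Euclidean gradient $\nabla h_{(k,i)}(\matr{U}^{(i)}_{k-1})$ through the explicit formula \eqref{eq-cost-function-general-1-2-31-01}, while the polar-decomposition step and the update to the orthogonal polar factor are identical. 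Hence the iterates $\omega^{(k,i)}$ generated by LMPD are exactly those to which \Cref{theor-isolate-limit} refers.

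Next I would check the structural hypotheses of \Cref{theor-isolate-limit}. The function \eqref{eq-cost-function-general} is the special case $L=1$, $\alpha_1=1$ of \eqref{cost-fn-general-2}, so it is of the general form \eqref{definition-f}; it is block multiconvex by \Cref{prop:multicon_02}; and it is real analytic, since $\|\tenss{W}\|^2$ is a polynomial in the real and imaginary parts of the entries of $\matr{U}^{(1)},\cdots,\matr{U}^{(d)}$. The remaining hypothesis, condition \eqref{main-condition-convergence} with some $\delta>0$, is assumed in the statement. Therefore the first conclusion of \Cref{theor-isolate-limit} applies verbatim and yields that $\omega^{(k,i)}$ converges to a single stationary point $\omega_{*}$.

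Finally, for the convergence rate I would invoke the \emph{unitarily invariant} branch of \Cref{theor-isolate-limit}. As recorded in the discussion preceding \Cref{lem:f_invariant_Hessian-2}, the objective function \eqref{cost-fn-general-2}, and therefore its special case \eqref{eq-cost-function-general}, is unitarily invariant. Consequently, when the limit $\omega_{*}$ is a unitarily semi-nondegenerate point, the theorem delivers a linear convergence rate; the existence of such points in the relevant setting is furnished by \Cref{prop:countexam_02}. The only point requiring care is to select the unitary rather than the scale branch of \Cref{theor-isolate-limit}, in contrast with the diagonalization objective \eqref{cost-fn-general-1} (which is merely scale invariant); no genuine obstacle arises, since all the substantive work has already been packaged into \Cref{theor-isolate-limit} and \Cref{prop:multicon_02}, and the corollary is simply a verification that these apply to \eqref{eq-cost-function-general}.
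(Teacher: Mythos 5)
Your proposal is correct and follows essentially the same route as the paper: the paper likewise notes (just before the corollary) that the objective \eqref{eq-cost-function-general} is block multiconvex by \Cref{prop:multicon_02} and then deduces the corollary directly from \Cref{theor-isolate-limit}, using the unitarily invariant branch for the linear rate. Your additional verifications (real analyticity, the identification of LMPD with APDOI via \eqref{eq-cost-function-general-1-2-31-01}) are exactly the points the paper leaves implicit, so there is no substantive difference.
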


\subsection{Algorithm LMPD-S}\label{subsec:lmpd-s}

\Cref{al-general-polar-S} for objective function \eqref{eq-cost-function-general} is specialized as \Cref{al-low-multilinear-polar-shift-gradient}.
Because of the explicit form 
\eqref{eq-cost-function-general-1-2-31-01} and $\matr{V}^{(k,i)}$ is positive semidefinite,
we have $\sigma_{\rm min}(\nabla h_{(k,i)}(\matr{U}^{(i)}_{k-1})+\gamma\matr{U}^{(i)}_{k-1})\geq\gamma$. 
Therefore, we only need to set $\gamma>0$ in \Cref{al-low-multilinear-polar-shift-gradient},
which is better than the general case in \Cref{al-general-polar-S}.

\begin{algorithm}
	\caption{LMPD-S}\label{al-low-multilinear-polar-shift-gradient}
	\begin{algorithmic}[1]
		\STATE{{\bf Input:} starting point $\omega^{(0)}$, multilinear rank $(r_1,\cdots,r_d)$, $\gamma>0$.}
		\STATE{{\bf Output:} $\omega^{(k,i)}$, $k\geq 1$, $1\leq i\leq d$.}
		\FOR{$k=1,2,\cdots,$}
		\FOR{$i=1,2,\cdots,d$}
		\STATE Compute $\nabla h_{(k,i)}(\matr{U}^{(i)}_{k-1})$ by \eqref{eq-cost-function-general-1-2-31-01};
		\STATE Compute the polar decomposition of $\nabla h_{(k,i)}(\matr{U}^{(i)}_{k-1})+\gamma\matr{U}^{(i)}_{k-1}$;
		\STATE Update $\matr{U}_{k}^{(i)}$ to be the orthogonal polar factor.
		\ENDFOR
		\ENDFOR
	\end{algorithmic}
\end{algorithm}

The next result follows directly from \Cref{theor-isolate-limit-s}.

\begin{theorem}\label{theorem-main-global-convergence-HOOI-G-s}
	The iterates $\omega^{(k,i)}$ produced by \Cref{al-low-multilinear-polar-shift-gradient} converge to a stationary point $\omega_{*}$. 
	If $\omega_{*}$ is a unitarily semi-nondegenerate point,
	then the convergence rate is linear. 
\end{theorem}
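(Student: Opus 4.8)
The plan is to derive this statement as a specialization of \Cref{theor-isolate-limit-s} to the objective function \eqref{eq-cost-function-general}, which is the $L=1$ instance of the compression objective \eqref{cost-fn-general-2}. First I would record the three structural hypotheses required by \Cref{theor-isolate-limit-s}: the objective \eqref{eq-cost-function-general} is block multiconvex by \Cref{prop:multicon_02}; it is real analytic because $\|\tenss{W}\|^2$ is a polynomial in the real and imaginary parts of the entries of the $\matr{U}^{(i)}$; and it is unitarily invariant, as already observed just after the definition of unitary invariance. These place us squarely in the ``unitarily invariant'' branch of \Cref{theor-isolate-limit-s}.

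The crux, and the only place where LMPD-S deviates from the generic APDOI-S, is the shift. In the generic analysis one imposes $\gamma>\Delta_{1}$ so that \Cref{lem:sigma_delta} forces $\sigma_{\rm min}(\nabla h_{(k,i)}(\matr{U}^{(i)}_{k-1})+\gamma\matr{U}^{(i)}_{k-1})>\gamma-\Delta_{1}>0$. Here I would instead exploit the explicit gradient \eqref{eq-cost-function-general-1-2-31-01}, writing $\nabla h_{(k,i)}(\matr{U}^{(i)}_{k-1})+\gamma\matr{U}^{(i)}_{k-1}=(2\matr{V}^{(k,i)}+\gamma\matr{I})\matr{U}^{(i)}_{k-1}$. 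Since $\matr{V}^{(k,i)}$ is a sum of Hermitian rank-one terms and hence positive semidefinite, the smallest eigenvalue of $2\matr{V}^{(k,i)}+\gamma\matr{I}$ is at least $\gamma$, and as $\matr{U}^{(i)}_{k-1}$ has orthonormal columns this yields $\sigma_{\rm min}(\nabla h_{(k,i)}(\matr{U}^{(i)}_{k-1})+\gamma\matr{U}^{(i)}_{k-1})\geq\gamma$. Consequently the shift condition \eqref{eq:shifited_conditon} holds with, say, $\delta=\gamma/2$ for any $\gamma>0$, with no constraint relating $\gamma$ to $\Delta_{1}$.

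With the shift condition secured, I would feed these verifications into \Cref{theor-isolate-limit-s}: its supporting lemmas use only that \eqref{eq:shifited_conditon} holds with some uniform $\delta>0$, together with block multiconvexity and analyticity, so the conclusion applies verbatim to the iterates of \Cref{al-low-multilinear-polar-shift-gradient}, giving convergence of $\omega^{(k,i)}$ to a stationary point $\omega_{*}$. For the linear-rate clause I would invoke the unitary branch: when $\omega_{*}$ is a unitarily semi-nondegenerate point the blocks of the Hessian attain the maximal rank allowed by \Cref{lem:f_invariant_Hessian-2}, so $f$ is Morse-Bott at $\omega_{*}$ and the Polyak--\L{}ojasiewicz inequality ($\zeta=\tfrac12$) delivers the linear rate; that such points exist is ensured by \Cref{prop:countexam_02}.

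The main obstacle I anticipate is bookkeeping rather than conceptual: one must confirm that the proof of \Cref{theor-isolate-limit-s} nowhere uses the particular identity $\delta=\gamma-\Delta_{1}$, but only the qualitative fact that \eqref{eq:shifited_conditon} holds with a uniform positive $\delta$. Once that is checked, the relaxed requirement $\gamma>0$ for LMPD-S is fully justified and the result follows directly, exactly as the preceding paragraph asserts.
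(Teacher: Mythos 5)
Your proposal is correct and follows essentially the same route as the paper: the paper also obtains this theorem directly from \Cref{theor-isolate-limit-s}, using the explicit gradient form \eqref{eq-cost-function-general-1-2-31-01} and the positive semidefiniteness of $\matr{V}^{(k,i)}$ to conclude $\sigma_{\rm min}(\nabla h_{(k,i)}(\matr{U}^{(i)}_{k-1})+\gamma\matr{U}^{(i)}_{k-1})\geq\gamma$, so that any $\gamma>0$ suffices in place of $\gamma>\Delta_1$. Your extra care in checking that the proof of \Cref{theor-isolate-limit-s} uses only a uniform positive lower bound $\delta$ in \eqref{eq:shifited_conditon}, rather than the specific value $\delta=\gamma-\Delta_1$, is precisely the (implicit) justification the paper relies on.
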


\subsection{Experiments}\label{sec-experiment-hooi}
To solve problem \eqref{eq-problem-low-multi-rank},
various algorithms \cite{ishteva2009numerical} have been developed,
including the \emph{higher order orthogonal iteration} (HOOI) algorithm \cite{Lathauwer00:rank-1approximation,xu2018convergence}, the Riemannian trust-region algorithm \cite{ishteva2011best} and the Jacobi-type algorithm \cite{IshtAV13:simax}. 
In this subsection,
some numerical experiments are conducted to compare the performances of HOOI, LMPD and LMPD-S.

\begin{example}\label{example-2}
	We randomly generate one tensor in $\CC^{5\times 5\times 5}$, and run HOOI, LMPD and LMPD-S ($\gamma=0.01$).
	The results of objective function value \eqref{eq-cost-function-general} are shown in \Cref{figure-example-2}.
	It can be seen that LMPD and LMPD-S have comparable speed of convergence as compared with HOOI. 
\end{example}

\begin{figure}[tbhp]
	\centering
	\subfloat[($r_1,r_2,r_3$) = (1,1,2)]{\includegraphics[width=0.5\textwidth]{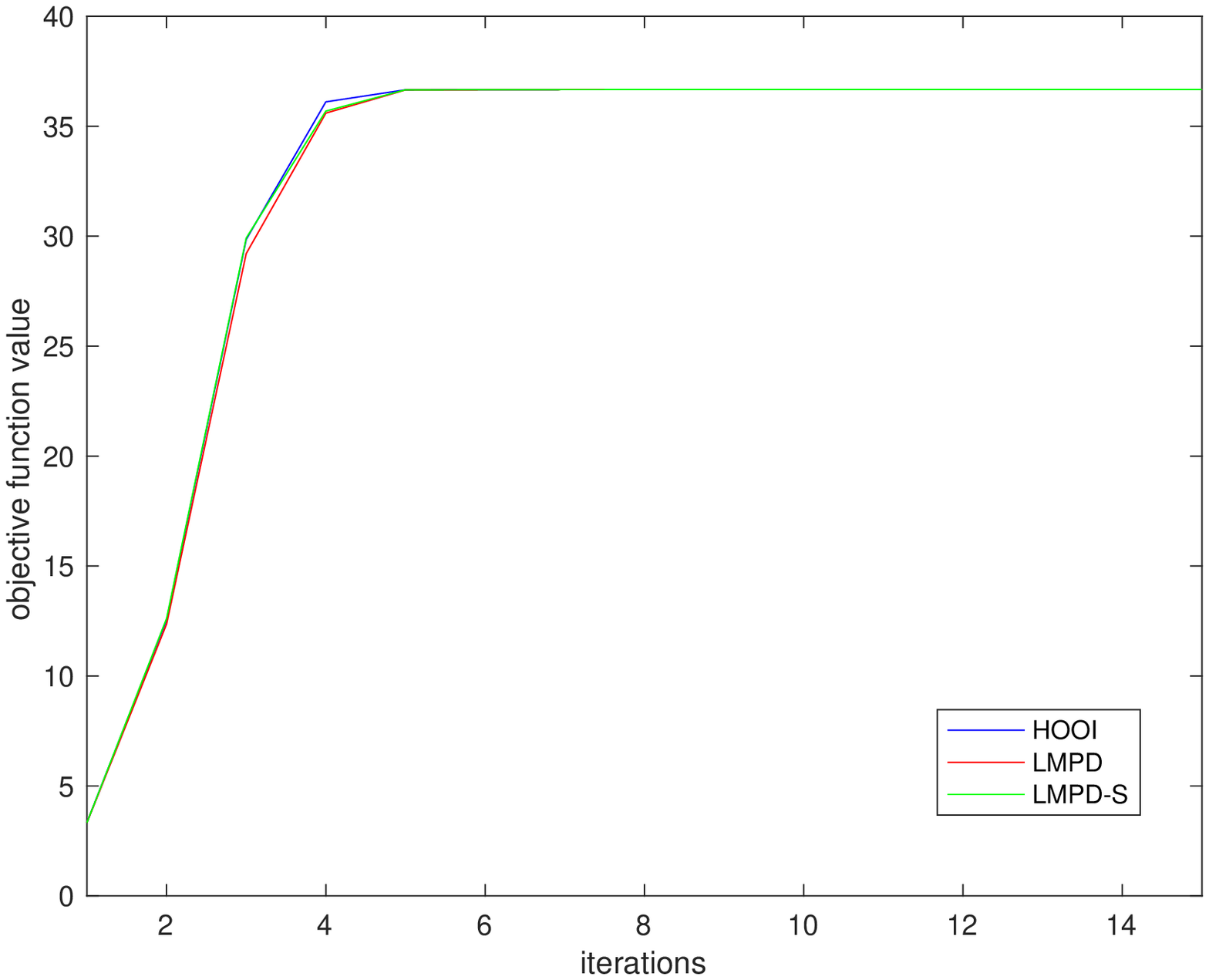}}\!\!\!
	\subfloat[$(r_1,r_2,r_3$) = (3,3,3)]{\includegraphics[width=0.5\textwidth]{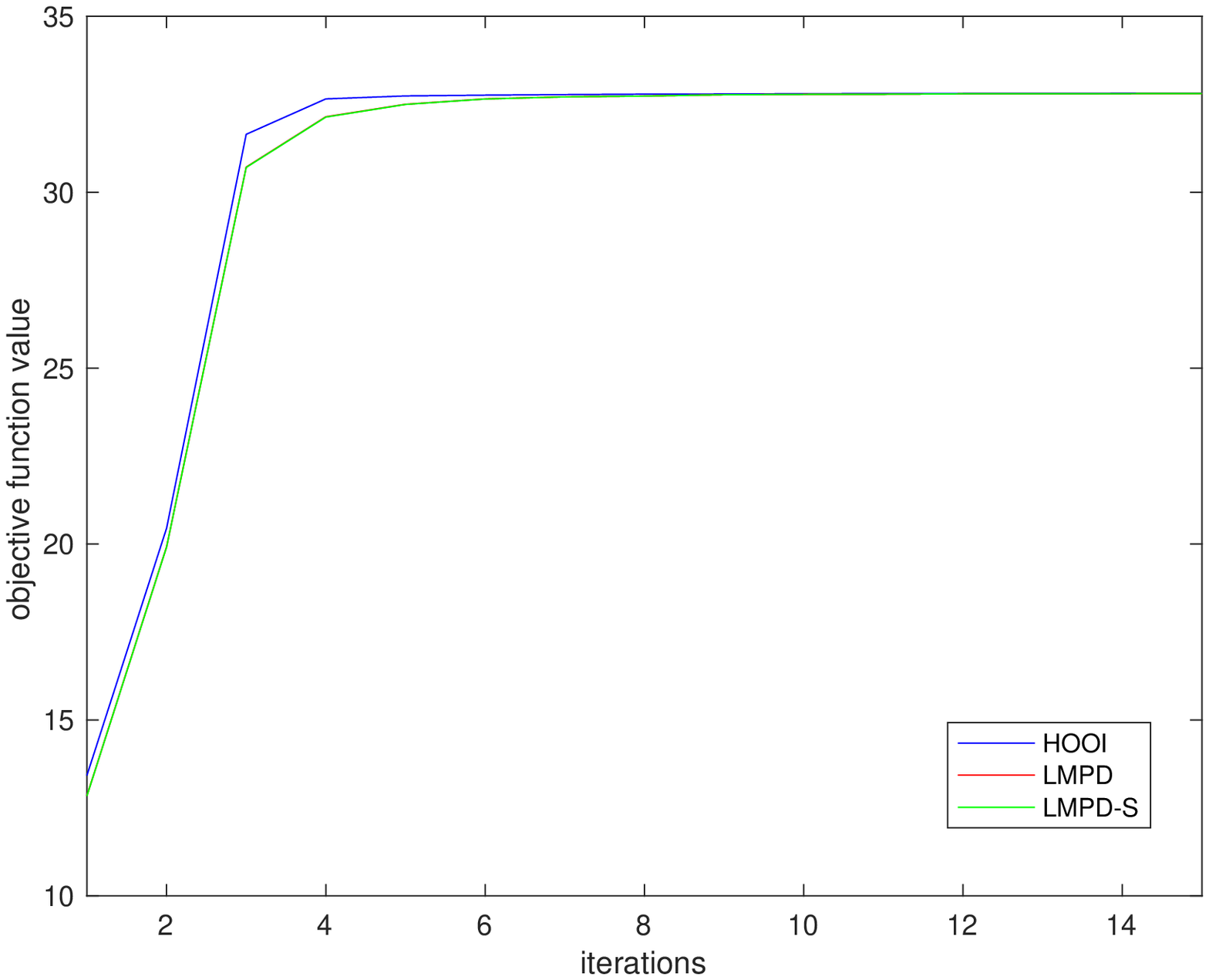}}
	\caption{Results for the tensor randomly generated in \Cref{example-2}.} 
	\label{figure-example-2}
\end{figure}

\begin{example}\label{example-3}
	We randomly generate two tensors in $\CC^{10\times 10\times 10}$, and run HOOI, LMPD and LMPD-S ($\gamma=0.01$) with ($r_1,r_2,r_3$) = (1,1,2).
	The distances of successive iterates $\|\omega^{(k,i+1)}-\omega^{(k,i)}\|$ are shown in \Cref{figure-example-3}.
	It can be seen that HOOI and LMPD fail to converge globally,
	while LMPD-S has a much better convergence performance.
\end{example}

\begin{figure}[tbhp]
	\centering
	\subfloat[first tensor]{\includegraphics[width=0.5\textwidth]{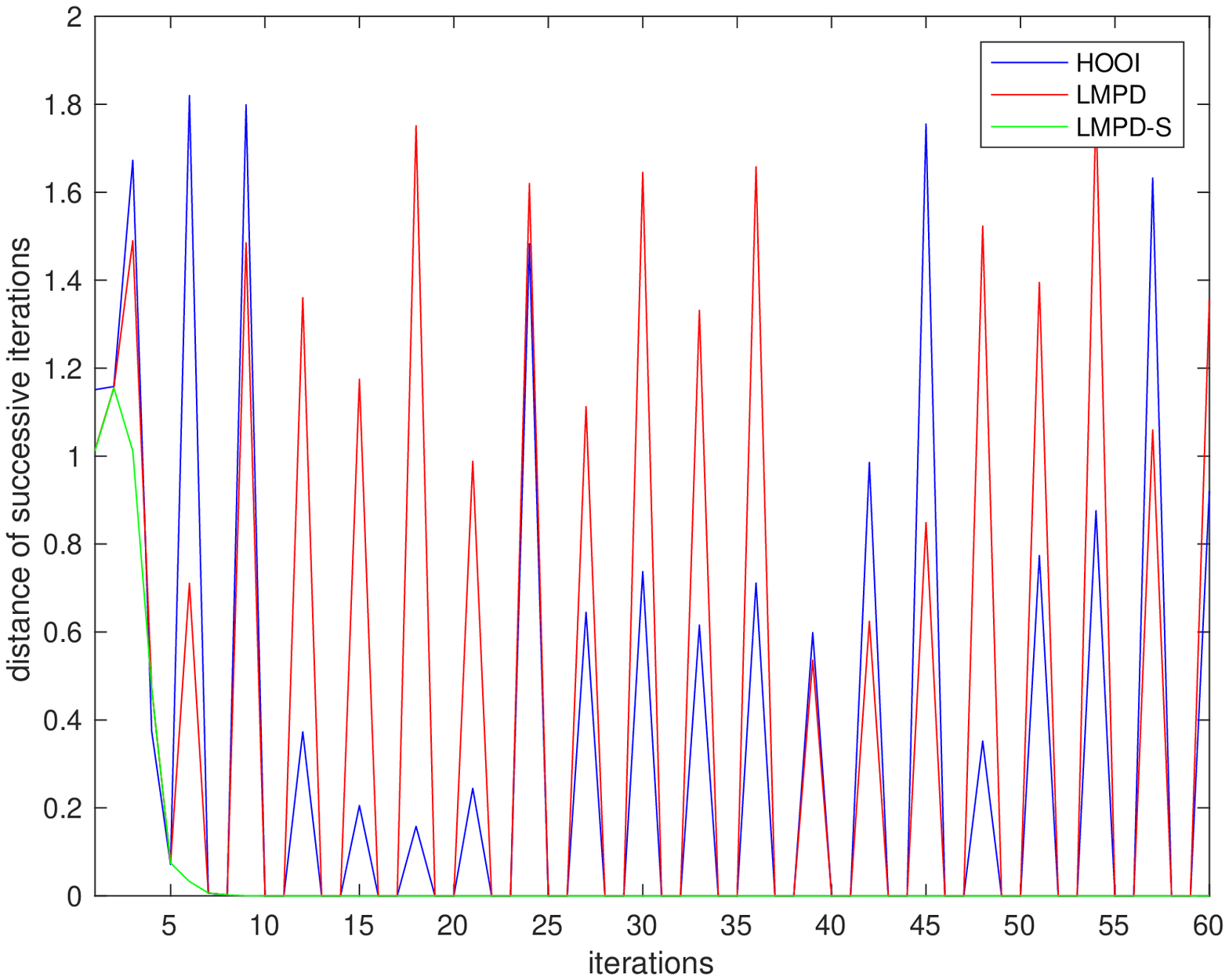}}\!\!\!
	\subfloat[second tensor]{\includegraphics[width=0.5\textwidth]{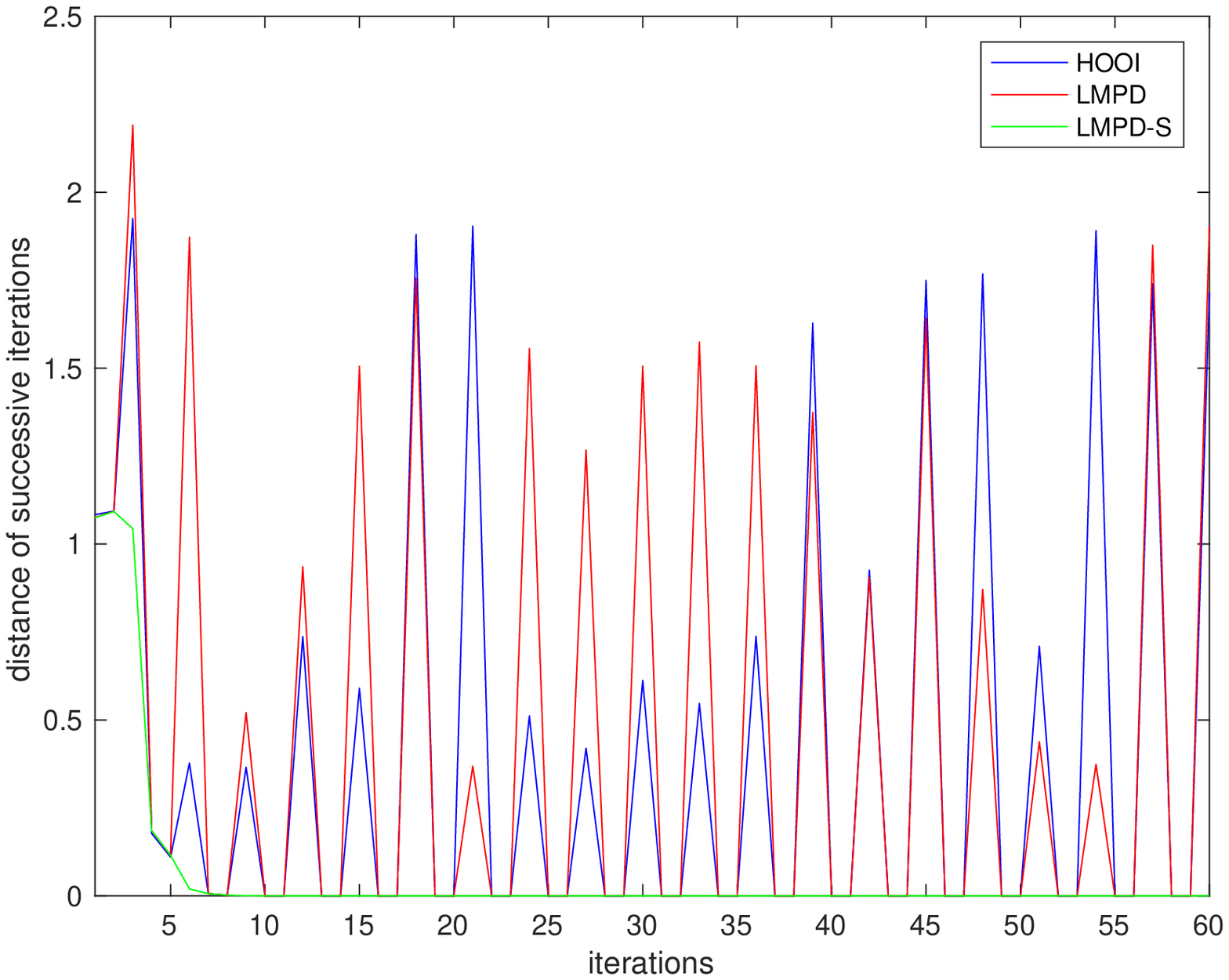}}
	\caption{Results for the tensors randomly generated in \Cref{example-3}.} 
	\label{figure-example-3}
\end{figure}

\section{Conclusions}\label{sec:conclusion}

Motivated by the objective functions \eqref{cost-fn-general-1} and \eqref{cost-fn-general-2}, which are defined on the product of complex Stiefel manifolds and have wide applications in tensor approximation, 
in the general setting, 
we propose Algorithm APDOI and its shifted version APDOI-S to solve optimization over the product of complex Stiefel manifolds based on the matrix polar decomposition. 
When the general objective function \eqref{definition-f} is multiconvex, we establish their weak convergence and global convergence based on the \L{}ojasiewicz gradient inequality. 
Then, based on the Morse-Bott property, we prove the linear convergence rate, when the limit is a scale (or unitarily) semi-nondegenerate point. In particular, we show that the objective functions \eqref{cost-fn-general-1} and \eqref{cost-fn-general-2} are both multiconvex. 

As the symmetric variant, for objective function \eqref{cost-fn-general-1-s}, which has wide applications in symmetric tensor approximation, in the general setting, we propose Algorithm PDOI and its shifted version PDOI-S to solve optimization over a single complex Stiefel manifold based on the matrix polar decomposition. 
When the general objective function \eqref{cost-function-genral-h} is convex, we similarly establish their weak convergence, global convergence and linear convergence rate. 
We present examples in \Cref{lemma-convex-tenso-form} to illustrate why the objective function \eqref{cost-fn-general-1-s} may be convex in many cases. 
It is a topic for further study to investigate if we can remove this assumption, while still guaranteeing convergence. 

\bibliographystyle{siamplain}
\bibliography{ReferenceTensor}

\end{document}